\newcommand{\stkout}[1]{\ifmmode\text{\sout{\ensuremath{#1}}}\else\sout{#1}\fi}
\DeclareRobustCommand\circled[1]{\tikz[baseline=(char.base)]{
   \node[shape=circle,draw,inner sep=0pt] (char) {#1};}}
\definecolor{darkspringgreen}{rgb}{0.09, 0.45, 0.27}
\newcommand{\mmod}{\! \sslash \!}
\newcommand{\mc}[1]{\mathcal{#1}}
\newcommand{\ull}[1]{\underline{#1}}
\newcommand{\mb}[1]{\mathbb{#1}}
\newcommand{\mr}[1]{\mathrm{#1}}
\newcommand{\abs}[1]{\lvert #1 \rvert}
\newcommand{\norm}[1]{\lVert #1 \rVert}
\newcommand{\bra}[1]{\langle #1 \rangle}
\newcommand{\br}[1]{\overline{#1}}
\newcommand{\td}[1]{\widetilde{#1}}
\newcommand{\ZZ}{\mathbb{Z}}
\newcommand{\QQ}{\mathbb{Q}}
\newcommand{\FF}{\mathbb{F}}
\newcommand{\tmf}{\mathrm{tmf}}
\newcommand{\bo}{\mathrm{bo}}
\newcommand{\bsp}{\mathrm{bsp}}
\def \HF2{\mr{H}\FF_2}
\newcommand{\KO}{\mr{KO}}
\def \AA0{\br{A \mmod A(0)}_*}
 \newtheorem{thm}[equation]{Theorem}
 \newtheorem{cor}[equation]{Corollary}
 \newtheorem{lem}[equation]{Lemma}
 \newtheorem{prop}[equation]{Proposition}
 \newtheorem{obs}[equation]{Observation}
  \newtheorem{rem}[equation]{Remark}
 \newtheorem*{thm*}{Theorem}
 \newtheorem*{cor*}{Corollary}
 \newtheorem*{lem*}{Lemma}
 \newtheorem*{prop*}{Proposition}
  \newtheorem*{not*}{Notation}
 \theoremstyle{definition}
 \newtheorem{defn}[equation]{Definition}
 \newtheorem{rmk}[equation]{Remark}
\newtheorem*{defn*}{Definition}
\newtheorem*{ex*}{Example}
\newtheorem*{exs*}{Examples}
\newtheorem*{rmk*}{Remark}
\newtheorem*{claim*}{Claim}
\numberwithin{equation}{section}
\numberwithin{figure}{section}
\DeclareMathOperator{\Ext}{Ext}
\newcommand{\s}{\wedge}
\newcommand{\Si}{\Sigma}
\newcommand{\E}[2]{\prescript{#1}{#2}{E}}
\definecolor{limegreen}{rgb}{0.2, 0.8, 0.2}
\definecolor{darkmagenta}{rgb}{0.55, 0.0, 0.55}
\definecolor{lavenderrose}{rgb}{0.91, 0.33, 0.5}
\definecolor{goldenpoppy}{rgb}{0.99, 0.76, 0.0}
\definecolor{seagreen}{rgb}{0.11, 0.35, 0.02}
\definecolor{maroon}{RGB}{128,0,0}
\definecolor{darkviolet}{RGB}{148,0,211}
\definecolor{twelve}{RGB}{100,100,170}
\definecolor{thirteen}{RGB}{100,150,50}
\definecolor{fourteen}{RGB}{200,0,0}
\definecolor{fifteen}{RGB}{0,200,0}
\definecolor{sixteen}{RGB}{0,0,200}
\definecolor{seventeen}{RGB}{200,0,200}
\definecolor{eighteen}{RGB}{0,200,200}
\newcommand{\Ex}{\Ext_{A_*}}
\def\gdm#1#2#3#4#5{
{\color{#1}(#2,#3:#4)_{#5}}
}
\def\ev#1#2#3{
{\color{#1}(#2,#3:#3)^{ev}}
}
\def\evm#1#2#3#4{
{\color{#1}(#2,#3:#3)^{ev}_{#4}}
}
\def\gdr#1#2#3#4{
{\color{#4}(#1,#2:#3)}
}
\def\evr#1#2#3{
{\color{#3}(#1,#2:#2)^{ev}}
}
\def\evrm#1#2#3#4{
{\color{#3}(#1,#2:#2)^{ev}_{#4}}
}
\title{On the $E_2$-term of the $\bo$-Adams spectral sequence}
\author{A.~Beaudry}\address{University of Colorado Boulder}\email{agnes.beaudry@colorado.edu}
\author{M.~Behrens}\address{University of Notre Dame}\email{mbehren1@nd.edu}
\author{P.~Bhattacharya}\address{University of Virginia}\email{pb9wh@virginia.edu}
\author{D.~Culver}\address{ University of Illinois Urbana-Champaign}\email{dculver@illinois.edu}
\author{Z.~Xu}\address{Massachusetts Institute of Technology}\email{xuzhouli@mit.edu}
\thanks{This material is based upon work supported by the National Science Foundation under Grant No. DMS-1050466/1611786 and DMS-1612020/1725563}
\begin{document}

\begin{abstract}
The $E_1$-term of the ($2$-local) $\bo$-based Adams spectral sequence for the sphere spectrum decomposes into a direct sum of a $v_1$-periodic part, and a $v_1$-torsion part.
Lellmann and Mahowald completely computed the $d_1$-differential on the $v_1$-periodic part, and  the corresponding contribution to the $E_2$-term.  The $v_1$-torsion part is harder to handle, but with the aid of a computer it was computed through the $20$-stem by Davis.  Such computer computations are limited by the exponential growth of $v_1$-torsion in the $E_1$-term.  In this paper, we introduce a new method for computing the contribution of the $v_1$-torsion part to the $E_2$-term, whose input is the cohomology of the Steenrod algebra.  We demonstrate the efficacy of our technique by computing the $\bo$-Adams spectral sequence beyond the $40$-stem.
\end{abstract}

\maketitle	

\tableofcontents


\section{Introduction}\label{sec:intro}

Let $\KO$ denote the ($2$-local) real $K$-theory spectrum, and let $\bo$ denote its connective cover. The $\bo$-based Adams
spectral sequence (ASS) for the sphere takes the form: 
$$ \E{bo}{}_1^{n,t} = \pi_t \bo \s \br{\bo}^{\s n} \Rightarrow (\pi^{s}_{t-n})_{(2)}. $$ 
Here, $\br{\bo}$ denotes the cofiber of the unit 
$$ S \rightarrow \bo \rightarrow \br{\bo} $$
and $\pi_*^s$ denotes the stable homotopy groups of spheres.

Many researchers have studied aspects of the $\bo$-ASS, most notably Mahowald in his paper \cite{Mahowaldbo}, where this
spectral sequence is used to prove the $2$-primary $v_1$-periodic telescope conjecture. However, a systematic study of this
spectral sequence as a tool to perform low-dimensional computations of the stable homotopy groups of spheres was not
undertaken until \cite{LM} and \cite{Davisbo}. Early on, the structure of the $E_1$-term was known \cite{Mahowaldbo}, \cite{Milgram}. Namely,
there is a splitting $$ \bo \s \bo \simeq \bigvee_i \Si^{4i} \bo \wedge B_i $$ where $B_i$ is the $i$th integral
Brown-Gitler spectrum \cite{CDGM}. This splitting iterates to yield a splitting 
$$ \bo \wedge \br{\bo}^n \simeq \bigvee_{i_1, \ldots, i_n}\Si^{4(i_1+ \cdots + i_n)} \bo \s B_{i_1} \s \cdots \s B_{i_n} $$
where each index satisfies $i_l > 0$.
The homotopy type of the wedge summands in the above splitting were also determined in \cite{Mahowaldbo}, \cite{Milgram}: for a multi-index $I = (i_1, \ldots, i_n)$, there is an equivalence
$$ \bo \s B_{i_1} \s \cdots \s B_{i_n} \simeq b_I \vee HV_I $$
where $b_I$ is a certain Adams cover of $\bo$ or $\bsp$ (see Section~\ref{sec:E1}), and $HV_I$ is the Eilenberg-MacLane spectrum associated to a graded $\FF_2$-vector space $V_I$.

There is a corresponding decomposition
\begin{align*}
\E{bo}{}_1^{n,*} & = \bigoplus_{I= (i_1, \ldots, i_n)} \Si^{4(i_1, \ldots, i_n)} \pi_*  b_I \oplus \Si^{4(i_1, \ldots, i_n)}  V_I.
\end{align*}
The subspaces
$$ V^{n,*} := \bigoplus_{I=( i_1, \cdots ,i_n)} \Si^{4(i_1, \ldots, i_n)} V_I $$
are closed under the $d_1$-differential.  Following \cite{LM}, \cite{Davisbo}, we define $\mc{C^{*,*}}$ to be the quotient complex
$$ 0 \rightarrow V^{*,*} \rightarrow \E{bo}{}_1^{*,*} \rightarrow \mc{C}^{*,*} \rightarrow 0 $$
so that
$$ \mc{C}^{n,*} \cong \bigoplus_{I= (i_1, \ldots, i_n)} \Si^{4(i_1, \ldots, i_n)}  \pi_*  b_I, $$
resulting in a long exact sequence
\begin{equation}\label{eq:LES}
\cdots \rightarrow H^{*,*}(V) \rightarrow \E{bo}{}_2^{*,*} \rightarrow H^{*,*}(\mc{C}) \xrightarrow{\partial} H^{*+1, *}(V) \rightarrow \cdots.
\end{equation}

Historically, most of the applications of the $\bo$-ASS have rested on analyzing $H^{*,*}(\mc{C})$ while bounding the effects of $H^{*,*}(V)$.  In fact, 
a complete computation of $H^{*,*}(\mc{C})$ was accomplished in \cite{LM}.  By contrast, the complex $V^{*,*}$, and its cohomology, have proven to be the major obstacle to using the $\bo$-ASS to perform low-dimensional computations.  Carlsson computed $V^{1,*}$ explicitly in \cite{Carlsson}, but the result is rather unwieldy.  Davis \cite{Davisbo} used computer computations to compute $V^{n,t}$, and its cohomology, through the range $t \le 25$.  He observed there that a rapid exponential growth in the dimension of $V^{n,t}$ severely limits this approach.  Furthermore, Davis's approach does not include a method for computing the map $\partial$ in (\ref{eq:LES}).  In essence, $V^{*,*}$ resembles the cobar complex for the Steenrod algebra (but seemingly without a good conceptual description), and as such, is too large for extensive computer computations.

Motivated by this disparity of computability, we shall refer to elements of $\E{bo}{}_2^{*,*}$ with non-trivial image in $H^{*,*}(\mc{C})$ as \emph{good}, and we shall refer to non-zero elements of $\E{bo}{}_2^{*,*}$ which are in the image of the map from $H^{*,*}(V)$ as \emph{evil}.  The purpose of this paper is to present a method of computing the evil part of $\E{bo}{}_2^{*,*}$.  This involves (1) computing $H^{*,*}(V)$, and (2) understanding the long exact sequence (\ref{eq:LES}).

The basic idea is to use an algebraic analog of the $\bo$-resolution (called the Mahowald spectral sequence in \cite{Miller}), studied by Davis, Mahowald, Miller and others, which we will refer to as the $\bo$-MSS.  The $\bo$-MSS is a spectral sequence of the form
$$ \E{bo}{alg}_1^{n,s,t} = \Ext^{s,t}_{A(1)}(\br{A\mmod A(1)}^{n}, \FF_2) \Rightarrow \Ext^{s+n,t}_{A}(\FF_2, \FF_2). $$
Analogous to the topological decomposition, there is a decomposition
$$ \E{bo}{alg}_1^{*,*,*} = \mc{C}^{*,*,*}_{alg} \oplus V^{*,*,*}_{alg} $$
and thus a long exact sequence
\begin{equation}\label{eq:algLES}
\cdots \rightarrow H^{*,*,*}(V_{alg}) \rightarrow \E{bo}{alg}_2^{*,*,*} \rightarrow H^{*,*,*}(\mc{C}_{alg}) \xrightarrow{\partial_{alg}} H^{*+1, *,*}(V) \rightarrow \cdots.
\end{equation}

The key observation (implicit in \cite{Davisbo}) is:
\begin{obs}
The terms $V_{alg}^{n,s,t}$ are zero unless $s = 0$, and there is an isomorphism of cochain complexes:
$$ V_{alg}^{n,0,t} \cong V^{n,t} $$
and therefore an isomorphism
$$ H^{*,*}(V) \cong H^{*,0,*}(V_{alg}). $$
\end{obs}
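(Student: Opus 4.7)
The plan is to transport the known topological splitting into $\FF_2$-cohomology and exploit the freeness of the ``evil'' summand over $A(1)$. First, I would recall that the decomposition $\bo\wedge B_{i_1}\wedge\cdots\wedge B_{i_n}\simeq b_I\vee HV_I$ is realized by an $A$-module splitting of $\FF_2$-cohomology in which $H^*HV_I$ is a free $A$-module (being the cohomology of a wedge of suspensions of $H\FF_2$, with one free generator for each basis element of $V_I$). By Milnor-Moore, $A$ is free as a left $A(1)$-module, so $H^*HV_I$ is also free over $A(1)$.

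Applying $\Ext_{A(1)}^{s,t}(-,\FF_2)$ term by term and assembling over $I$ produces the algebraic decomposition $\E{bo}{alg}_1^{n,*,*}\cong\mc{C}_{alg}^{n,*,*}\oplus V_{alg}^{n,*,*}$, in which $V_{alg}^{n,s,t}$ arises from the $\Sigma^{4|I|}HV_I$ summands. Since free $A(1)$-modules are $\Ext_{A(1)}$-acyclic, $V_{alg}^{n,s,t}=0$ for $s>0$, proving the vanishing claim. For $s=0$, the standard identification $\Hom_{A(1)}(H^*HV_I,\FF_2)\cong\pi_*HV_I = V_I$ (which mirrors the fact that Hurewicz is an isomorphism for generalized Eilenberg-MacLane spectra) yields, after incorporating the $\Sigma^{4|I|}$ shifts, an isomorphism of bigraded vector spaces $V_{alg}^{n,0,*}\cong V^{n,*}$.

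It remains to upgrade this to an isomorphism of cochain complexes, i.e., to match the two $d_1$-differentials in $n$. The topological $d_1$ on $V^{n,*}$ is induced by the $\bo$-resolution map $\bo\wedge\br{\bo}^n\to\bo\wedge\br{\bo}^{n+1}$ restricted to the $HV_I$ summands; since Hurewicz is an iso on these Eilenberg-MacLane pieces, this $d_1$ is determined by its effect on $\FF_2$-cohomology, which by construction of the $\bo$-MSS agrees with the algebraic $d_1$ on the $V_{alg}$-summand. The main obstacle is the naturality bookkeeping: one must verify that the $A$-module splittings $H^*(\bo\wedge B_{i_1}\wedge\cdots\wedge B_{i_n})\cong H^*b_I\oplus H^*HV_I$ for successive $n$ assemble compatibly with the underlying cobar-type differentials, so that the graded-vector-space isomorphism really intertwines $d_1$ on the two sides rather than only doing so up to a map into $\mc{C}_{alg}$. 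Once this compatibility is checked, passage to cohomology in $n$ gives the final isomorphism $H^{*,*}(V)\cong H^{*,0,*}(V_{alg})$.
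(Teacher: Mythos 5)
Your overall outline is the same one the paper sketches (the observation is stated without a detailed proof), but there is a genuine error in the key computation at $s=0$, and a related confusion in the vanishing step.

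You write that $V_{alg}^{n,0,t}$ is computed via ``the standard identification $\Hom_{A(1)}(H^*HV_I,\FF_2)\cong\pi_*HV_I = V_I$.'' This is false. Taking $V_I = \FF_2$ concentrated in degree $0$, one has $H^*HV_I = A$, and by Milnor--Moore $A \cong A(1)\otimes (A\mmod A(1))$ as a left $A(1)$-module, so $\Hom_{A(1)}(A,\FF_2) \cong (A\mmod A(1))^*$, which is infinite-dimensional and not $\FF_2$. The correct statement is $\Hom_{A}(H^*HV_I,\FF_2)\cong V_I$, i.e.\ the Hom must be taken over the whole Steenrod algebra. The relevant chain is: by change of rings, $\Ext_{A(1)}^{*,*}(H^*B_I,\FF_2)\cong\Ext_{A}^{*,*}(H^*(\bo\wedge B_I),\FF_2)$; the spectrum splitting $\bo\wedge B_I\simeq b_I\vee HV_I$ then gives an $A$-module splitting of $H^*(\bo\wedge B_I)$; and since $H^*HV_I$ is a \emph{free $A$-module}, $\Ext_A^{s,t}(H^*HV_I,\FF_2)$ vanishes for $s>0$ and equals $\Hom_A^t(H^*HV_I,\FF_2)\cong V_I^t$ for $s=0$. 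Your invocation of $A(1)$-freeness of $H^*HV_I$ (via Milnor--Moore) is a detour that does not feed into the computation; it is the $A$-freeness that is used once the change of rings is in place.

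If you prefer to stay in the $A(1)$-module world and avoid change of rings, the right object is \emph{not} $H^*HV_I$ but the $A(1)$-module summand $F_I\subseteq H^*B_I$ underlying the Mahowald--Milgram splitting: one has $H^*B_I\cong M_I\oplus F_I$ as $A(1)$-modules with $F_I$ free over $A(1)$ (and $A\otimes_{A(1)}F_I\cong H^*HV_I$). Then $\Ext_{A(1)}(F_I,\FF_2)=\Hom_{A(1)}(F_I,\FF_2)\cong V_I$ concentrated in $s=0$. Your proof conflates $F_I$ with $H^*HV_I$; they are different $A(1)$-modules with different $\Ext_{A(1)}$. The remaining part of your argument---that the two $d_1$'s agree because the Hurewicz map is an isomorphism on the Eilenberg--MacLane pieces---is the correct idea and matches what the paper has in mind, though you flag rather than carry out the naturality check.
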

 In fact, the map $\partial$ turns out to be computable in terms of the map $\partial_{alg}$.  In short, a complete understanding of the evil part of $\E{bo}{}_2$ can be extracted from a complete understanding of the evil part of $\E{bo}{alg}_2$.  

Our methods rest on the idea that the groups 
$\Ext^{*,*}_{A}(\FF_2, \FF_2)$ are easily understood (through a range) by computer computations using minimal resolutions \cite{Bruner}.  Furthermore, analogous to the topological situation, the groups $H^{*,*,*}(\mc{C}_{alg})$ can be completely computed.  One can then deduce $H^{*,*,*}(V_{alg})$ and $\partial_{alg}$ from the existence of the $\bo$-MSS, using knowledge of $\Ext^{*,*}_{A}(\FF_2, \FF_2)$,  \emph{provided one has a means of determining which elements of $\Ext$ are detected by good classes and which are detected by evil classes in the $\bo$-MSS}.  

We shall say a non-trivial class $x \in \Ext^{*,*}_{A}(\FF_2, \FF_2)$ is \emph{evil} if there exists an evil class in the $\bo$-MSS which detects it; otherwise we shall say $x$ is \emph{good}.
Our main theorem establishes a precise relationship between being good, and being $v_1$-periodic (Lemma~\ref{lem:onethird} and Theorem~\ref{thm:dichotomy}):

\begin{thm*}[Dichotomy Principle]
Suppose $x$ is a non-trivial class in $\Ext^{s,t}_{A}(\FF_2, \FF_2)$.
\begin{enumerate}
\item 
If $x$ is $v_1$-torsion, it is evil.

\item If $x$ lies above the 1/3 line ($t-s < 3s$), then it is good and $v_1$-periodic.

\item 
Suppose $x$ is $v_1$-periodic.  There is an $M \gg 0$ for which $y = v_1^{M} x$ is defined and lies above the $1/3$-line.  Suppose $y$ has $\bo$-filtration $n$.  The class $x$ is good if and only if 
$$ s \ge n. $$
\end{enumerate}
\end{thm*}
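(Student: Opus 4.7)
The argument will rest on two structural facts about the $\bo$-MSS that I would establish first.

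\textbf{Fact (A).} The good complex $\mc{C}_{alg}$ inherits $v_1^4$-periodicity from the Bott periodicity of each Adams cover $b_I$; in particular, multiplication by $v_1^4$ preserves the $\bo$-filtration $n$ on $H^{*,*,*}(\mc{C}_{alg})$ and, for $M$ sufficiently large, $v_1^{4M}$ is injective (indeed an isomorphism onto the corresponding bidegree).

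\textbf{Fact (B).} By the Observation, $V_{alg}^{n, s, t} = 0$ unless $s = 0$; and by the Brown--Gitler internal-degree bound (each $HV_I$ with $I = (i_1, \ldots, i_n)$, $i_k > 0$, contributes starting in internal degree $\geq 4 \sum i_k \geq 4n$), $V_{alg}^{n, 0, t} = 0$ whenever $t < 4n$. Two consequences are immediate: $v_1^4$ acts as zero on $V_{alg}$ at the $E_1$-level of the $\bo$-MSS, and any evil class in $\Ext^{s, t}_A(\FF_2, \FF_2)$ has $\bo$-filtration exactly $s$ and satisfies $t \geq 4s$.

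Given (A) and (B), the three parts fall out as follows. For Part (1), I would argue contrapositively: a good class $x$ with detecting class $\bar{x} \in H^{n(x), s - n(x), t}(\mc{C}_{alg})$ has $v_1^{4M} \bar{x}$ non-zero for all $M$ by (A), and by multiplicativity of the MSS this detects $v_1^{4M} x$ at $\bo$-filtration $n(x)$; hence $v_1^{4M} x \neq 0$ in $\Ext$, so $x$ is $v_1$-periodic. For Part (2), (B) forces every evil class to lie on or below the $1/3$-line, so above the line every class is good, and the $v_1$-periodicity then follows from Part (1). For Part (3), the forward direction is immediate from (A): if $x$ is good, $y = v_1^M x$ is detected at the same $\bo$-filtration $n(x) \leq s$ as $x$, so $n = n(x) \leq s$. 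For the converse I again contrapose: if $x$ is evil, (B) forces $n(x) = s$ and $\bar{x} \in V_{alg}^{s, 0, t}$, and since $v_1^4 \cdot \bar{x}$ lands in $V_{alg}^{s, 4, t+12} = 0$ at the $E_1$-level, the $\bo$-filtration of $v_1^4 x$ (non-zero by the $v_1$-periodicity hypothesis) is strictly greater than $s$. A short induction on powers of $v_1^4$, using that the good part preserves $\bo$-filtration while evil steps strictly raise it, shows $y$ has $\bo$-filtration $> s$, contradicting $s \geq n$.

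The main obstacle I expect is the multiplicative detection step used in Parts (1) and (3): to conclude from $v_1^{4M} \bar{x} \neq 0$ in $H^{*,*,*}(\mc{C}_{alg})$ that $v_1^{4M} \bar{x}$ survives to a non-zero class in $E_\infty$ of the $\bo$-MSS (rather than being killed by a later differential or swallowed into a boundary). This should follow by comparing the $v_1^4$-action coming from $\Ext_{A(1)}^{4, 12}(\FF_2, \FF_2)$ with the $v_1^4$-action on $\Ext_A^{4, 12}(\FF_2, \FF_2)$ through the MSS, but will require careful tracking of filtrations and a verification that the edge homomorphism into $H^{*,*,*}(\mc{C}_{alg})$ respects $v_1^4$-multiplication.
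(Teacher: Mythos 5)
Your overall route tracks the paper's closely: your Fact (B) is the content of Lemma~\ref{lem:onethird} and of the observation that evil classes are concentrated in internal Adams degree $s=0$, and the skeleton of your Parts (1)--(3) is essentially the argument of Theorem~\ref{thm:dichotomyAA0} (which the paper proves first for $\br{A \mmod A(0)}_*$ and then transports to $\FF_2$ via $\bar\partial$ in Theorem~\ref{thm:dichotomy}). The paper makes the detour through $\br{A\mmod A(0)}_*$ because that comodule is $A(0)$-free, so Theorem~\ref{thm:AdamsP} gives a clean half-plane in which $\Ext_{A_*}\cong\Ext_{A(N)_*}$ and $v_1^{2^N}$ acts; a direct argument for $\FF_2$ as you propose is also feasible, since Lemma~\ref{lem:v1torsion} is stated and proved for both, but you do not thereby avoid the hard technical input.

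That input is precisely the ``multiplicative detection step'' you flag, and this is a genuine gap, not a bookkeeping exercise. Your proposed repair --- ``compare the $v_1^4$-action from $\Ext_{A(1)}^{4,12}$ with the $v_1^4$-action on $\Ext_A^{4,12}$'' --- does not get off the ground, because $v_1^4$ is not an element of $\Ext_A^{4,12}(\FF_2,\FF_2)$; it is only defined as an operation on $\Ext_A$ in the wedge of Proposition~\ref{prop:v1ext}, with period depending on the class. The paper's fix is Lemma~\ref{lem:v1torsion}: pass to the AKSS for $\Ext_{A(N)_*}$, where $v_1^{2^N}$ is an honest element and the spectral sequence is a module over it, then argue by induction on the length of differentials that $v_1$-division commutes with good-to-good differentials. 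The inductive step uses exactly your Fact~(B): a hypothetical shorter differential from $v_1^{-2^Nk}x$ would have to hit an evil class, forcing $s(x)-2^Nk-r'+1=0$, which combined with $s(y)\ge 2^Nk$ and $s(x)-r+1=s(y)$ yields $r\le r'$, a contradiction. Related to this, you should replace $v_1^4$ with $v_1^{2^{N(x)}}$ throughout (the choice of $N(x)$ is what lets you invoke the $\Ext_{A(N)_*}$ comparison), and the injectivity claim in your Fact~(A) needs to be stated carefully: the relevant statement is Lemma~\ref{lem:v1div}, which controls $v_1$-\emph{divisibility} of elements of $\mc{C}_{alg}$ in terms of their $s$-degree, rather than injectivity of multiplication on $H^{*,*,*}(\mc{C}_{alg})$.
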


Our technique of using knowledge of $\Ext^{s,t}_{A}(\FF_2, \FF_2)$ and $H^{*,*,*}(\mc{C}_{alg})$ to deduce $H^{*,*}(V)$ via the notions of good and evil is encoded more fluently in a refinement of the $\bo$-MSS we develop, called the \emph{agathokakological spectral sequence}
$$ \E{akss}{alg}^{*,*,*}_1 = \mc{C}_{alg}^{*,*,*} \oplus V^{*,*} \Rightarrow \Ext^{s,t}_{A}(\FF_2, \FF_2). $$
The indexing in this spectral sequence is non-standard, but is set up in such a manner that good and evil classes actually live in distinct tri-degrees.

Mahowald's proof of the $v_1$-periodic telescope conjecture \cite{Mahowaldbo} rested on two fundamental theorems: the \emph{Bounded Torsion Theorem}, and the \emph{Vanishing Line Theorem}.  We note that our perspective on the $\bo$-ASS $E_2$-term organically produces these results (Corollary~\ref{cor:BTT} and Theorem~\ref{thm:vanishing}).

We demonstrate the efficacy of our technique by computing the $\bo$-ASS through the $40$-stem.  The previous computations of \cite{LM} and \cite{Davisbo} only run through the $20$-stem.  In principle, our method could be employed to compute the $\bo$-ASS through a larger range, but it quickly becomes apparent that as a function of the stem,  increasingly large portions of $\pi_*^s$ are detected by evil classes in the $\bo$-ASS, and beyond our range it appears that the $\bo$-ASS is not much more effective than the classical ASS.

Besides being an attempt to streamline and extend previous work on the $\bo$-ASS, this paper represents a developmental platform for methods the authors hope to employ to study the $\tmf$-resolution (extending the program of \cite{BOSS}).

It should be noted that Gonz\'alez's work on the odd primary $BP\bra{1}$-resolution \cite{Gonzalez2} suggests an interesting alternative to the methodology of this paper. Namely, Gonz\'alez shows that the analog of the complex $V^{*,*}$ for the Adams cover $S^{\bra{1}}$ of the sphere  spectrum is isomorphic to a cobar complex for a relative $\Ext$ group. A similar observation carries over to the $2$-primary case.  It would be an interesting project to attempt to compute these relative Ext groups with some variant of the May spectral sequence or a minimal resolution technique.

\subsection*{Organization of the paper}

The paper is organized as follows. In Section~\ref{sec:E1} we review the structure of the $E_1$-page of the $\bo$-ASS,
together with the good component of the $d_1$-differential. In Section~\ref{sec:wss} we describe the Lellmann-Mahowald
weight spectral sequence, a spectral sequence for computing $H^{*,*}(\mc{C})$.  In Section~\ref{sec:alg}, we describe the algebraic $\bo$ resolution (the $\bo$-MSS), and compute the corresponding weight spectral sequence converging to $H^{*,*,*}(\mc{C}_{alg})$. 
In Section~\ref{sec:v1}, we review the notion of $v_1$-periodicity in $\Ext$, define the $v_1$-periodic $\bo$-resolution, and discuss its convergence.  
In Section~\ref{sec:AA0}, we discuss the $\bo$-MSS for $\br{A \mmod A(0)}$.  The Ext groups of $\br{A \mmod A(0)}$ are identical to those of $\FF_2$ away from the $0$-stem, but the $v_1$-periodic behavior of the former is much more manageable.
In Section~\ref{sec:rules}, we define the agathokakological spectral sequence (AKSS) and prove the Dichotomy Principle.  We also introduce a topological analog of the AKSS, which refines the $\bo$-ASS.  In Section~\ref{sec:algcomp}, we use the results of the previous section, together with knowledge of $\Ext_A(\FF_2, \FF_2)$, to completely compute the algebraic $\bo$-resolution through a large range.  The information this affords us about $H^{*,*}(V)$, and the connecting homomorphism $\partial$, is then used to compute the $\bo$-ASS through the same range in Section~\ref{sec:topcomp}.

\subsection*{Conventions} In the remainder
of this paper everything is implicitly localized at the $p = 2$.  

Homology $H_*$ will always be implicitly taken with $\FF_2$ coefficients.
We let $A$ denote the $2$-primary Steenrod algebra, and $A_*$ its dual.  For a sub-Hopf algebra $B \subseteq A$, we will use $B_*$ to denote its dual, and we will use $A\mmod B_*$ to denote the dual of the Hopf-algebra quotient $A\mmod B$.  We let $\zeta_i$ denote the conjugate of the element $\xi_i \in A_*$.  We shall sometimes use the abbreviation
$$ \Ext^{*,*}_{A_*}(M) := \Ext^{*,*}_{A_*}(\FF_2, M). $$
For any connective ($2$-local) finite type spectrum $X$, we let $X^{\bra{s}}$ denote its $s$th Adams cover, so that
$$ X \leftarrow X^{\bra{1}} \leftarrow X^{\bra{2}} \leftarrow \cdots $$
is a minimal Adams resolution of $X$.
The associated classical Adams spectral sequence (ASS) will be denoted
$$ \E{ass}{}^{s,t}_2(X) = \Ext_{A_*}^{s,t}(H_*X) \Rightarrow \pi_{t-s} X. $$
For a non-zero element $x \in \pi_*X$, we let 
$$ [x] \in \E{ass}{}_\infty(X) $$
denote the class in $E_\infty$ which detects it.  Cycles in the $E_r$-page of the ASS will be automatically regarded as representing elements of the $E_{r+1}$-page.  Thus, given an element $y \in \E{ass}{}_2^{*,*}$, the expression
$$ y = [x] $$
means that $y$ detects $x$ in the ASS.  For a general ring spectrum $R$, we let $\{ \E{R}{}^{*,*}_r(X) \}$ denote the $R$-based Adams spectral sequence for $X$.

For a non-negative integer $i$, we let $\alpha(i)$ denote the sum of the digits of its base $2$ expansion.
For a multi-index 
$$ I = (i_1, \ldots, i_n) $$
we define
\begin{align*}
\abs{I} & := n, \\
\norm{I} & := i_1 + \cdots + i_n, \\
\alpha(I) & := \alpha(i_1) + \cdots + \alpha(i_n).
\end{align*}

\subsection*{Acknowledgments}  The authors thank the referee for their valuable suggestions and comments, and John Rognes for pointing out an error in the statement of a computation.


\section{The $E_1$-term of the $\bo$-ASS}\label{sec:E1}

Let $B_i$ denote the $i$th integral Brown-Gitler spectrum (see, for example, \cite{CDGM}).  The integral Brown-Gitler spectra are a sequence of finite complexes whose colimit is the integral Eilenberg-Maclane spectrum:
$$ S = B_0 \subset B_1 \subset \cdots \subset H\ZZ. $$
The image of their homology in 
$$ H_* H\ZZ \cong \FF_2[\zeta_1^2, \zeta_2, \ldots ] \subset A_* $$
is the $A_*$-subcomodule spanned by monomials $m$ of weight $wt(m) \le 2i$, where
$$ wt(\zeta_i) := 2^{i-1}. $$

\begin{thm}[Mahowald \cite{Mahowaldbo}, Milgram \cite{Milgram}]\label{thm:splitting}
	We have
	$$ \bo \wedge \br{\bo}^n \simeq \bigvee_{\abs{I} = n} \Si^{4\norm{I}} \bo \s B_I $$
	 where each term of the multi-index $I = (i_1, \ldots i_n)$ satisfies $i_l > 0$ and 
	 $$ B_I := B_{i_1} \s \cdots \s B_{i_n}. $$
\end{thm}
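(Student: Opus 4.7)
The plan is to prove the statement by induction on $n$, using the $n=1$ case (the fundamental Mahowald--Milgram splitting of $\bo \wedge \bo$) as a black box and then iterating via the cofiber sequence defining $\br{\bo}$.

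First I would establish the base case. For $n=1$, one needs the splitting $\bo \wedge \bo \simeq \bigvee_{i \ge 0} \Sigma^{4i} \bo \wedge B_i$. The standard approach is to compute
$$ H_*(\bo \wedge \bo) \cong A \mmod A(1)_* \cong \FF_2[\zeta_1^4, \zeta_2^2, \zeta_3, \zeta_4, \ldots], $$
filter it by the weight $\mr{wt}(\zeta_i) = 2^{i-1}$, and observe that the weight-$4i$ part is precisely the shift by $4i$ of the image of $H_*(B_i)$ in $H_*(H\ZZ)$. One then lifts the inclusions of the weight-$\le 2i$ subspace to maps $\Sigma^{4i} B_i \to \bo \wedge \bo$ using that $\bo \wedge \bo$ has enough homotopy to realize such maps (this is where Mahowald and Milgram do the real work: constructing the Brown--Gitler summands via cell-by-cell obstruction theory, using the $A(1)_*$-freeness of the relevant subquotients). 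Smashing with $\bo$ and using the $\bo$-module structure gives maps
$$ \bigvee_{i \ge 0} \Sigma^{4i} \bo \wedge B_i \longrightarrow \bo \wedge \bo, $$
and one checks this is an equivalence by comparing $\FF_2$-homology in each bidegree.

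Next, I would extract the splitting of $\bo \wedge \br{\bo}$. The cofiber sequence $S \to \bo \to \br{\bo}$, smashed with $\bo$, is split by the unit map, so the $i=0$ summand $B_0 = S$ drops out and we obtain
$$ \bo \wedge \br{\bo} \simeq \bigvee_{i \ge 1} \Sigma^{4i} \bo \wedge B_i. $$
This is the inductive atom.

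For the inductive step, assume the splitting holds for $n-1$. Then
$$ \bo \wedge \br{\bo}^n \simeq \bigl(\bo \wedge \br{\bo}^{n-1}\bigr) \wedge \br{\bo} \simeq \bigvee_{\abs{J}=n-1} \Sigma^{4\norm{J}} \bo \wedge B_J \wedge \br{\bo}. $$
Commuting $B_J$ past $\bo \wedge \br{\bo}$ and applying the $n=1$ version yields
$$ \bo \wedge B_J \wedge \br{\bo} \simeq B_J \wedge \bigvee_{i \ge 1} \Sigma^{4i} \bo \wedge B_i \simeq \bigvee_{i \ge 1} \Sigma^{4i} \bo \wedge B_{J} \wedge B_i, $$
and reindexing the wedge over $I = (J, i)$ with $\abs{I}=n$ and $\norm{I} = \norm{J}+i$ completes the induction.

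The main obstacle is the base case, specifically the construction of Brown--Gitler maps into $\bo \wedge \bo$ realizing the weight decomposition of $A \mmod A(1)_*$. Once those maps exist, everything afterwards is formal manipulation of the smash product, and the inductive step is essentially bookkeeping on multi-indices.
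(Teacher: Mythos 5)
The paper does not prove this theorem; it is cited directly from Mahowald \cite{Mahowaldbo} and Milgram \cite{Milgram}, so there is no ``paper proof'' to compare against. That said, your reduction to the $n=1$ case and the inductive step via smash-product bookkeeping are correct and are the standard way to derive the multi-index form of the splitting from the fundamental $\bo \wedge \bo$ decomposition. The induction hinges on smash distributing over wedges and on the $n=1$ case
$$\bo \wedge \br{\bo} \simeq \bigvee_{i \ge 1} \Sigma^{4i}\, \bo \wedge B_i,$$
which you correctly extract from the split cofiber sequence $\bo \to \bo \wedge \bo \to \bo \wedge \br{\bo}$ (the unit $\bo \to \bo \wedge \bo$ is retracted by multiplication, so the cofiber is the wedge of the $i \ge 1$ summands).

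One small correction to the base-case sketch: you write $H_*(\bo \wedge \bo) \cong A\mmod A(1)_*$, but $A\mmod A(1)_*$ is $H_*(\bo)$, not $H_*(\bo \wedge \bo)$. The actual input to Mahowald--Milgram is the decomposition of $A\mmod A(1)_*$ as an $A(1)_*$-\emph{comodule} into $\bigoplus_i \Sigma^{4i}\ull{B_i}$ (see the algebraic analogue in equation (\ref{eq:algsplitting})); this produces the topological splitting of $\bo \wedge \bo$ via the $\bo$-module Adams spectral sequence, with the obstruction-theoretic work going into lifting that algebraic splitting. Since you explicitly treat the base case as a black box, this slip does not affect the logic, but if you intend to sketch the construction you should work with $H_*(\bo)$ as an $A(1)_*$-comodule rather than with $H_*(\bo \wedge \bo)$.
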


Mahowald and Milgram also determined the homotopy type of the wedge summands above.

\begin{thm}[Mahowald \cite{Mahowaldbo}, Milgram \cite{Milgram}]
	There are equivalences
	$$ \bo \wedge B_I \simeq b_I \vee HV_I $$
	where
	$$ b_I := 
	\begin{cases}
	\bo^{\bra{2\norm{I}-\alpha(I)}}, & \norm{I} \: \mr{even}, \\
	\bsp^{\bra{2\norm{I}-\alpha(I)-1}}, & \norm{I} \: \mr{odd}. \\
	\end{cases}
	$$
	and $V_I$ is a graded $\FF_2$-vector space of finite type.
\end{thm}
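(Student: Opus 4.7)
The strategy is to pass to algebra via the change-of-rings isomorphism
$$ \Ext_{A_*}^{*,*}(H_*(\bo \wedge B_I)) \cong \Ext_{A(1)_*}^{*,*}(H_*B_I), $$
arising from $H^*\bo \cong A\mmod A(1)$, and then to deduce the topological splitting from a splitting of $H_*B_I$ as an $A(1)_*$-comodule. The target decomposition on the comodule side is: a cofree $A(1)\mmod A(0)_*$-comodule (or $A(1)\mmod E(Q_1)_*$-comodule, in the odd case), direct-summed with a trivial $A(1)_*$-comodule. The cofree summand is precisely what gives rise to an Adams cover of $\bo$ or $\bsp$, while the trivial summand realizes as the $HV_I$ piece.

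I would proceed by induction on $n$. The base case $n = 1$ is the classical theorem of Mahowald and Milgram: one verifies that the subcomodule $H_*B_i \subset \FF_2[\zeta_1^2, \zeta_2, \ldots]$ splits as claimed, and that the bottom class of the cofree summand sits in degree $2i$ with Adams filtration $\alpha(i)$, giving rise to the Adams cover $\bo^{\bra{2i - \alpha(i)}}$ when $i$ is even and $\bsp^{\bra{2i - \alpha(i)-1}}$ when $i$ is odd. Lifting the algebraic splitting to spectra is standard: the obstructions in $\Ext^{1,*}_{A(1)_*}$ between the trivial and cofree summands vanish, so the splitting exists topologically.

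For the inductive step, I would smash the $n = 1$ splitting with $B_{i_2} \wedge \cdots \wedge B_{i_n}$ to get
$$ \bo \wedge B_I \simeq (b_{i_1} \wedge B_{i_2} \wedge \cdots \wedge B_{i_n}) \vee (HV_{i_1} \wedge B_{i_2} \wedge \cdots \wedge B_{i_n}). $$
The second summand is a wedge of suspensions of $H\FF_2$, contributing to $HV_I$. For the first, $b_{i_1}$ is an Adams cover of $\bo$ or $\bsp$, so one invokes the analogous decomposition of $\bo \wedge B_j$ and $\bsp \wedge B_j$ to iterate. The Adams filtration shifts accumulate additively to $\sum_l (2 i_l - \alpha(i_l)) = 2\norm{I} - \alpha(I)$, with a possible $-1$ correction depending on the parity of the final factor.

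The main obstacle I anticipate is the parity bookkeeping: one must verify that whether the cumulative cover lands on $\bo$ or $\bsp$ depends only on the parity of $\norm{I}$, and that the $-1$ correction occurs exactly in the odd case. On the comodule side, this amounts to tracking the $Q_1$-action on each generator $\zeta_1^{2 i_l}$ and confirming that the tensor product of cofree summands of mixed type (over $A(0)$ vs.\ over $E(Q_1)$) collapses to a single cofree summand whose type is determined by the parity of $\norm{I}$. This combinatorial check is essentially the heart of the Mahowald--Milgram argument, extended to multi-indices.
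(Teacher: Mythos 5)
The paper does not prove this theorem; it is cited to Mahowald \cite{Mahowaldbo} and Milgram \cite{Milgram} and used as a known input, so there is no internal proof to compare against. That said, your sketch has two substantive problems worth flagging.

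First, the inductive step as written does not close. After smashing the $n=1$ splitting with $B_{i_2}\wedge\cdots\wedge B_{i_n}$ you are left to analyze $b_{i_1}\wedge B_{i_2}\wedge\cdots\wedge B_{i_n}$, and you propose to ``invoke the analogous decomposition of $\bo\wedge B_j$ and $\bsp\wedge B_j$.'' But $b_{i_1}$ is an Adams \emph{cover} of $\bo$ or $\bsp$, not $\bo$ or $\bsp$ itself, so the $n=1$ theorem says nothing directly about $b_{i_1}\wedge B_j$. You could try to control it via the cofiber sequence $b_{i_1}\to\bo\to(\text{wedge of }H\FF_2\text{'s})$ smashed with $B_{I'}$, but that only exhibits $b_{i_1}\wedge B_{I'}$ as an extension, and you would still owe an argument that the extension splits and that the resulting cover indices add correctly. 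The way this is actually handled in Mahowald's and Milgram's arguments is to work in the stable module category of $A(1)$-modules, where one shows $H^*B_I$ is stably equivalent to a syzygy $\Omega^{k}\FF_2$ (or $\Omega^{k}J$, with $J$ the joker) for $k = 2\norm{I}-\alpha(I)$ (resp.\ $k-1$); the tensor--triangulated structure of that category, in which $\Omega$ is invertible and $J\otimes J\simeq\FF_2$ stably, is precisely what makes the induction and the parity bookkeeping go through.

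Second, the algebraic characterization of the non-$HV_I$ summand as a ``cofree $A(1)\mmod A(0)_*$-comodule'' is not right. An extended comodule of that form has $\Ext_{A(1)_*}$ equal to a suspension of $\Ext_{A(0)_*}(\FF_2)=\FF_2[v_0]$, which is the $E_2$-term of $\bo\wedge H\ZZ$, not of an Adams cover of $\bo$. What characterizes $b_I$ on the algebraic side is that its cohomology is, up to free summands, $A\otimes_{A(1)}N$ where $N$ is stably isomorphic to $\Omega^{2\norm{I}-\alpha(I)}\FF_2$ (even case) or $\Omega^{2\norm{I}-\alpha(I)-1}J$ (odd case), whose $\Ext_{A(1)}$ is the appropriate filtration shift of $\Ext_{A(1)}(\FF_2)$ or $\Ext_{A(1)}(J)$. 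The free-summand part of $H^*B_I$ over $A(1)$ is what gives $HV_I$. Your gesture toward lifting the algebraic splitting topologically is reasonable in outline (maps to Eilenberg--MacLane spectra are determined by cohomology, and one can work in $\bo$-modules to split off $b_I$), but it should be tied to the correct algebraic input.
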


As indicated in the introduction, we will define 
\begin{align*}
V^{n,*} &:= \bigoplus_{\abs{I} = n} \Si^{4\norm{I}} V_I, \\
\mc{C}^{n,*} &:= \bigoplus_{\abs{I} = n} \Si^{4\norm{I}} \pi_* b_I.
\end{align*}
The subspace $V^{*,*} \subseteq \E{bo}{}_1^{*,*} $ is a subcomplex with respect to the $d_1$ differential \cite{LM}, and we endow $\mc{C}^{*,*}$ with the structure of the quotient complex.
The direct sum decomposition
$$ \E{bo}{}^{*,*}_1 = \mc{C}^{*,*} \oplus V^{*,*} $$
results in a decomposition of elements $x \in \E{bo}{}_1^{*,*}$ into \emph{good} and \emph{evil} components:
$$ x = x_{good} + x_{evil}. $$ 
We shall refer to the $d_1$ differential, restricted to $V^{*,*}$, as $d^{evil}_1$.  The induced differential on $\mc{C}^{*,*}$ will be denoted $d_1^{good}$. 

Since the spectra $b_I$ are Adams covers, it is useful to understand $\pi_* b_I$, and the associated differential $d_1^{good}$ in the complex $\mc{C}^{*,*}$, in terms of the Adams spectral sequence for $\pi_* b_I$.
To this end, we will denote elements of $\mc{C}^{*,*}$
using notation
$ x(I)$
for $x$ an element of $\pi_* b_I$.  
We denote the generators of $\E{ass}{}_2^{*,*}(b_I)$
using notation
$$ v_0^k w^m h_1^a $$
where $v_0 = [2]$, $w$ is the formal expression 
$$ w := v_1^2/v_0^2 $$
(here $v_1^4$ detects the Bott element in $\pi_8(\bo)$) and $h_1 = [\eta]$.
For example, in the case of $b_{(2)} = \bo^{\bra{3}}$, generators of $\E{ass}{}_2^{*,*}(b_{2})$ are labeled in Figure~\ref{fig:bIexample}.
\begin{figure}
\centering
\includegraphics[width=0.7\linewidth]{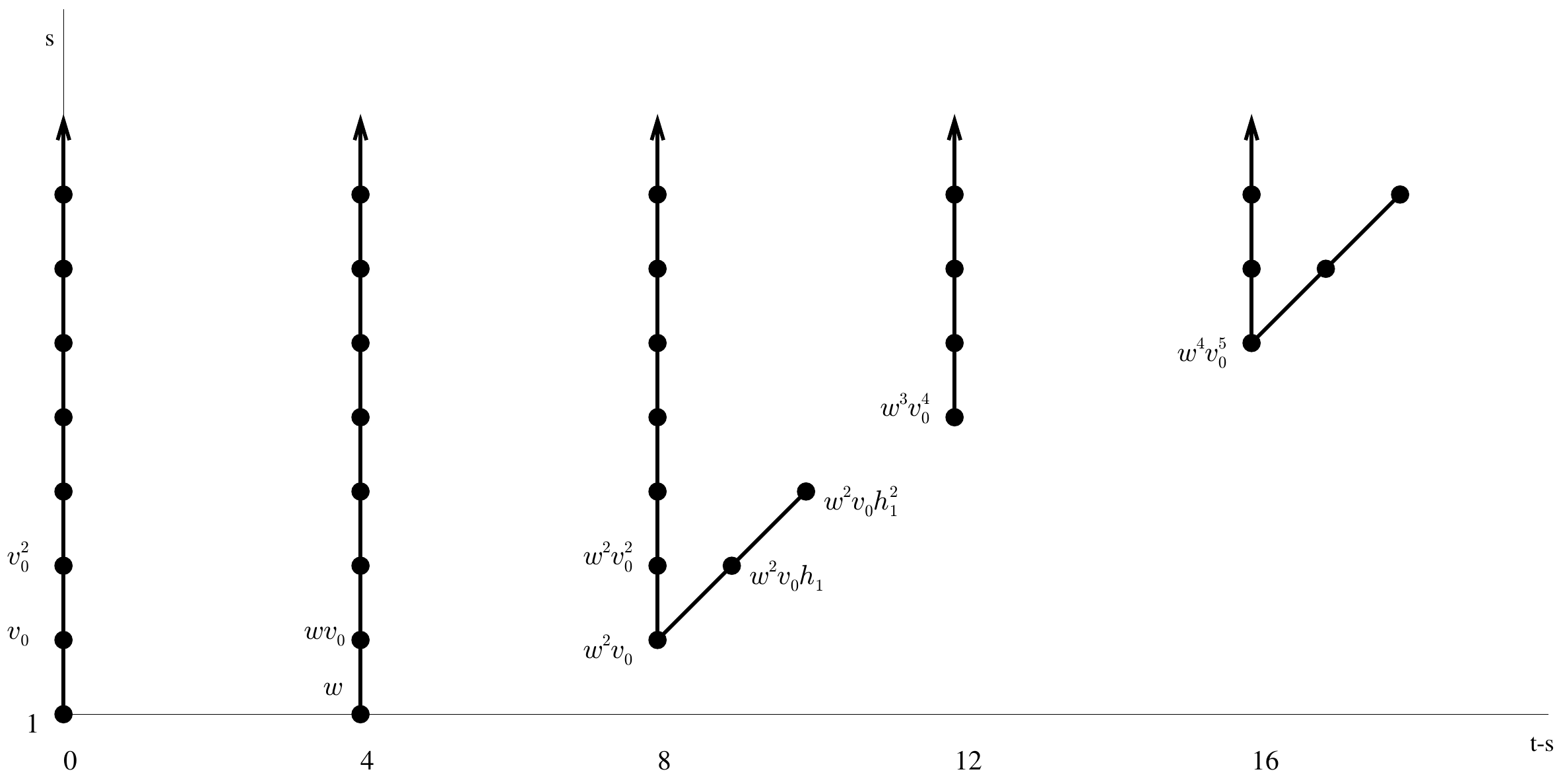}
\caption{An example of the labeling system for generators in the Adams spectral sequence for $b_I$ ($I=(2)$).}
\label{fig:bIexample}
\end{figure}
With respect to this notation, up to terms of higher Adams filtration, the differential in the complex $\mc{C}^{*,*}$ is determined by the following theorem.

\begin{thm}[Mahowald \cite{Mahowaldbo}, Lellmann-Mahowald \cite{LM}]\label{thm:d1}
	The differential $d_1^{good}$ in the complex $\mc{C}^{*,*}$ is determined modulo elements of higher Adams filtration by the formula
	\begin{multline*}
	 [d_1^{good}(w^m(i_1, \ldots, i_n))] = \sum_{m' + m'' = m} v_0^{\alpha(m')+\alpha(m'')-\alpha(m)} w^{m'}(m'', i_1, \ldots, i_n) \\ + 
	\sum_k \sum_{i_k'+i_k'' = i_k} v_0^{\alpha(i_k') + \alpha(i_k'')-\alpha(i_k)} w^m(i_1, \ldots, i_k', i_k'', \ldots, i_n) \end{multline*}
	and the fact that the differential is $v_0$ and $h_1$-linear.  (In the above formula, we are saying that for each multi-index $J$, the $J$-component of the differential $d_1^{good}$ is detected by the indicated element in the Adams spectral sequence for $b_J$.)
\end{thm}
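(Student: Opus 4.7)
The plan is to unpack the $d_1$ differential of the $\bo$-ASS as the cobar-style differential on $\pi_*(\bo \wedge \br{\bo}^{\wedge n})$ and to extract its $\mc{C}^{*,*}$-component using the Mahowald--Milgram splitting of Theorem~\ref{thm:splitting}. Concretely, $d_1$ is the alternating sum of $n+1$ maps $\bo \wedge \br{\bo}^{\wedge n} \to \bo \wedge \br{\bo}^{\wedge (n+1)}$: the leftmost one is the right unit $\eta_R \wedge 1^{\wedge n}$, and the remaining $n$ maps apply the $\br{\bo}$-coproduct at each interior position. Under the splitting, the interior coface at position $k$ restricts on the summand indexed by $I$ to a coproduct-like map
$$B_{i_k} \longrightarrow \bigvee_{i_k' + i_k'' = i_k} \Sigma^{4 i_k''} B_{i_k'} \wedge B_{i_k''},$$
reflecting the compatibility of the Brown--Gitler filtration with the multiplication on $H\ZZ$. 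The leftmost coface, combined with the splitting $\bo \wedge \br{\bo} \simeq \bigvee_{j>0} \Sigma^{4j}\,\bo \wedge B_j$, inserts a new leading Brown--Gitler factor and mixes it with the $w$-power structure of $\pi_* b_I$; this mixing is what produces the first sum of the theorem, redistributing $w^m$ as $w^{m'}$ together with a new leading multi-index entry $m''$.

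Once the shape of each term is identified, the attached power of $v_0$ should be computed as the Adams filtration of the good-to-good component of the corresponding map. For the interior cofaces, one expects this component to be detected in the Adams $E_2$-term of $b_{(i_1, \ldots, i_k', i_k'', \ldots, i_n)}$ by a class whose coefficient is the binomial coefficient $\binom{i_k}{i_k'}$; by Kummer's theorem this coefficient has $2$-adic valuation $\alpha(i_k') + \alpha(i_k'') - \alpha(i_k)$, which is precisely the exponent of $v_0 = [2]$ in the second sum. An analogous computation with $\binom{m}{m'}$, combined with the $w$-action on $\pi_* b_{(m'', i_1, \ldots, i_n)}$, accounts for the leftmost coface and yields the first sum.

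The principal obstacle is cleanly isolating the $b_J$-summand of each coproduct map modulo the $HV_J$-summands and modulo terms of higher Adams filtration---this is precisely the reason the formula is stated for $[\,d_1^{good}(\cdot)\,]$ rather than for $d_1^{good}$ on the nose. Executing this step relies on the explicit Mahowald--Milgram identifications of the $b_I$ summands inside $\bo \wedge B_I$, together with careful bookkeeping of how the generators $v_0$, $w$, $h_1$ of $\pi_* b_I$ correspond across the relevant multi-indices. Finally, the $v_0$- and $h_1$-linearity of $d_1^{good}$ is essentially formal: both $v_0 = [2]$ and $h_1 = [\eta]$ are images of permanent cycles from $\pi_*^s$ under the unit $S \to \bo \wedge \br{\bo}^{\wedge n}$, and $d_1$ is $\pi_*^s$-linear by naturality of the Adams cosimplicial construction, so these elements pass through the differential unchanged.
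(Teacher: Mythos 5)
Your approach is genuinely different from the paper's and, as you yourself flag, it remains a plan rather than a proof. The paper avoids nearly all of the bookkeeping you describe by comparing $\mc{C}^{*,*}$ to the $v_0$-localized cobar complex of $BP_*BP$ via the zig-zag of ring spectra $BP[1/2] \rightarrow E(1)[1/2] \leftarrow \bo$. The key observations are: (i) the map $g \colon \mc{C}^{*,*} \rightarrow v_0^{-1}\E{E(1)}{}_1^{*,*}$ is injective in internal degrees $t \equiv 0 \bmod 4$, so it suffices to verify the formula after mapping there; (ii) the generators $e_i$ of $\pi_0(b_{(i)})$ lift to powers of the primitive $\tau := t_1^2 + v_1 t_1 \in BP_*BP$, so $w^m(i_1,\ldots,i_n)$ lifts to $\frac{v_1^{2m}}{2^{2m}}[\tau^{i_1}|\cdots|\tau^{i_n}]$; and (iii) the cobar differential on that element is then a two-line computation using $\eta_R(v_1^2/4) = v_1^2/4 + \tau$, $\psi(\tau) = \tau\otimes 1 + 1\otimes\tau$, the binomial theorem, and $\nu_2\binom{a}{b} = \alpha(b)+\alpha(a-b)-\alpha(a)$. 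Both your first sum (from $\eta_R$) and your second sum (from $\psi$) then appear simultaneously, with the $v_0$-powers manifest.

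The places where your outline has genuine gaps are precisely the places this comparison resolves. You assert that the interior coface restricts to a coproduct-like map on Brown--Gitler spectra and that its good component is detected by $\binom{i_k}{i_k'}$ times a generator, but this is not established: the $\br{\bo}$-coproduct is not literally a map $B_{i_k} \to \bigvee \Sigma^{4i_k''} B_{i_k'}\wedge B_{i_k''}$, and getting from the actual coproduct to the indicated class in $\E{ass}{}_2(b_J)$, \emph{modulo} $HV_J$ and \emph{modulo} higher filtration, is exactly the ``careful bookkeeping'' you name as the principal obstacle and do not carry out. Your account of the leftmost coface producing the first sum by ``mixing with the $w$-power structure'' is similarly heuristic; in the paper this step is a single application of $\eta_R$ to $v_1^{2m}/2^{2m}$. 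If you want to salvage the direct approach, the analogous cleanly-provable statement is the algebraic version, Proposition~\ref{prop:d1alg}, where the evil summand sits in $\Ext^{0,*}$ and the comparison with the $A_*$-cobar differential is much more tractable; the topological statement is then deduced, or one should simply use the $BP$/$E(1)$ comparison as the paper does.
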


\begin{rmk}
	Lellmann and Mahowald actually compute the differential $d_1^{good}$ more precisely (i.e., not up to ambiguity of higher Adams filtration) but our presentation in Theorem~\ref{thm:d1} is precise enough to contain all of the relevant features of this differential.
\end{rmk}

We present a new proof of Theorem~\ref{thm:d1} which we think is much simpler than the approach of \cite{LM} (based on Adams operations and numerical polynomials).

\begin{proof}[Proof of Theorem~\ref{thm:d1}]
Consider the zig-zag of complexes:
$$ C^*_{BP_*BP}(BP_*[1/2]) = v_0^{-1}\E{BP}{}^{*,*}_1 \xrightarrow{f} v_0^{-1} \E{E(1)}{}^{*,*}_1 \xleftarrow{g} \mc{C}^{*,*} $$
(where $C^*_{BP_*BP}(BP_*[1/2])$ denotes the normalized cobar complex for $BP_*BP$ --- with $2$ inverted) induced by the zig-zag of ring spectra
$$ BP [1/2] \rightarrow E(1)[1/2] \leftarrow \bo. $$
Since the map $g$ is an injection in internal degrees $t \equiv 0 \mod 4$, it will suffice to lift the image of $w^m(I)$ to the localized cobar complex for $BP_*BP$, and verify the formula for the differential there. 
Define
$$ \tau := t_1^2 + v_1t_1 \in BP_*BP; $$
this primitive element detects $\nu$ in the Adams-Novikov spectral sequence.
Let $e_i$ denote the generator of $\pi_0(b_{(i)})$.  Under the zig-zag
$$ BP_*BP \xrightarrow{f} E(1)_*E(1) \xleftarrow{g} \mc{C}^{1,*} $$
we have
$$ [f(\tau^i)] = [g(e_i)]. $$
This follows from \cite[Sec.~3.4]{BOSS} (where $e_i$ is instead denoted ``$e_{4i}$''), together with the fact that $[\tau] = [t_1^2]$ (since $v_1$ has Adams filtration 1).
It follows that we have  
$$ \left[f\left(\frac{v_1^{2m}}{2^{2m}} [\tau^{i_1}| \cdots | \tau^{i_n}]\right)\right]
= [g(w^m(i_1, \ldots, i_n))]. $$
Observe that in the Hopf algebroid $(BP_*[1/2], BP_*BP[1/2])$ we have the formulas:
\begin{align*}
\eta_R\left(\frac{v_1^2}{4}\right) = \frac{v_1^2}{4} + \tau, \\
\psi(\tau) = \tau \otimes 1 + 1 \otimes \tau.
\end{align*}
By the binomial theorem, together with the fact that
$$ \nu_2 \binom{a}{b} = \alpha(b)+\alpha(a-b)-\alpha(a) $$
we have
\begin{align*}
\left[\eta_R\left(\frac{v_1^{2m}}{2^{2m}}\right)\right] &= \sum_{m = m' + m''}v_0^{\alpha(m')+\alpha(m'') - \alpha(m)}\frac{v_1^{2m'}}{2^{2m'}}\tau^{m''}, \\
[\psi(\tau^i)] &= \sum_{i'+i'' = i} v_0^{\alpha(i')+\alpha(i'') - \alpha(i)}\tau^{i'} \otimes \tau^{i''}.
\end{align*}
We deduce 
\begin{multline*}
	 \left[d\left(\frac{v_1^{2m}}{2^{2m}} [\tau^{i_1}| \cdots | \tau^{i_n}] \right)\right] = \sum_{m' + m'' = m} v_0^{\alpha(m')+\alpha(m'')-\alpha(m)} \frac{v_1^{2m'}}{v_0^{2m'}} [\tau^{m''}| \tau^{i_1}| \cdots | \tau^{i_n}] \\ + 
	\sum_k \sum_{i_k'+i_k'' = i_k} v_0^{\alpha(i_k') + \alpha(i_k'')-\alpha(i_k)} \frac{v_1^{2m}}{v_0^{2m}}[\tau^{i_1}|\cdots|\tau^{i_k'}|\tau^{i_k''}| \cdots |\tau^{i_n}]. \end{multline*}
The result follows.
\end{proof}


\section{The weight spectral sequence}\label{sec:wss}

In order to compute $H^{*,*}(\mc{C})$, Lellmann and Mahowald \cite{LM} introduced a \emph{weight spectral sequence}.  This section is a summary of their work.  Endow $\mc{C}^{*,*}$ with a decreasing filtration by weight ({$wt$}), where we define
$$ wt(x(I)) = \norm{I}. $$
We see from Theorem~\ref{thm:d1} that $d_1^{good}$ does not decrease weight.  There is a resulting spectral sequence
$$ \E{wss}{}_0^{n,*,w} := [\mc{C}^{n,*}]_{wt=w} \Rightarrow H^{n,*}(\mc{C}) $$
with differentials
$$ d_r^{wss}: \E{wss}{}_r^{n,*,w} \rightarrow \E{wss}{}_r^{n+1, *, w+r}. $$
From Theorem~\ref{thm:d1}, we see that $d_0^{wss}$ is given (modulo terms of higher Adams filtration) by
\begin{equation}\label{eq:d0wss} 
[d_0^{wss}(x(i_1, \ldots, i_n))] = \sum_k \sum_{i_k'+i_k'' = i_k} v_0^{\alpha(i_k') + \alpha(i_k'')-\alpha(i_k)} x(i_1, \ldots, i_k', i_k'', \ldots, i_n). 
\end{equation}
Even with this explicit differential, the calculation of $\E{wss}{}_1$ is not immediate.  

To this end, paraphrasing \cite{LM}, we will introduce an \emph{Adams filtration spectral sequence} $\{\E{AF}{}_r\}_{r \ge 0}$ to compute $\E{wss}{}_1$.  We will be able to compute $\E{AF}{}_1$ directly, but the computation of $\E{AF}{}_2$ will be clarified by the use a \emph{lexographical filtration spectral sequence} $\{ \E{LF}{}_\alpha\}$ (technically speaking, this spectral sequence is indexed on a totally ordered set).  The situation is summarized by the following:
$$ \E{wss}{}_0 = \E{AF}{}_0 \rightsquigarrow \E{AF}{}_1 = \E{LF}{}_0\Rightarrow \E{AF}{}_2 = \E{wss}{}_1 \Rightarrow H(\mc{C}). $$

\subsection*{The Adams filtration spectral sequence}

Endow $\E{wss}{}_0^{*,*,*}$ with a decreasing filtration by Adams filtration (AF), where we define
$$ AF(x(I)) := AF(x). $$
There is a resulting \emph{Adams filtration spectral sequence} with
$$ \E{AF}{}_0^{n,*,w,a} := [\E{wss}{}_0^{n,*,w}]_{AF = a} \Rightarrow \E{wss}{}_1^{n,*,w} $$
with differentials
$$ d_r^{AF}: \E{AF}{}_r^{n,*,w,a} \rightarrow \E{AF}{}_r^{n+1, *, w, a+r}. $$

\begin{prop}[Lellman-Mahowald \cite{LM}]
	An additive basis for $\E{AF}{}_1$ is given by elements
	$$ x h_2^{k_2} h_3^{k_3}h_4^{k_4} \ldots $$
	indexed by $K = (k_2, k_3, \ldots)$, detected by $x(I[K])$ where 
	$$ I[K] = (\underbrace{1, \ldots ,1}_{k_2}, \underbrace{2, \ldots ,2}_{k_3}, \underbrace{4, \ldots ,4}_{k_4}, \ldots). $$
	Here, the elements $x$ run through a basis of $\E{ass}{}_2(b_{I[K]})$.
\end{prop}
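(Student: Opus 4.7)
The plan is to decompose the complex by a conserved invariant, identify an algebra structure that produces the proposed cocycles, and then compute the cohomology of each component by combinatorial means.

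First I would observe that $d_0^{AF}$ preserves the multiset $M$ of binary digits (i.e.\ powers of $2$) occurring across all entries of a multi-index, since a carries-free splitting $i_k = i_k'+i_k''$ with $\alpha(i_k')+\alpha(i_k'')=\alpha(i_k)$ simply partitions the binary support of $i_k$. Consequently $(\mc{C}^{*,*},d_0^{AF})$ decomposes as $\bigoplus_M C_M$ over such multisets. For every $I\in C_M$, both $\norm{I}=\sum_{2^a\in M}2^a$ and $\alpha(I)=|M|$ depend only on $M$, so the Adams cover $b_I$ is constant along $C_M$, equal to $b_{I[K(M)]}$. Thus $d_0^{AF}|_{C_M}$ is $\E{ass}{}_2(b_{I[K(M)]})$-linear, and it suffices to compute the combinatorial cohomology of each $C_M$.

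Next I would equip $\mc{C}^{*,*}$ with the associative product $x(I)\cdot y(J):=(xy)(I\sqcup J)$ coming from concatenation of multi-indices, with the coefficient pairing induced by $b_I\smsh b_J\to b_{I\sqcup J}$. The explicit formula~(\ref{eq:d0wss}) shows that $d_0^{AF}$ acts as a derivation for this product, since each term of the outer sum splits exactly one entry of $I\sqcup J$, occurring either in the $I$-part or in the $J$-part. Hence $\E{AF}{}_1$ inherits a ring structure. The multi-index $(2^a)$ admits no carries-free splitting, so it represents a cocycle which I denote $h_{a+2}$. By the derivation property, every product $xh_2^{k_2}h_3^{k_3}\cdots$ with $x\in\E{ass}{}_2(b_{I[K]})$ is a cocycle represented by $x(I[K])$, placing the asserted basis elements in $\E{AF}{}_1$.

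The main step, and the principal obstacle, is to verify that $H^*(C_M,d_0^{AF})$ is one-dimensional over $\E{ass}{}_2(b_{I[K(M)]})$, concentrated at the fully-atomic multi-index $I[K(M)]$. A basis of $C_M$ is given by ordered partitions of $M$ into non-empty blocks of pairwise disjoint binary supports, with $d_0^{AF}$ equal to the sum over ways of splitting one block into two ordered halves. When $M$ has all distinct atoms, $C_M$ is (up to relabeling) the reduced cobar complex of the exterior coalgebra on $M$ restricted to total weight $M$, whose cohomology is one-dimensional in top degree by Koszul duality. For general $M$ with repeated atoms, I would either pass to a ``labeled'' exterior-coalgebra model and then take symmetric-group coinvariants, or construct an explicit contracting homotopy that migrates atoms toward the canonical order $I[K(M)]$ one inversion at a time; verifying the homotopy identity $ds+sd=1-\pi_{I[K(M)]}$ (or the compatibility of invariants with the differential) is the crux of the argument, and combined with the earlier steps yields the basis claimed in the proposition.
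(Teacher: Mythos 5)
Your overall strategy --- decompose by the conserved multiset $M$ of binary digits, observe that the coefficients $\E{ass}{}_2(b_{I[K]})$ are constant along $d_0^{AF}$, and identify the combinatorics with an exterior-coalgebra cobar complex --- is in the same spirit as the paper's. The paper's proof considers the subcomplex $\mc{B}^*$ spanned by the $1(I)$ and identifies it \emph{globally} with $\bar{C}^*(\FF_2, E, \FF_2)$ for $E = \Lambda_{\FF_2}[e_0, e_1, \ldots]$ via $1(i_1,\ldots,i_n) \mapsto [e(i_1)|\cdots|e(i_n)]$, so that $H^*(\mc{B}) = \Ext_E(\FF_2,\FF_2) = \FF_2[h_2, h_3, \ldots]$ in one stroke; the claim about coefficients then follows from $b_I \simeq b_{I'}$ for carry-free splittings.

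The genuine gap is your handling of repeated atoms, which you correctly flag as the crux but do not close, and the proposed repairs do not work as stated. Because $E$ is exterior, $e_i^2 = 0$, so the multidegree-$M$ piece of $\bar{C}^*(\FF_2, E, \FF_2)$ already \emph{is} your $C_M$: there are no cobar cochains placing a repeated atom inside a single tensor slot, hence nothing extra to quotient away. If you instead build a ``labeled'' coalgebra $\Lambda[e^{(1)}, e^{(2)}]$ and pass to $\Sigma_2$-coinvariants, you introduce the spurious cochain $[e^{(1)}e^{(2)}]$ (which corresponds to no multi-index, since no integer has a repeated binary digit), and over $\FF_2$ the coinvariant differential sends $[e^{(1)}e^{(2)}] \mapsto [e^{(1)}|e^{(2)}] + [e^{(2)}|e^{(1)}] \equiv 0$, producing a spurious class in cohomological degree one; the coinvariant complex does not compute $H^*(C_M)$. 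The contracting-homotopy migration might be made to work, but it is precisely the verification you defer. The clean fix is to not treat each $C_M$ separately: all of $\mc{B}^*$ is one cobar complex, its cohomology is a polynomial algebra by the standard Koszul computation for a primitively generated exterior algebra, and the multidegree-$M$ component of $\FF_2[h_2, h_3, \ldots]$ is visibly one-dimensional, spanned by the single monomial $h_2^{k_2}h_3^{k_3}\cdots$, regardless of whether $M$ has repeats.
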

	
\begin{proof}
By (\ref{eq:d0wss}), 
\begin{equation}\label{eq:d0AF} 
[d_0^{AF}(x(i_1, \ldots, i_n))] = \sum_k \sum_{\substack{i_k'+i_k'' = i_k \\ \alpha(i_k')+\alpha(i_k'') = \alpha(i_k)}} x(i_1, \ldots, i_k', i_k'', \ldots, i_n). 
\end{equation}
	The essential observation is that if $i = i' + i''$ then
	$$ \alpha(i) = \alpha{(i')} + \alpha{(i'')}, $$
	if and only if $i'$ and $i''$ have dyadic expansions with $1$'s in complementary places. 
	Consider the subcomplex $\mc{B}^n$ of $\E{AF}{}_0^{n,*,*,*}$ spanned by the terms $1(I)$.  There is an isomorphism
	$$ \mc{B}^* \cong \bar{C}^{*}(\FF_2, E, \FF_2) $$
	(the normalized cobar complex for the primitively generated exterior algebra $E = \Lambda_{\FF_2}[e_0, e_1, \ldots]$) given by 
	$$ 1(i_1, \ldots, i_n) \mapsto [e(i_1)|\ldots|e(i_n)]. $$
	Here, for $i$ expressed dyadically as 
	$$ i = \epsilon_0 + \epsilon_1 2 + \epsilon_2 4 + \cdots $$
	(with $\epsilon_j \in \{0,1\}$), the element $e(i)$ denotes the element
	$$ e(i) := e_0^{\epsilon_0} e_1^{\epsilon_1} \cdots \in E. $$
	We deduce
	\begin{align*} H^*(\mc{B}) \cong & \Ext^*_{E}(\FF_2, \FF_2) \cong
		 \FF_2[h_2, h_3, \cdots ]
    \end{align*}
	(where $h_i$ is represented by the cocycle $[e_{i-2}]$ in the cobar complex).  The structure of $\E{AF}{}_1$ follows from the fact that for 
	$$ I = (i_1, \ldots, i_n) $$ 
	and
	$$ I' = (i_1, \ldots, i_k', i''_k, \ldots i_n) $$
	with $i_k = i'_k + i''_k$ and
	$$ \alpha(i_k) = \alpha(i_k')+\alpha(i_k'') $$
	we have
	$$ b_I \simeq b_{I'}. $$
\end{proof}

\begin{rmk}
	Our naming of the polynomial generators $h_2, h_3, \ldots$ of $\E{AF}{}_1$ may seem bizarre, and differs from \cite{LM}, where these generators are named $h_0, h_1, \ldots$.  Our reason for this different indexing is that in our notation, $h_i$ will correspond to the element of the same name in the classical Adams spectral sequence for the sphere.
\end{rmk}

\subsection*{The lexigraphical filtration spectral sequence}

In order to compute $\E{AF}{}_2$, we observe that from (\ref{eq:d0wss}) we have
$$
d_1^{AF}h_i = 
\begin{cases}
v_0 h_{i-1}^2, & i \ge 3, \\
0, & i = 2.
\end{cases}
$$
The Adams filtration spectral sequence, like all the spectral sequences we are employing, is multiplicative, and hence
$$ d_1^{AF}(xh_2^{k_2}h_3^{k_3}\cdots h_l^{k_l}) = \sum_{\substack{3 \le j \le l \\ k_j \: \mr{odd}}} v_0xh_2^{k_2} \cdots h_{j-1}^{k_{j-1}+2} h_j^{k_j-1} \cdots h_l^{k_l}. $$
For the purposes of analyzing the resulting cohomology, it is useful to order the monomials 
$$ h_2^{k_2} h_3^{k_3} h_4^{k_4} \cdots $$
by left lexigraphical ordering on the sequence 
$$
K = (k_2, k_3, k_4, \ldots ).
$$
In this filtration, we have
$$ (k_2, k_3, k_4, \ldots) < (k'_2, k'_3, k'_4, \ldots) $$
if $k_2 < k'_2$, or $k_2 = k'_2$ and $k_3 < k_3'$, or $k_2 = k_2'$ and $k_3=k_3'$ and $k_4 < k'_4$, etc.

We will call the resulting filtration (indexed by the ordinal $\omega^\omega$) \emph{lexigraphical filtration} (LF).  
The filtration is multiplicative, and the differential $d_2^{AF}$ increases lexigraphical filtration.  Amusingly, one way to organize the resulting cohomology is via the $\omega^\omega$-indexed spectral sequence based on this filtration (c.f. \cite{Hu}, \cite{Matschke}):
$$ \E{LF}{}_\alpha \Rightarrow \E{AF}{}_2. $$
The differentials in this spectral sequence (which for expediency of notation we do not index) are
\begin{equation}\label{eq:dLF} d^{LF}(x h_2^{k_2} h_3^{k_3} \cdots h_l^{k_l}) = v_0 x h_2^{k_2} h_3^{k_3} \cdots h_{l-1}^{k_{l-1}+2}h_{l}^{k_l-1}, \quad k_l \: \mr{odd}.
\end{equation}

\begin{prop}[Lellman-Mahowald \cite{LM}, \cite{Mahowaldbo}]\label{prop:E2AF}
The Adams filtration spectral sequence term $\E{AF}{}_2$ has a basis given by elements
$$ x h_2^\epsilon, \qquad \epsilon = 0,1, $$
and
$$ y h_2^{k_2} h_3^{k_3} \cdots h_{l}^{k_l}, \qquad k_l \ge 2, $$
where $x$ runs through a basis of 
$$
\begin{cases} \E{ass}{}_2^{*,*}(\bo), & \epsilon = 0, \\ \E{ass}{}_2^{*,*}(\bsp), & \epsilon = 1, \end{cases}
$$ and $y$ runs through a basis of
$$ \E{ass}{}^{0,*}(b_{I[k_2, \ldots, k_l]}). $$
\end{prop}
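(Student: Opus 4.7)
The plan is to compute $\E{AF}{}_2$ as the cohomology of $d^{LF}$ in the $\omega^\omega$-indexed LF spectral sequence, whose differential is given by (\ref{eq:dLF}). I will classify $d^{LF}$-cocycles and coboundaries monomial-by-monomial, then match the surviving classes to the two families in the statement.

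Inspecting (\ref{eq:dLF}), a monomial $xh_2^{k_2}h_3^{k_3}\cdots h_l^{k_l}$ (with $l$ the largest index appearing) is a $d^{LF}$-cocycle precisely when either $l\le 2$, or $l\ge 3$ and $k_l$ is even. Moreover, every element of the image of $d^{LF}$ is a $v_0$-multiple whose top $h$-exponent is at least $2$: a preimage with $k_l\ge 3$ odd yields an image with top exponent $k_l-1\ge 2$, while a preimage with $k_l=1$ yields an image whose top is at position $l-1$ with exponent $k_{l-1}+2\ge 2$. This already extracts the first family: for $\epsilon\in\{0,1\}$ the monomial $xh_2^\epsilon$ is a cocycle (no $h_j$ with $j\ge 3$ appears) and cannot be a coboundary (every coboundary has top exponent $\ge 2$), so it survives freely with $x$ ranging over a basis of $\E{ass}{}_2(b_{I[\epsilon]})$. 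A direct check gives $b_{I[0]}=\bo$ and $b_{I[1]}=\bsp$, as required.

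For the second family, fix $K=(k_2,\ldots,k_l)$ with $k_l\ge 2$. A case-(B) preimage of $yh_K$ is $y'h_2^{k_2}\cdots h_l^{k_l-2}h_{l+1}\in\E{ass}{}_2(b_{I[K']})$ for $K'=(k_2,\ldots,k_l-2,1)$. A direct computation yields $\norm{I[K']}=\norm{I[K]}$ and $\alpha(I[K'])=\alpha(I[K])-1$, so $b_{I[K']}$ is equivalent, via the identifications used in the proof of the structure of $\E{AF}{}_1$, to the one-fold Adams cover $b_{I[K]}^{\bra{1}}$. Hence the $h_K$-component of the coboundary image equals $v_0\cdot\iota_*\E{ass}{}_2(b_{I[K]}^{\bra{1}})$, where $\iota\colon b_{I[K]}^{\bra{1}}\to b_{I[K]}$ is the Adams cover map. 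Iterating across the transfinite LF filtration then leaves exactly $\E{ass}{}^{0,*}(b_{I[K]})$, which is the second family in the proposition.

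The main obstacle is this last identification. A priori, the quotient $\E{ass}{}_2(b_{I[K]})/v_0\cdot\iota_*\E{ass}{}_2(b_{I[K]}^{\bra{1}})$ could retain positive-Adams-filtration classes, including $v_0$-torsion classes like those involving $h_1$, or classes outside the image of $\iota_*$. Pinning this quotient down to exactly $\E{ass}{}^{0,*}(b_{I[K]})$ requires a careful analysis of $\iota_*$ on $E_2$-pages of the Adams covers of $\bo$ and $\bsp$, together with the action of higher-ordinal $d^{LF}$-differentials in the $\omega^\omega$-indexed LF spectral sequence. The key structural observation to exploit is that every LF differential factors through multiplication by $v_0$, and every $b_{I[K]}$ is itself an Adams cover of $\bo$ or $\bsp$, giving the $v_0$-action a uniform form that systematically exhausts the positive-filtration classes.
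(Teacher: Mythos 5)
Your overall approach---computing $\E{AF}{}_2$ as the cohomology of the lexicographical filtration spectral sequence, separating out the $xh_2^\epsilon$ family by degree reasons, and reducing the second family to the Adams cover identification $b_{I[K']}=b_{I[K]}^{\bra{1}}$---is the same as the paper's. The cocycle classification and the extraction of the first family are correct, and your computation that $\norm{I[K']}=\norm{I[K]}$ while $\alpha(I[K'])=\alpha(I[K])-1$ is exactly the paper's observation.

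However, there is a genuine error in the way you formulate the coboundary. You claim the $h_K$-component of the image equals $v_0\cdot\iota_*\E{ass}{}_2(b_{I[K]}^{\bra{1}})$ where $\iota\colon b_{I[K]}^{\bra{1}}\to b_{I[K]}$ is the Adams cover map. But by the paper's conventions these covers come from a \emph{minimal} Adams resolution, so $\iota$ has Adams filtration $\ge 1$; the induced map $\iota_*\colon H_*(b^{\bra{1}})\to H_*(b)$ is zero (the next map $H_*(b)\to H_*(K_0)$ in the defining cofiber sequence is injective by minimality), and hence $\iota_*$ is zero on Adams $E_2$-pages. As written, your claimed coboundary is $v_0\cdot 0 = 0$, and the argument collapses. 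The correct statement underlying the paper's notation ``$x\mapsto v_0 x$'' is that the differential is identified with the \emph{degree-shifting isomorphism} $\E{ass}{}_2^{s,t}(b^{\bra{1}})\xrightarrow{\sim}\E{ass}{}_2^{s+1,t+1}(b)$ coming from the connecting homomorphism of the cofiber sequence $b^{\bra{1}}\to b\to K_0$ in the minimal resolution---an isomorphism onto the Adams filtration $\ge 1$ part of $\E{ass}{}_2(b)$. That is precisely why the cokernel is $\E{ass}{}_2^{0,*}(b_{I[K]})$, with no extra analysis needed of ``$v_0$-torsion classes like those involving $h_1$'' and no extra $d^{LF}$-differentials required to sweep them out: the primary differential already hits \emph{all} of positive filtration. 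Replacing $\iota_*$ with this connecting isomorphism is not a cosmetic change---it is what makes the coboundary nonzero and what makes the quotient manifestly $\E{ass}{}^{0,*}$.

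A secondary point: you leave the higher LF stages to a hand-wave (``iterating across the transfinite LF filtration''), whereas the paper supplies the actual argument---the remaining potential differentials $d^{LF}(xh_2^{k_2}\cdots h_l^{k_l})$ with $k_l\ge 3$ odd all vanish because their targets are already hit by the shorter LF differentials emanating from sources with top exponent $1$. That step is needed to justify that nothing further gets killed beyond what the primary differentials account for.
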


\begin{proof}
The proof amounts to analyzing the result of running the differentials of (\ref{eq:dLF}) in order of increasing lexigraphical filtration.  This analysis is simplified by the fact that the differentials in the lexigraphical filtration spectral sequence send monomials to monomials.

Equation (\ref{eq:dLF}) implies that 
$$ d^{LF}(x h_2^\epsilon )= 0. $$
As nothing can hit these classes, these provide the first part of the basis.  Note that for $K = (k_2, \ldots, k_l)$
\begin{align*}
\norm{I[K]} & = k_2+2k_3+ \cdots + 2^{l-2}k_l, \\
\alpha(I[K]) & = k_2 + k_3 + \cdots + k_l
\end{align*}
and therefore for $K' = (k_2, k_3, \cdots, k_l+2, k_l-1)$ we have
\begin{align*}
\norm{I[K']} = \norm{I[K]},  \\
\alpha(I[K']) = \alpha(I[K])+1.
\end{align*}
It follows that
$$ b_{I[K]} = b^{\bra{1}}_{I[K']}. $$
Therefore the differentials
$$ d^{LF}(xh_2^{k_2} \cdots h_{k-1}^{k_{l-1}}h_l) = v_0 h_2^{k_2} \cdots h_{l-1}^{k_{l-1}+2} $$
are all non-trivial.  The only elements not hit by the differentials above are of the form
$$ x h_2^{a_2} \cdots h_l^{a_m} $$
with $a_m \ge 2$ and $AF(x) = 0$.  The remaining possible differentials
$$ d^{LF}(xh_2^{k_2} \cdots h_{k-1}^{k_{l-1}}h_l^{k_l}) = v_0 h_2^{k_2} \cdots h_{l-1}^{k_{l-1}+2}h_{l}^{k_l-1} $$
(with $k_l \ge 3$ odd) are all zero since their targets are already killed by the shorter differentials
$$ d^{LF}(xh_2^{k_2} \cdots h_{k-1}^{k_{l-1}+2}h_l^{k_l-3}h_{l+1}) = v_0 h_2^{k_2} \cdots h_{l-1}^{k_{l-1}+2}h_{l}^{k_l-1}. $$
\end{proof}

We have at this point deduced Mahowald's ``Bounded Torsion Theorem'' \cite{Mahowaldbo}:

\begin{cor}\label{cor:BTT}
For $n \ge 2$ and $a > 0$, we have
$$ \E{AF}{}_2^{n,*,*,a} = 0. $$
\end{cor}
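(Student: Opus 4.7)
The plan is to read the corollary directly off the basis of $\E{AF}{}_2$ described in Proposition~\ref{prop:E2AF}, by splitting that basis according to cobar degree $n$ and tracking which elements can possibly carry positive Adams filtration.

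First I would note the degree convention: each polynomial generator $h_i$ is represented in the cobar complex $\mc{B}^*$ by the cocycle $[e_{i-2}]$, so $h_i$ has cobar degree $1$, and a monomial $h_2^{k_2} h_3^{k_3} \cdots h_l^{k_l}$ has cobar degree $k_2 + k_3 + \cdots + k_l$. This is the only piece of indexing that is really needed.

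Next I would examine the two families of basis elements separately. The elements of the form $x h_2^\epsilon$ with $\epsilon \in \{0,1\}$ have cobar degree $\epsilon \le 1$, and therefore contribute nothing once we restrict to $n \ge 2$. The remaining basis elements have the form $y h_2^{k_2} \cdots h_l^{k_l}$ with $k_l \ge 2$, and by the statement of Proposition~\ref{prop:E2AF} the coefficient $y$ ranges through a basis of $\E{ass}{}^{0,*}(b_{I[K]})$, which by definition lives entirely in Adams filtration zero. Hence every such basis element has total Adams filtration $a = 0$.

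Putting these two observations together, any basis element of $\E{AF}{}_2^{n,*,*,*}$ with $n \ge 2$ must come from the second family and therefore has $a = 0$, so $\E{AF}{}_2^{n,*,*,a} = 0$ whenever $a > 0$. There is no serious obstacle to this argument; the substantive work is already in Proposition~\ref{prop:E2AF}, and the corollary is a bookkeeping consequence of how the two families of generators are distributed across cobar degrees.
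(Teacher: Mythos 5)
Your argument is correct and follows exactly the route the paper intends: the paper gives no separate proof of Corollary~\ref{cor:BTT} but presents it as an immediate consequence of the basis description in Proposition~\ref{prop:E2AF}, which is precisely what you spell out by observing that the $xh_2^\epsilon$ family lives in cobar degree $\le 1$ while the $y h_2^{k_2}\cdots h_l^{k_l}$ family (with $k_l \ge 2$) has $y$ in Adams filtration zero.
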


Because what remains in $\E{AF}{}_2$ (with the exception of the classes $xh_2^\epsilon$) is concentrated in Adams filtration 0, there are no further differentials in the Adams filtration spectral sequence, and we deduce the following corollary.

\begin{cor}\label{cor:E1wss}
We have
$$ \E{AF}{}_2 = \E{AF}{}_\infty $$
and $\E{wss}{}_1$ is essentially given by Proposition~\ref{prop:E2AF}.
\end{cor}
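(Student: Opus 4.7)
The plan is to show that the Adams filtration spectral sequence collapses at $\E{AF}{}_2$, so that Proposition~\ref{prop:E2AF} already describes $\E{wss}{}_1$ up to filtration extensions. The collapse is enforced almost entirely by the Bounded Torsion Theorem (Corollary~\ref{cor:BTT}), with a single residual case handled by inspection of the exceptional family $xh_2^\epsilon$.

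The key bidegree observation is that $d_r^{AF}$ with $r \geq 2$ raises cohomological degree by $1$ and Adams filtration by $r \geq 2$. By Proposition~\ref{prop:E2AF}, the $\E{AF}{}_2$-basis decomposes into (i) exceptional classes $xh_2^\epsilon$ concentrated in cohomological degrees $n = 0, 1$ (where $x$ runs through $\E{ass}{}_2^{*,*}(\bo)$ or $\E{ass}{}_2^{*,*}(\bsp)$, possibly of high Adams filtration), and (ii) classes $yh_2^{k_2}\cdots h_l^{k_l}$ with $k_l \geq 2$, living in cohomological degrees $n \geq 2$ with $AF(y) = 0$.

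Applying Corollary~\ref{cor:BTT}, the group $\E{AF}{}_2^{n, *, *, a}$ vanishes whenever $n \geq 2$ and $a > 0$. Hence any $d_r^{AF}$ ($r \geq 2$) whose source has cohomological degree $n \geq 1$ targets a group in $(n+1, *, *, a+r)$ with $n+1 \geq 2$ and $a+r \geq 2$, and so is zero. This immediately disposes of all potential differentials on the family (ii) and on the classes $xh_2^1$.

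The only remaining case is a putative $d_r^{AF}$ from $xh_2^0 \in \E{AF}{}_2^{0,*,*,a}$ into $x'h_2^1 \in \E{AF}{}_2^{1,*,*,a+r}$, where the target is itself exceptional and the Bounded Torsion Theorem does not apply. This is the main obstacle. I would finish by noting that, by multiplicativity of the spectral sequence, such a differential is determined by its value on the $h_2$-free generators $x \in \E{ass}{}_2(\bo)$, and the entire $v_0^r$-content of $d_1^{good}$ acting on these has already been extracted in $d_0^{AF}$ and $d_1^{AF}$; equivalently, the two families of exceptional classes realize independent $v_0$-tower summands modeled on $\E{ass}{}_2(\bo)$ and $\E{ass}{}_2(\bsp)$, leaving no bidegree available for a new $v_0^r$-jumping differential. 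Together these steps yield $\E{AF}{}_2 = \E{AF}{}_\infty$, and Proposition~\ref{prop:E2AF} then delivers the stated description of $\E{wss}{}_1$.
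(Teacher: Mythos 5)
Your reduction is on the right track: the bidegree bookkeeping on $d_r^{AF}$ is correct, the appeal to Corollary~\ref{cor:BTT} cleanly kills any differential whose source has cohomological degree $n \ge 1$, and you correctly isolate the putative $d_r^{AF}: x h_2^0 \to x'h_2^1$ as the one case Bounded Torsion does not reach (nor can anything hit family (ii) or $xh_2^0$, since the former sits in $a = 0$ and the latter in $n = 0$). This reproduces what the paper compresses into its one-sentence argument before the corollary statement.

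The gap is in your closing paragraph. The claim that there is ``no bidegree available'' for such a differential is actually false if one only tracks the indices you write ($n$, internal degree $t$, and Adams filtration $a$): for instance the generator of $\E{ass}{}_2^{4,12}(\bo)$ in $\mc{C}^{0,8}$ and the class of Adams filtration $6$ in $\Sigma^4 \E{ass}{}_2(\bsp) \subset \mc{C}^{1,8}$ sit in perfectly matching positions for a $d_2^{AF}$, and the loose remark about the ``$v_0^r$-content of $d_1^{good}$ already being extracted'' is not a proof, particularly since Theorem~\ref{thm:d1} only pins down $d_1^{good}$ modulo higher Adams filtration. What kills this case is a different grading you have suppressed to a $*$: the Adams-filtration spectral sequence is set up \emph{at fixed weight}, with $d_r^{AF}: \E{AF}{}_r^{n,*,w,a} \to \E{AF}{}_r^{n+1,*,w,a+r}$ preserving $w$. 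Every class in $\mc{C}^{0,*}$ has weight $0$ (the empty multi-index), whereas every class in $\mc{C}^{n,*}$ with $n \ge 1$ has weight $\norm{I} \ge n \ge 1$. Hence the target group $\E{AF}{}_r^{1,*,0,a+r}$ is literally zero, and the putative differential vanishes for that reason --- not for a bidegree-matching reason. Once this weight observation is in place, the collapse $\E{AF}{}_2 = \E{AF}{}_\infty$ follows.
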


\subsection*{The higher differentials in the weight spectral sequence}

We now compute the remaining differentials in the weight spectral sequence.  By \ref{thm:d1}, these are given by
\begin{equation}\label{eq:drwss} [d_{2^r}^{wss} (w^m h_2^{k_2} \cdots h_l^{k_l})] = v_0^{\alpha(m-2^r)-\alpha(m)+1} w^{m-2^r}h_{r+2} h_2^{k_2} \cdots h_l^{k_l}.
\end{equation}
(provided the source and target persist to $\E{wss}{}_{2^r}$).

\begin{prop}[Lellmann-Mahowald \cite{LM}]\label{prop:drwss}
The remaining differentials in the weight spectral sequence are given by
\begin{align*}
[d_{1}^{wss}(v_0^iw^{m})] & = v_0^{i+\nu_2(m)}w^{m-1} h_2, \\
[d_{2^r}^{wss}(w^{2^r a}h_2^{k_2} \cdots h_l^{k_l})] & = w^{2^r(a-1)} h_{r+2} h_2^{k_2} \cdots h_l^{k_l}, \quad k_l \ge 2, \: a \: \mr{odd}. 
\end{align*}
\end{prop}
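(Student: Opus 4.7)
The plan is to derive both formulas directly from the master formula (\ref{eq:drwss}) by specializing to the surviving basis from Proposition~\ref{prop:E2AF} and exploiting the $v_0$-torsion structure established there.

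First I would treat the $v_0$-tower classes $v_0^i w^m$ (and, by $h_2$-multiplicativity, their $\bsp$-analogues $v_0^i w^m h_2$). Setting $r=0$ with an empty product of $h_j$'s in (\ref{eq:drwss}) gives
$$[d_1^{wss}(v_0^i w^m)] = v_0^{\,i + \alpha(m-1) - \alpha(m) + 1}\, w^{m-1}\, h_2.$$
The exponent simplifies via the elementary identity $\alpha(m-1) - \alpha(m) + 1 = \nu_2(m)$: writing $m = 2^{\nu_2(m)} b$ with $b$ odd, subtracting $1$ from $m$ clears the unique $1$-bit at position $\nu_2(m)$ and introduces $\nu_2(m)$ new $1$-bits in the lower positions, so $\alpha(m-1) = \alpha(m) - 1 + \nu_2(m)$. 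This yields the first formula.

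Next I would turn to the second family $w^m h_2^{k_2}\cdots h_l^{k_l}$ with $k_l \ge 2$. By Proposition~\ref{prop:E2AF}, these classes are concentrated in Adams filtration $0$, and the proof of that proposition shows that all of their $v_0$-multiples were killed at the $\E{LF}{}$ stage; hence $v_0$ annihilates them in $\E{wss}{}_1$. Inserting such a class into (\ref{eq:drwss}) gives
$$[d_{2^r}^{wss}(w^m h_2^{k_2}\cdots h_l^{k_l})] = v_0^{\,\alpha(m-2^r) - \alpha(m) + 1}\, w^{m-2^r}\, h_{r+2}\, h_2^{k_2}\cdots h_l^{k_l},$$
which, in view of the $v_0$-torsion, is nonzero precisely when $\alpha(m-2^r) = \alpha(m) - 1$. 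This condition is equivalent to the binary expansion of $m$ having a $1$ at position $r$; the smallest such $r$ is $\nu_2(m)$, and writing $m = 2^r a$ with $a$ odd produces the second formula.

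The main potential obstacle is bookkeeping: one must check that each listed $d_{2^r}$ is in fact the first nontrivial differential on its source (so the source persists to the $2^r$-page) and that its target has not already been hit. Both checks reduce to the same $v_0$-annihilation observation applied to (\ref{eq:drwss}) at earlier pages $2^{r'}$ with $r' < \nu_2(m)$: the $v_0$-exponent is strictly positive there, forcing that differential to vanish in $\E{wss}{}_{2^{r'}}$. This confirms that (\ref{eq:drwss}) contributes exactly the differentials listed in the proposition.
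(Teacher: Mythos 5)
Your proposal is correct and follows essentially the same route as the paper: both derive each formula by specializing the master formula (\ref{eq:drwss}) to the surviving basis of $\E{wss}{}_1$, simplify the $v_0$-exponent for the first family via the identity $\alpha(m-1)-\alpha(m)+1 = \nu_2(m)$, and for the second family use the $v_0$-torsion of the classes with $k_l \ge 2$ (established in the proof of Proposition~\ref{prop:E2AF}) to conclude that only the $r$'s with a $1$ in position $r$ of the dyadic expansion of $m$ can support a nontrivial differential, the shortest being $r=\nu_2(m)$. Your closing paragraph about persistence of sources and targets merely makes explicit what the paper leaves implicit, so there is no substantive difference.
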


\begin{proof}
The first formula follows from (\ref{eq:drwss}) and the fact that 
$$ \alpha(m-1) - \alpha(m)+1 = \nu_2(m). $$
The second formula follows from the fact that, for $k_l \ge 2$, the classes
$$ v_0^i w^m h_2^{k_2} \cdots h_l^{k_l} $$
die in the lexigraphical filtration spectral sequence for $i > 0$.  Thus the only possible non-trivial differentials coming from (\ref{eq:drwss}) are
$$ [d_{2^r}^{wss} (w^m h_2^{k_2} \cdots h_l^{k_l})] = w^{m-2^r}h_{r+2} h_2^{k_2} \cdots h_l^{k_l} $$
for $r$ such that the dyadic expansion of $m$ has a $1$ in the $r$th place.  The first of these is $r = \nu_2(m)$.
\end{proof}

In \cite{LM}, Lellmann and Mahowald proceed to deduce a closed form computation for $H^{*,*}(\mc{C})$.  We give our own description of this cohomology, based on the algebraic good complex $\mc{C}^{*,0,*}_{alg}$, in Corollary~\ref{cor:HC}.


\section{The algebraic $\bo$-resolution}\label{sec:alg}

We now construct and analyze an algebraic parallel to the $\bo$-ASS studied so far, the $\bo$-Mahowald spectral sequence ($\bo$-MSS).

\subsection*{Construction of the $\bo$-MSS}

Taking homology of the cofiber sequence
$$ S \rightarrow \bo \rightarrow \br{\bo}, $$
we get a short exact sequence
$$ 0  \rightarrow \FF_2 \rightarrow A \mmod A(1)_*  \rightarrow \br{A \mmod A(1)}_* \rightarrow 0 $$
of $A_*$-comodules, and hence short exact sequences 
$$ 0 \rightarrow \br{A \mmod A(1)}^n_* \rightarrow A \mmod A(1)_* \otimes \br{A \mmod A(1)}^n_*\rightarrow \br{A \mmod A(1)}^{n+1}_*   \rightarrow 0. $$
Piecing together the associated long exact sequences of $\Ext$-groups, and using the change of rings isomorphisms
$$ \Ext^{*,*}_{A_*}(A \mmod A(1)_* \otimes \br{A \mmod A(1)}^n_*) \cong \Ext^{*,*}_{A(1)_*}(\br{A \mmod A(1)}^n_*),  $$
we get an associated ``algebraic $\bo$-resolution''
$$
\xymatrix@C-1em@R-1em{
\Ext_{A_*}^{s,t}(\FF_2) \ar[d] 
& \Ext^{s-1,t}_{A_*}(\br{A \mmod A(1)}_*) \ar[l] \ar[d]
& \Ext^{s-2,t}_{A_*}(\br{A \mmod A(1)}^2_*) \ar[l] \ar[d] 
& \cdots \ar[l]
\\
\Ext^{s,t}_{A(1)_*}(\FF_2)
& \Ext^{s-1,t}_{A(1)_*}(\br{A \mmod A(1)}_*)
& \Ext^{s-2,t}_{A(1)_*}(\br{A \mmod A(1) }^2_*)
}
$$
This gives a spectral sequence (the \emph{$\bo$-Mahowald spectral sequence})
$$ \E{bo}{alg}_1^{n,s,t} = \Ext^{s,t}_{A(1)_*}(\br{A\mmod A(1)}^{n}_*) \Rightarrow \Ext^{s+n,t}_{A_*}(\FF_2) $$
with differentials
$$ d^{alg}_r: \E{bo}{alg}^{n,s,t}_{r} \rightarrow \E{bo}{alg}^{n+r,s-r+1,t}_{{r}}. $$

\subsection*{The $E_1$-term of the $\bo$-MSS}

Let $\ull{B_i}$ denote the $i$th integral Brown-Gitler comodule, the $A_*$-comodule obtained by taking homology of the $i$th integral Brown-Gitler spectrum
$$ \ull{B_i} := H_*(B_i). $$
The motivation behind Theorem~\ref{thm:splitting} is that there is a splitting of $A(1)_*$-comodules:
\begin{equation}\label{eq:algsplitting}
 \br{A \mmod A(1)}^n_* \cong_{A(1)_*} \bigoplus_{\abs{I} = n} \Si^{4\norm{I}} \ull{B_I}
 \end{equation}
where $I = (i_1, \ldots, i_n)$ with each $i_l > 0$ and 
$$ \ull{B_I} := \ull{B_{i_1}} \otimes \cdots \otimes \ull{B_{i_n}}. $$
The $\Ext_{A(1)_*}$-groups of these comodules are given by
$$ \Ext^{*,*}_{A(1)_*}(\ull{B_I}) \cong \E{ass}{}_2(b_I) \oplus V_I $$
where the graded $\FF_2$-vector space $V_I$ has cohomological degree zero:
$$ V_I \subseteq \Ext^{0,*}_{A(1)_*}(\ull{B_I}). $$
Thus the evil subcomplex $(V^{*,*}, d_1^{evil})$ with
$$
V^{n,*} := \bigoplus_{\abs{I} = n} \Si^{4\norm{I}} V_I
$$
is a subcomplex of $(\E{bo}{alg}^{*,0,*}, d_1^{alg})$.
We will define $(\mc{C}^{*,*,*}_{alg}, d_1^{alg,good})$ to be the quotient complex, where
\begin{equation}\label{eq:Calg}
\mc{C}^{n,s,t}_{alg} := \bigoplus_{\abs{I} = n} \E{ass}{}^{s,t}_2(\Si^{4\norm{I}} b_I).
\end{equation}
Analogous to Section~\ref{sec:E1}, we will denote elements of $\mc{C}_{alg}^{*,*,*}$
using notation
$ x(I)$
for $x$ an element of $\E{ass}{}_2^{*,*}(b_I)$
using notation
$$ x = v_0^k w^m h_1^a $$
where $w$ is the formal expression 
$$ w := v_1^2/v_0^2. $$
The following proposition is an immediate consequence of Theorem~\ref{thm:d1}.

\begin{prop}\label{prop:d1alg}
	The differential $d_1^{good, alg}$ in the complex $\mc{C}_{alg}^{*,*,*}$ is given by the formula
	\begin{multline*}
	 d_1^{good,alg}(w^m(i_1, \ldots, i_n)) = \sum_{\substack{m' + m'' = m \\
	 \alpha(m')+\alpha(m'') = \alpha(m)}} w^{m'}(m'', i_1, \ldots, i_n) \\ + 
	\sum_k \sum_{\substack{i_k'+i_k'' = i_k \\ \alpha(i_k') + \alpha(i_k'') = \alpha(i_k)}} w^m(i_1, \ldots, i_k', i_k'', \ldots, i_n) \end{multline*}
	and the fact that the differential is $v_0$ and $h_1$-linear.
\end{prop}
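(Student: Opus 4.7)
The plan is to deduce the algebraic formula from the topological formula of Theorem~\ref{thm:d1} by exploiting the bigraded structure of the $\bo$-MSS. The key observation is that the algebraic differential $d_1^{alg}$ preserves the internal Adams bidegree $(s,t)$, and so picks out exactly the filtration-preserving part of the topological formula.

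The first step is to identify $d_1^{alg}$ with the map on classical Adams $E_2$-pages induced by the topological $d_1$ in the $\bo$-ASS. The short exact sequences of $A_*$-comodules used to construct the $\bo$-MSS are precisely the $\FF_2$-homology of the topological cofiber sequences whose connecting maps define $d_1$, so under the splitting (\ref{eq:algsplitting}) and the identification (\ref{eq:Calg}), this realizes $d_1^{good}$ as $d_1^{good, alg}$ at the level of the $\E{ass}{}_2$-pages of the summands $b_I$.

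Next, I would apply Theorem~\ref{thm:d1} to a class $w^m(I)$ and track the Adams filtrations of the summands on the right-hand side. Each term $v_0^{\alpha(m')+\alpha(m'')-\alpha(m)}\, w^{m'}(m'', I')$ lies in Adams filtration equal to the filtration of $w^m(I)$ plus the nonnegative shift $\alpha(m')+\alpha(m'')-\alpha(m)$, and the ``higher Adams filtration'' error in Theorem~\ref{thm:d1} sits in strictly greater filtration still. Since $d_1^{alg}$ preserves $s$, only the summands for which $\alpha(m')+\alpha(m'')-\alpha(m) = 0$ survive in $d_1^{good, alg}(w^m(I))$; the same reasoning handles the inner sum over splittings $i_k = i_k' + i_k''$. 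This recovers the stated formula. The $v_0$- and $h_1$-linearity is automatic, since $d_1^{alg}$ is a connecting homomorphism of $A_*$-comodules and hence $\Ext_{A_*}^{*,*}(\FF_2)$-linear, and $v_0$ and $h_1$ act through this Yoneda multiplication.

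The one subtle point is the naturality claim in the first paragraph: one must verify that the classical ASS, applied functorially to the topological $\bo$-resolution, literally realizes $d_1^{alg, good}$ at the $E_2$-level. Given the explicit cobar presentations that underlie both differentials this is essentially bookkeeping, but it is the step without which the filtration-by-filtration argument above cannot be justified.
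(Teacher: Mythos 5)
The proposal is correct and is exactly the content that the paper is leaving implicit when it calls the proposition ``an immediate consequence of Theorem~\ref{thm:d1}.'' You correctly identify that the algebraic $d_1^{good,alg}$ is the map of classical Adams $E_2$-terms induced (via the change-of-rings identification~(\ref{eq:Calg})) by the topological $d_1^{good}$, and that because the algebraic $\bo$-MSS differential preserves the internal bidegree $(s,t)$, applying it to the generator $w^m(I)$ picks out precisely the summands of the expression in Theorem~\ref{thm:d1} with $v_0$-exponent $\alpha(m')+\alpha(m'')-\alpha(m)=0$ (resp.\ $\alpha(i_k')+\alpha(i_k'')-\alpha(i_k)=0$), the others sitting in strictly higher Adams filtration along with the error term. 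Your closing caveat about the naturality of the construction -- that the algebraic $\bo$-resolution really is $H_*$ of the topological one, so that the ASS functor carries $d_1^{good}$ to $d_1^{good,alg}$ -- is the one piece of genuine bookkeeping, and it does check out because $d_1^{top}$ is the composite $\bo\wedge\br{\bo}^n\to\br{\bo}^{n+1}\to\bo\wedge\br{\bo}^{n+1}$, whose $H_*$ is precisely the composite of $A_*$-comodule maps defining $d_1^{alg}$.
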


\subsection*{The algebraic weight spectral sequence}

Analogous to the weight spectral sequence of Section~\ref{sec:wss}, we can set up an algebraic weight spectral sequence to compute $H^{*,*,*}(\mc{C}_{alg})$. 
Just as in the topological case, we endow $\mc{C}^{*,*,*}_{alg}$ with a decreasing filtration by weight ({$wt$}), where we define
\[ wt(x(I)) = \norm{I}. \]
Proposition~\ref{prop:d1alg} implies that $d_1^{good, alg}$ does not decrease weight.  There is a resulting spectral sequence
\[ \E{wss}{alg}_0^{n,s,t,w} := [\mc{C}^{n,s,t}_{alg}]_{wt=w} \Rightarrow H^{n,s,t}(\mc{C}_{alg}) \]
with differentials
\[ d_r^{wss, alg}: \E{wss}{alg}_r^{n,s,t,w} \rightarrow \E{wss}{alg}_r^{n+1, s,t, w+r}. \]
From Proposition~\ref{prop:d1alg}, we see that $d_0^{wss, alg}$ is given by
\begin{equation}\label{eq:d0wssalg} 
d_0^{wss, alg}(x(i_1, \ldots, i_n)) = \sum_k \sum_{
\substack{i_k'+i_k'' = i_k \\ \alpha(i_k')+\alpha(i_k'') = \alpha(i_k)}} x(i_1, \ldots, i_k', i_k'', \ldots, i_n). 
\end{equation}

Note that this is precisely the formula for $d_0^{AF}$ of Section~\ref{sec:wss} {(see (\ref{eq:d0AF})).  Therefore, the same proof for Proposition~\ref{prop:E2AF} yields the following:

\begin{prop}\label{prop:E1wssalg}
	An additive basis for $\E{wss}{alg}_1$ is given by elements
	$$ x h_2^{k_2} h_3^{k_3}h_4^{k_4} \ldots $$
	indexed by $K = (k_2, k_3, \ldots)$, detected by $x(I[K])$ where 
	$$ I[K] = (\underbrace{1, \ldots ,1}_{k_2}, \underbrace{2, \ldots ,2}_{k_3}, \underbrace{4, \ldots ,4}_{k_4}, \ldots). $$
	Here, the elements $x$ run through a basis of $\E{ass}{}_2(b_{I[K]})$.
\end{prop}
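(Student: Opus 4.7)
The plan is to replicate, essentially verbatim, the proof of the Lellmann--Mahowald computation of $\E{AF}{}_1$ from Section~\ref{sec:wss}. As the paragraph preceding the statement already points out, formula (\ref{eq:d0wssalg}) for $d_0^{wss,alg}$ is combinatorially identical to formula (\ref{eq:d0AF}) for $d_0^{AF}$; and by (\ref{eq:Calg}) the $E_0$-term decomposes as a direct sum over multi-indices $I$ with $\abs{I} = n$ of copies of $\E{ass}{}_2(b_I)$, with the differential acting trivially on the Adams $E_2$ coefficient and only rearranging the multi-index. So the computation reduces to the same purely combinatorial problem addressed in Section~\ref{sec:wss}, with the Adams $E_2$-terms carried along as passive coefficients.

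First I would isolate the ``$1$-subcomplex'' $\mc{B}^*$ of $\E{wss}{alg}_0^{*,*,*}$ spanned by the generators $1(I)$. The key combinatorial observation is the one recalled in the proof of the analogous $\E{AF}{}_1$ result: for $i = i' + i''$, one has $\alpha(i) = \alpha(i') + \alpha(i'')$ if and only if the dyatic expansions of $i'$ and $i''$ have $1$s in complementary positions. Writing $i = \sum_j \epsilon_j 2^j$ and $e(i) := e_0^{\epsilon_0} e_1^{\epsilon_1} \cdots \in E := \Lambda_{\FF_2}[e_0, e_1, \ldots]$, the assignment
$$ 1(i_1, \ldots, i_n) \mapsto [e(i_1) \vert \cdots \vert e(i_n)] $$
identifies $\mc{B}^*$ with the normalized cobar complex $\br{C}^*(\FF_2, E, \FF_2)$ of the primitively generated exterior Hopf algebra $E$, carrying $d_0^{wss,alg}$ to the cobar differential. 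Since $\Ext^*_E(\FF_2, \FF_2) \cong \FF_2[h_2, h_3, h_4, \ldots]$ with $h_{j+2}$ represented by the cocycle $[e_j]$, the cohomology class of the monomial $h_2^{k_2} h_3^{k_3} \cdots$ is represented precisely by $1(I[K])$.

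To promote this description of $H^*(\mc{B})$ to a description of the full $\E{wss}{alg}_1$-term, I would invoke the fact that whenever $I'$ is obtained from $I$ by splitting a single entry $i_k$ into $i_k', i_k''$ with $\alpha(i_k) = \alpha(i_k') + \alpha(i_k'')$, one has $b_I \simeq b_{I'}$ and hence a canonical isomorphism $\E{ass}{}_2(b_I) \cong \E{ass}{}_2(b_{I'})$ compatible with $d_0^{wss,alg}$. This allows any basis element $x$ of $\E{ass}{}_2(b_{I[K]})$ to be transported consistently along all the identifications, producing a cocycle represented by $x(I[K])$ for each choice of $K$ and of basis vector $x$. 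No real obstacle is anticipated: the entire argument is bookkeeping parallel to the topological case, since nothing about passing to the algebraic $\bo$-MSS affects the combinatorial identification with the cobar complex of $E$.
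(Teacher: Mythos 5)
Your proof is correct and is exactly the approach the paper takes: the paper simply observes that the formula (\ref{eq:d0wssalg}) for $d_0^{wss,alg}$ coincides with (\ref{eq:d0AF}) and cites the corresponding $\E{AF}{}_1$ computation from Section~\ref{sec:wss} without rewriting the details. Your proposal --- isolating the subcomplex $\mc{B}^*$ spanned by $1(I)$, identifying it with the normalized cobar complex of the exterior Hopf algebra $E$ to get $\FF_2[h_2,h_3,\ldots]$, and then using the equivalences $b_I \simeq b_{I'}$ under dyatic refinement to transport the $\E{ass}{}_2(b_{I[K]})$ coefficients along --- reproduces that cited argument in full.
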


By Proposition~\ref{prop:d1alg}, the remaining differentials in the algebraic weight spectral sequence are given by
\begin{align*}
d_{2^r}^{wss, alg}(w^{2^r a}h_2^{k_2} \cdots h_l^{k_l}) & = w^{2^r(a-1)} h_{r+2} h_2^{k_2} \cdots h_l^{k_l}, \: a \: \mr{odd}. 
\end{align*}
when the source and target persist to $\E{wss}{alg}_{2^r}$.

\begin{lem}\label{lem:drwssalg}
The non-trivial differentials in the algebraic weight spectral sequence are given by
\begin{align*}
d_{2^{r-2}}^{wss, alg}(w^{2^{r-2} a}h_{r'}^{k_{r'}}  \cdots h_l^{k_l}) & = w^{2^{r-2}(a-1)} h_r h_{r'}^{k_{r'}}  \cdots h_l^{k_l}, \: k_{r'} > 0, \: r \le r', \: a \: \mr{odd},
\end{align*}
where $r\geq 2$.
\end{lem}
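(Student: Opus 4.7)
The plan is to establish the lemma by induction on $s \ge 0$ with $s = r-2$, explicitly tracking the basis that survives to each page $\E{wss}{alg}_{2^s}$. The inductive claim to prove is that the $\E{wss}{alg}_{2^s}$-page has basis consisting of classes of the form $x \cdot w^{2^s m} h_{s+2}^{k_{s+2}} h_{s+3}^{k_{s+3}} \cdots h_l^{k_l}$, where the $w$-exponent is divisible by $2^s$, every $h$-index is at least $s+2$, and $x$ contributes the $v_0,h_1$ multiplicities coming from $\E{ass}{}_2(b_{I[K]})$. The base case $s=0$ is Proposition~\ref{prop:E1wssalg}.

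For the inductive step, I would first verify the $d_{2^s}^{wss, alg}$-differential on a class $w^{2^s a} y$ with $a$ odd. Lifting to a chain-level representative and applying Proposition~\ref{prop:d1alg}, the relevant splitting of weight increase $2^s$ is $2^s a = 2^s(a-1) + 2^s$, whose $\alpha$-condition $\alpha(2^s(a-1)) + 1 = \alpha(2^s a)$ is precisely equivalent to $a$ being odd. The resulting term is $w^{2^s(a-1)}(2^s, I[K])$, and under the cobar-complex identification from the proof of Proposition~\ref{prop:E1wssalg} (for $E = \Lambda_{\FF_2}[e_0, e_1, \ldots]$), the leading index $2^s$ gives $e(2^s) = e_s$, which represents the cohomology class $h_{s+2}$. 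Thus the target identifies with $w^{2^s(a-1)} h_{s+2} y$, which is the lemma's formula upon setting $r = s+2$. A direct bookkeeping of sources and targets then shows that survivors on $\E{wss}{alg}_{2^s + 1}$ have $w$-exponent divisible by $2^{s+1}$ and contain no factor $h_{s+2}$, which is exactly the inductive claim at step $s+1$.

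The main obstacle is ruling out the intermediate differentials $d_r^{wss, alg}$ for $2^s < r < 2^{s+1}$, which is needed to conclude $\E{wss}{alg}_{2^{s+1}} = \E{wss}{alg}_{2^s + 1}$. By Proposition~\ref{prop:d1alg}, any weight-$r$ contribution on a survivor would arise from a splitting $m = m' + m''$ with $m'' = r$ and $m''$ a sub-bitmask of $m$ (this is the $\alpha$-condition). Since $2^s < r < 2^{s+1}$, the binary expansion of $r$ has its highest bit at position $s$, while survivors on $\E{wss}{alg}_{2^s + 1}$ have $m$ divisible by $2^{s+1}$ and hence no bit at position $s$. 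No such splitting exists, so these intermediate differentials vanish identically. The lemma's condition $r \le r'$ is then automatic, as it encodes precisely the survival condition on the source at page $2^{r-2}$ from the inductive description of the basis.
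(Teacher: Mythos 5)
Your inductive approach is a reasonable alternative to the paper's argument (which simply observes that the "bad" sources with $r > r'$ are themselves targets of shorter differentials, so never survive long enough to support a $d_{2^{r-2}}$). However, your stated inductive hypothesis is incorrect, and this introduces a genuine gap.

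Your IH asserts that $\E{wss}{alg}_{2^s}$ has a basis consisting only of classes with every $h$-index $\ge s+2$. This cannot be right: by Theorem~\ref{thm:HCalg}, $\E{wss}{alg}_\infty = H^{*,*,*}(\mc{C}_{alg})$ contains classes $x\, h_r^{k_r}\cdots h_l^{k_l}$ with $k_r \ge 1$ for every $r \ge 2$, and these must have nonzero representatives on every page $\E{wss}{alg}_{2^s}$ (since $\E{wss}{alg}_\infty$ is a subquotient of each $\E{wss}{alg}_{2^s}$). For $s > r-2$ these classes violate your IH. The source of the discrepancy is that $d_{2^s}^{wss,alg}$, while injective on the classes where the formula produces a nonzero target, is not surjective onto the classes of the form $w^{2^s m'}h_{s+2}^{k_{s+2}+1}\cdots$ with $m'$ even: the spectra $b_{I[K]}$ and $b_{I[K']}$ are different Adams covers, so the $v_0$-tower in the target group extends below the image of the source's tower. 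This is exactly what produces the truncated patterns $\bo_{r-1}[2^r-1]^{\bra{\,\cdot\,}}$ in the statement of Theorem~\ref{thm:HCalg}. Your IH silently discards those cokernel pieces.

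This matters because your argument for ruling out the intermediate differentials $d_t$, $2^s < t < 2^{s+1}$, rests squarely on the claim that "survivors on $\E{wss}{alg}_{2^s+1}$ have $m$ divisible by $2^{s+1}$." That is false for the truncated permanent-cycle classes with smallest $h$-index $r'' \le s+1$, whose $w$-exponent is only guaranteed divisible by $2^{r''-1}$ and may well have a bit at position $s$. To rule out $d_t$ on those classes you would need to invoke the conclusion of the lemma (that no nontrivial differential has length exceeding $2^{r''-2}$), which is circular. The paper's proof avoids this trap by arguing by induction on the differential length rather than on an explicit basis description: it never needs to know what $\E{wss}{alg}_{2^s}$ looks like, only that a particular would-be source is killed by a strictly shorter differential before the page in question. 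If you want to salvage the inductive-basis approach, the IH needs an extra clause describing the surviving truncated towers with $h$-indices below $s+2$, together with a separate argument (likely the same length-induction the paper uses) showing that those classes do not support $d_t$ for $t > 2^{r''-2}$.
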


\begin{proof}
The differentials
\begin{align*}
d_{2^{r-2}}^{wss, alg}(w^{2^{r-2} a}h_{r'}^{k_{r'}}  \cdots h_l^{k_l}) & = w^{2^{r-2}(a-1)} h_r h_{r'}^{k_{r'}}  \cdots h_l^{k_l}, \: k_{r'} > 0, \: r > r', \: a \: 
\mr{odd}
\end{align*}
never get a chance to run, because the sources of these potential differentials are targets of the shorter differentials:
\[d_{2^{r'-2}}^{wss, alg}(w^{2^{r-2} a+2^{r'-2}}h_{r'}^{k_{r'}-1}  \cdots h_l^{k_l})  = w^{2^{r-2} a}h_{r'}^{k_{r'}}  \cdots h_l^{k_l}. \qedhere\]
\end{proof}

To describe $\E{wss}{alg}_\infty$, we need some notation.  Consider the ``$\bo$-pattern'':
\begin{center}
\includegraphics[width=0.7\linewidth]{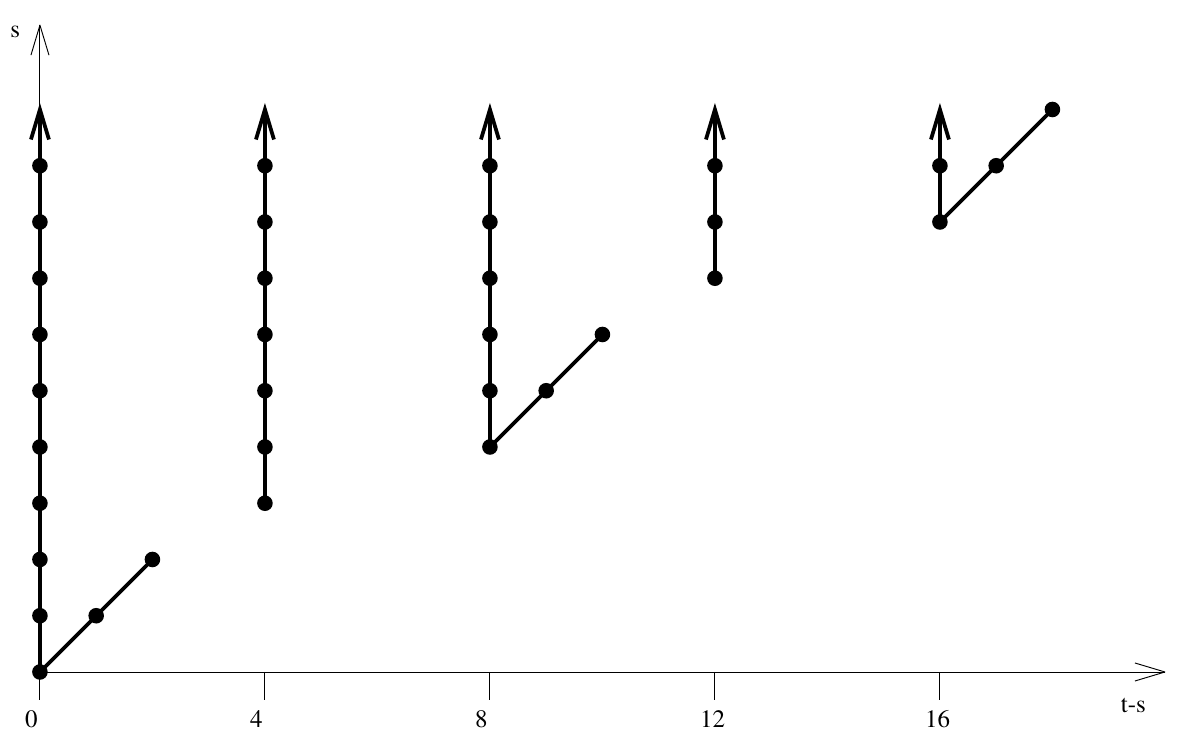}
\end{center}
We shall use $\bo_i[j]$ to denote the sub-pattern where we only include every $(2^i)$th $v_0$ tower, and truncate these $v_0$-towers to have length $j$.  For example, $\bo_1[3]$ is given by:
\begin{center}
\includegraphics[width=0.7\linewidth]{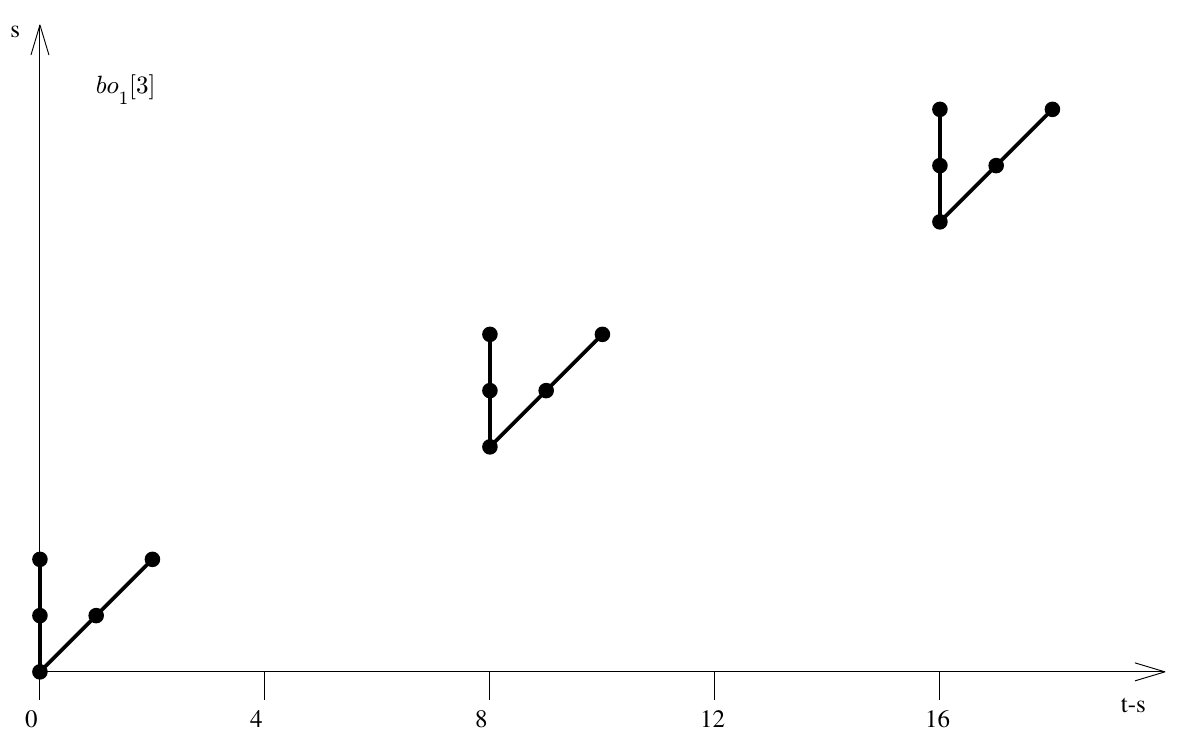}
\end{center}
whereas $\bo_2[7]$ is given by:
\begin{center}
\includegraphics[width=0.7\linewidth]{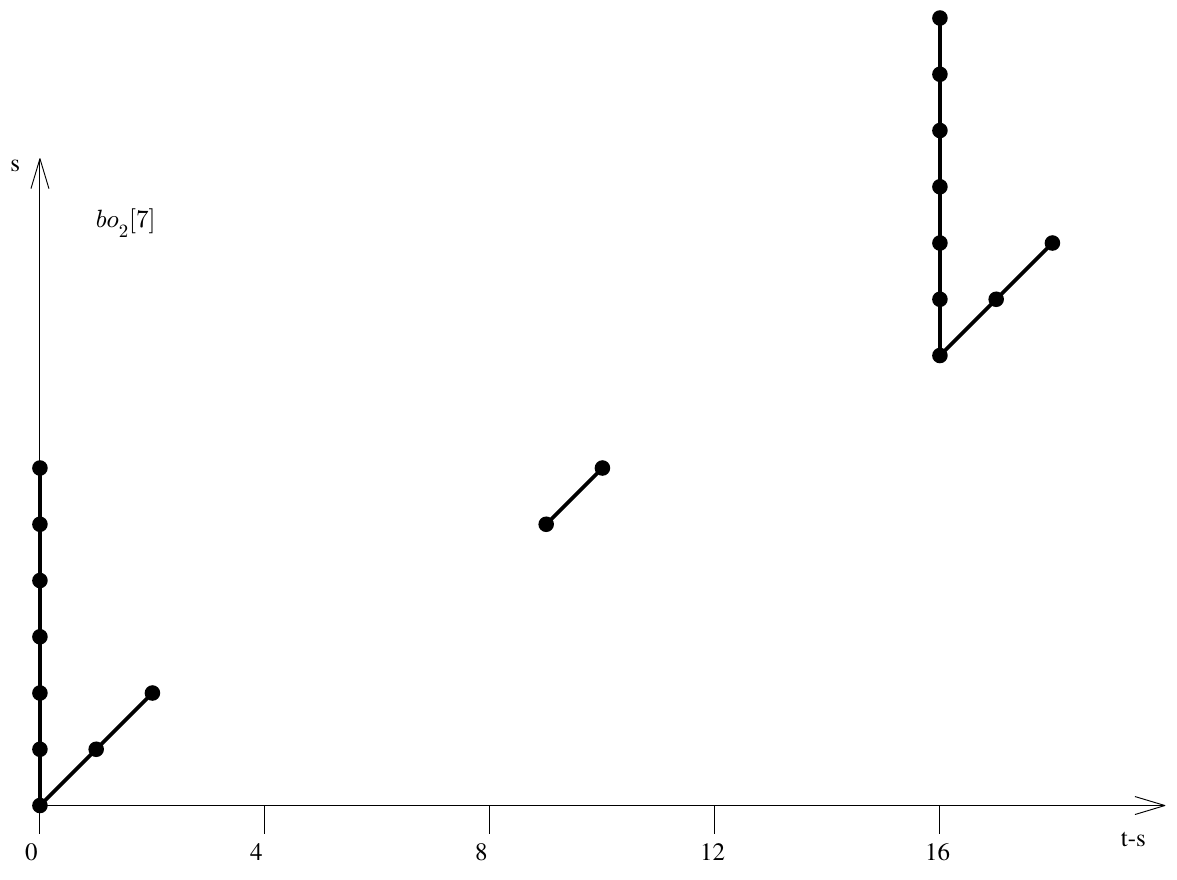}
\end{center}
We also need to consider an analog of these patterns for $\bsp$. Let $\bsp_1[3]$ denote the following pattern:
\begin{center}
\includegraphics[width=0.7\linewidth]{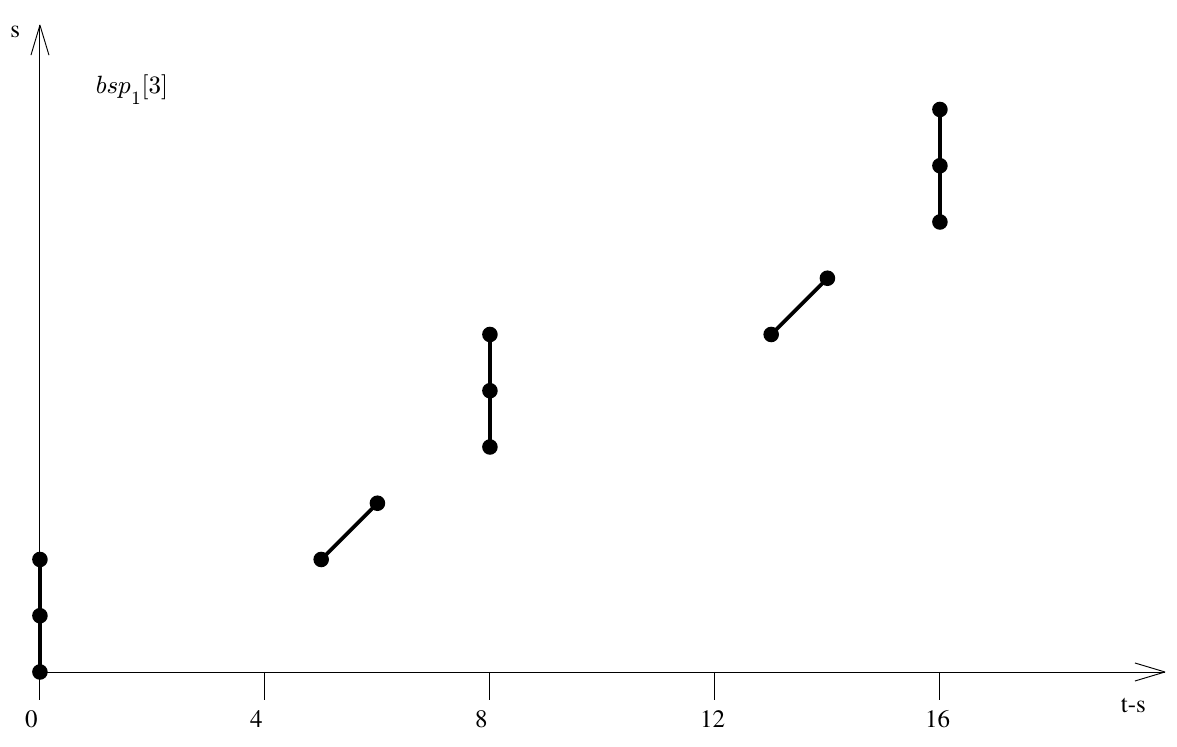}
\end{center}
Finally, we let $\bo_i[j]^{\bra{k}}$ denote the ``$k$th Adams cover'' of the pattern $\bo_i[j]$.  The pattern $\bo_1[3]^{\bra{2}}$ is:
\begin{center}
\includegraphics[width=0.7\linewidth]{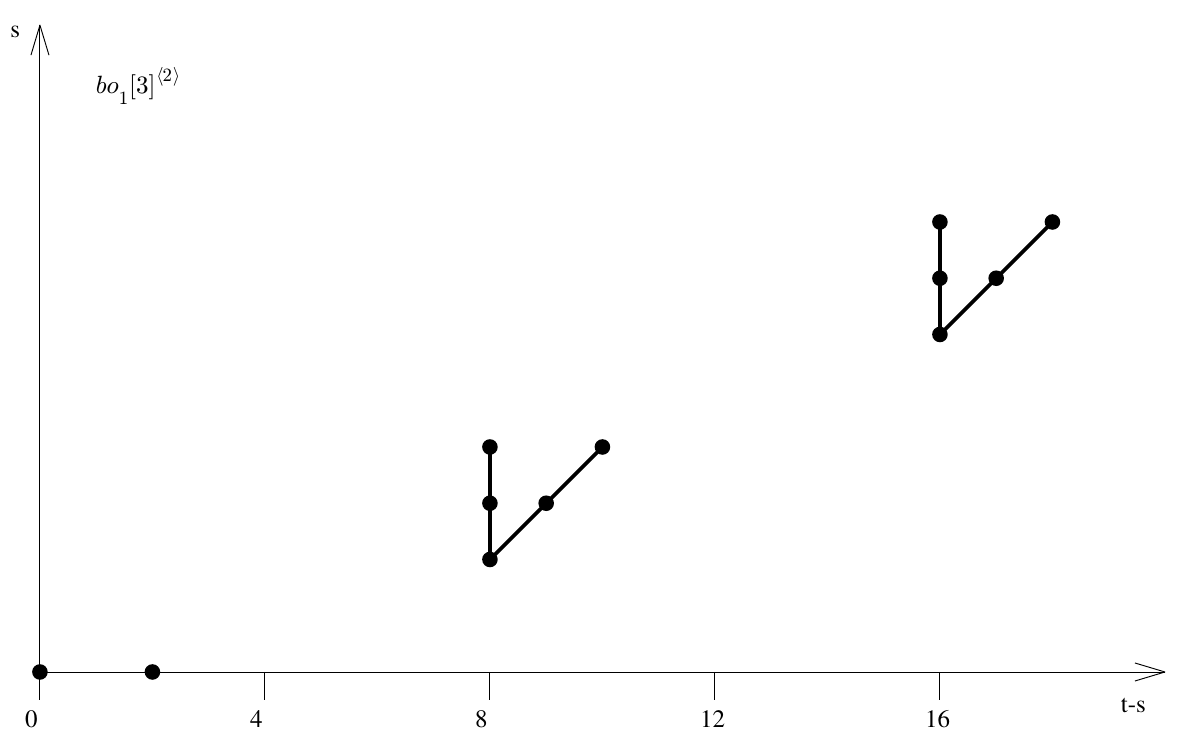}
\end{center}

With regard to these patterns, $H^{*,*,*}(\mc{C}_{alg})$ is described by
the following theorem.

\begin{thm}\label{thm:HCalg}
The groups $H^{*,*,*}(\mc{C}_{alg})$ have a basis given by
\begin{align*}
v_0^i \cdot 1, \: & i \ge 0, \\
x \cdot h_r^{k_r} \ldots h_l^{k_l}, \: & r \ge 2, \: k_r \ge 1, \: \mr{all \: other} \: k_j \ge 0, 
\end{align*}
where
$$ K = (0, \ldots, 0, k_r, \ldots, k_l) $$
and $x$ runs through a basis of
$$
\begin{cases}
\bsp_1[3]^{\bra{2\norm{I[K]}-\abs{I[K]}-1}}, & r = 2, k_2 \: \mr{odd}, \\
\bo_{r-1}[2^r-1]^{\bra{2\norm{I[K]}-\abs{I[K]}}}, & \mr{otherwise}.
\end{cases}
$$
\end{thm}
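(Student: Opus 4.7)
The plan is to compute $H^{*,*,*}(\mc{C}_{alg}) = \E{wss}{alg}_\infty$ by running the differentials of Lemma~\ref{lem:drwssalg} on the $E_1$-page from Proposition~\ref{prop:E1wssalg}, and identifying the survivors with the claimed basis.

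First I would decompose $\E{wss}{alg}_1 = \bigoplus_K M_K$, where $M_K := \E{ass}{}_2(b_{I[K]}) \cdot h_2^{k_2}\cdots h_l^{k_l}$ is viewed as an $\FF_2[v_0, h_1]$-module, with $K = (k_2, k_3, \ldots)$. For each $K$, let $r_0 = r_0(K)$ denote the smallest index with $k_{r_0} > 0$ (and $r_0 = \infty$ when $K = 0$). Since the differentials of Lemma~\ref{lem:drwssalg} are $v_0$- and $h_1$-linear, their effect on $M_K$ is determined by action on $w$-powers, and a short case analysis of the condition $r \le r_{\min}$ shows that the outgoing differentials from $M_K$ are exactly $d_{2^{r-2}}^{wss,alg}$ for $r = 2, 3, \ldots, r_0$ (landing in $M_{K+\delta_r}$), while the unique incoming differential (for $K \ne 0$) is $d_{2^{r_0 - 2}}^{wss,alg}$ from $M_{K-\delta_{r_0}}$.

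Next I would analyze the combined kernel/cokernel structure sector by sector. The outgoing differentials successively kill $w$-exponents whose lowest bit lies at position $\le r_0 - 2$, thereby restricting the surviving $w$-powers in $M_K$ to multiples of $2^{r_0 - 1}$; this produces the ``every $2^{r_0 - 1}$-th $v_0$-tower'' spacing of the sub-pattern $\bo_{r_0 - 1}[\cdot]$. Simultaneously, the $v_0$-torsion behavior in the various target $\E{ass}{}_2(b_{I[K+\delta_r]})$ cuts the infinite $v_0$-towers of $\E{ass}{}_2(b_{I[K]})$ at specific heights, and the cumulative effect is precisely the truncation to length $2^{r_0} - 1$. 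The Adams cover index $2\norm{I[K]} - \abs{I[K]}$ (or minus $1$ in the $\bsp$ case) is then read off directly from the Mahowald--Milgram formula for $b_{I[K]}$, and the $\bo$-versus-$\bsp$ dichotomy is governed by the parity of $\norm{I[K]} = k_2 + 2k_3 + \cdots + 2^{l-2}k_l$, which equals the parity of $k_2$ and thus singles out the $\bsp$-pattern precisely when $r_0 = 2$ with $k_2$ odd.

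The $K = 0$ sector requires separate treatment: with $r_0 = \infty$, all differentials $d_{2^{r-2}}$ for $r \ge 2$ act on $w^m \cdot 1$ with $m > 0$, and since $K = 0$ admits no incoming differentials, every such source dies non-trivially, leaving precisely the $v_0$-tower $\{v_0^i \cdot 1\}_{i \ge 0}$. Convergence of the weight spectral sequence then identifies $\E{wss}{alg}_\infty$ with $H^{*,*,*}(\mc{C}_{alg})$. The main obstacle is the careful bookkeeping in the kernel/cokernel analysis: one must verify that the combined action of $d_1, d_2, d_4, \ldots, d_{2^{r_0 - 2}}$ on $M_K$ produces exactly the sub-pattern $\bo_{r_0-1}[2^{r_0} - 1]^{\bra{2\norm{I[K]} - \abs{I[K]}}}$ (or its $\bsp$-analog), with spacing, truncation height, and Adams cover shift all agreeing. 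This requires tracking how each differential interacts with the $v_0$- and $h_1$-module structure of the various $\E{ass}{}_2(b_{I[K']})$, and it is here that the combinatorial identity $2^{r_0} - 1 = 1 + 2 + 4 + \cdots + 2^{r_0 - 1}$ links the cumulative $w$-shifts to the final tower length. That no later pages contribute further is automatic from the restriction $r \le r_0$ in Lemma~\ref{lem:drwssalg}.
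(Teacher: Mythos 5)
Your overall strategy — decompose $\E{wss}{alg}_1$ into pieces $M_K$, identify which $d_{2^{r-2}}$ enter and leave each piece, and then count survivors — matches the paper's, and several steps are sound: the $K = 0$ sector is handled correctly, and the observation that the outgoing differentials restrict the surviving $w$-powers in $M_K$ to multiples of $2^{r_0-1}$ (giving the $\bo_{r_0-1}$ spacing) is exactly right.

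The genuine gap is in the truncation step. You attribute the truncation of the surviving $v_0$-towers to ``the $v_0$-torsion behavior in the various target $\E{ass}{}_2(b_{I[K+\delta_r]})$,'' i.e.\ to the \emph{outgoing} differentials. But each $b_{I[K+\delta_r]}$ is an Adams cover of $\bo$ or $\bsp$, so $\E{ass}{}_2(b_{I[K+\delta_r]})$ consists entirely of infinite $v_0$-towers (plus $h_1$-stuff) — there is no $v_0$-torsion there to ``cut'' anything. Consequently the outgoing differentials play only one role: they kill, completely, the $v_0$-towers at $w^m$ with $\nu_2(m) \le r_0-2$. They cannot produce any finite survivors. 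The truncation to height $2^{r_0}-1$ of the remaining towers (at $w$-powers divisible by $2^{r_0-1}$) comes from the single \emph{incoming} differential $d_{2^{r_0-2}}$ from $M_{K-\delta_{r_0}}$, which hits a co-final segment of each such tower and leaves a finite bottom piece. This is what the paper means by ``truncated by differentials emanating from the classes $y\,h_r^{k_r-1}\cdots h_l^{k_l}$.''

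Relatedly, the ``combinatorial identity $2^{r_0}-1 = 1 + 2 + \cdots + 2^{r_0-1}$ links the cumulative $w$-shifts to the final tower length'' does not apply: a fixed $w^m$ supports at most one nontrivial $d_{2^{r-2}}$ (the one with $r = \nu_2(m)+2$), so there is no cumulative action of $d_1, d_2, \ldots, d_{2^{r_0-2}}$ on the same tower, and the $w$-shifts $1, 2, \ldots, 2^{r_0-2}$ in fact sum to $2^{r_0-1}-1$, not $2^{r_0}-1$. To finish correctly you must analyze the single incoming $d_{2^{r_0-2}}$: compare the bottom Adams filtrations of $w^{2^{r_0-2}a}$ in $\Si^{4\norm{I[K']}} b_{I[K']}$ (source, $a$ odd) with $w^{2^{r_0-2}(a-1)}$ in $\Si^{4\norm{I[K]}} b_{I[K]}$ (target), and verify that their difference is uniformly $2^{r_0}-1$. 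This requires both the Adams cover shift $\bigl(2\norm{I[K]}-\abs{I[K]}\bigr)-\bigl(2\norm{I[K']}-\abs{I[K']}\bigr)$ \emph{and} the difference of tower-bottom filtrations within the $\bo$ (resp.\ $\bsp$) patterns themselves; neither ingredient alone gives $2^{r_0}-1$.
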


\begin{rmk}
We note for the reader's convenience that
\begin{align*}
\norm{I[K]} & = k_2+2k_3+4k_4+\cdots, \\
\abs{I[K]} & = k_2 + k_3 + k_4 + \cdots .
\end{align*}
\end{rmk}

\begin{proof}
Proposition~\ref{prop:E1wssalg} gives a basis of $\E{wss}{alg}_1$ as
$ xh_2^{k_2} \cdots h_l^{k_l} $
where $x$ runs through a basis of
$$
\begin{cases}
\E{ass}{}_2\left(\bsp^{\bra{2\norm{I[K]}-\abs{I[K]}-1}}\right), &  k_2 \: \mr{odd}, \\
\E{ass}{}_2\left( \bo^{\bra{2\norm{I[K]}-\abs{I[K]}}} \right), & \mr{otherwise}.
\end{cases}
$$
Lemma~\ref{lem:drwssalg} implies that if 
$$ K = (0, \ldots, 0, k_r, \ldots, k_l) $$
with $k_r \ge 1$, then only every $2^{r-1}$ $v_0$-tower
in these patterns persist to $\E{wss}{alg}_\infty$.  Moreover, these $v_0$-towers are truncated by differentials emanating from the classes
$$ y h_r^{k_r-1}\cdots h_l^{k_l}. $$
Assuming $k_2$ is even, ($k_2$ odd is a similar separate case) and letting 
$$ K' = (0, \ldots, 0, k_r-1, \ldots, k_l) $$
we compute the height of the $v_0$-towers to be
\[ (2\norm{I[K]}-\abs{I[K]}) - (2\norm{I[K']}-\abs{I[K']}) = 2^r-1. \qedhere\]
\end{proof}

By Proposition~\ref{prop:drwss}, the differentials in the algebraic weight spectral sequence (in $\bo$-filtration $\ge 2$) are simply a restriction of the differentials in the topological weight spectral sequence. We can thus read off $H^{n,*}(\mc{C})$ from $H^{n,0,*}(\mc{C}_{alg})$ for $n \ge 2$.  The resulting description of these cohomology groups is given below.

\begin{cor}\label{cor:HC}
The cohomolology of the topological good complex $\mc{C}^{*,*}$ is given in degrees $0$ and $1$ by:
\begin{align*}
H^{0,*}(\mc{C})  = & \ZZ\{1\} \oplus \ZZ/2\{v_1^{4i}h_1^j \: : \: i \ge 0, \: j = 1,2 \}, \\
H^{1,*}(\mc{C})  = & \ZZ/8 \{ v_1^{4i}h_2 \: : \: i \ge 0 \} \\
& \oplus 
\ZZ/2^{\nu_2(i+1)+4}\{ v_1^{4i} \cdot v_0wh_2 \: : \: i \ge 0 \} \\
& \oplus
\ZZ/2\{v_1^{4i}h_1^j \cdot v_0wh_2 \: : \: i \ge 0, \: j = 1,2 \}.
\end{align*}
In degrees $n \ge 2$, these cohomology groups are given by the subspace
$$ H^{n,*}(\mc{C}) \subseteq H^{n,0,*}(\mc{C}_{alg}) $$
generated by elements of the form
$$ x \cdot h_2^{k_2} \cdots h_l^{k_l}, \quad k_l \ge 2 $$
for $x$ as in Theorem~\ref{thm:HCalg}.
\end{cor}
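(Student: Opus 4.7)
\emph{Proof plan.} The strategy is to finish running the weight spectral sequence of Section~\ref{sec:wss} to $E_\infty$ using the differentials of Propositions~\ref{prop:E2AF} and~\ref{prop:drwss}, and then compare the outcome with the algebraic computation of Theorem~\ref{thm:HCalg}.

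For $n \ge 2$ the Bounded Torsion Theorem (Corollary~\ref{cor:BTT}) confines $\E{wss}{}_1^{n,*}$ entirely to Adams filtration zero; by Proposition~\ref{prop:E2AF} it is spanned by the monomials $y h_2^{k_2} \cdots h_l^{k_l}$ with $k_l \ge 2$ and $y$ of Adams filtration zero. This sits inside the algebraic $\E{wss}{alg}_1^{n,0,*}$ of Proposition~\ref{prop:E1wssalg} as the subspace cut out by the extra constraint $k_l \ge 2$. The higher topological differentials (second formula of Proposition~\ref{prop:drwss}) restrict on this subspace to the algebraic differentials of Lemma~\ref{lem:drwssalg}, while the extra $d_1^{wss}$ (first formula) is supported entirely in $\bo$-filtrations $\le 1$ and so does not contribute. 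Passing to cohomology therefore identifies $H^{n,*}(\mc{C})$ with the claimed subspace of $H^{n,0,*}(\mc{C}_{alg})$.

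For $n = 0, 1$ the extra differential $d_1^{wss}(v_0^i w^m) = v_0^{i + \nu_2(m)} w^{m-1} h_2$ must be handled directly. In $n = 0$ its kernel consists of $\{v_0^i\}_{i \ge 0}$ (from the $m = 0$ column, yielding $\ZZ\{1\}$) together with the $h_1^j$-multiples $v_1^{4i} h_1^j$ for $j = 1, 2$, which survive because $v_0 h_1 = 0$ annihilates their $d_1^{wss}$-images; all other $v_0^i w^m$ with $m \ge 1$ map to nonzero classes in $\mc{C}^{1,*}$. In $n = 1$, I take the quotient of $\E{ass}{}_2(\bsp) h_2$ by these images: the classical relation $v_0^3 h_2 = 0$ in $\E{ass}{}_2(\bsp)$ accounts for the $\ZZ/8$-summand on $v_1^{4i} h_2$; the $v_0$-valuations of the incoming images truncate the tower on $v_1^{4i} v_0 w h_2$ to order $2^{\nu_2(i+1)+4}$; and the $\ZZ/2$-summands arise from the $h_1^j$-multiples via $v_0 h_1 = 0$.

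The delicate step is determining the exponent $\nu_2(i+1)+4$ in $H^{1,*}(\mc{C})$; this reduces to applying the $2$-adic identity $\nu_2\binom{a}{b} = \alpha(b) + \alpha(a-b) - \alpha(a)$ to compute the maximal $v_0$-power hitting $v_1^{4i} v_0 w h_2$ via $d_1^{wss}$, and then verifying that the outgoing $d_1^{wss}$ from $n = 1$ to $n = 2$ does not further shorten these towers --- which follows because its targets already live in the $k_l \ge 2$ subspace handled in the $n \ge 2$ case above.
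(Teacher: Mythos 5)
Your treatment of the $n \ge 2$ range is the same as the paper's (which merely sketches this in the paragraph preceding the corollary): after the Bounded Torsion Theorem, the topological $\E{wss}{}_1^{n,*}$ for $n\ge 2$ coincides with the $k_l\ge 2$, Adams-filtration-zero part of $\E{wss}{alg}_1^{n,0,*}$, the higher differentials agree, and $H^{n,*}(\mc{C})$ is read off from $H^{n,0,*}(\mc{C}_{alg})$. The paper does not supply a proof for $n=0,1$ (it points the reader to Lellmann--Mahowald in the preceding paragraph), so your direct computation there is filling a gap and cannot be compared line-by-line to the paper. Your outline for $n=0,1$ is the right one, but two steps are stated incorrectly.

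First, the assertion ``the classical relation $v_0^3 h_2 = 0$ in $\E{ass}{}_2(\bsp)$'' is false: under the identification $\E{wss}{}_1^{1,*} = \E{ass}{}_2(\bsp)\cdot h_2$, the class $v_0^3 h_2$ is $v_0^3\cdot 1_{\bsp}$, and this is nonzero because $\pi_0\bsp = \ZZ$ (the $v_0$-tower on the unit is infinite). The $\ZZ/8$ arises as a \emph{cokernel}: the incoming differential $d_1^{wss}(v_0^j w) = v_0^j h_2$ is nonzero exactly for $j\ge 3$ (since the $v_0$-tower in $\E{ass}{}_2(\bo)$ in stem $4$ starts at $v_0^3 w = \alpha$), leaving $h_2, v_0 h_2, v_0^2 h_2$ alive. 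Attributing the truncation to a relation internal to $\E{ass}{}_2(\bsp)$ misidentifies the mechanism, and the same error would produce the wrong exponents $\nu_2(i+1)+4$ if carried through, so it is worth fixing.

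Second, the justification for why the outgoing $d_1^{wss}: n=1 \to n=2$ ``does not further shorten these towers --- because its targets already live in the $k_l\ge 2$ subspace'' does not quite constitute an argument. What is true is that for $j\ge 1$, $d_1^{wss}(v_0^j w^{2i+1}h_2)$ has leading term $v_0^j w^{2i}h_2^2$, which lies in positive Adams filtration and hence vanishes in $\E{wss}{}_1^{2,*}$ (by the Bounded Torsion Theorem), so the classes with $j\ge 1$ are permanent cycles. But this leaves open what happens at the bottom ($j=0$): either the class $w^{2i+1}h_2$ does not exist in $\E{ass}{}_2(\bsp)$ in Adams filtration $0$, or it does exist and the outgoing $d_1^{wss}$ kills it via $d_1^{wss}(w^{2i+1}h_2) = w^{2i}h_2^2$. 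In either case the generator becomes $v_1^{4i}\cdot v_0 w h_2$ as claimed, but your argument does not distinguish these cases or explain where the extra factor of $v_0$ in the generator comes from; this is precisely the point where the outgoing differential \emph{does} matter and needs to be addressed.

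Finally, in the $n=0$ case you should be a little more careful about the $h_1$-multiples: the formula of Theorem~\ref{thm:d1} is only given modulo higher Adams filtration, so the vanishing of $d_1^{wss}(v_1^{4i}h_1^j)$ requires noting that the error terms are $v_0$-divisible (or higher $h_1$-powers, which vanish by $h_1^3=0$) and so are also annihilated by $h_1$-multiplication in $\E{ass}{}_2(\bsp)$. This is true, but deserves a word.
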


In particular, we can derive a vanishing line for $H^{*,*}(\mc{C})$.

\begin{cor}\label{cor:HCvanishing}
We have
$$ H^{n,t}(\mc{C}) = 0 $$
for $n \ge 1/5(t-n) + 2$.
\end{cor}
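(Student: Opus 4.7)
My plan is a direct degree-counting exercise applied to the description of $H^{*,*}(\mc{C})$ given in Corollary~\ref{cor:HC}.  For $n \in \{0, 1\}$, the vanishing bound follows immediately from the closed-form formulas for $H^{0, *}(\mc{C})$ and $H^{1, *}(\mc{C})$ listed there -- in each case the explicit generators lie well below the asserted line.  For $n \ge 2$, Corollary~\ref{cor:HC} identifies $H^{n, *}(\mc{C})$ with the subspace of $H^{n, 0, *}(\mc{C}_{alg})$ generated by monomials
\[
x \cdot h_2^{k_2} h_3^{k_3} \cdots h_l^{k_l}, \qquad k_l \ge 2, \quad \textstyle\sum_{j}k_j = n,
\]
where, by Theorem~\ref{thm:HCalg}, $x$ runs through a basis of the pattern $\bo_{r-1}[2^r-1]^{\bra{2\norm{I[K]}-\abs{I[K]}}}$ (or its $\bsp$-variant when $r = 2$ and $k_2$ is odd), with $r$ the smallest index for which $k_r \ne 0$.

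Each such generator has $\bo$-filtration $n = \sum_j k_j$ and internal degree $t = t_x + \sum_{j\ge 2} 2^j k_j$, where $t_x \ge 0$ is the internal degree of $x$ inside its pattern.  My plan is then to minimize $t$ over all admissible pairs $(K, x)$ subject to $\sum_j k_j = n$ and $k_l \ge 2$, and to verify that this minimum strictly exceeds $6n - 10$, which is precisely the rearrangement of the asserted inequality $n < \tfrac{1}{5}(t-n) + 2$.  The minimization splits into cases according to the smallest index $r$ with $k_r \ne 0$: in each case the constraint $k_l \ge 2$ forces either $K$ to be spread over several indices or $l$ itself to be large, and in every such configuration one can bound $\sum_j 2^j k_j$ from below at the required rate.

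The main obstacle will be the boundary configurations -- in particular, I will have to rule out that the naive candidate $K = (n, 0, \ldots)$ paired with the bottom class of its pattern produces a non-zero class inside the region $t \le 6n - 10$.  Here one must use how the pattern structure (via the exponent $2\norm{I[K]}-\abs{I[K]}$ of the Adams cover) conspires with the $k_l \ge 2$ requirement to push the class out of the vanishing range.  Once the correct extremum is identified and checked against the slope-$\tfrac{1}{5}$ bound, the vanishing statement follows immediately.
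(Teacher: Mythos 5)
Your plan follows the same route as the paper: read the generators $x\cdot h_r^{k_r}\cdots h_l^{k_l}$ off Corollary~\ref{cor:HC} and Theorem~\ref{thm:HCalg}, then minimize the stem subject to $\sum_j k_j = n$. But the step you flag as ``the main obstacle'' and defer --- evaluating the extremum for $K=(n,0,\ldots)$ --- is the entire content of the argument, and the lower bound you actually state, $t_x\ge 0$, is genuinely too weak: it yields only $t \ge \sum_j 2^j k_j = 4\norm{I[K]} = 4n$ for $K=(n,0,\ldots)$, which fails to clear $6n-10$ once $n\ge 5$. So as written the proposal does not prove the bound.

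The missing quantitative input is that the pattern $\bo_{r-1}[2^r-1]^{\bra{2\norm{I[K]}-\abs{I[K]}-\epsilon}}$ (with $\epsilon=1$ in the $\bsp$ case, $\epsilon=0$ otherwise) is an Adams cover of a $v_0$-truncated pattern, so the bottom class $x$ sits at stem $2(2\norm{I[K]}-\abs{I[K]}-\epsilon-(2^r-2))$; adding the stem $\sum_j k_j(2^j-1)$ of the $h_j$'s gives $t-n = \sum_j k_j(2^{j+1}-3)-2\epsilon-2^{r+1}+4 \ge n(2^{r+1}-3)-2^{r+1}+2 \ge 5n-6$ for $r\ge 2$, which is what the paper computes and is exactly enough. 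Note also that your guess that the constraint $k_l\ge 2$ ``conspires'' to push the extremal class out of range is a red herring: for $K=(n,0,\ldots)$ with $n\ge 2$ that constraint is automatically satisfied and plays no role at all. What saves the day is solely that the Adams-cover exponent $2\norm{I[K]}-\abs{I[K]}$ grows linearly in $n$; multiplied by $\abs{v_1}=2$ and added to the $3n$ coming from $h_2^n$, it lands you at $5n-4$ or $5n-6$, just barely past the $1/5$-line.
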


\begin{proof}
By Corollary~\ref{cor:HC}, the lowest $t-n$ where a class
$$ x h_r^{k_r} \cdots h_l^{k_l} $$
can contribute to $H^{n,t}(\mc{C})$
is
\begin{align*}
 t-n & = \underbrace{k_r(2^r-1)+\cdots +k_l(2^l-1)}_{\abs{h_2^{k_2}\cdots h_l^{k_l}}} + \underbrace{2}_{\abs{v_1}}(\underbrace{2\norm{I[K]} - \abs{I[K]} - \epsilon}_{\text{Adams cover}} - \underbrace{(2^{r}-2)}_{\text{$v_0$-truncation}}) \\ 
 & = (2^{r+1}k_2 + \cdots + 2^{l+1}k_l) - 3(k_2 + \cdots + k_l) - 2^{r+1}+4-2\epsilon \\
 & \ge (2^{r+1}-3)(k_2 + \cdots + k_l)  - 2^{r+1}+2. 
\end{align*}
where
$$
\epsilon = \begin{cases}
0,  & k_2 \: \mr{even}, \\
1, & k_2 \: \mr{odd}.
\end{cases}
$$
Since
$$ n = k_2 + \cdots + k_l $$
and $r \ge 2$, we deduce that such a class must satisfy
\begin{align*}
n & \le \frac{1}{2^{r+1}-3}(t-n) + \frac{2^{r+1}-2}{2^{r+1}-3} \\
& < \frac{1}{5}(t-n) + 2.
\end{align*}
\end{proof}


\section{$v_1$-periodicity}\label{sec:v1}

In Section~\ref{sec:alg}, we established that there is a long exact sequence
\begin{equation}
 \cdots \rightarrow H^{n,k}(V) \rightarrow \E{bo}{alg}_2^{n,0,k} \xrightarrow{g_{alg}} H^{n,0,k}(\mc{C}_{alg}) \xrightarrow{\partial_{alg}} H^{n+1,k}(V) \rightarrow \cdots
\end{equation}
which assembles $\E{bo}{alg}_2$ out of good and evil classes. 
Furthermore, we computed the good component $H^{*,*,*}(\mc{C})$ in its entirety (Theorem~\ref{thm:HCalg}).  Since the spectral sequence $\{\E{bo}{alg}_r\}$ converges to $\Ext_{A_*}(\FF_2)$, it stands to reason that if we know $\Ext_{A_*}(\FF_2)$ through a range, we should be able to deduce the evil component $H^{*,*}(V)$.  As summarized in the introduction, this is predicated on actually having a means of determining which elements of $\Ext$ are detected by good and evil classes.  The Dichotomy Principle (Theorem~\ref{thm:dichotomy}) will relate this to $v_1$-periodicity.  We review the definition and properties of $v_1$-periodic Ext groups in this section.

\subsection*{$v_1$-periodic Ext groups}

Our approach to $v_1$-periodic Ext follows that of  \cite{MahowaldShick}, \cite{DavisMahowaldv1}, which is based on the Adams periodicity theorem \cite{AdamsP}.  Let $M$ be an $A(n)_*$-comodule.  For $n \ge 2$, Adams produced elements
$$ v_1^{2^n} \in \Ext^{2^n, 3\cdot 2^n}_{A(n)_*}(\FF_2) $$
which are appropriately compatible under the maps
$$ \Ext^{*,*}_{A(n+1)_*}(\FF_2) \rightarrow \Ext^{*,*}_{A(n)_*}(\FF_2). $$
For any $A(n)_*$-comodule $M$, we can form the localized Ext groups
$$ v_1^{-1}\Ext^{s,t}_{A(n)_*}(M). $$
We then define
\begin{equation}\label{eq:v1Ext} v_1^{-1}\Ext^{s,t}_{A_*}(M) := \varprojlim_n v_1^{-1}\Ext^{s,t}_{A(n)_*}(M).
\end{equation}
For certain $M$, the inverse system in (\ref{eq:v1Ext}) stabilizes for large $n$ for fixed bidegrees $(s,t)$. When this is the case, the localized Ext groups are manageable.

One class of comodules for which this is true are those whose duals are $A(0)$-free.  Indeed, as pointed out by \cite{DavisMahowaldv1}, Adams proves:

\begin{thm}[Adams \cite{AdamsP}]\label{thm:AdamsP}
Suppose that $M$ is a connective $A_*$-comodule whose dual is free over $A(0)$, and suppose $n \ge 2$. 
\begin{enumerate}
\item For $t-s < 3s$, the map
$$ v_1^{2^n}: \Ext^{s,t}_{A(n)_*}(M) \rightarrow \Ext^{s+2^n, t+3\cdot 2^n}_{A(n)_*}(M) $$
is an isomorphism.

\item For $s > 0$ and $t-s < 2s + 2^{n+1} - 5$, the map
$$ \Ext_{A_*}^{s,t}(M) \rightarrow \Ext_{A(n)_*}^{s,t}(M) $$
is an isomorphism.
\end{enumerate}
\end{thm}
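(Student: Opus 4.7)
The theorem is Adams' classical periodicity theorem. The plan is to treat (1) by Adams' original Massey-product strategy and to deduce (2) from a change-of-rings spectral sequence.

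For part (1), first handle the case $M = \FF_2$. The key algebraic ingredient is the vanishing line $\Ext^{s,t}_{A(n)_*}(\FF_2) = 0$ below a line of slope $1/3$ in the $(t-s, s)$-plane, established by induction on $n$ via the May spectral sequence associated to the filtration of $A(n)$ by its normal sub-Hopf algebra $A(n-1)$. With the vanishing line in hand, realize $v_1^{2^n}$ as a Massey product $\langle h_{n+1}, h_0^{2^n}, -\rangle$, which is well-defined because $h_0^{2^n} h_{n+1} = 0$ in $\Ext_{A(n)_*}$ (a consequence of the defining relations of $A(n)$). Juggling of Massey products then shows that multiplication by $v_1^{2^n}$ is an isomorphism above the vanishing line. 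To extend to a general $A(0)$-free $M$, choose a filtration of $M$ by $A_*$-subcomodules whose associated graded consists of shifted copies of $(A \mmod A(0))_*$, and observe that both the vanishing line and the periodicity isomorphism are preserved termwise under the resulting spectral sequence converging to $\Ext_{A(n)_*}(M)$.

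For part (2), apply the Cartan--Eilenberg change-of-rings spectral sequence
$$E_2^{p, q, t} = \Ext^{p, t}_{A \mmod A(n)_*}\!\bigl(\Ext^{q, *}_{A(n)_*}(M)\bigr) \Longrightarrow \Ext^{p+q, t}_{A_*}(M),$$
whose $p = 0$ edge map realizes the restriction $\Ext_{A_*}(M) \to \Ext_{A(n)_*}(M)$. The restriction map is an isomorphism in bidegree $(s, t)$ provided $E_2^{p, s-p, t}$ and $E_2^{p, s+1-p, t}$ vanish for $p \ge 1$. The lowest generator of $\Ext^{1, *}_{A \mmod A(n)_*}$ is $h_{n+1}$ in bidegree $(1, 2^{n+1})$, contributing minimum internal degree $t = 2^{n+1}$; combining this with the part-(1) vanishing line applied to $\Ext^{s-p, *}_{A(n)_*}(M)$ yields a lower bound on $t$ for the offending contributions, which one checks forces $t - s \ge 2s + 2^{n+1} - 5$.

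The main obstacle is the inductive establishment of the slope-$1/3$ vanishing line for $\Ext_{A(n)_*}(\FF_2)$ together with the Massey product identities that give the periodicity --- this is the technical heart of Adams' original paper and requires delicate analysis of the May filtration and hidden extensions. By comparison, the reduction from general $A(0)$-free $M$ to $M = \FF_2$ and the change-of-rings argument for part (2) are more formal once the base case is in hand.
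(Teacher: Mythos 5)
The paper does not prove Theorem~\ref{thm:AdamsP}: it is quoted directly from Adams' periodicity paper (via Davis--Mahowald), so there is no in-paper argument to compare yours against. Judged on its own, the overall shape of your sketch (slope-$1/3$ vanishing from the May spectral sequence, a Massey-product-flavored periodicity operator, change-of-rings for part~(2)) is consistent with how this circle of ideas is usually organized, but several of the concrete steps do not work as written.

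The reduction to $M = \FF_2$ is the wrong starting point. The hypothesis that the dual of $M$ is $A(0)$-free is essential, and $\FF_2$ fails it; indeed the paper proves Proposition~\ref{prop:v1ext} precisely by \emph{deducing} the $\FF_2$ case from this theorem applied to $\br{A\mmod A(0)}_*$, at the cost of weakening both lines (from $t-s<3s$ to $t-s<3s-4$, etc.). So the $A(0)$-freeness is what buys the clean bounds, and the theorem is not an easy consequence of the $\FF_2$ case. Relatedly, if you filter an $A(0)$-free $M$, the graded pieces you want are suspensions of $A(0)_*$-type objects, not of $(A\mmod A(0))_*$; and $A(0)$ is not a sub-Hopf-algebra-quotient-stable object, so setting up this filtration at the level of $A_*$-comodules is itself nontrivial. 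Finally, the Massey product $\langle h_{n+1}, h_0^{2^n}, -\rangle$ is not available over $A(n)_*$: since $\xi_1^{2^{n+1}}=0$ in $A(n)_*$, there is no class $h_{n+1}$ in $\Ext^{1}_{A(n)_*}$ to form the bracket with, so the relation ``$h_0^{2^n}h_{n+1}=0$ from the defining relations of $A(n)$'' does not produce a usable operator. Adams instead produces $v_1^{2^n}$ as an honest element of $\Ext^{2^n,3\cdot 2^n}_{A(n)_*}(\FF_2)$ and relates it to the Massey-product operator on $\Ext_{A_*}$ (where $h_{n+1}$ lives but $v_1^{2^n}$ does not); reconciling the two is one of the genuinely delicate points of his argument, not something that can be taken for granted. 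These gaps are all in the direction of needing more of the original machinery, but the outline of part~(2) via the Cartan--Eilenberg spectral sequence is sound once the part-(1) inputs are secured.
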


In other words, $\Ext_{A(n)}$ is $v_1^{2^n}$-periodic above the line of slope $1/3$ through the origin. Further, there is a sequence of lines of slope $1/2$ above which $\Ext_{A_*}$ agrees with $\Ext_{A(n)_*}$. We deduce that above the line of slope $1/3$, every element of $\Ext_{A_*}$ is $v_1^{2^n}$-periodic for some $n$ which depends on which ``band'' of slope $1/2$ it lies in. The typical picture one draws of this (and indeed a similar picture appears in \cite{DavisMahowaldv1}) is:

\begin{center}
\includegraphics[width=0.7\linewidth]{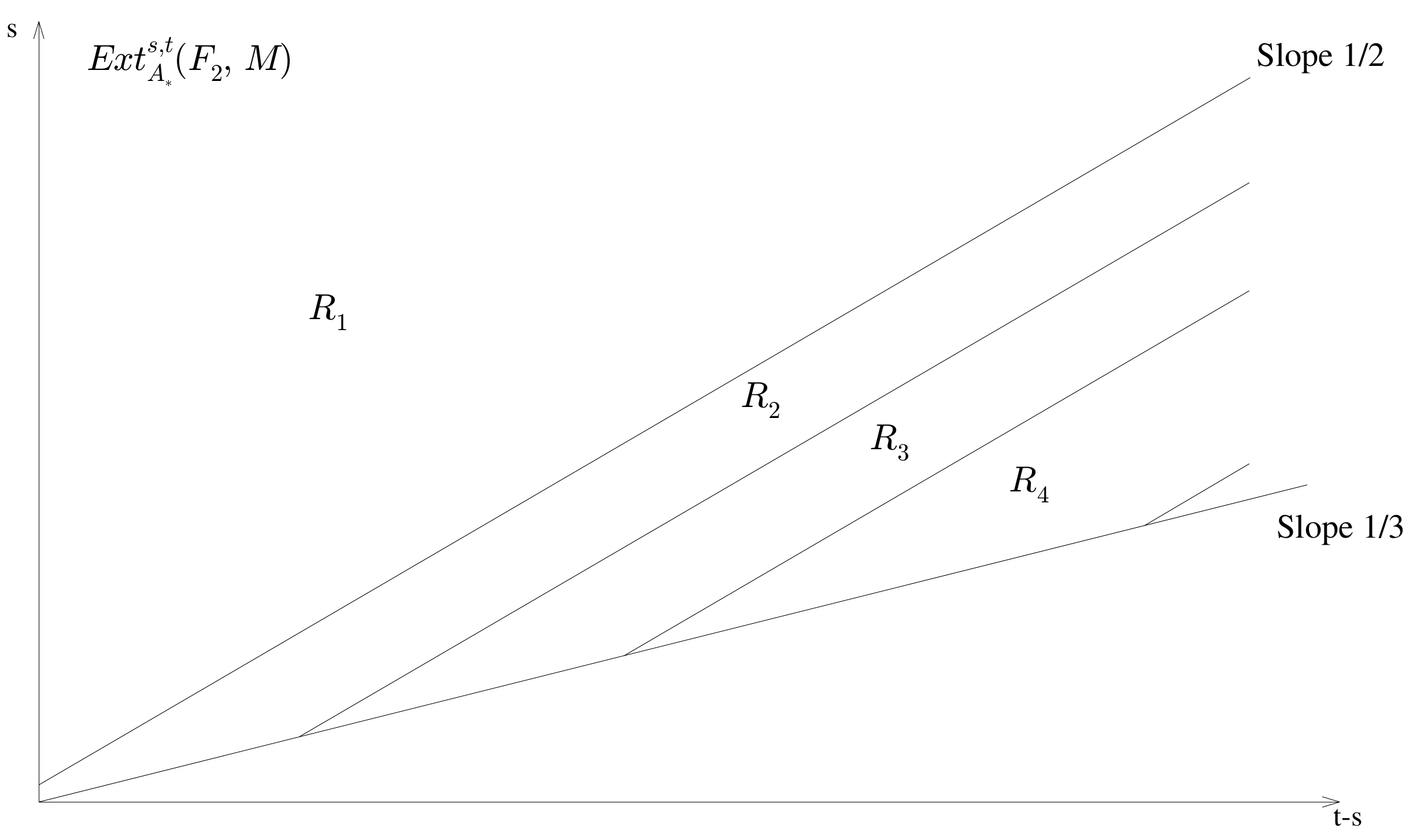}
\end{center}
In the figure above, the Ext groups in the region $R_1$ are all trivial.  In the region $R_n$, for $n \ge 2$ the Ext groups are $v_1^{2^n}$-periodic, and isomorphic to the corresponding $\Ext_{A(n)_*}$-groups.  The corresponding $v_1$-periodic Ext groups take the form:
\begin{center}
\includegraphics[width=0.7\linewidth]{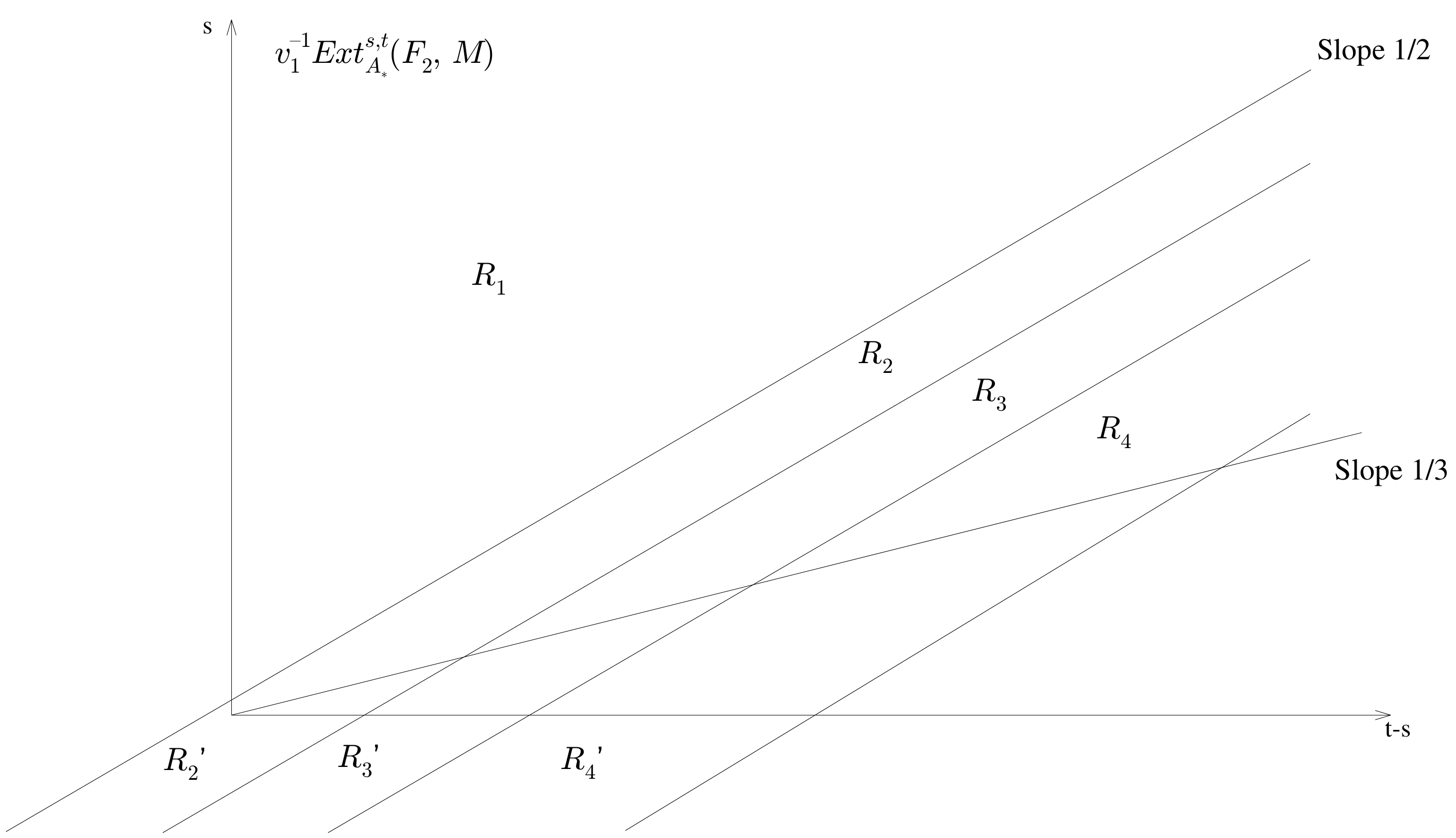}
\end{center}
Here the periodic Ext groups in the region $R_n'$ are obtained by interpolating back the values of the Ext groups of the region $R_n$. In particular, the values of $v_1^{-1}\Ext_{A(m)_*}$ stabilize in the region $R_n \cup R_n'$ for $m \ge n$.

The comodule $\FF_2$ is not free over $A(0)$, but $\br{A\mmod A(0)}_*$ is. The following remark establishes an explicit relationship between their Ext groups.
\begin{rmk}\label{rmk:AmodA0SES}
The short exact sequence
\begin{equation}\label{eq:AmodA0SES}
 0 \rightarrow \FF_2 \xrightarrow{i} A\mmod A(0)_* \xrightarrow{p} \AA0 \rightarrow 0
\end{equation}
induces a long exact sequence of Ext groups.  Since by change of rings we have
\[ \Ext^{*,*}_{A_*}(A \mmod A(0)_*) \cong \Ext^{*,*}_{A(0)_*}(\FF_2) = \FF_2[v_0], \]
the connecting homomorphism
\[ \Ext^{s-1,t}_{A_*}(\AA0) \xrightarrow{\bar{\partial}} \Ext_{A_*}^{s,t}(\FF_2) \]
is an isomorphism for $t \ne s$.  

The short exact sequence (\ref{eq:AmodA0SES}) is the homology of the cofiber sequence
$$ S \rightarrow H\ZZ \rightarrow \br{H\ZZ}. $$ 
\end{rmk}

\begin{prop}\label{prop:v1ext}
$\quad$
\begin{enumerate} 
\item For $ 0 < t -s < 2s+2^{n+1}-6 $,
there are well defined homomorphisms (compatible as $n$ varies)
\[
v_1^{2^n}: \Ext^{s,t}_{A_*}(\FF_2) \rightarrow \Ext^{s+2^n,t+3\cdot 2^n}_{A_*}(\FF_2). 
\]
For $t -s < 3s-4$
these homomorphisms are isomorphisms.

\item The maps
$$ \Ext^{s,t}_{A_*}(\FF_2) \rightarrow \Ext^{s,t}_{A(n)_*}(\FF_2) $$
are isomorphisms for $t-s < -s+2^{n+1}$.
\end{enumerate}
\end{prop}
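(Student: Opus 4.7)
The plan is to reduce both parts to Adams' Theorem~\ref{thm:AdamsP} by exploiting two short exact sequences of $A_*$-comodules: the sequence
\[ 0 \to \FF_2 \to A\mmod A(0)_* \to \AA0 \to 0 \]
of Remark~\ref{rmk:AmodA0SES}, together with the analogous sequence at level $A(n)$,
\[ 0 \to \FF_2 \to A\mmod A(n)_* \to \br{A\mmod A(n)}_* \to 0. \]
The point is that $\AA0$ has $A(0)$-free dual, so Theorem~\ref{thm:AdamsP} applies to it directly, and the connecting homomorphism $\bar{\partial}$ transfers information back to $\FF_2$.

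For part (2), I would combine the second sequence above with the change-of-rings isomorphism $\Ext^{*,*}_{A_*}(A\mmod A(n)_*) \cong \Ext^{*,*}_{A(n)_*}(\FF_2)$; under this identification, the homomorphism appearing in the induced long exact sequence of Ext groups is precisely the comparison map of part (2). Therefore this map is an isomorphism wherever the flanking groups $\Ext^{*, t}_{A_*}(\br{A\mmod A(n)}_*)$ vanish. Inspecting the dual, the lowest-degree generator of $A\mmod A(n)_*$ lying above the unit is $\xi_1^{2^{n+1}}$, in internal degree $2^{n+1}$, so $\br{A\mmod A(n)}_*$ is concentrated in degrees $t \ge 2^{n+1}$. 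Hence the relevant Ext groups vanish for $t < 2^{n+1}$, which is exactly the condition $t-s < -s + 2^{n+1}$.

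For part (1), I would define $v_1^{2^n}$ on $\Ext^{s,t}_{A_*}(\FF_2)$ by conjugating Adams' $v_1^{2^n}$-multiplication on $\Ext_{A(n)_*}(\AA0)$ through two isomorphisms: the connecting isomorphism $\bar{\partial}: \Ext^{s-1, t}_{A_*}(\AA0) \xrightarrow{\cong} \Ext^{s, t}_{A_*}(\FF_2)$ from Remark~\ref{rmk:AmodA0SES} (valid since $t \ne s$ in the range $t-s > 0$), and the comparison isomorphism $\Ext^{s+2^n-1, t+3\cdot 2^n}_{A_*}(\AA0) \xrightarrow{\cong} \Ext^{s+2^n-1, t+3\cdot 2^n}_{A(n)_*}(\AA0)$ of Theorem~\ref{thm:AdamsP}(2). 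Applying Theorem~\ref{thm:AdamsP}(2) at the shifted target bidegree $(s+2^n-1, t+3\cdot 2^n)$ on $\AA0$ gives precisely a bound of the form $t-s < 2s + 2^{n+1} + c$ for an explicit constant $c$, giving the stated range. The sharper iso assertion $t-s < 3s - 4$ is then immediate from Theorem~\ref{thm:AdamsP}(1) applied to $\AA0$ at the shifted source bidegree $(s-1, t)$: the Adams iso range $t - (s-1) < 3(s-1)$ rewrites as $t-s < 3s - 4$. Compatibility as $n$ varies follows from the corresponding compatibility of Adams' elements under $\Ext_{A(n+1)_*} \to \Ext_{A(n)_*}$.

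The main bookkeeping obstacle is tracking three simultaneous shifts, namely the $s \to s-1$ shift of $\bar{\partial}$, the $(s,t) \to (s+2^n, t+3\cdot 2^n)$ shift of $v_1^{2^n}$, and the range of Theorem~\ref{thm:AdamsP}(2); I would also have to verify well-definedness of the composite, i.e., that it is independent of representatives, which in the Adams iso range is automatic because the pullback is unique.
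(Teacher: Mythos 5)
Your approach to part~(1) is essentially the same as the paper's: transfer to $\AA0$ via the connecting isomorphism $\bar{\partial}$ of Remark~\ref{rmk:AmodA0SES} and apply Theorem~\ref{thm:AdamsP}, and your derivation of the isomorphism range $t-s < 3s-4$ by shifting Theorem~\ref{thm:AdamsP}(1) to bidegree $(s-1,t)$ is exactly right. One caution, though: you assert without computing that tracking Theorem~\ref{thm:AdamsP}(2) at the shifted target $(s+2^n-1,\, t+3\cdot 2^n)$ on $\AA0$ "gives the stated range,'' but the arithmetic there actually yields $t-s < 2s+2^{n+1}-8$ rather than $-6$, so the constant deserves a second look before claiming the bound as stated.

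For part~(2) you take a genuinely different route from the paper. The paper's argument is a one-liner: the projection $A_* \to A(n)_*$ is an isomorphism in degrees less than $2^{n+1}$ (its kernel begins with $\zeta_1^{2^{n+1}}$), so the induced map of cobar complexes is an isomorphism in internal degree $t<2^{n+1}$ and hence so is the map on $\Ext$. Your argument, via the short exact sequence $0\to\FF_2\to A\mmod A(n)_*\to\br{A\mmod A(n)}_*\to 0$, change of rings, and the $(2^{n+1}-1)$-connectivity of $\br{A\mmod A(n)}_*$, is also valid and lands in the same place; it is simply heavier machinery for the same elementary fact, though it has the pedagogical merit of mirroring the $A(0)$-case of Remark~\ref{rmk:AmodA0SES}. (Cosmetic: the bottom class of $\br{A\mmod A(n)}_*$ is $\zeta_1^{2^{n+1}}$, the conjugate generator, rather than $\xi_1^{2^{n+1}}$; the internal degree is the same, so your connectivity estimate stands.)
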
  

\begin{proof}
Statement (1) follows from the isomorphism of Remark~\ref{rmk:AmodA0SES} by applying Theorem~\ref{thm:AdamsP} to the comodule $\br{A\mmod A(0)}_*$, which is free over $A(0)$. Statement (2) follows from the fact that in degrees less than $2^{n+1}$, the map
$$ A_* \rightarrow A(n)_* $$
is an isomorphism.
\end{proof}

\begin{rmk}\label{rmk:onefifth}
The line $t -s < 3s-2$ of Proposition~\ref{prop:v1ext} can be upgraded to a line of slope $1/5$ (see \cite[Thm~3.4.6]{Ravenel}).
\end{rmk}

Proposition~\ref{prop:v1ext} allows to define $v_1^{-1}\Ext_{A_*}(\FF_2)$ by interpolating $v_1$-periodic families backwards across the slope $1/3$ line. 
\footnote{There is one exception: the $v_0$-tower in $t-s = 0$ is $v_1$-periodic, with ``infinite period''.}

\begin{cor}\label{cor:stabilization}
For fixed $s,t$, the map
\[ v_1^{-1}\Ext^{s,t}_{A_*}(\FF_2) \rightarrow \Ext^{s,t}_{A(n)_*}(\FF_2) \]
is an isomorphism for $n \gg 0$.
\end{cor}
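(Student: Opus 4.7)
The plan is to reduce the statement to the $A(0)$-free comodule $\AA0$, where both parts of Theorem~\ref{thm:AdamsP} apply directly, and then to show that the inverse system defining $v_1^{-1}\Ext^{s,t}_{A_*}(\FF_2)$ stabilizes at a single $n \gg 0$.

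First, I would use Remark~\ref{rmk:AmodA0SES} to transfer the problem from $\FF_2$ to $\AA0$. The connecting map of the short exact sequence
\[ 0 \to \FF_2 \to A \mmod A(0)_* \to \AA0 \to 0 \]
gives an isomorphism $\Ext^{s-1,t}_{A_*}(\AA0) \cong \Ext^{s,t}_{A_*}(\FF_2)$ for $s \ne t$, and the analogous iso holds over each $A(n)_*$, via the same change-of-rings calculation $\Ext^{*,*}_{A(n)_*}(A \mmod A(0)_*) \cong \FF_2[v_0]$. These connecting maps are $v_1^{2^n}$-equivariant, since the Adams classes $v_1^{2^n} \in \Ext_{A(n)_*}$ act compatibly on both sides via Yoneda pairing, so they descend to localizations. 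The exceptional case $s = t$ --- i.e.\ the $v_0$-tower --- is handled by direct inspection, as it is $v_1$-periodic with infinite period and is preserved by the comparison maps.

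For the $A(0)$-free comodule $\AA0$, fix $(s,t)$. By Theorem~\ref{thm:AdamsP}(1), $v_1^{2^n}$ acts as an isomorphism on $\Ext^{s',t'}_{A(n)_*}(\AA0)$ whenever $t' - s' < 3s'$. Iterating, the colimit
\[ v_1^{-1}\Ext^{s,t}_{A(n)_*}(\AA0) \;=\; \colim_k \Ext^{s+2^n k,\, t+3\cdot 2^n k}_{A(n)_*}(\AA0) \]
stabilizes at the first $k_0$ with $(s + 2^n k_0, t + 3\cdot 2^n k_0)$ above the $1/3$ line, and below the line this stable value is transported back into bidegree $(s,t)$ via the same Adams periodicity iso, identifying $v_1^{-1}\Ext^{s,t}_{A(n)_*}(\AA0)$ with $\Ext^{s,t}_{A(n)_*}(\AA0)$. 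Now apply Theorem~\ref{thm:AdamsP}(2) to choose $n$ large enough that the comparison map $\Ext^{s',t'}_{A_*}(\AA0) \to \Ext^{s',t'}_{A(n)_*}(\AA0)$ is an iso simultaneously at $(s,t)$ and at the finitely many shifted bidegrees $(s + 2^n k, t + 3\cdot 2^n k)$ with $k \le k_0$. For such $n$, the transition maps in $\{v_1^{-1}\Ext^{s,t}_{A(m)_*}(\AA0)\}_m$ are isomorphisms, so the inverse limit collapses to $\Ext^{s,t}_{A(n)_*}(\AA0)$, and translating back via the connecting iso gives the corollary.

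The principal obstacle is coordinating the two interacting Adams periodicity ranges: the $v_1^{2^n}$-periodicity range $t - s < 3s$, which is fixed and independent of $n$, and the stable comparison range $t - s < 2s + 2^{n+1} - 5$, which expands with $n$. For fixed $(s,t)$, one must pick a single $n \gg 0$ that brings all finitely many bidegrees appearing in the colimit into the stable comparison range; this is precisely the quantitative content of ``$n \gg 0$.'' A secondary subtlety is verifying that the identification $v_1^{-1}\Ext^{s,t}_{A(n)_*}(\AA0) \cong \Ext^{s,t}_{A(n)_*}(\AA0)$ survives below the $1/3$ line -- i.e.\ that the Adams periodicity reflections genuinely exhaust the target, which ultimately comes down to the fact that $\AA0$ is $A(0)$-free so that Theorem~\ref{thm:AdamsP}(1) provides genuine isomorphisms rather than just injections.
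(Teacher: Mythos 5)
The overall strategy (reduce to $\AA0$ via Remark~\ref{rmk:AmodA0SES}, then use Adams periodicity) is the right one, and matches the paper's implicit reasoning, but there is a genuine gap at the crucial step. You identify $v_1^{-1}\Ext^{s,t}_{A(n)_*}(\AA0)$ with $\Ext^{s,t}_{A(n)_*}(\AA0)$ by ``transporting back'' the stable value above the $1/3$-line into the original bidegree $(s,t)$ ``via the same Adams periodicity iso.'' But Theorem~\ref{thm:AdamsP}(1) only asserts that $v_1^{2^n}\colon\Ext^{s',t'}_{A(n)_*}(\AA0)\to\Ext^{s'+2^n,t'+3\cdot 2^n}_{A(n)_*}(\AA0)$ is an isomorphism when $t'-s'<3s'$, i.e.\ at source bidegrees that are \emph{already} above the $1/3$-line. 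For a bidegree $(s,t)$ below the $1/3$-line (which is most of the range of interest), the theorem gives you no control over the map out of $(s,t)$, so you cannot reverse the colimit identification to conclude that the localization map $\Ext^{s,t}_{A(n)_*}(\AA0)\to v_1^{-1}\Ext^{s,t}_{A(n)_*}(\AA0)$ is an isomorphism.

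Your closing paragraph does flag this issue, but then resolves it incorrectly: you attribute the rescue to ``the fact that $\AA0$ is $A(0)$-free so that Theorem~\ref{thm:AdamsP}(1) provides genuine isomorphisms rather than just injections.'' The $A(0)$-freeness of $\AA0$ is what makes Theorem~\ref{thm:AdamsP} applicable at all; it does not remove the hypothesis $t-s<3s$. What is actually needed is the sharper form of Adams periodicity (as in \cite[Thm.~3.4.6]{Ravenel}, alluded to in Remark~\ref{rmk:onefifth}), in which the boundary of the periodicity region for $v_1^{2^n}$ has slope decreasing as $n\to\infty$. With that refinement, for any fixed $(s,t)$ with $t-s>0$, choosing $n\gg0$ places $(s,t)$ inside the $v_1^{2^n}$-periodicity region for $\Ext_{A(n)_*}(\AA0)$, so the periodicity operator out of $(s,t)$ is itself an isomorphism and your transport argument goes through. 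Without invoking that refined statement, the argument as written does not establish the corollary.
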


\subsection*{The $v_1$-periodic $\bo$-MSS}

Davis and Mahowald \cite{DavisMahowaldv1} considered a $v_1$-periodic version of the $\bo$-MSS:
\[ v_1^{-1} \! \E{bo}{alg}_1^{n,s,t}(M) = v_1^{-1} \Ext^{s,t}_{A(1)_*}(\br{A\mmod A(1)}^{n}_* \otimes M) \Rightarrow v_1^{-1}\Ext^{s+n,t}_{A_*}(M). \]
They use the stabilization of $v_1^{-1}\Ext_{A(n)_*}$ in fixed bidegrees to prove the convergence of this spectral sequence for comodules $M$ which are free over $A(0)$.  Corollary~\ref{cor:stabilization} implies that the Davis-Mahowald convergence argument also applies to the case where $M = \FF_2$.

\begin{lem}\label{lem:v1E2boalg}
The maps
\[
v_1^{-1} \! \E{bo}{alg}^{n,s,t}_2 \xrightarrow{v_1^{-1}g_{alg}} H^{n,s,t}(v_1^{-1} \mc{C}_{alg})
\]
are isomorphisms.
\end{lem}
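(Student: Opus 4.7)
The plan is to exploit the Observation from the introduction: the algebraic evil complex $V_{alg}^{*,*,*}$ is concentrated in cohomological Adams degree $s=0$. Feeding this vanishing into the long exact sequence~(\ref{eq:algLES}) immediately shows that for $s \neq 0$, both $H^{n,s,t}(V_{alg})$ and $H^{n+1,s,t}(V_{alg})$ are zero, so the sequence collapses to an isomorphism
\[
g_{alg} : \E{bo}{alg}_2^{n,s,t} \xrightarrow{\ \cong\ } H^{n,s,t}(\mc{C}_{alg}), \qquad s \neq 0.
\]
Thus $g_{alg}$ is already an isomorphism off the $s=0$ line before any $v_1$-localization enters the picture.

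Next, I would verify that $g_{alg}$ is equivariant with respect to multiplication by a sufficiently high power $v_1^{2^k} \in \Ext^{2^k, 3\cdot 2^k}_{A(1)_*}(\FF_2)$ (which raises the Adams degree by $2^k > 0$). The map $g_{alg}$ is induced on cohomology by the projection $\E{bo}{alg}_1 \to \mc{C}_{alg}$ coming from the splitting $\E{bo}{alg}_1 = V \oplus \mc{C}_{alg}$, so the equivariance reduces to showing that $v_1^{2^k} \cdot V = 0$ inside $\E{bo}{alg}_1$. This follows by identifying $V_I \subseteq \Ext_{A(1)_*}(\ull{B_I})$ with $\Ext_{A(1)_*}(\ull{HV_I})$ via the Mahowald--Milgram splitting $\bo \wedge B_I \simeq b_I \vee HV_I$ of $\bo$-module spectra: because $HV_I$ is an Eilenberg--MacLane spectrum, its Ext is concentrated in $s=0$, so multiplication by $v_1^{2^k}$ sends $V_I$ into $\Ext^{2^k,*}_{A(1)_*}(\ull{HV_I}) = 0$.

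Finally, I would invoke the formal fact that $v_1$-localization is a filtered colimit and therefore commutes with cohomology, and write
\[
(v_1^{-1} M)^{n,s,t} \;=\; \varinjlim_{\ell}\, M^{n,\, s + \ell \cdot 2^k,\, t + \ell \cdot 3 \cdot 2^k}
\]
for $M = \E{bo}{alg}_2$ or $M = H(\mc{C}_{alg})$. For every $\ell \geq 1$ the transition map lands in Adams degree $s + \ell \cdot 2^k > 0$, where the first step has already produced an isomorphism between the two sides. Since a filtered colimit of isomorphisms is an isomorphism, $v_1^{-1} g_{alg}$ is an isomorphism in every tri-degree.

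The main obstacle I anticipate is the $v_1$-equivariance verification in the second step: one must promote the direct sum decomposition $\E{bo}{alg}_1 = V \oplus \mc{C}_{alg}$ from a decomposition of cochain complexes to a decomposition of $\FF_2[v_1^{2^k}]$-modules, which ultimately rests on the Mahowald--Milgram splitting being one of $\bo$-modules (so that $V_I$ is stable under the Ext action of $\Ext_{A(1)_*}(\FF_2)$). Once that is granted the rest of the argument is formal manipulation of long exact sequences and filtered colimits.
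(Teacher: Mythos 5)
Your proof is correct and rests on the same key fact as the paper's proof: the evil subcomplex $V^{n,*}$ is $v_1$-torsion (equivalently, concentrated in $s = 0$ while $v_1$-multiplication strictly raises $s$ within a $\bo$-module splitting). The paper's version is shorter — it observes directly that $v_1^{-1}V = 0$, hence $v_1^{-1}\E{bo}{alg}_1 \cong v_1^{-1}\mc{C}_{alg}$ as complexes, and concludes by exactness of localization; your version unwinds the localization as a filtered colimit, reaches the $s > 0$ region where $g_{alg}$ is an isomorphism for trivial reasons, and arrives at the same conclusion.
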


\begin{proof}
The evil subcomplex
\[ V^{n,*} \subseteq \Ext^{0,*}_{A(1)_*}(\br{A\mmod A(1)}^{n}_*) \]
is $v_1$-torsion, so we have an isomorphism
\[ v_1^{-1} \! \E{bo}{alg}_1^{n,s,t} = v_1^{-1} \Ext^{s,t}_{A(1)_*}(\br{A\mmod A(1)}^{n}_*) \xrightarrow{\cong} v_1^{-1}\mc{C}^{n,s,t}_{alg} \]
and hence an isomorphism
\[ v_1^{-1} \! \E{bo}{alg}^{n,s,t}_2 \xrightarrow{\cong}  H^{n,s,t}(v_1^{-1}\mc{C}_{alg}). \qedhere \]
\end{proof}

\section{Comparison with $\br{A\mmod A(0)}$}\label{sec:AA0}

We would like to leverage the results of Section~\ref{sec:v1} to relate $v_1$-periodicity in $\Ext_{A_*}(\FF_2)$ to being ``good'' in a precise manner.  However, as we saw in Section~\ref{sec:v1}, the groups 
$$ \Ext^{s,t}_{A_*}(\AA0) $$
have much better behaved $v_1$-periodic phenomena. For instance, there is a sequence of lines $L_n$ of slope 1/2 above which the Ext groups are entirely $v^{2^n}_1$-periodic and simultaneously isomorphic to the corresponding $\Ext_{A(n)_*}$ groups, where $v_1^{2^n}$ is well defined.

While $\AA0$ and $\FF_2$ have essentially the same Ext groups (see Remark~\ref{rmk:AmodA0SES}), their respective $\bo$-MSS's differ.  In this section, we give a complete dictionary between these two $\bo$-MSS's.  This will allow us to transport results on $v_1$-periodicity from $\AA0$ to $\FF_2$.  There is an added bonus: since the two $\bo$-MSS's are different, we will be able to deduce hidden extensions or differentials in one from non-hidden extensions or differentials \emph{of different length} in the other.

Given any $A_*$-comodule $M$, we can consider the associated $\bo$-MSS
$$ \E{bo}{alg}_1^{n,s,t}(M) = \Ext^{s,t}_{A(1)_*}(\br{A\mmod A(1)}^{n}_* \otimes M) \Rightarrow \Ext^{s+n,t}_{A_*}(M). $$
Suppose that the map 
$$ \E{bo}{alg}_1^{n,s,t}(M) \rightarrow v_1^{-1} \E{bo}{alg}_1^{n,s,t}(M) $$
is injective for $s > 0$.  Then we will define $V^{n,t}(M)$ to be the
kernel of the above map for $s = 0$.  Just as in the case of $M = \FF_2$, the evil subgroup $V^{*,*}(M)$ is a subcomplex with respect to $d_1^{bo,alg}$, and we define the good complex 
$$ (\mc{C}_{alg}^{*,*,*}(M), d_1^{good, alg}) $$ 
to be the quotient complex.  Then there is a long exact sequence
\begin{multline}\label{eq:LESM}
 \cdots \rightarrow H^{n,k}(V(M)) \rightarrow \E{bo}{alg}_2^{n,0,k}(M) \xrightarrow{g^M_{alg}} H^{n,0,k}(\mc{C}_{alg}(M)) \\ \xrightarrow{\partial^M_{alg}} H^{n+1,k}(V(M)) \rightarrow \cdots
\end{multline}

The following lemma is an algebraic analog of the ``generalized connecting homomorphism theorem'' \cite[Thm~2.3.4]{Ravenel}, and its proof is identical to the topological case.

\begin{lem}[Connecting Homomorphism Lemma]\label{lem:MSSSES}
Associated to the short exact sequence
$$ 0 \rightarrow \FF_2 \xrightarrow{i} {A \mmod A(0)_*} \xrightarrow{p} \AA0 \rightarrow 0 $$
we have a sequence of spectral sequences
$$ 
\xymatrix@C-1.5em{
\E{bo}{alg}_1^{n,s,t}(\FF_2) \ar[r]^-{i_*} \ar@{=>}[d] &
\E{bo}{alg}^{n,s,t}_1({A\mmod A(0)_*}) \ar[r]^-{p_*} \ar@{=>}[d] &
\E{bo}{alg}^{n,s,t}_1(\br{A\mmod A(0)}_*) \ar[r]^-{\partial} \ar@{=>}[d] &
\E{bo}{alg}^{n,s+1,t}_1(\FF_2)  \ar@{=>}[d]
\\
\Ext^{n+s, t}_{A_*}(\FF_2) \ar[r]_-{i_*} &
\Ext^{n+s, t}_{A_*}(A\mmod A(0)_*) \ar[r]_-{p_*} &
\Ext^{n+s, t}_{A_*}(\AA0) \ar[r]_-{\bar{\partial}} &
\Ext^{n+s+1, t}_{A_*}(\FF_2) 
}
$$
The top and bottom rows are long exact sequences.  The map $\partial$ induces a map of spectral sequences, converging to $\bar{\partial}$.
\end{lem}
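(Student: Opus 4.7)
The plan is to follow the standard snake-lemma-for-spectral-sequences construction, as in Ravenel Theorem 2.3.4. The key observation is that the comodules $A\mmod A(1)_*$ and $\br{A\mmod A(1)}_*$ are $\FF_2$-vector spaces, so tensoring over $\FF_2$ with any iterate $\br{A\mmod A(1)}^n_*$ preserves exactness. Consequently, tensoring the given short exact sequence $0 \to \FF_2 \to A\mmod A(0)_* \to \AA0 \to 0$ with each stage of the $\bo$-MSS Adams tower yields, for each $n$, a short exact sequence
$$ 0 \to \br{A\mmod A(1)}^n_* \to \br{A\mmod A(1)}^n_* \otimes A\mmod A(0)_* \to \br{A\mmod A(1)}^n_* \otimes \AA0 \to 0 $$
of $A_*$-comodules.

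Applying $\Ext^{s,t}_{A_*}(-)$ and invoking the change of rings isomorphism produces the long exact sequence in $s$ displayed as the top row of the lemma; its connecting homomorphism is the claimed map $\partial$. The bottom row is simply the long exact sequence of Ext groups associated to the original short exact sequence. To see that $\partial$ assembles to a morphism of spectral sequences, I would verify compatibility with $d_1^{bo,alg}$. The point is that $d_1^{bo,alg}$ is itself (essentially) a connecting homomorphism, arising from the ``perpendicular'' short exact sequences
$$ 0 \to \br{A\mmod A(1)}^n_* \to A\mmod A(1)_* \otimes \br{A\mmod A(1)}^n_* \to \br{A\mmod A(1)}^{n+1}_* \to 0 $$
of the paper's $\bo$-MSS construction, and two connecting homomorphisms arising from perpendicular short exact sequences in a $3 \times 3$ diagram always commute (up to sign, hence automatically at $p=2$) by a standard diagram chase. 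Taking homology then inductively promotes $\partial$ to a map on all higher pages $E_r$.

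For convergence of $\partial$ to $\bar{\partial}$, I would compare abutment filtrations: both $\bo$-MSS's converge via the same filtration by Adams-tower position on the abutment, and $\bar{\partial}$ respects this filtration because its cobar-level representative only sees classes modulo deeper tower stages. Consequently, the associated graded of $\bar{\partial}$ with respect to $\bo$-MSS filtration agrees with $\partial_\infty$. The main (mild) obstacle I anticipate is bookkeeping --- tracking degree and sign conventions across the two independent long-exact-sequence constructions and verifying that the edge identifications are compatible --- but this is precisely the content of Ravenel's topological argument, which transfers verbatim to the present algebraic setting since all the maps in sight are induced by maps of comodules.
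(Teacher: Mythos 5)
Your proposal is correct, and it is essentially the argument the paper has in mind: the paper itself gives no details, merely citing Ravenel's generalized connecting homomorphism theorem \cite[Thm~2.3.4]{Ravenel} and asserting the algebraic translation is immediate; your construction of the tensored short exact sequences, the $3\times 3$ anticommutativity of perpendicular connecting homomorphisms (harmless at $p=2$), and the filtration-compatibility of $\bar{\partial}$ via the same $3\times 3$ argument is exactly the intended fleshing-out. One small imprecision: your closing claim that Ravenel's topological argument ``transfers verbatim since all the maps in sight are induced by maps of comodules'' is not quite right --- $\bar{\partial}$ is a connecting homomorphism, not induced by any comodule map (this is exactly the disanalogy with the topological $Y \to \Sigma W$), which is why the $3\times 3$ diagram chase you give earlier in the proof is genuinely needed rather than mere functoriality.
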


\subsection*{The $\pmb\bo$-MSS for $\pmb{A \mmod A(0)}$}

We now study the spectral sequence
$$ \E{bo}{alg}^{*,*,*}_1(A\mmod A(0)_*) \Rightarrow \Ext^{*,*}_{A_*}(A \mmod A(0)_*) \cong \FF_2[v_0]. $$
The next lemma computes the ``good'' part of the $E_1$-term.

\begin{lem}\label{lem:AmodA0good}
We have
\[\mc{C}_{alg}^{n,*,*}(A \mmod A(0)_*) \cong \FF_2[v_0, w] \otimes \FF_2[\zeta_1^{4}]^{\otimes n}  \]
where $w = v_1^2/v_0^2$.
\end{lem}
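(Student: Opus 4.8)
The strategy is to reduce the statement, via the splitting (\ref{eq:algsplitting}) and change of rings, to a computation involving $\bo\s B_I\s\HZ$, and then to isolate the $v_1$-periodic (``good'') summand. First, combining (\ref{eq:algsplitting}) with the change of rings isomorphism $\Ext_{A_*}(A\mmod A(1)_*\otimes N)\cong\Ext_{A(1)_*}(N)$ gives
$$
\E{bo}{alg}_1^{n,*,*}(A\mmod A(0)_*)=\bigoplus_{\abs I=n}\Si^{4\norm I}\Ext_{A_*}\!\big(\ull{B_I}\otimes A\mmod A(0)_*\otimes A\mmod A(1)_*\big)=\bigoplus_{\abs I=n}\Si^{4\norm I}\E{ass}{}_2(\bo\s B_I\s\HZ),
$$
using $H_*\bo=A\mmod A(1)_*$, $H_*B_I=\ull{B_I}$, $H_*\HZ=A\mmod A(0)_*$. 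Applying the homotopy splitting $\bo\s B_I\simeq b_I\vee HV_I$, this becomes $\bigoplus_{\abs I=n}\Si^{4\norm I}\big(\E{ass}{}_2(b_I\s\HZ)\oplus\E{ass}{}_2(HV_I\s\HZ)\big)$, and by definition $\mc C_{alg}^{n,*,*}(A\mmod A(0)_*)$ is obtained from this by discarding the $v_1$-torsion in filtration $0$.

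Second, $HV_I\s\HZ$ is a wedge of mod $2$ Eilenberg--MacLane spectra, so its $E_2$-term is concentrated in Adams filtration $0$ and is killed by $v_1$; the same will be true of the $\HF2$-wedge summands of $b_I\s\HZ$ produced below. These pieces contribute only to the evil subcomplex $V^{*,*}(A\mmod A(0)_*)$, and one checks in passing that the hypothesis that $\E{bo}{alg}_1^{*,*}(A\mmod A(0)_*)\to v_1^{-1}\E{bo}{alg}_1^{*,*}(A\mmod A(0)_*)$ is injective for $s>0$ holds, since these terms live only in filtration $0$.

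Third, and this is the heart of the argument, one analyzes $b_I\s\HZ$ for $b_I$ an Adams cover of $\bo$ or of $\bsp$. The Margolis homology of $H_*\bo=A\mmod A(1)_*$ (and of $H_*\bsp$) with respect to $Q_0$ is the polynomial algebra $\FF_2[\zeta_1^4]$ on a single degree $4$ class, so $H_*(\bo;\ZZ)$ splits as $\ZZ_{(2)}[\zeta_1^4]$ together with a sum of $\ZZ/2$'s; since Adams covers are rational equivalences and $(\bo\s\HZ)\otimes\QQ\simeq\bigvee_{j\ge 0}\Si^{4j}H\QQ$, one gets $b_I\s\HZ\simeq\big(\bigvee_{j\ge 0}\Si^{4j}\HZ\big)\vee\big(\bigvee_k\Si^{d_k}\HF2\big)$. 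The $\HF2$-summands are evil; the $\HZ$-summands contribute $\bigoplus_{j\ge 0}\Si^{4j}\FF_2[v_0]=\FF_2[v_0,w]$ to $\E{ass}{}_2(b_I\s\HZ)$ (with $w=v_1^2/v_0^2$ the degree-$4$ periodicity operator), on which $v_1$ acts injectively, so this is precisely the good summand attached to the index $I$. Hence
$$
\mc C_{alg}^{n,*,*}(A\mmod A(0)_*)\cong\bigoplus_{\abs I=n}\Si^{4\norm I}\FF_2[v_0,w],
$$
and recording, for $I=(i_1,\dots,i_n)$, the shift $\Si^{4\norm I}$ as the monomial $(\zeta_1^4)^{i_1}\otimes\cdots\otimes(\zeta_1^4)^{i_n}$ in the $n$ tensor slots of $\br{A\mmod A(1)}^n_*$ identifies this with $\FF_2[v_0,w]\otimes\FF_2[\zeta_1^4]^{\otimes n}$.

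\textbf{Main obstacle.} The delicate point is the Adams-cover bookkeeping together with this final re-indexing: one must verify that the Adams-cover twist does not affect the torsion-free (hence good) part of $b_I\s\HZ$ (this is exactly what rationalization gives), and that the internal-degree shifts $\Si^{4\norm I}$, with multiplicities $\#\{I:\abs I=n,\ \norm I=m\}$, reassemble exactly into the polynomial ring $\FF_2[v_0,w]\otimes\FF_2[\zeta_1^4]^{\otimes n}$ with $w$ tracking the periodicity of $\bo\s\HZ$ and the $n$ copies of $\zeta_1^4$ tracking the bottom classes of the $n$ factors of $\br{A\mmod A(1)}_*$. An alternative, more algebraic route that I expect to be cleaner: $\br{A\mmod A(1)}^n_*\otimes A\mmod A(0)_*$ is a free module over the trivial sub-comodule $\FF_2[\zeta_1^4]^{\otimes n}$ acting through the $n$ tensor factors, so both $\Ext_{A(1)_*}$ and the passage to good classes factor through $\FF_2[\zeta_1^4]^{\otimes n}$; this reduces the statement to the case $n=0$, i.e.\ to showing that the good part of $\Ext_{A(1)_*}(A\mmod A(0)_*)$ is $\FF_2[v_0,w]$, which is exactly the $Q_0$-Margolis homology computation above together with the observation that the remaining classes of $\Ext_{A(1)_*}(A\mmod A(0)_*)$ (the ones coming from the $\HF2$-summands of $\bo\s\HZ$) are $v_1$-torsion.
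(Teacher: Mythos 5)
Your overall strategy is genuinely different from the paper's, which stays purely algebraic: it applies change of rings twice to rewrite $\E{bo}{alg}_1^{n,*,*}(A \mmod A(0)_*)$ as $\Ext_{A(0)_*}$ of $A\mmod A(1)_*$ tensored with the $n$-fold power of $A\mmod A(1)_*$, and then reads off the $v_0$-towers from the $Q_0^*$-Margolis homology, computed at once by K\"unneth from $Q_0^*\zeta_i=\zeta_{i-1}^2$; no Brown--Gitler splitting, Adams covers, or Eilenberg--MacLane wedge decompositions are needed. Your route through $\bigoplus_{\abs{I}=n}\E{ass}{}_2(\Si^{4\norm{I}}\bo\s B_I\s H\ZZ)$ can be made to work, but it has a genuine gap exactly at the point you flag as delicate: the assertion $b_I\s H\ZZ\simeq\bigl(\bigvee_{j\ge0}\Si^{4j}H\ZZ\bigr)\vee\bigl(\bigvee_k\Si^{d_k}H\FF_2\bigr)$ does not follow from the two facts you cite. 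Rationalization only identifies the free part of $H_*(b_I;\ZZ)$, and the Margolis computation you quote is for $H_*\bo$, not for the covers $b_I$ (which are wedge summands of $\bo\s B_I$, with different mod~$2$ homology). What must be excluded is higher torsion: an $H\ZZ/2^k$ wedge summand with $k\ge2$ would contribute a truncated $v_0$-tower reaching filtration $s\ge1$ that is $v_1$-torsion, which would both violate the injectivity hypothesis needed to define $V^{n,*}(A\mmod A(0)_*)$ and change the good quotient you are computing.

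The claim is true, but needs its own argument, e.g.\ a Margolis computation for $b_I$ itself: $M(H_*b_I;Q_0^*)=M(A\mmod A(1)_*\otimes\ull{B_I};Q_0^*)$ because $H_*(HV_I)$ is a sum of copies of $A_*$, whose $Q_0^*$-Margolis homology vanishes; by K\"unneth this is $\FF_2[\zeta_1^4]\otimes M(\ull{B_I};Q_0^*)$; and $M(\ull{B_I};Q_0^*)=\FF_2$ since $Q_0^*$ preserves weight on $A\mmod A(0)_*$, whose Margolis homology is concentrated in weight $0$, so every positive-weight component (hence everything in $\ull{B_i}$ above the unit) is acyclic. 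Comparing with the rational count then shows the torsion of $H_*(b_I;\ZZ)$ has exponent $2$ and the free rank is one in each degree divisible by $4$, which is exactly your wedge decomposition; the paper's second change of rings sidesteps the covers $b_I$ altogether, which is why its proof is shorter. Granting this, your endgame is fine: the re-indexing of $\bigoplus_{\abs{I}=n}\Si^{4\norm{I}}\FF_2[v_0,w]$ as $\FF_2[v_0,w]\otimes\FF_2[\zeta_1^4]^{\otimes n}$ is the same identification the paper makes. Your closing ``alternative algebraic route'' is in the paper's spirit, but the freeness of $\br{A\mmod A(1)}^n_*$ over $\FF_2[\zeta_1^4]^{\otimes n}$ that you invoke is neither obvious nor what the paper uses; the reduction to a rank-one computation there is simply K\"unneth for Margolis homology after the second change of rings.
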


\begin{proof}
By change of rings, we have
\begin{align*}
 \E{bo}{alg}_1^{n,*,*}(A \mmod A(0)_*) & \cong 
 \Ext^{*,*}_{A(1)_*} ((A \mmod A(1)_*)^{\otimes n} \otimes A\mmod A(0)_*) \\
 & \cong \Ext^{*,*}_{A_*} (A \mmod A(1)_* \otimes (A \mmod A(1)_*)^{\otimes n} \otimes A\mmod A(0)_*) \\
 & \cong \Ext^{*,*}_{A(0)_*} (A \mmod A(1)_* \otimes (A \mmod A(1)_*)^{\otimes n})
 \end{align*}
 The latter is computed using Margolis homology. Using the fact that in 
 \[ A\mmod A(1)_* \cong \FF_2[\zeta_1^{4}, \zeta_2^{2}, \zeta_3, \ldots  ] \]
 we have
 \[ Q_0^* \zeta_i = \zeta^2_{i-1} \] 
 we deduce that the Margolis homology is given by
\[ H_*((A \mmod A(1)_*)^{\otimes n+1}; Q_0^*) = \FF_2[\zeta_1^{4}]^{\otimes n+1}. \]
Identifying the first factor of $\FF_2[\zeta^4_1]$ with $\FF_2[w]$,
 we have
\[ \E{bo}{alg}_1^{n, *, *} = \FF_2[v_0, w] \otimes \FF_2[\zeta_1^{4}]^{\otimes n} \oplus V^{n,*}(A \mmod A(0)_*). \qedhere \]
\end{proof}

Just as in Section~\ref{sec:alg}, we will use the notation 
\[ v_0^i w^m (i_1, \ldots, i_n) := v_0^i w^m \zeta_1^{4i_1}\otimes  \cdots \otimes \zeta_1^{4i_n} \]
to denote a generic element of $\E{bo}{alg}_1(A \mmod A(0)_*)$.
From the map of good complexes
\[ i_* : \mc{C}_{alg}^{*,*,*}(\FF_2) \rightarrow \mc{C}_{alg}^{*,*,*}(A \mmod A(0)_*) \]
(and the fact that $d_1^{good, alg}$ is $v_0$-linear), we find the formula for $d_1^{good, alg}$ is identical to that of Proposition~\ref{prop:d1alg}.
Let
\[ \E{wss}{alg}_0^{n, s, t, w}(A \mmod A(0)_*) \Rightarrow H^{n, s, t}(\mc{C}_{alg}(A \mmod A(0)_*)) \]
denote the associated algebraic weight spectral sequence.  Just as in Proposition~\ref{prop:E1wssalg}, we have 
\[ \E{wss}{alg}^{*,*,*,*}_1(A \mmod A(0)_*) = \FF_2[v_0, w, h_2, h_3, \cdots ]. \]
Just as in Lemma~\ref{lem:drwssalg}, the non-trivial differentials in the algebraic weight spectral sequence are given by
\begin{align*}
d_{2^{r-2}}^{wss, alg}(w^{2^{r-2} a}h_{r'}^{k_{r'}}  \cdots h_l^{k_l}) & = w^{2^{r-2}(a-1)} h_r h_{r'}^{k_{r'}}  \cdots h_l^{k_l}, \: k_{r'} > 0, \: r \le r', \: a \: \mr{odd}. 
\end{align*}
We deduce
\[ H^{*,*,*}(\mc{C}_{alg}(A \mmod A(0)_*)) = \FF_2[v_0]. \]

\begin{prop}\label{prop:boMSSAmodA0collapse}
The $\bo$-MSS for $A \mmod A(0)_*$ collapses at $E_2$.
\end{prop}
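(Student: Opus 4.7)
The plan is to exploit the minimality of the abutment $\Ext_{A_*}(A\mmod A(0)_*) \cong \FF_2[v_0]$, together with the preceding computation $H^{*,*,*}(\mc{C}_{alg}(A\mmod A(0)_*)) = \FF_2[v_0]$, to reduce the problem to showing that each class $v_0^s$ is a permanent cycle in $\bo$-filtration $0$. I would establish this by pulling back along $i : \FF_2 \hookrightarrow A\mmod A(0)_*$ using the naturality supplied by Lemma~\ref{lem:MSSSES}.

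First I would pin down the $E_2$-page in the region $s \ge 1$. Because the evil subcomplex $V^{*,*}(A\mmod A(0)_*)$ is concentrated in $s = 0$ by definition, the long exact sequence~(\ref{eq:LESM}) gives isomorphisms $\E{bo}{alg}_2^{n,s,t}(A\mmod A(0)_*) \cong H^{n,s,t}(\mc{C}_{alg}(A\mmod A(0)_*))$ whenever $s \ge 1$. By the preceding computation, this group is $\FF_2$ concentrated at $(n, s, t) = (0, s, s)$; the structure of the column $s = 0$ will not be needed.

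Next I would enumerate the potentially nonzero differentials $d_r: \E{bo}{alg}_r^{n,s,t} \to \E{bo}{alg}_r^{n+r, s-r+1, t}$ for $r \ge 2$. No differential exits the column $s = 0$, since its target sits at $s = 1 - r \le -1$. A differential from a source at $(0, s, s)$ with $s \ge 1$ has target $(r, s-r+1, s)$; for $s - r + 1 \ge 1$ the target lies in the $s \ge 1$ region but at $n = r \ge 2$, where everything vanishes, while for $s - r + 1 < 0$ the target is trivially zero. The only remaining possibility is $r = s + 1$, producing
\[ d_{s+1}: \E{bo}{alg}_{s+1}^{0, s, s} \longrightarrow \E{bo}{alg}_{s+1}^{s+1, 0, s}, \quad s \ge 1. \]
Collapse at $E_2$ is therefore equivalent to the vanishing of $d_{s+1}(v_0^s)$ for every $s \ge 1$.

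Finally I would invoke naturality. In the $\bo$-MSS for $\FF_2$, the class $v_0^s \in \Ext^{s,s}_{A_*}(\FF_2)$ has $\bo$-filtration exactly $0$: the restriction map $\Ext_{A_*}(\FF_2) \to \Ext_{A(1)_*}(\FF_2) = \E{bo}{alg}_1^{0,*,*}(\FF_2)$ carries it to the nonzero class $v_0^s$, since $v_0 = h_0$ already exists in $\Ext_{A(0)_*}(\FF_2)$ and lifts to every $\Ext_{A(n)_*}(\FF_2)$. Consequently $v_0^s$ is detected at $E_\infty^{0, s, s}(\FF_2)$, so its representative in $\E{bo}{alg}_2^{0, s, s}(\FF_2) = H^{0, s, s}(\mc{C}_{alg}(\FF_2))$ is a permanent cycle. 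Lemma~\ref{lem:MSSSES} provides a map of spectral sequences $i_* : \E{bo}{alg}_r(\FF_2) \to \E{bo}{alg}_r(A\mmod A(0)_*)$ which, by functoriality of the good-complex identification, sends this class to $v_0^s \in \E{bo}{alg}_r^{0, s, s}(A\mmod A(0)_*)$. Hence $d_{s+1}(v_0^s) = 0$ in the target spectral sequence, completing the proof. The main subtlety is the bookkeeping in the differential analysis; the naturality step is automatic once one observes that $v_0$ is visibly defined at every level of the filtration $A(0) \subseteq A(1) \subseteq A$.
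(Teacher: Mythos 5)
Your proof is correct and rests on the same underlying mechanism as the paper's: the good cohomology $H^{*,*,*}(\mc{C}_{alg}(A\mmod A(0)_*)) \cong \FF_2[v_0]$ is concentrated in column $n=0$, the evil classes live in $s=0$ so cannot support higher differentials, and the only conceivable differentials are $d_{s+1}$ from $v_0^s$ into the $s=0$ column. The paper's phrasing is terser---it simply concludes $H^{*,*}(V(A\mmod A(0)_*)) = 0$---but this is the contrapositive of your analysis: if the $v_0^s$ never support differentials, the evil classes are permanent cycles, and comparing with the abutment $\FF_2[v_0]$ (which is entirely detected in $\bo$-filtration $0$) forces those classes to vanish. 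One minor simplification: the detour through the $\bo$-MSS for $\FF_2$ and the comparison map $i_*$ is not needed. The same filtration argument applies directly to $A\mmod A(0)_*$: the class $v_0^s \in \Ext^{s,s}_{A_*}(A\mmod A(0)_*)$ restricts nontrivially to $\Ext^{s,s}_{A(1)_*}(A\mmod A(0)_*) \cong \FF_2[v_0,w]$, so it already has $\bo$-filtration $0$ in the target spectral sequence, and hence is a permanent cycle there without appeal to Lemma~\ref{lem:MSSSES}. That said, the route through $i_*$ is also valid.
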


\begin{proof}
With the exception of $\FF_2[v_0]$, all of the good classes are targets and sources of $d_1$-differentials.  Since
$d_r^{bo, alg}$ changes $s$-degree for $r > 1$, and the evil classes are concentrated in $s = 0$, we deduce that
\[ H^{*,*}(V(A \mmod A(0)_*)) = 0. \]
The result follows.
\end{proof}

\subsection*{The structure of $\pmb{\E{bo}{alg}_1(\AA0)}$}

In this subsection, we will describe the $E_1$-term $\E{bo}{alg}_1(\AA0)$ in terms of $\E{bo}{alg}_1(\FF_2)$.  We shall find that while these $E_1$-terms are different, their relationship admits a complete description.

The primary tool in this analysis is the long exact sequence
\begin{multline}\label{eq:boMSSLESE1}
\cdots \xrightarrow{} \E{bo}{alg}^{n, s, t}_1(\FF_2) 
\xrightarrow{i} \E{bo}{alg}^{n, s, t}_1(A \mmod A(0)_*) \xrightarrow{p}
\E{bo}{alg}^{n, s, t}_1(\AA0) \\
\xrightarrow{\partial} \E{bo}{alg}^{n, s+1, t}_1(\FF_2) \xrightarrow{} \cdots 
\end{multline}
These long exact sequences decompose into a direct sum of long exact sequences
\begin{multline*}
\cdots \rightarrow \Ext^{s,t}_{A(1)_*}(\ull{B_I}) \xrightarrow{i} \Ext^{s,t}_{A(1)_*}(\ull{B_I} \otimes A \mmod A(0)_*) \xrightarrow{p} \Ext^{s,t}_{A(1)_*}(\ull{B_I} \otimes \AA0) \\ \xrightarrow{\partial} \Ext^{s+1, t}_{A(1)_*}(\ull{B_I}) \rightarrow \cdots
\end{multline*}
using the decomposition
\[ \E{bo}{alg}^{n, s, t}_1(M) = \bigoplus_{\abs{I} = n}  \Ext^{s, t}_{A(1)_*}(\Sigma^{4\norm{I}} \ull{B_I} \otimes M) \]
induced by the splitting (\ref{eq:algsplitting}).

We first examine the behavior of the good classes.  A useful schematic is depicted below.
\begin{center}
\includegraphics[width=\linewidth]{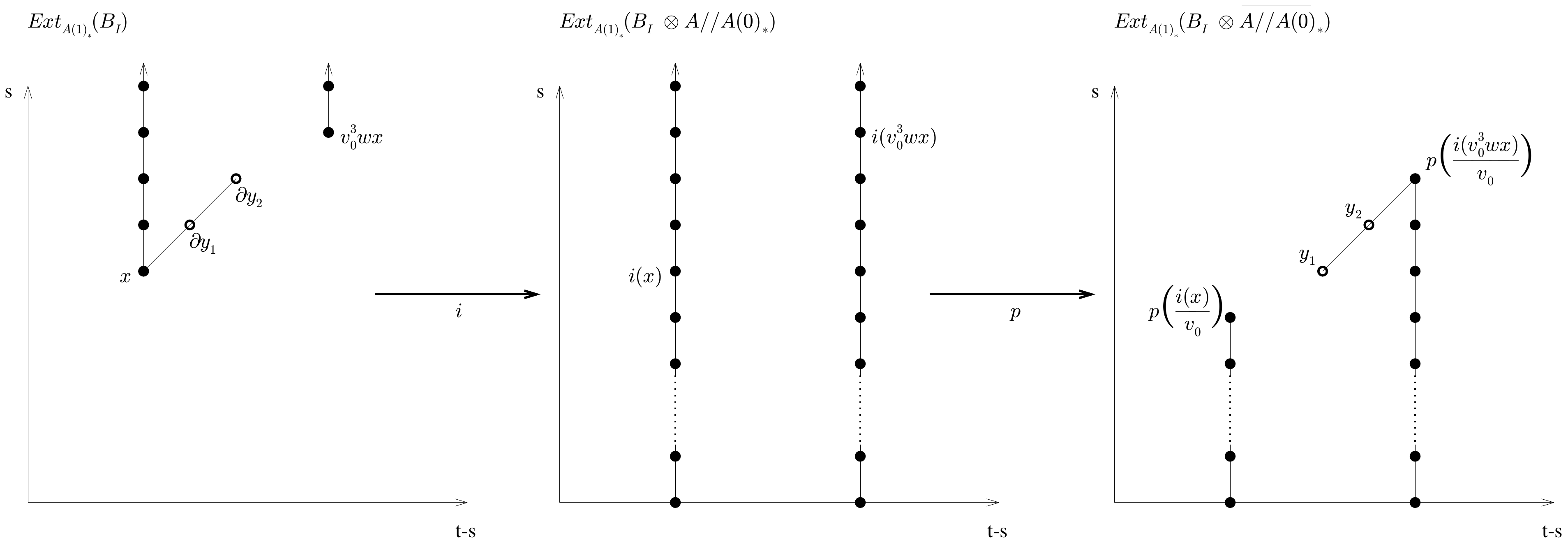}
\end{center}
In the figure above, there are two kinds of good classes in $\Ext_{A(1)_*}(\ull{B_I})$: those which are parts of $v_0$-towers, and those which are $v_0$-torsion.  We shall refer to the former as \emph{$v_0$-good} (marked with solid dots in the above figure) and the latter as \emph{$h_1$-good} (so that there is a basis of $\E{bo}{alg}_1(\FF_2)$ consisting of $v_0$-good, $h_1$-good, and evil classes).  The good part of $\E{bo}{alg}_1(A \mmod A(0)_*)$ was computed in the last subsection, as depicted in the middle chart above.  The righthand chart is then deduced by the long exact sequence.  We see that in $\E{bo}{alg}_1(\br{A\mmod A(0)}_*)$, the $v_0$-towers get turned upside down (we will call these classes $v_0$-good as well), while the $h_1$-good classes get transported by the boundary homomorphism  $\partial$ (which we also call $h_1$-good).

The extension 
\begin{equation}\label{eq:extension} 
h_1 y_{2} = p \left( \frac{i(v_0^3wx)}{v_0} \right)
\end{equation}
in the figure above is of special importance.  One way of deducing it is to observe that associated to the cofiber sequence
\begin{equation}\label{eq:kocofiber}
\bo \rightarrow \bo \wedge H\ZZ \rightarrow \bo \wedge \br{H\ZZ}
\end{equation}
there is a sequence of $v_1$-localized Adams spectral sequences
\[\xymatrix@C-1em{
v_1^{-1}\Ext^{s,t}_{A(1)}(\ull{B_I}) \ar[r]^-{i} \ar@{=>}[d] & 
v_1^{-1}\Ext^{s,t}_{A(1)}(\ull{B_I} \otimes A \mmod A(0)_*) \ar[r]^-p \ar@{=>}[d] &
v_1^{-1}\Ext^{s,t}_{A(1)}(\ull{B_I} \otimes \AA0) \ar@{=>}[d]
\\
\pi_{t-s} (\KO \wedge B_I) \ar[r] & 
\pi_{t-s} ((\KO \wedge B_I)_\QQ) \ar[r] &  
\pi_{t-s} (\Sigma M_1(\KO \wedge B_I)) 
}
\]
(where $M_1(-)$ is the first monochromatic layer). The localized spectral sequences converge as indicated because of the following lemma.

\begin{lem}
Inverting the Bott element in the cofiber sequence (\ref{eq:kocofiber}) yields the cofiber sequence
\[ \KO \rightarrow \KO_\QQ \rightarrow \Sigma M_1\KO \]
\end{lem}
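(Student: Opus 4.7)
The plan is to apply the Bott localization $v_1^{-1}(-)$ to the cofiber sequence~\eqref{eq:kocofiber}. Since this localization commutes with cofiber sequences and with smashing against a fixed spectrum, we obtain a cofiber sequence
\[ \KO \to \KO \wedge H\ZZ \to \KO \wedge \br{H\ZZ}. \]
Granted an identification $\KO \wedge H\ZZ \simeq \KO_\QQ$, the third term is then automatically $\Sigma M_1\KO$ via the defining cofiber sequence $M_1\KO \to \KO \to \KO_\QQ$ of the first monochromatic layer, so the entire claim reduces to producing this middle equivalence.

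The equivalence $\KO \wedge H\ZZ \simeq \KO \wedge H\QQ = \KO_\QQ$ amounts to the vanishing of $\KO \wedge H(\QQ/\ZZ)$. Working $2$-locally (per the conventions of the paper), $\QQ/\ZZ \cong \colim_n \ZZ/2^n$, so by compatibility of smash products with filtered colimits and a straightforward induction on $n$ using the short exact sequences $0 \to \ZZ/2 \to \ZZ/2^{n+1} \to \ZZ/2^n \to 0$, the problem reduces to the vanishing $\KO \wedge H\FF_2 = 0$.

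For this last step, I would argue that the Bott element $v_1^4 \in \pi_8\bo$ acts as zero on $\pi_*(\bo \wedge H\FF_2) \cong H_*(\bo;\FF_2)$. Indeed, the $\pi_*\bo$-action on $\pi_*(\bo \wedge H\FF_2)$ factors through the mod-$2$ Hurewicz homomorphism $\pi_*\bo \to H_*(\bo;\FF_2)$, and in the classical Adams spectral sequence $\Ext_{A(1)_*}(\FF_2,\FF_2) \Rightarrow \pi_*\bo$ the Bott element is detected by $v_1^4$ in Adams filtration $4$, hence has vanishing Hurewicz image. Consequently $\KO \wedge H\FF_2 = v_1^{-1}(\bo \wedge H\FF_2) = 0$. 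The main obstacle is this vanishing, but it follows cleanly once one recalls the well-known structure of $\Ext_{A(1)_*}(\FF_2,\FF_2)$ and the positive Adams filtration of Bott in the $\bo$-ASS.
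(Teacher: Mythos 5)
Your proof is correct, and it takes a genuinely different route from the paper's. The paper invokes the explicit computation (from Lemma~\ref{lem:AmodA0good}) that the Adams spectral sequence $\Ext_{A(1)_*}(A\mmod A(0)_*) \Rightarrow \bo_*H\ZZ$ collapses to give $\bo_*H\ZZ \cong \ZZ[w]$; after inverting the Bott element this gives $\KO_*H\ZZ \cong \QQ[v_1^{\pm 2}]$, and the map $\KO_* \to \KO_*H\ZZ$ is then seen to be a rational isomorphism by inspecting localized Adams $E_2$-terms. You instead bypass the computation of $\bo_*H\ZZ$ entirely: after reducing to $\KO \wedge H\FF_2 = 0$ via the $2$-local arithmetic fracture of $\ZZ$, you observe that the Bott class lies in positive Adams filtration, hence has trivial Hurewicz image in $H_*(\bo;\FF_2)$, and since the $\pi_*\bo$-action on $\pi_*(\bo \wedge H\FF_2)$ factors through the Hurewicz ring map, the telescope $v_1^{-1}(\bo \wedge H\FF_2)$ is contractible. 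Your identification of the map $\KO \to \KO\wedge H\ZZ$ with rationalization follows from the factorization of $S \to H\QQ$ through $H\ZZ$, so the cofiber is indeed $\Sigma M_1\KO$. Your argument is shorter, more formal, and self-contained in the sense of not leaning on the earlier Margolis homology computation; the paper's version is arguably more informative in that it pins down $\KO_*H\ZZ$ on the nose, which is the form of the statement most directly reusable in the surrounding section. Both are valid.
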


\begin{proof}
The computations earlier in this section specialize to show that the Adams spectral sequence
$$ \Ext_{A(1)_*}(A\mmod A(0)_*) \Rightarrow \bo_*H\ZZ $$
collapses to give
$$ \bo_*H\ZZ \cong \ZZ[w] \oplus \text{$v_1$-torsion} $$
with $w = v_1^2/4$.  Inverting the Bott element $v_1^4$, we get
$$ \KO_* H\ZZ = \QQ[v_1^{\pm 2}]. $$
The map
$$ \KO_* \rightarrow \KO_*H\ZZ $$
is easily computed on the level of the $v_1$-localized Adams $E_2$-terms, and is seen to be a rational isomorphism.  The result follows.
\end{proof}

The extension (\ref{eq:extension}) follows from:
\begin{itemize}
\item $\KO \wedge B_I$ is a suspension of $\KO$,
\item the Brown-Comenetz dual $IM_1(\KO)$ is the Gross-Hopkins dual $I_1\KO$,
\item the Gross-Hopkins dual of $\KO$ is well known to be a suspension of $\KO$ (see, for example, \cite[Cor.9.1]{HeardStojanoska}). 
\end{itemize}

With the language introduced in the discussion above, we have (for $M = \FF_2$, $A \mmod A(0)_*$, or $\AA0$) decompositions
\[ \E{bo}{alg}^{n,*,*}_1(M) \cong \mc{C}^{n,*,*}_{v_0}(M) \oplus \mc{C}^{n, *, *}_{h_1}(M) \oplus V^{n,*}(M) \]
into $v_0$-good, $h_1$-good, and evil components.  Note that for each of these $M$ we have
\begin{align*}
\mc{C}^{n,s,t}_{v_0}(M) & = 
\begin{cases}
\mc{C}^{n,s,t}_{alg}(M), & t-s \equiv 0 \mod 4, \\
0, & \mr{otherwise},
\end{cases}
\\
\mc{C}^{n,s,t}_{h_1}(M) & = 
\begin{cases}
\mc{C}^{n,s,t}_{alg}(M), & t-s \not\equiv 0 \mod 4, \\
0, & \mr{otherwise},
\end{cases}
\end{align*}
(and for $M = A \mmod A(0)_*$ we have $\mc{C}^{*,*,*}_{h_1}(M) = 0$).
The complete structure of these components is summarized in the following proposition.

\begin{prop}\label{prop:MSSSESE1}
There are short exact sequences
\begin{gather*}
 0 \rightarrow V^{n,*}(\FF_2) \oplus \mc{C}^{n,0,*}_{h_1}(\FF_2) \xrightarrow{i} V^{n,*}(A \mmod A(0)_*) \xrightarrow{p} V^{n,*}(\AA0) \rightarrow 0 \\
 0 \rightarrow \mc{C}^{n,*, *}_{v_0}(\FF_2) \xrightarrow{i} \mc{C}^{n, *,*}_{v_0}(A \mmod A(0)_*) \xrightarrow{p} \mc{C}^{n,*,*}_{v_0}(\AA0) \rightarrow 0
\end{gather*}
and isomorphisms
\[ \mc{C}_{h_1}^{n, s, *}(\AA0) \xrightarrow[\cong]{\partial} \mc{C}^{n, s+1, *}_{h_1}(\FF_2). \]
Thus, for every $v_0$-good tower
\[ \{ v_0^jx \}_{j \ge 0} \subset \mc{C}^{n, *, *}_{v_0}(\FF_2) \]
generated by $x \in \mc{C}^{n, s, t}_{v_0}$
there is a corresponding truncated $v_0$-good tower
\[ \left\{ p\left(\frac{i(x)}{v_0^j}\right) \right\}_{1 \le j \le s} \subset \mc{C}^{n, *, *}_{v_0}(\br{A\mmod A(0)}_*). \]
Furthermore, in $\mc{C}_{alg}^{n,*,*}(\AA0)$ we have
\[ h_1 \partial^{-1}(h_1^2 x) = p\left( \frac{i(v_0^3wx)}{v_0} \right). \]
\end{prop}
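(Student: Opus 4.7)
The plan is to extract every claim from the long exact sequence (\ref{eq:boMSSLESE1})---produced by applying the Connecting Homomorphism Lemma~\ref{lem:MSSSES} to $0 \to \FF_2 \to A \mmod A(0)_* \to \AA0 \to 0$---together with the structural input of Lemma~\ref{lem:AmodA0good}, which says the good part $\mc{C}_{v_0}(A \mmod A(0)_*) = \FF_2[v_0, w] \otimes \FF_2[\zeta_1^4]^{\otimes n}$ is concentrated in bidegrees with $t - s \equiv 0 \pmod 4$. I will split the LES by the congruence class of $t - s$ modulo $4$ and use the $\FF_2[v_0, v_1]$-linearity of $i$, $p$, $\partial$ together with the direct-sum decomposition $\E{bo}{alg}_1 = \mc{C}_{v_0} \oplus \mc{C}_{h_1} \oplus V$ to pin down which component each map can hit.

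For $t - s \equiv 0 \pmod 4$ with $s \geq 1$, the LES involves only $\mc{C}_{v_0}$-components, and the boundary into bidegree $(s+1, t)$ lands at $t - s - 1 \equiv 3 \pmod 4$, where $\E{bo}{alg}_1(\FF_2)$ vanishes; this yields the second SES. The truncated-tower description follows from $v_0$-regularity of $\FF_2[v_0, w]$: for a $v_0$-good generator $x \in \mc{C}_{v_0}^{n, s, t}(\FF_2)$ the lifts $i(x)/v_0^j$ exist in $\mc{C}_{v_0}(A \mmod A(0)_*)$ for $0 \leq j \leq s$, and $p$ annihilates only the $j = 0$ representative.

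For $t - s \not\equiv 0 \pmod 4$ with $s \geq 1$, Lemma~\ref{lem:AmodA0good} gives $\E{bo}{alg}_1^{n, s, t}(A \mmod A(0)_*) = 0$, so $i|_{\mc{C}_{h_1}(\FF_2)} = 0$ (consistent with $\mc{C}^{n, 0, *}_{h_1}(\FF_2) = 0$). At $s = 0$ the LES collapses to a four-term exact sequence
\[
0 \to V^{n, t}(\FF_2) \xrightarrow{i} V^{n, t}(A \mmod A(0)_*) \xrightarrow{p} \E{bo}{alg}_1^{n, 0, t}(\AA0) \xrightarrow{\partial} \E{bo}{alg}_1^{n, 1, t}(\FF_2) \to 0.
\]
By $v_1$-linearity of $p$ and $v_1$-regularity of $\mc{C}_{v_0}(A \mmod A(0)_*)$, $p$ sends $V(A \mmod A(0)_*)$ into $V(\AA0)$; combined with $\ker \partial = \im p \subseteq V(\AA0)$, this makes $\partial$ injective on the $\mc{C}_{h_1}(\AA0)$ summand. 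The vanishing $\partial|_{V(\AA0)} = 0$ then follows from $v_1$-linearity at $t \equiv 1 \pmod 4$ (the target is the $v_1$-regular $\mc{C}_{v_0}(\FF_2)$) and from comparison with the $v_1$-periodic $\bo$-MSS's of Section~\ref{sec:v1} at $t \equiv 2, 3 \pmod 4$, which shows that the $v_1$-periodic portion of the LES is already exact, so the $v_1$-torsion portions must split cleanly. This yields the first SES and the $s = 0$ case of the $\partial$-isomorphism; the general $s \geq 1$ case follows from the LES at higher $s$ by the same reasoning.

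The hidden extension $h_1 \partial^{-1}(h_1^2 x) = p(i(v_0^3 w x)/v_0)$ is the subtlest claim and, I expect, the main obstacle, since it records a mixing between the $\mc{C}_{v_0}$ and $\mc{C}_{h_1}$ summands that is invisible to the $\FF_2[v_0, v_1]$-module structure alone. The plan is to lift it from the topological extension $h_1 y_2 = p(i(v_0^3 w x)/v_0)$ displayed just before the proposition. Smashing the cofiber sequence $\bo \to \bo \wedge H\ZZ \to \bo \wedge \br{H\ZZ}$ with $B_I$ and inverting the Bott element yields, by the lemma preceding the proposition, $\KO \wedge B_I \to (\KO \wedge B_I)_\QQ \to \Sigma M_1(\KO \wedge B_I)$; since $IM_1(\KO) = I_1 \KO$ is a suspension of $\KO$ by \cite[Cor.~9.1]{HeardStojanoska}, the third term is a suspension of $\KO \wedge B_I$, and the $v_0^3$-factor encodes the Adams filtration shift in this duality. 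Pulling the topological extension back to the algebraic $\bo$-MSS $E_1$-level via the comparison of Adams spectral sequences completes the proof.
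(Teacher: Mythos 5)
Your overall plan is sound, and for the second short exact sequence, the truncated-tower description, and the $\partial$-isomorphism your reasoning tracks the paper's (LES from Lemma~\ref{lem:MSSSES} plus the structure of $\E{bo}{alg}_1(A\mmod A(0)_*)$ from Lemma~\ref{lem:AmodA0good}), and the Gross--Hopkins argument for the $h_1$-relation is exactly the route the paper takes. But there is a genuine gap in your treatment of the first short exact sequence, and it stems from a false premise.

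You write that your four-term sequence at $s=0$ begins with $V^{n,t}(\FF_2)$, and parenthetically note this is ``consistent with $\mc{C}^{n,0,*}_{h_1}(\FF_2)=0$.'' That group is \emph{not} zero. The summands $b_I$ are Adams covers $\bo^{\bra{k}}$ or $\bsp^{\bra{k}}$ of arbitrarily high depth $k=2\norm{I}-\alpha(I)$ (or $-1$), and for $k\ge 5$ the Adams chart of $\bo^{\bra{k}}$ has $h_1$-good classes sitting in Adams filtration $0$ (e.g.\ the image of $v_1^4h_1$ in $\bo^{\bra{5}}$). So $\E{bo}{alg}^{n,0,t}_1(\FF_2)=V^{n,t}(\FF_2)\oplus\mc{C}^{n,0,t}_{h_1}(\FF_2)$ with both summands nonzero in general, and your four-term sequence is missing a term. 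This matters: the proposition's first SES has $\mc{C}^{n,0,*}_{h_1}(\FF_2)$ explicitly as a summand of the kernel, and you cannot derive that from a sequence in which the term does not appear.

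The deeper issue is that this omitted term breaks the organizing principle of your proof. You propose to use $\FF_2[v_0,v_1]$-linearity ``to pin down which component each map can hit,'' expecting the $\mc{C}_{v_0}\oplus\mc{C}_{h_1}\oplus V$ decomposition to be respected by $i$, $p$, $\partial$. It is not: $i$ carries the \emph{good} (hence $v_1^4$-torsion-free) classes of $\mc{C}^{n,0,t}_{h_1}(\FF_2)$ into the \emph{evil} (hence $v_1^4$-torsion) group $V^{n,t}(A\mmod A(0)_*)$, because $\mc{C}_{alg}(A\mmod A(0)_*)$ vanishes at these $t$ and so there is simply nowhere else for the image to go. There is no contradiction with $v_1^4$-linearity (one checks $v_1^4i(x)=i(v_1^4x)=0$ since the target group at $(s,t)=(4,t+12)$ is zero), but $v_1^4$-linearity alone cannot detect that this map is \emph{injective}. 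The paper's proof supplies exactly that missing ingredient: injectivity follows from exactness of (\ref{eq:boMSSLESE1}) at $\E{bo}{alg}^{n,0,t}_1(\FF_2)$, since $\ker i=\im\partial$ and the source $\E{bo}{alg}^{n,-1,t}_1(\AA0)$ is zero. You should add this step; once it is in place, the rest of your argument for the first SES goes through.

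(Two smaller points: the $v_1^4$-linearity argument for $\partial|_{V(\AA0)}=0$ actually works uniformly, since $\mc{C}_{h_1}(\FF_2)$ is also $v_1^4$-torsion-free, so the detour through the localized $\bo$-MSS at $t\equiv 2,3\bmod 4$ is unnecessary. And the trailing ``$\to 0$'' on your four-term sequence is only justified when $t\not\equiv 1\bmod 4$, since for $t\equiv 1\bmod 4$ the next term $\E{bo}{alg}^{n,1,t}_1(A\mmod A(0)_*)$ is nonzero; for $t\equiv 1\bmod 4$ you instead need the injectivity of $i$ on $\mc{C}^{n,1,t}_{v_0}(\FF_2)$ to truncate the LES.)
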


\begin{proof}
This proposition mostly follows from the preceding discussion, using the long exact sequence (\ref{eq:boMSSLESE1}) and Lemma~\ref{lem:AmodA0good}.
In particular, the second short exact sequence follows from the fact that the map $i$ induces an inclusion
$$ i : \mc{C}^{n,*,*}_{v_0}(\FF_2) \hookrightarrow \mc{C}^{n, *,*}_{v_0}
(A \mmod A(0)_*). $$
The map $i$ vanishes on $\mc{C}^{n,s,t}_{h_1}(\FF_2)$ for $s > 0$ for dimensional reasons, and thus the latter must be in the image of $\partial$.  However, $i$ can map classes in $\mc{C}^{n, 0, t}_{h_1}(\FF_2)$ to evil classes in $V^{n,t}(A \mmod A(0)_*)$, and in fact must do so in an injective fashion, because there are no non-trivial classes in $\E{bo}{alg}^{n, -1, t}(\AA0)$ for $\partial$ to map into $\ker i$.  The $h_1$-relation follows from the Gross-Hopkins duality argument discussed above.
\end{proof}

To state the structure of $\mc{C}_{alg}^{*,*,*}(\AA0)$ more clearly, we remark that the Ext groups
\begin{align*}
\Ext^{s,t}_{A(1)_*}(\AA0) & = \E{ass}{}_2^{s,t}(\bo \wedge \br{H\ZZ}) \\
\Ext^{s,t}_{A(1)_*}(\ull{B_1} \otimes \AA0) & = \E{ass}{}_2^{s,t}(\bsp \wedge \br{H\ZZ})
\end{align*}
take the following form ($v_1$-torsion classes ommitted):
\begin{center}
\includegraphics[width=1\linewidth]{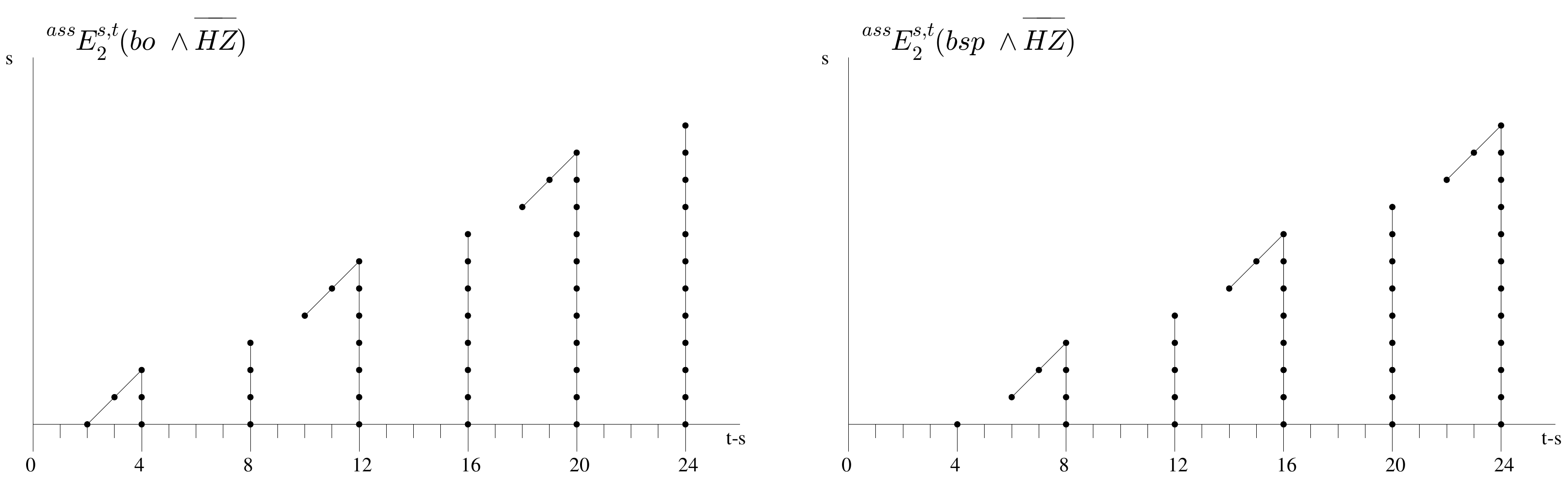}
\end{center}

\begin{cor}
There are decompositions
\[ \mc{C}^{n,s,t}_{alg}(\AA0) \cong \bigoplus_{\abs{I} = n} \E{ass}{}^{s,t}_2(\Sigma^{4\norm{I}}b_I \wedge \br{H\ZZ}) \]
where
\[ \E{ass}{}^{s,t}_2(b_I \wedge \br{H\ZZ}) = 
\begin{cases}
\E{ass}{}_2^{s,t}((\bo \wedge \br{H\ZZ})^{\bra{2\norm{I}-\alpha(I)}}), & \norm{I} \: \mr{even}, \\
\E{ass}{}_2^{s,t}((\bsp \wedge \br{H\ZZ})^{\bra{2\norm{I}-\alpha(I)-1}}), & \norm{I} \: \mr{odd}.
\end{cases} \]
\end{cor}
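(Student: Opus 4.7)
The plan is to decompose the $E_1$-term using the algebraic splitting of $\br{A\mmod A(1)}^n_*$ together with change of rings, identify the evil subcomplex, and then pass to the quotient.

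First, the splitting (\ref{eq:algsplitting}) and change of rings give
\[ \E{bo}{alg}_1^{n,s,t}(\AA0) \cong \bigoplus_{\abs{I}=n} \E{ass}{}_2^{s,t}(\Sigma^{4\norm{I}} \bo \wedge B_I \wedge \br{H\ZZ}). \]
Smashing the Mahowald--Milgram splitting $\bo \wedge B_I \simeq b_I \vee HV_I$ with $\br{H\ZZ}$ further decomposes each summand. Since $HV_I$ is a wedge of copies of $H\FF_2$, the mod-$2$ homology of $HV_I \wedge \br{H\ZZ}$ is $V_I \otimes A_* \otimes \AA0$, which is cofree over $A_*$; hence
\[ \E{ass}{}_2^{s,t}(HV_I \wedge \br{H\ZZ}) = \begin{cases} V_I \otimes \AA0, & s = 0, \\ 0, & s > 0. \end{cases} \]

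Next, I identify this cofree summand with $V^{n,*}(\AA0)$. Cofree $A(1)_*$-comodules have vanishing positive-degree $A(1)_*$-Ext, so multiplication by $v_1^2 \in \Ext^{2,6}_{A(1)_*}(\FF_2)$ annihilates $\E{ass}{}_2^{0,*}(HV_I \wedge \br{H\ZZ})$, placing these classes in the kernel of $v_1$-inversion. On the other hand, the $s = 0$ classes of $\E{ass}{}_2(b_I \wedge \br{H\ZZ})$ are the bottoms of the upside-down $v_0$-towers pictured in Section~\ref{sec:AA0}, and they are $v_1$-periodic: the sequence of $v_1$-localized Adams spectral sequences reviewed there identifies $v_1^{-1}\E{ass}{}_2^{0,*}(b_I \wedge \br{H\ZZ})$ with the $E_\infty$-page of a nontrivial spectral sequence converging to a suspension of $\pi_*(\Sigma M_1 \KO \wedge B_I)$. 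The same $v_1$-periodic picture at positive Adams filtration shows that the map $\E{bo}{alg}_1^{n,s,t}(\AA0) \to v_1^{-1}\E{bo}{alg}_1^{n,s,t}(\AA0)$ is injective for $s > 0$, so the definition of $V^{n,*}(\AA0)$ applies and picks out exactly the cofree summand.

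Finally, forming the quotient $\mc{C}^{n,*,*}_{alg}(\AA0) = \E{bo}{alg}_1^{n,*,*}(\AA0) / V^{n,*}(\AA0)$ removes precisely the $HV_I \wedge \br{H\ZZ}$ summands, yielding the asserted decomposition. The two-case identification then follows on substituting the description of $b_I$ as $\bo^{\bra{2\norm{I}-\alpha(I)}}$ or $\bsp^{\bra{2\norm{I}-\alpha(I)-1}}$ according to the parity of $\norm{I}$, together with the fact that smashing with $\br{H\ZZ}$ commutes with the Adams cover construction at the level of the $E_2$-page (both sides compute $\E{ass}{}_2^{s+k,t}$ of $\bo \wedge \br{H\ZZ}$ or $\bsp \wedge \br{H\ZZ}$, shifted by the cover index $k$). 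The main obstacle is the $v_1$-periodicity verification in the second paragraph: one must genuinely confirm that the good $s = 0$ classes survive $v_1$-inversion, so that they are not wrongly absorbed into $V^{n,*}(\AA0)$. Once this separation is in place, everything else is a direct assembly of the splittings already on the table.
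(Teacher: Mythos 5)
Your proof is correct and takes essentially the route the paper intends: the Corollary is stated without proof, as an assembly of the preceding splittings, and your argument—smashing $\bo\wedge B_I\simeq b_I\vee HV_I$ with $\br{H\ZZ}$, observing the $HV_I\wedge\br{H\ZZ}$ summand is $A_*$-cofree and hence concentrated in $s=0$ and killed by $v_1^2$, and checking that the $s=0$ part of $\E{ass}{}_2(b_I\wedge\br{H\ZZ})$ survives $v_1$-inversion—is the natural way to fill that in. Two small observations: (a) your identification of the cofree summand with $V^{n,*}(\AA0)$ does legitimately require both halves of the dichotomy you state (the cofree piece is $v_1$-torsion, the $b_I\wedge\br{H\ZZ}$ piece in $s=0$ is not), and you correctly flag the second as the nontrivial verification; the paper's charts and the $v_1$-localized Adams argument already establish this, so the appeal is fine, but one could equivalently read it off from Proposition~\ref{prop:MSSSESE1}, where the $v_0$-good classes of $\AA0$ are bottoms of inverted $v_0$-towers mapping injectively to $v_1^{-1}\Ext_{A(1)_*}$ and the $h_1$-good classes in $s=0$ are $\partial^{-1}$ of $v_1$-periodic classes. (b) The closing claim that "smashing with $\br{H\ZZ}$ commutes with Adams covers at the level of $E_2$" deserves a bit more care than you give it, since $b_I\wedge\br{H\ZZ}$ and $(\bo\wedge\br{H\ZZ})^{\bra{k}}$ are not the same spectrum; but the $E_2$-page identification does hold here because $H_*(b_I)$ sits in the minimal $A(1)_*$-resolution of $\FF_2$ (respectively of $\ull{B_1}$) in such a way that tensoring with $\AA0$ produces the corresponding cover of $\Ext_{A(1)_*}(\AA0)$. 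This is implicit in the paper's use of the pattern notation $\bo_i[j]^{\bra{k}}$, so your step is justified, but stating the comodule-level reason would close the gap cleanly.
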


\subsection*{The structure of $\pmb{\E{bo}{alg}_2(\AA0)}$}  We now turn to understanding the $E_2$-page of the $\bo$-MSS for $\br{A\mmod A(0)}_*$.  We shall let
\[ w^m(i_1, \ldots, i_n) \in \E{bo}{alg}_1^{n, 0, 4\norm{I}+4m}(\AA0) \]
denote the image under $p$ of the element of the same name in $\E{bo}{alg}_1(A \mmod A(0)_*)$.  Since $p$ induces a map of spectral sequences, we get the same formula for $d_1^{good, alg}$ on the classes above as in $\E{bo}{alg}_1(A\mmod A(0)_*)$ (the formula from \ref{prop:d1alg}).
The formula for $d_1^{good,alg}$ on all of $\mc{C}_{alg}^{*,*,*}(\br{A\mmod A(0)}_*)$ follows from the fact that it is $v_0$ and $h_1$-linear.  We use a weight spectral sequence:
\[ \E{wss}{alg}^{n, s, t, w}(\AA0) \Rightarrow H^{n, s, t}(\mc{C}_{alg}(\br{A \mmod A(0)_*})). \]
The computation of $\E{wss}{alg}_1(\AA0)$ is just as in Proposition~\ref{prop:E1wssalg}:

\begin{prop}\label{prop:E1wssAmodA0}
	An additive basis for $\E{wss}{alg}_1(\br{A \mmod A(0)_*})$ is given by elements
	$$ x h_2^{k_2} h_3^{k_3}h_4^{k_4} \ldots $$
	indexed by $K = (k_2, k_3, \ldots)$, detected by $x(I[K])$ where 
	$$ I[K] = (\underbrace{1, \ldots ,1}_{k_2}, \underbrace{2, \ldots ,2}_{k_3}, \underbrace{4, \ldots ,4}_{k_4}, \ldots). $$
	Here, the elements $x$ run through a basis of $\E{ass}{}_2(b_{I[K]} \wedge \br{H\ZZ})$.
\end{prop}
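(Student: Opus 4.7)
The plan is to adapt the proof of Proposition~\ref{prop:E1wssalg} essentially verbatim to the setting of $\br{A \mmod A(0)}_*$. As observed in the paragraph preceding the statement, the map $p$ induces a map of weight spectral sequences, and $d_1^{good, alg}$ on $\mc{C}_{alg}^{*,*,*}(\br{A \mmod A(0)}_*)$ is $v_0$- and $h_1$-linear, so the associated $d_0^{wss, alg}$ is given on monomials $x(i_1, \ldots, i_n)$ by the same formula~(\ref{eq:d0wssalg}), with $x$ now ranging over a basis of $\E{ass}{}_2^{*,*}(b_I \wedge \br{H\ZZ})$.

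First I would single out the subcomplex $\mc{B}^*(\br{A \mmod A(0)}_*) \subseteq \E{wss}{alg}_0^{*,*,*,*}(\br{A \mmod A(0)}_*)$ spanned by elements $1(I)$, where $1$ denotes the unit generator of $\E{ass}{}_2^{0,0}(b_I \wedge \br{H\ZZ})$. The same dyadic combinatorics used in Proposition~\ref{prop:E1wssalg} --- namely, $\alpha(i) = \alpha(i') + \alpha(i'')$ with $i = i' + i''$ if and only if the binary supports of $i'$ and $i''$ are disjoint --- yields an isomorphism from $\mc{B}^*(\br{A \mmod A(0)}_*)$ to the normalized cobar complex for the primitively generated exterior algebra $E = \Lambda_{\FF_2}[e_0, e_1, \ldots]$, via $1(i_1, \ldots, i_n) \mapsto [e(i_1) | \cdots | e(i_n)]$. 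Its cohomology is the polynomial ring $\FF_2[h_2, h_3, \ldots]$ on generators $h_i = [e_{i-2}]$.

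Next I would invoke the fact that whenever $I'$ is obtained from $I$ by a splitting $i_k = i'_k + i''_k$ with $\alpha(i'_k) + \alpha(i''_k) = \alpha(i_k)$, one has $\norm{I'} = \norm{I}$ and $\alpha(I') = \alpha(I)$, so $b_I \simeq b_{I'}$ as spectra and hence $b_I \wedge \br{H\ZZ} \simeq b_{I'} \wedge \br{H\ZZ}$. Consequently, $\E{wss}{alg}_0^{*,*,*,*}(\br{A \mmod A(0)}_*)$ splits, with the summand indexed by $K$ isomorphic to $\E{ass}{}_2^{*,*}(b_{I[K]} \wedge \br{H\ZZ}) \otimes \mc{B}^*_K(\br{A \mmod A(0)}_*)$, where $\mc{B}^*_K$ is the sub-cobar-complex whose multi-indices decompose dyadically into $K$. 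Taking cohomology delivers the claimed basis $xh_2^{k_2}h_3^{k_3}\cdots$ with $x \in \E{ass}{}_2^{*,*}(b_{I[K]} \wedge \br{H\ZZ})$.

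There is no serious obstacle: the argument is a direct transcription of the proof of Proposition~\ref{prop:E1wssalg}, replacing $\E{ass}{}_2^{*,*}(b_{I[K]})$ by $\E{ass}{}_2^{*,*}(b_{I[K]} \wedge \br{H\ZZ})$ throughout. The only mild point requiring verification is that the equivalence $b_I \simeq b_{I'}$ persists after smashing with $\br{H\ZZ}$, which is immediate since $\br{H\ZZ}$ has a fixed homotopy type independent of $I$.
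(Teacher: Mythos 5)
Correct, and essentially the same approach as the paper: the paper states this proposition without proof, declaring the argument to be "just as in Proposition~\ref{prop:E1wssalg}," which in turn defers to the cobar-complex computation used for $\E{AF}{}_1$ in Section~\ref{sec:wss}, and your writeup reproduces that argument faithfully --- including the essential observation that $b_I \simeq b_{I'}$ (and hence $b_I \wedge \br{H\ZZ} \simeq b_{I'} \wedge \br{H\ZZ}$) across the $d_0^{wss,alg}$-differential. One small slip worth flagging: there is no "unit generator of $\E{ass}{}_2^{0,0}(b_I \wedge \br{H\ZZ})$," since $\br{H\ZZ}$ is $1$-connected and so this group vanishes; the subcomplex $\mc{B}^*$ is better regarded as the abstract cobar factor in the tensor-product decomposition $\E{ass}{}_2(b_{I[K]} \wedge \br{H\ZZ}) \otimes \mc{B}^*_K$ that you go on to describe, which is the actual content of the proof and is correct.
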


The remaining differentials in the weight spectral sequence are induced by the map $p$ from the weight spectral sequence for $A \mmod A(0)_*$ (the $h_1$-good classes in $\E{wss}{alg}_1(\AA0)$ are all permanent cycles):
\begin{multline}\label{eq:drwssAmodA0}
d_{2^{r-2}}^{wss, alg}(w^{2^{r-2} a}h_{r'}^{k_{r'}}  \cdots h_l^{k_l}) = w^{2^{r-2}(a-1)} h_r h_{r'}^{k_{r'}}  \cdots h_l^{k_l}, \\
k_{r'} > 0, \: r \le r', \: a \: \mr{odd}. 
\end{multline}
The figure below illustrates the relationship between the weight spectral sequences (and consequently the corresponding $\E{bo}{alg}_2$-terms) for $\FF_2$, $A \mmod A(0)_*$, and $\AA0$.

\begin{figure}[h]
\centering
\includegraphics[width=1\linewidth]{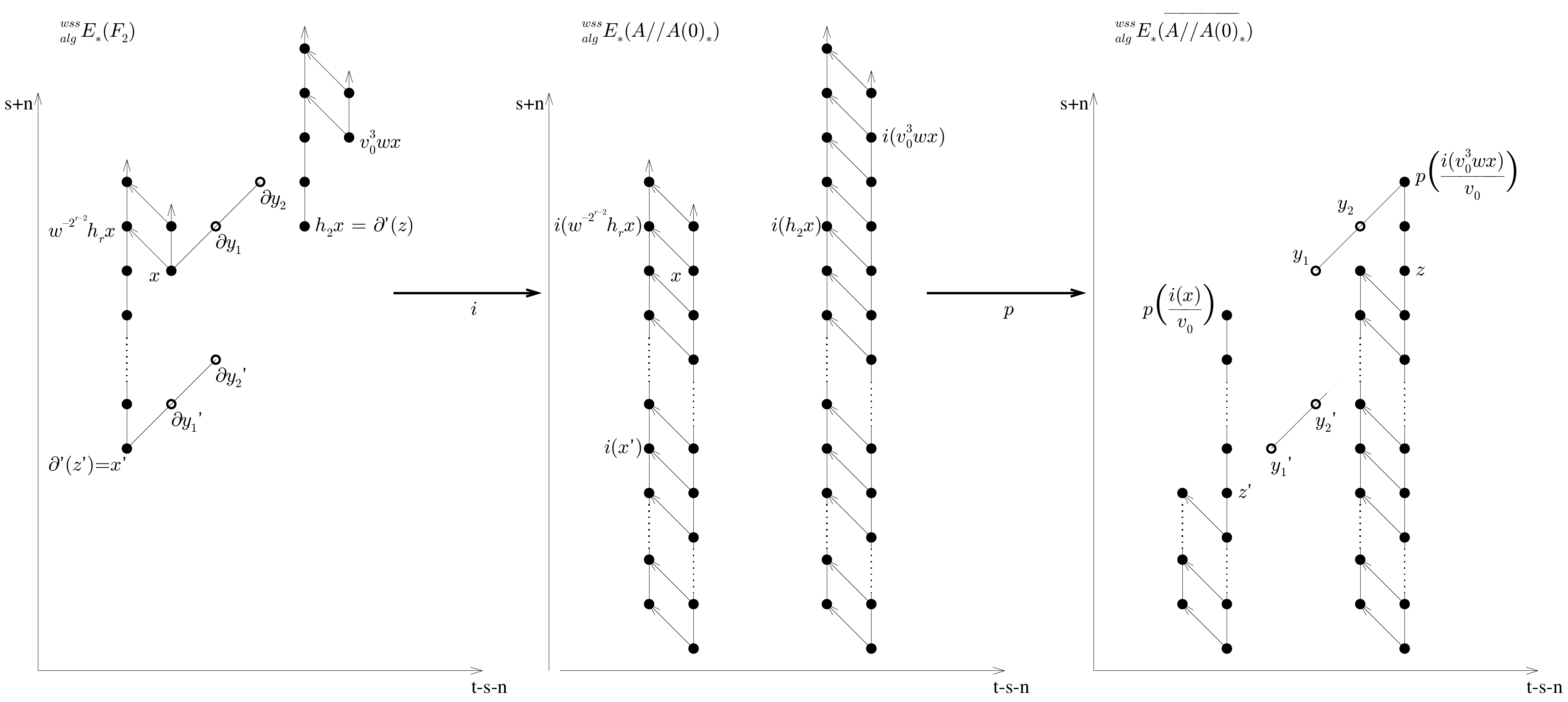}
\caption{}
\label{fig:E2AmodA0}
\end{figure}

We see that there is a bijective correspondence between the truncated $v_0$-towers that comprise the $v_0$-good classes in $\E{bo}{alg}_2(\FF_2)$ and $\E{bo}{alg}_2(\AA0)$, with the notable feature that their respective $\bo$-filtrations differ by $1$ while their $s$-degrees are identical.  Similarly, there is a bijective correspondence between the $h_1$-good classes in each of these $E_2$-terms, but with identical $\bo$-filtrations and $s$-degrees which differ by $1$ (\emph{except for $h_1$-good classes in $\E{bo}{alg}_2^{*,0,*}(\FF_2)$, for which there is no corresponding class in $\E{bo}{alg}_2(\br{A\mmod A(0)}_*)$}).    In the notation of Figure~\ref{fig:E2AmodA0} above, we have
$$ h_1 \cdot x' = \partial y_1' \quad \mr{in} \: \E{bo}{alg}_2(\FF_2) $$
while 
$$ h_1 \cdot z' = 0 \quad \mr{in} \: \E{bo}{alg}_2(\br{A \mmod A(0)_*}) $$
(with $y'$ lying in higher $\bo$ filtration), and also we have
$$ h_1 \cdot y_2 = p\left( \frac{i(v_0^3wx)}{v_0} \right) \quad \mr{in} \: \E{bo}{alg}_2(\br{A \mmod A(0)}) $$
while 
$$ h_1 \cdot \partial y_2 = 0 \quad \mr{in} \: \E{bo}{alg}_2(\FF_2) $$
(with 
$v_0^2 h_2 x$ lying in higher $\bo$ filtration).  The map $\partial'(-)$ used in Figure~\ref{fig:E2AmodA0} is one of the two connecting homomorphisms associated to long exact sequences associated to the short exact sequences of Proposition~\ref{prop:MSSSESE1}:
\begin{align*}
\partial': & H^{n,s,t}(\mc{C}_{v_0}(\AA0)) \rightarrow H^{n+1,k}(\mc{C}_{v_0}(\FF_2)), \\
\partial': & H^{n,t}(V(\AA0)) \rightarrow H^{n+1,t}(V(\FF_2) \oplus \mc{C}^{*,0,*}_{h_1}(\FF_2)). 
\end{align*}
A complete description of $\E{bo}{alg}_2(\br{A\mmod A(0)}_*)$ in terms of $\E{bo}{alg}_2(\FF_2)$ is provided by the following proposition.

\begin{prop}\label{prop:E2boMSSAA0}
The maps $\partial$, $\partial'$ induce isomorphisms
\begin{align*}
H^{n,s,t}(\mc{C}_{h_1}(\AA0)) &  \xrightarrow[\cong]{\partial} H^{n, s+1, t}(\mc{C}_{h_1}(\FF_2)), 
\\
H^{n,s,t}(\mc{C}_{v_0}(\AA0)) & \xrightarrow[\cong]{\partial'} H^{n+1,s, t}(\mc{C}_{v_0}(\FF_2)),
\quad s \ne t,
\\
H^{n,t}(V^{*,*}(\AA0)) & \xrightarrow[\cong]{\partial'} H^{n+1, t}(V^{*,*}(\FF_2)\oplus \mc{C}^{*,0,*}_{h_1}(\FF_2)).
\end{align*}
Here, the groups $H^{n, t}(V(\FF_2)\oplus \mc{C}^{*,0,*}_{h_1}(\FF_2))$ are determined by
$$
H^{n, t}(V(\FF_2)\oplus \mc{C}^{*,0,*}_{h_1}(\FF_2)) =
\begin{cases}
\E{bo}{alg}^{n,0,t}_2(\FF_2), & t \not\equiv 0 \mod 4, \\
H^{n,t}(V(\FF_2)), & \mr{otherwise}.
\end{cases}
$$
The connecting homomorphism $\partial_{alg}^{\AA0}$
in the long exact sequence (\ref{eq:LESM}) is determined for $t \equiv 0 \mod 4$ by the commutative diagram
$$
\xymatrix{
H^{n,0,t}(\mc{C}_{v_0}(\AA0)) \ar[r]^{\partial_{alg}^{\AA0}} \ar[d]_{\partial'}^{\cong} &
H^{n+1,t}(V(\AA0)) \ar[d]^{\partial'}_{\cong}
\\
H^{n+1,0,t}(\mc{C}_{{v_0}}(\FF_2)) \ar[r]_{\partial_{alg}^{\FF_2}} &
H^{n+2,t}(V(\FF_2))
}
$$
and for $t \not\equiv 0 \mod 4$ by the commutative diagram
$$
\xymatrix{
H^{n,0,t}(\mc{C}_{h_1}(\AA0)) \ar[r]^{\partial_{alg}^{\AA0}} \ar[d]_{\partial}^{\cong} &
H^{n+1,t}(V(\AA0)) \ar[d]^{\partial'}_{\cong}
\\
H^{n,1,t}(\mc{C}_{h_1}(\FF_2)) \ar@{=}[d] &
H^{n+2,t}(V(\FF_2) \oplus \mc{C}^{*,0,*}_{h_1}(\FF_2)) \ar@{=}[d]
\\
\E{bo}{alg}_2^{n,1,t}(\FF_2) \ar[r]_{d_2^{bo,alg}} &
\E{bo}{alg}_2^{n+2,0,t}(\FF_2)
}
$$
\end{prop}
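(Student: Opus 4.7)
The plan is to derive all parts of the proposition from the three short exact sequences of Proposition~\ref{prop:MSSSESE1} by applying cohomology and invoking naturality.

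First, I would dispose of the three isomorphisms. The $h_1$-good isomorphism $\partial\colon H^{n,s,t}(\mc{C}_{h_1}(\AA0)) \xrightarrow{\cong} H^{n,s+1,t}(\mc{C}_{h_1}(\FF_2))$ is immediate from the chain-level isomorphism in Proposition~\ref{prop:MSSSESE1}: the map $\partial$ is induced by the connecting homomorphism of the sequence of spectral sequences in Lemma~\ref{lem:MSSSES}, hence commutes with $d_1^{good,alg}$. For the $v_0$-good case, I apply cohomology to the short exact sequence of $\mc{C}_{v_0}$-complexes in Proposition~\ref{prop:MSSSESE1}. The computation earlier in this section gives $H^{*,*,*}(\mc{C}_{alg}(A\mmod A(0)_*)) = \FF_2[v_0]$, and because $\mc{C}_{h_1}(A\mmod A(0)_*) = 0$ (by Lemma~\ref{lem:AmodA0good}, the good complex for $A\mmod A(0)_*$ lives entirely in $t-s\equiv 0\bmod 4$), this cohomology coincides with $H^{*,*,*}(\mc{C}_{v_0}(A\mmod A(0)_*))$ and is supported in tridegrees $(0,s,s)$. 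The associated long exact sequence therefore forces $\partial'$ to be an isomorphism in every tridegree $(n,s,t)$ with $s\ne t$. For the evil isomorphism, I apply cohomology to the SES of evil complexes and invoke Proposition~\ref{prop:boMSSAmodA0collapse}, which establishes $H^{*,*}(V(A\mmod A(0)_*)) = 0$, making $\partial'$ an isomorphism.

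The characterization of $H^{n,t}(V(\FF_2)\oplus \mc{C}^{*,0,*}_{h_1}(\FF_2))$ then follows by inspection. When $t\equiv 0 \bmod 4$ the $h_1$-good summand is zero by definition, and the cohomology reduces to $H^{n,t}(V(\FF_2))$. When $t\not\equiv 0 \bmod 4$ the $v_0$-good classes at $s=0$ vanish, so that $V^{*,*}(\FF_2)\oplus \mc{C}^{*,0,*}_{h_1}(\FF_2) = \E{bo}{alg}_1^{*,0,*}(\FF_2)$ as a chain complex in this range, and cohomology gives $\E{bo}{alg}_2^{n,0,t}(\FF_2)$.

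Finally, for the commutative diagrams I would assemble a $3\times 3$ grid of short exact sequences: the horizontal ones are those of Proposition~\ref{prop:MSSSESE1} distinguishing the $\mc{C}_{v_0}$, $\mc{C}_{h_1}$, and $V$ components, while the vertical ones are the evil/good decomposition of $\E{bo}{alg}_1$ applied to each of $\FF_2$, $A\mmod A(0)_*$, and $\AA0$. The first diagram (case $t\equiv 0 \bmod 4$) then commutes by the standard naturality of the snake lemma, comparing the two iterated connecting maps arising from the grid. The second diagram, for $t\not\equiv 0 \bmod 4$, is the main obstacle: it claims that under $\partial$ and $\partial'$, the connecting homomorphism $\partial_{alg}^{\AA0}$ is identified with the $d_2^{bo,alg}$-differential in $\E{bo}{alg}_2(\FF_2)$. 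My plan to verify this is to represent a class $z\in H^{n,0,t}(\mc{C}_{h_1}(\AA0))$ by a cocycle $\tilde z \in \mc{C}^{n,0,t}_{h_1}(\AA0)$, lift through $p$ to $\tilde y \in \E{bo}{alg}_1^{n,0,t}(A\mmod A(0)_*)$, apply $d_1^{alg}$ once to produce a chain representative of $\partial_{alg}^{\AA0}(z)$ in $V(\AA0)$, lift back through $p$, apply $d_1^{alg}$ a second time, and descend through $i$ to obtain a cocycle in $V^{*,*}(\FF_2)\oplus \mc{C}^{*,0,*}_{h_1}(\FF_2)$; one then recognizes this double-boundary procedure as precisely the exact-couple formula for $d_2^{bo,alg}$ applied to the class in $\mc{C}^{n,1,t}_{h_1}(\FF_2)$ corresponding to $z$ under the chain-level isomorphism $\partial$. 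The verification is a careful but routine diagram chase once the grid is set up.
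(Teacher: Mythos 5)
Your treatment of the first three parts coincides with the paper's: the isomorphisms come from the long exact sequences of the short exact sequences in Proposition~\ref{prop:MSSSESE1}, together with the acyclicity facts $H^{*,*,*}(\mc{C}_{alg}(A\mmod A(0)_*)) = \FF_2[v_0]$ (concentrated in $n=0$, $s=t$) and $H^{*,*}(V(A\mmod A(0)_*)) = 0$ from Proposition~\ref{prop:boMSSAmodA0collapse}, and the identification of $H^{n,t}(V(\FF_2)\oplus\mc{C}^{*,0,*}_{h_1}(\FF_2))$ follows from the observation that at $s=0$ the complex equals $\E{bo}{alg}_1^{\bullet,0,t}(\FF_2)$ when $t\not\equiv 0\pmod 4$. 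Your proof of the first commutative diagram via the $3\times 3$ grid of short exact sequences (with vertical arrows $V\to E_1\to\mc{C}$ and horizontal $\FF_2\to A\mmod A(0)_*\to\AA0$) is also exactly the paper's argument; this works for $t\equiv 0\pmod 4$ because then $\mc{C}^{\bullet,0,t}_{h_1}=0$ for all three comodules, so the middle row is a genuine short exact sequence.

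The gap is in the second commutative diagram. You assert that a similar diagram chase through a $3\times 3$ grid identifies $\partial'\circ\partial_{alg}^{\AA0}$ with $d_2^{bo,alg}\circ\partial$ when $t\not\equiv 0\pmod 4$, and that the ``double-$d_1$'' is recognized as ``the exact-couple formula for $d_2^{bo,alg}$.'' This is not a routine chase. For $t\not\equiv 0\pmod 4$ the middle row fails to be short exact: $p\colon \E{bo}{alg}_1^{\bullet,0,t}(A\mmod A(0)_*) \to \E{bo}{alg}_1^{\bullet,0,t}(\AA0)$ surjects only onto $V(\AA0)$, and the nonzero cokernel is precisely $\mc{C}_{h_1}^{\bullet,0,t}(\AA0)$, which is picked up by the nontrivial connecting homomorphism $\partial\colon\E{bo}{alg}_1^{\bullet,0,t}(\AA0)\to\E{bo}{alg}_1^{\bullet,1,t}(\FF_2)$. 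That jump in $s$-degree is exactly what converts a $d_1$-length relation into a $d_2$-length one, and unwinding it correctly is the content of a geometric/algebraic boundary theorem, not of the snake lemma. Indeed $d_2^{bo,alg}$ is a secondary operation defined by the $\bo$-resolution tower, not an iterated $d_1$, and the paper explicitly acknowledges this by deferring this diagram to Theorem~\ref{thm:AA0}, Case~(1) with $r=1$, which is proven via the algebraic version of the Geometric Boundary Theorem. You would need to either import such a boundary theorem or carry out the derived-couple bookkeeping in detail; as written, the ``recognition'' step is where the argument breaks down.
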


\begin{proof}
The isomorphisms in the first part of the proposition follow from Proposition~\ref{prop:MSSSESE1} and the fact that we have (Proposition~\ref{prop:boMSSAmodA0collapse})
\begin{align*}
H^{n,s,t}(\mc{C}_{alg}(A \mmod A(0)_*)) & = 0, \quad \text{unless $n = 0$ and $s = t$}, \\
H^{n,t}(V(A \mmod A(0)_*)) & = 0.
\end{align*}
The identification of $H^{*,*}(V(\FF_2)\oplus \mc{C}_{h_1}^{*,0,*}(\FF_2))$ simply follows from the fact that for $t \not\equiv 0 \mod 4$ we have
\[ \E{bo}{alg}_1^{n,{0}, t}(\FF_2) = V^{n,t}(\FF_2)\oplus \mc{C}_{h_1}^{n,0,t}(\FF_2). \]
The diagram computing $\partial^{\AA0}_{alg}$ for $t \equiv 0 \mod 4$ follows from the fact that for such $t$ there is a $3\times 3$ diagram of short exact sequences of chain complexes
$$
\xymatrix{
V^{\bullet,t}(\FF_2) \ar[r] \ar[d] &
V^{\bullet,t}(A \mmod A(0)_*) \ar[r] \ar[d] &
V^{\bullet,t}(\AA0) \ar[d] 
\\
\E{bo}{alg}_1^{\bullet,0,t}(\FF_2) \ar[r] \ar[d] &
\E{bo}{alg}_1^{\bullet,0,t}(A \mmod A(0)_*) \ar[r] \ar[d] &
\E{bo}{alg}_1^{\bullet,0,t}(\AA0) \ar[d] 
\\
\mc{C}_{v_0}^{\bullet,0,t}(\FF_2) \ar[r] &
\mc{C}_{v_0}^{\bullet,0,t}(A \mmod A(0)_*) \ar[r] &
\mc{C}_{v_0}^{\bullet,0,t}(\AA0) 
}
$$
(note there is no sign in the commutativity of the resulting diagram of connecting homomorphisms because we are working in characteristic 2).
The proof of the commutativity of the diagram computing $\partial_{alg}^{\AA0}$ for $t \not\equiv 0 \mod 4$ will be deferred to the next subsection (Case~(1) of Theorem~\ref{thm:AA0}, $r = 1$).
\end{proof}

\subsection*{Higher differentials and hidden extensions in the $\bo$-MSS for $\pmb{\AA0}$.}

We have so far established a dictionary between classes in the $\bo$-MSS's for $\AA0$ and $\FF_2$:
\begin{align*}
x \: \text{is $h_1$-good} \quad \Leftrightarrow \quad & \partial(x) \: \text{is $h_1$-good} \\
\\
x \: \text{is not $h_1$-good} \quad \Leftrightarrow \quad & \partial'(x) \: \text{is not $h_1$-good} \\
& \text{or $\partial'(x)$ is $h_1$-good and lies in $s = 0$.}
\end{align*}

We will now extend this dictionary to all of the higher differentials. For the purpose of the statement of the next theorem, a non-trivial class $x \in \E{bo}{alg}_r(\AA0)$ will be regarded as $h_1$-good if and only if $\partial(x) \ne 0 \in \E{bo}{alg}_r(\FF_2)$.

\begin{thm}\label{thm:AA0}
Suppose that $z$ and $z'$ are classes in the $\bo$-MSS for $\AA0$.   \begin{enumerate}

\item
If $z$ is $h_1$-good and $z'$ not $h_1$-good, then 
$$ d^{bo, alg}_r (z) = z' \quad \Leftrightarrow \quad d^{bo,alg}_{r+1}(\partial(z)) = \partial'(z'). $$

\item
If both $z$ and $z'$ are $h_1$-good, we have
$$ d_r^{bo,alg}(z) = z'  \quad \Leftrightarrow \quad d^{bo,alg}_r(\partial(z)) = \partial(z'). $$ 

\item 
If $z$ is not $h_1$-good and $z'$ is $h_1$-good, we have
$$ d_{r}^{bo,alg}(z) = z' \quad \Leftrightarrow \quad d_{r-1}^{bo,alg}(\partial'(z)) = \partial(z'). $$

\item
If neither $z$ nor $z'$ are $h_1$-good, we have
$$ d_r^{bo,alg}(z) = z'  \quad \Leftrightarrow \quad d^{bo,alg}_r(\partial'(z)) = \partial'(z'). $$ 
\end{enumerate}
\end{thm}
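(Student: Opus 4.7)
The plan is to deduce all four assertions from two structural inputs: the naturality of the morphism of spectral sequences $\partial$ from Lemma~\ref{lem:MSSSES} (which gives $\partial \circ d_r^{bo,alg} = d_r^{bo,alg} \circ \partial$ on every page), and the collapse of the middle $\bo$-MSS at $E_2$ to $\FF_2[v_0]$ from Proposition~\ref{prop:boMSSAmodA0collapse}. Together these force the two outer $\bo$-MSS's to be related by a tight dictionary in which the length of a differential changes by at most one, with the shift occurring precisely when the source or target crosses the $\ker\partial = \im p$ dichotomy.

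I would first settle Case~(2) by pure naturality. The forward implication is immediate: $d_r(\partial z) = \partial(d_r z) = \partial(z')$. For the backward implication, $\partial(d_r z - z') = 0$ places $d_r z - z'$ in $\im p$ at $E_r$; for $r \geq 2$ this image vanishes in the relevant tridegrees by the $E_2$-collapse, and for $r = 1$ it lies in the not-$h_1$-good part of $E_1(\AA0)$ by Proposition~\ref{prop:MSSSESE1}, which cannot contribute to the $h_1$-good class $z'$. Case~(4) is the mirror statement for $\partial'$: naturality of $\partial'$ with respect to $d_r$ follows from the two short exact sequences of chain complexes in Proposition~\ref{prop:MSSSESE1}, whose middle terms have cohomology $\FF_2[v_0]$ supported only in tridegree $(0,s,s)$, so the same collapse argument governs the backward implication.

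Cases~(1) and~(3) carry the substance of the theorem: they encode the length-change phenomenon. For Case~(1), naturality gives $d_r(\partial z) = \partial(d_r z) = \partial(z') = 0$, so $\partial z$ is a permanent $d_r$-cycle and defines a class in $E_{r+1}(\FF_2)$. To identify $d_{r+1}(\partial z)$, lift a chain-level representative of $\partial z$ through the Lemma~\ref{lem:MSSSES} short exact sequence into $E_1(A \mmod A(0)_*)$, apply the chain-level $d^{bo,alg}_1$, and then zigzag the output back through $i$: the middle complex's collapse ensures that this lift is canonical up to boundaries, and the resulting class in $E_1^{n+r+1, s-r+1, t}(\FF_2)$ coincides with the class obtained by applying the connecting-homomorphism definition of $\partial'$ from Proposition~\ref{prop:MSSSESE1} to $z'$. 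Case~(3) runs the symmetric zigzag, starting from a not-$h_1$-good $z$ and producing a $d_{r-1}$-differential on $\partial'(z)$ on the $\FF_2$ side.

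The main obstacle is formalizing the zigzag arguments in Cases~(1) and~(3): the higher differential $d_{r+1}$ on $E_{r+1}$ is defined by a secondary operation, while $\partial'$ is the connecting homomorphism of a short exact sequence of chain complexes, and their agreement must be checked on the nose. This matching reduces to chain-level bookkeeping in the bicomplex $\Hom_{A(1)_*}(P_\bullet, \br{A \mmod A(1)}^\bullet_* \otimes {-})$ resolving the three comodules simultaneously, where the collapse of the middle $\bo$-MSS means its total complex is quasi-isomorphic to $\FF_2[v_0]$ concentrated at $(n,s,t) = (0,s,s)$, making the required identifications canonical.
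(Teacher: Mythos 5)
Your overall architecture (use Lemma~\ref{lem:MSSSES}, use the collapse of the middle $\bo$-MSS from Proposition~\ref{prop:boMSSAmodA0collapse}, and relate $\partial$ and $\partial'$ to differentials by zigzags) is heading in the same direction as the paper, which runs the argument through the algebraic Geometric Boundary Theorem (GBT). But there are two concrete gaps.

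First, you treat Case~(4) as the ``easy mirror'' of Case~(2), citing ``naturality of $\partial'$ with respect to $d_r$.'' That naturality does not exist. The map $\partial$ is a genuine morphism of spectral sequences (Lemma~\ref{lem:MSSSES}), so $\partial \circ d_r = d_r \circ \partial$ on every page; that is what makes Case~(2) essentially formal. By contrast, $\partial'$ is a \emph{connecting homomorphism} of the short exact sequences of $(E_1, d_1)$-chain complexes in Proposition~\ref{prop:MSSSESE1}, hence is only defined at the $E_2$-level. It is not induced by a chain map and does not automatically commute with $d_r$ for $r\geq 2$. Indeed, the same $\partial'$ appears in Cases~(1), (3), and (4), and in (1) and (3) the differential \emph{length changes} under $\partial'$; whether the length is preserved or shifted depends on the $h_1$-goodness of both the source and the target, which is exactly what the GBT controls. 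A ``naturality'' argument cannot both preserve length (Case 4) and shift it (Cases 1, 3). The paper handles (1), (3), and (4) uniformly via the GBT applied to the short exact sequence $0 \to \FF_2 \to A\mmod A(0)_* \to \AA0 \to 0$, with the collapse of the middle only serving to control the GBT's indeterminacy; the collapse does not by itself upgrade $\partial'$ to a morphism of spectral sequences.

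Second, even granting the forward implications, you do not supply the argument that upgrades them to biconditionals in Cases~(1), (3), (4). The paper runs an induction on $r$: once the dictionary is established for $d_{r'}$ with $r'<r_0$, a differential appearing on the $\FF_2$ side cannot be killed by a shorter differential without contradicting the inductive hypothesis, so the forward implications become non-trivial and hence biconditional. Your Case~(2) argument gets the backward direction honestly from naturality of $\partial$ plus the collapse, but Cases~(1), (3), (4) need either the GBT in its two-sided form or the inductive boost, and neither is present.
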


\begin{proof}
This theorem is proven using a combination of the Connecting Homomorphism Lemma (CHL) (Lemma~\ref{lem:MSSSES}) and an adaptation of the Geometric Boundary Theorem (GBT) \cite[Lem. A.4.1]{goodEHP} to our algebraic setting --- the $\bo$-MSS's associated to the short exact sequence of $A_*$-modules:
$$ 0 \rightarrow \FF_2 \xrightarrow{i} A \mmod A(0)_* \xrightarrow{p} \AA0 \rightarrow 0. $$ 
In general, the GBT has a great deal of potential ambiguity.  However, in our case much of it goes away as the $\bo$-MSS for $A \mmod A(0)_*$ collapses at $E_2$, where it is virtually acyclic (Proposition~\ref{prop:boMSSAmodA0collapse}).

Suppose first that $z$ is $h_1$-good, so $\partial(z) \ne 0$.  Suppose
there is a non-trivial differential
\begin{equation}\label{GBT1}
d_r(z) = z' \ne 0
\end{equation}
and $\partial(z') = 0$ (i.e. $z'$ is either $v_0$-good or evil).  Then we apply Case~(2) of the GBT to (\ref{GBT1}) deduce that there is a differential
$$ d_{r+1}(\partial(z)) = \partial'(z') \ne 0. $$
This establishes:
\begin{quote}
(1) $\quad$ If $z$ is $h_1$-good and $z'$ not $h_1$-good, then 
$$ d_r (z) = z' \ne 0 \quad \Leftrightarrow \quad d_{r+1}(\partial(z)) = \partial'(z') \ne 0. $$
\end{quote}
Suppose however that the class $z'$ of (\ref{GBT1}) is not $h_1$-good.  Then it follows from the CHL that we have 
$$ d_r(\partial(z)) = \partial(z'). $$
This implies
\begin{quote}
(2') $\quad$ If both $z$ and $z'$ are $h_1$-good, we have
$$ d_r(z) = z' \ne 0  \quad \Rightarrow \quad d_r(\partial(z)) = \partial(z'). $$ 
\end{quote}
Furthermore, we deduce
\begin{quote}
(2.5) $\quad$ If $z$ is an $h_1$-good permanent cycle, then $\partial(z)$ is a permanent cycle.
\end{quote}  

Suppose now that $z$ is a non-trivial class which is not $h_1$-good, so $\partial(z) = 0$.  By (\ref{eq:LESM}), we deduce that there is a $y \in \E{bo}{alg}_1(A \mmod A(0)_*)$ so that $p(y) = z$.  Then there is a non-trivial differential (Proposition~\ref{prop:boMSSAmodA0collapse}) 
$$ d_1(y) = y' \ne 0. $$
We apply the GBT to this differential.  Note that by the definition of the connecting homomorphism $\partial'$, there is a representative for $\partial'(z)$ in $\E{bo}{alg}_{1}(\FF_2)$ (which we abusively also call $\partial'(z)$) so that
$$ i(\partial'(z)) = y'. $$
Case (2) of the GBT then implies that if there is a non-trivial differential
\begin{equation}\label{eq:GBT2}
d_{r}(z) = z' \ne 0
\end{equation}
with $z'$ $h_1$-good,
then there is a differential
$$ d_{r-1}(\partial'(z)) = \partial(z'). $$
Thus we have shown
\begin{quote}
(3') $\quad$ If $z$ is not $h_1$-good and $z'$ is $h_1$-good, we have
$$ d_{r}(z) = z' \ne 0 \quad \Rightarrow \quad d_{r-1}(\partial'(z)) = \partial(z'). $$
\end{quote}
Suppose however that $z'$ of (\ref{eq:GBT2}) is not $h_1$-good.  Then we are in Case~(3) of the GBT, and we have
$$ d_r(\partial'(z)) = \partial'(z'). $$  
We have shown
\begin{quote}
(4') $\quad$ If $z$ and $z'$ are not $h_1$-good, we have
$$ d_{r}(z) = z' \ne 0 \quad \Rightarrow \quad d_{r}(\partial'(z)) = \partial'(z'). $$
\end{quote}
Finally, suppose that $z$ is a permanent cycle.  Then Cases~(4)-(5) of the GBT imply that $\partial'(z)$
is a permanent cycle.  Thus we have shown
\begin{quote}
(4.5) $\quad$ If $z$ is a permanent cycle which is not $h_1$-good, then $\partial'(z)$ is a permanent cycle.
\end{quote}

Suppose inductively that we have established (2), (3), (4) for all $r < r_0$. Since we have also established (1), we have then established our dictionary between $d_r$ differentials in the $\bo$-MSS for $\AA0$ for $r < r_0$ and corresponding differentials in the $\bo$-MSS for $\FF_2$.  The differentials on the right-hand side of (2'), (3'), and (4') could in principle be trivial, but only if their targets were hit by non-trivial shorter differentials in the $\bo$-MSS for $\FF_2$.  This would violate our inductive hypothesis.  We conclude inductively that the differentials on the right-hand side of (2'), (3'), and (4') are actually non-trivial.  This, combined with (2.5) and (4.5), upgrades these statements to (2), (3), and (4) of the theorem.
\end{proof}


\section{The agathokakological spectral sequence}\label{sec:rules}

At the end of the day we would like to deduce the evil classes $H^{*,*}(V)$ (in a range) directly from the known quantities $H^{*,*,*}(\mc{C}_{alg})$ and $\Ext^{*,*}_{A_*}(\FF_2)$.  This is somewhat confusing, as the relationship of these three quantities occurs through the combination of a spectral sequence (the $\bo$-MSS) and a long exact sequence (\ref{eq:algLES}). To mitigate this complication, we introduce a variant of the $\bo$-MSS, called the \emph{agathokakological spectral sequence} (AKSS), which combines the three quantities directly:
$$ \E{akss}{alg}_{1+\epsilon}^{*,*,*} = H^{*,*}(V) \oplus H^{*,*,*}(\mc{C}_{alg})\Rightarrow \Ext_{A_*}^{*,*}(\FF_2). $$
(The indexing of this spectral sequence is unorthodox, and will be explained.)

\subsection*{The construction of the spectral sequence}

For convenience, fix a splitting of the short exact sequence
\begin{equation}\label{eq:SES} 
0 \rightarrow V^{n,*} \rightarrow \E{bo}{alg}^{n,*,*}_1 \xrightarrow{g_{alg}} \mc{C}^{n,*,*}_{alg} \rightarrow 0.
\end{equation}
Recall that with respect to this splitting, we can decompose elements $x \in \E{bo}{alg}_1$ as
$$ x = x_{evil} + x_{good} $$
with $x_{evil} \in V$ and $x_{good} \in \mc{C}_{alg}$.  While (\ref{eq:SES}) does \emph{not} split as a short exact sequence of chain complexes, it does
introduce a micrograding on the filtration $n$-layer of the $\bo$-MSS.
We will regard the evil subcomplex $V^{n,*}$ as being in filtration $n + \epsilon$, where $\epsilon$ is regarded as being infinitesimal.  The new grading is on the ordered set
$$ \mb{N}^\epsilon := \{ n + \alpha \epsilon \: : \: n \in \mb{N}, \: \alpha \in \{0,1\} \}. $$
A total ordering is defined by
$$ n < n + \epsilon < n+1. $$
The result is a spectral sequence indexed on $\mb{N}^\epsilon$ (see \cite{Matschke}) which we call the \emph{agathokakological spectral sequence} (AKSS):
$$ \{ \E{akss}{alg}^{n+\alpha\epsilon,s,t}_{r + \beta \epsilon} \} \Rightarrow \Ext^{n+s, t}_{A_*}(\FF_2) $$
with
$$\E{akss}{alg}^{n+\alpha \epsilon, s, t}_1 = 
\begin{cases}
\mc{C}^{n, s, t}_{alg}, & \alpha = 0, \\
V^{n, t}, & \alpha = 1, \: s = 0, \\
0, & \text{otherwise}.
\end{cases}
$$
The pages are indexed on the totally ordered set
$$ \mb{N}^{\pm \epsilon} := \{ n + \beta \epsilon \: : \: n \in \mb{N}, \: \beta \in \{-1,0,1\} \} $$
with
$$ n - \epsilon < n < n + \epsilon < n+1. $$
The differentials take the form
\begin{align*}
d^{akss}_{r-\epsilon}:  & \E{akss}{alg}_{r - \epsilon}^{n+ \epsilon, s, t} \rightarrow \E{akss}{alg}_{r- \epsilon}^{n+r, s-r+1, t}, \\
d^{akss}_{r}:  & \E{akss}{alg}_{r}^{n+ \alpha\epsilon, s, t} \rightarrow \E{akss}{alg}_{r}^{n+r +\alpha\epsilon, s-r+1, t}, \\
d^{akss}_{r+\epsilon}:  & \E{akss}{alg}_{r+\epsilon}^{n, s, t} \rightarrow \E{akss}{alg}_{r+ \epsilon}^{n+r +\epsilon, s-r+1, t}.
\end{align*}
Given an element $x \in \E{akss}{alg}_{r+\beta\epsilon}^{n+\alpha\epsilon, s, t}$, we will define
\begin{align*}
n(x) & := n, \\
s(x) & := s, \\
t(x) & := t.
\end{align*}
If $\alpha = 0$, we will refer to the element $x$ as \emph{good}, and if $\alpha = 1$, we refer to $x$ as \emph{evil}.  
The $d_1$-differential 
$$ d^{akss}_{1}: \E{akss}{alg}^{n+\alpha \epsilon, s, t}_1 \rightarrow \E{akss}{alg}^{n+ 1+\alpha\epsilon, s,t}_1
$$
is given by
$$d_1 = 
\begin{cases}
d_1^{good}, & \alpha = 0, \\
d_1^{evil}, & \alpha = 1, s = 0, \\
0, & \text{otherwise}. 
\end{cases}
$$
We therefore have
$$\E{akss}{alg}^{n+\alpha \epsilon, s, t}_{1+\epsilon} = 
\begin{cases}
H^{n, s, t}(\mc{C}_{alg}), & \alpha = 0, \\
H^{n, t}(V), & \alpha = 1, \: s = 0, \\
0, & \text{otherwise}.
\end{cases}
$$
The only nonzero $d_{1+\epsilon}$-differentials are of the form
$$ H^{n, 0, t}(\mc{C}_{alg}) = \E{akss}{alg}^{n, {0}, t}_{1+\epsilon} \xrightarrow{d_{1+\epsilon}} \E{akss}{alg}^{n+1 + \epsilon, 0, t}_{1+\epsilon} = H^{n+1, t}(V), $$
for which we have 
$$ d_{1+\epsilon} = \partial_{alg} $$
where $\partial_{alg}$ is the connecting homomorphism of (\ref{eq:algLES}).
We therefore have
$$ \E{akss}{alg}^{n+\alpha\epsilon,s,t}_2 = 
\begin{cases}
\mr{Im} \left( \E{bo}{alg}^{n, s, t}_{{2}} \xrightarrow{g_{alg}} H^{n, s, t}(\mc{C}_{alg}) \right), & \alpha = 0, \\
\mr{Ker} \left( \E{bo}{alg}^{n, {0}, t}_{{2}} \xrightarrow{g_{alg}} H^{n, {0}, t}(\mc{C}_{alg}) \right), & \alpha = 1, \: s = 0, \\
0, & \text{otherwise}.
\end{cases} $$
Thankfully, as the differentials $d^{bo,alg}_r$ in the $\bo$-MSS decrease $s$ by $r-1$, and the evil classes are concentrated in $s = 0$, the only other potentially non-trivial differentials ($r \ge 2$) in the AKSS are of the form:
\begin{align*}
d^{akss}_r: & \E{akss}{alg}_{r}^{n, s, t} \rightarrow \E{akss}{alg}_{r}^{n+r, s-r+1, t}, \\
d^{akss}_{s + 1 + \epsilon}: & \E{akss}{alg}_{r+\epsilon}^{n, s, t} \rightarrow \E{akss}{alg}_{r+\epsilon}^{n+s+1+\epsilon, 0, t} \\
\end{align*}
in which case they are determined by 
\begin{align*}
d^{akss}_r(x_{good}) & = \left[d^{bo, alg}_r(x_{good}) \right]_{good}, \\
d^{akss}_{s+1+\epsilon}(x_{good}) & = \left[d^{bo, alg}_r(x_{good}) \right]_{evil}.
\end{align*}

The behavior of the differentials in the AKSS is summarized by the following lemma.
\begin{lem}\label{lem:akdr}
In the agathokakological spectral sequence, given a non-trivial differential
$$ d_{r + \beta\epsilon}(x) = y $$
for $r \ge 1$ and $\beta \in \{ -1,0,1\}$, there are three possibilities:
\begin{enumerate}
\item $x$ and $y$ are both evil, $r = 1$, $\beta = 0$,
\item $x$ and $y$ are both good, $r \ge 1$, $\beta = 0$,
\item $x$ is good and $y$ is evil, $r = s(x)+1$, $\beta = 1$. 
\end{enumerate}
In other words, {\bf evil can never triumph over good}.
\end{lem}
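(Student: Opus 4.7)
The plan is to recast every AKSS differential as a component of some $bo$-MSS differential and then invoke two rigid structural constraints to eliminate the impossible configurations. Because the splitting $\E{bo}{alg}_1 = \mc{C}_{alg} \oplus V$ is only a splitting of bigraded vector spaces, each $d_r^{bo,alg}$ decomposes into four pieces indexed by the good/evil character of source and target; the AKSS differentials $d_{r+\beta\epsilon}$ are precisely these four pieces, with $\beta \in \{-1,0,1\}$ tracking the shift in the $\alpha$-coordinate. So the lemma reduces to a case analysis on which of the four source/target types can occur, governed by (a) the fact that $V^{*,*}$ is concentrated in $s = 0$, and (b) the fact that $d_r^{bo,alg}$ shifts $(n,s)$ by $(r,-r+1)$.

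First I would dispose of every differential with evil source. Since such a source has $s = 0$, constraint (b) immediately forbids $r \geq 2$ (the target would land in negative $s$). For $r = 1$, I would appeal to the fact established in Section~\ref{sec:alg} and embedded in (\ref{eq:SES}) that $V^{*,*}$ is a subcomplex of $\E{bo}{alg}^{*,0,*}_1$ with respect to $d_1^{bo,alg}$. This forces the target of any $d_1$-differential with evil source to be evil, so $\beta = 0$, and we land in Case~(1). In particular the a~priori possible $d_{1-\epsilon}$ (evil source, good target) is excluded, which is the crux of the ``evil cannot triumph over good'' slogan.

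For a good source $x$ the remaining analysis is purely numerical. If the target is good, the $\alpha$-coordinate does not move, so $\beta = 0$ and any $r \geq 1$ gives Case~(2). If the target is evil, its $s$-degree is forced to be $0$, which via constraint (b) pins down $r = s(x)+1$; the $\alpha$-coordinate jumps from $0$ to $1$, giving $\beta = 1$ and Case~(3). Together with the previous paragraph this exhausts the four source/target configurations and proves the lemma.

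No step here is a genuine obstacle; the proof is entirely bookkeeping. The only delicate point is translating between the AKSS indexing $d_{r+\beta\epsilon}$ on $\mb{N}^\epsilon$ and the underlying $bo$-MSS differentials --- that is, verifying from the definitions of the pages $\E{akss}{alg}_{r+\beta\epsilon}$ set out immediately above the statement that the components of $d_r^{bo,alg}$ really do manifest on the pages claimed. Once that identification is in hand, the lemma's substantive content is the single observation, extracted from the fact that $V$ is a subcomplex of $\E{bo}{alg}^{*,0,*}_1$, that the evil~$\to$~good configuration never arises.
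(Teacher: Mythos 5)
Your argument is correct and is essentially the same as the paper's. The paper does not supply a separate proof environment for this lemma; it is stated as a summary of the discussion immediately preceding it, and that discussion makes exactly your two observations: the higher differentials $d_r^{\bo,alg}$ decrease $s$ by $r-1$ while evil classes are concentrated in $s=0$, so any $d_r$ with $r\ge 2$ must have good source; and for $r=1$ the fact that $V^{*,*}$ is a subcomplex of $\E{bo}{alg}_1^{*,0,*}$ forbids the evil-to-good component. Your added care in checking that the $d_{1-\epsilon}$ case (evil source, good target, same $s$-degree) is ruled out by the subcomplex property rather than by degree reasons alone is a correct and worthwhile clarification, but it does not constitute a different route.
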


The reader might wonder why the authors elected to index the AKSS on $\mb{N}^\epsilon$ rather than, say, half integers.  The reason is that since the map $g_{alg}$ of (\ref{eq:SES}) is a map of algebras, $V^{*,*}$ gets the structure of an ideal, and the multiplicative structure of the $\bo$-MSS descends to a multiplicative structure on the AKSS.  

Specifically, define a monoid structure on $\mb{N}^\epsilon$
determined by the rule $\epsilon+\epsilon = \epsilon$, so we have
$$ (n + \alpha \epsilon) + (n'+ \alpha' \epsilon) = 
\begin{cases}
n + n', & \alpha = \alpha' = 0, \\
(n+n') + \epsilon & \text{otherwise}.
\end{cases}
$$
Then there is a product map
$$ \E{akss}{alg}^{n+\alpha \epsilon, s, t}_{r+\beta\epsilon} \otimes 
\E{akss}{alg}^{n'+\alpha' \epsilon, s', t'}_{r+\beta\epsilon}
\rightarrow \E{akss}{alg}^{(n+\alpha \epsilon)+(n'+\alpha' \epsilon), s+s', t+t'}_{r+\beta\epsilon}. $$
The differential $d_{r+\beta\epsilon}$ is a derivation with respect to this product, provided one interprets that to mean
$$ d_{r-\epsilon}(xy) = 0 $$
if $x$ and $y$ are both evil.

With respect to this multiplicative structure, the product of good classes is good, and the product of evil classes is evil (which is why we require $\epsilon + \epsilon = \epsilon$).  For dimensional reasons, only certain kinds of hidden extensions can occur:

\begin{lem}\label{lem:akhe}
Suppose that there is a hidden extension in the AKSS given by
\[ \td{x}\td{y} = \td{z} \ne 0 \]
in $\Ext^{*,*}_{A_*}(\FF_2)$, where 
$x$, $y$, and $z$ detect $\td{x}$, $\td{y}$, and $\td{z}$ in the AKSS, respectively, with $xy = 0$ in $\E{akss}{alg}_\infty$.  Then either
\[ s(z) < s(x) + s(y) \]
or
\[ s(z) = s(x) = s(y) = 0 \: \text{and $x$ and $y$ are good, and $z$ is evil.}  \]
\end{lem}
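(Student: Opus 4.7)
The plan is to exploit the multiplicative structure on the AKSS together with the numerical identities that any hidden extension must satisfy. Write $\nu(a) := n(a) + \alpha(a)\epsilon \in \mb{N}^\epsilon$ for the filtration of a class $a$ (with $\alpha(a) \in \{0,1\}$ recording good versus evil), and recall from the multiplicative structure set up just before the lemma that $E_\infty$-multiplication is additive in $\nu$ under the convention $\epsilon + \epsilon = \epsilon$. The hypothesis $xy = 0$ in $\E{akss}{alg}_\infty$ with $\td{x}\td{y} = \td{z}$ detected by $z$ therefore translates into the strict inequality $\nu(z) > \nu(x) + \nu(y)$ in $\mb{N}^\epsilon$. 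Convergence of the AKSS to $\Ext^{n+s,t}_{A_*}(\FF_2)$ supplies the identity $n(z) + s(z) = n(x) + s(x) + n(y) + s(y)$, which together with the filtration inequality is all that the argument needs.

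Next I would run a short case analysis. Suppose first that at least one of $x, y$ is evil, say $y$, so $s(y) = 0$ and $\nu(x) + \nu(y) = n(x) + n(y) + \epsilon$. Then $\nu(z) > n(x) + n(y) + \epsilon$ forces $n(z) \geq n(x) + n(y) + 1$, whence the numerical identity gives $s(z) \leq s(x) - 1 < s(x) + s(y)$, which is conclusion (a). Suppose instead that both $x$ and $y$ are good, so $\nu(x) + \nu(y) = n(x) + n(y)$. The strict inequality $\nu(z) > n(x) + n(y)$ leaves exactly two options. Either $n(z) \geq n(x) + n(y) + 1$, in which case the same identity yields $s(z) \leq s(x) + s(y) - 1$, again conclusion (a); or $n(z) = n(x) + n(y)$ and $z$ is evil. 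In the latter subcase $s(z) = 0$ because evil classes are concentrated in $s = 0$, and substituting into the identity forces $s(x) + s(y) = 0$ and hence $s(x) = s(y) = 0$, which is exactly conclusion (b).

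The only point requiring care is the $\epsilon$-bookkeeping: one must be sure that $\nu$ is genuinely additive under AKSS multiplication with the convention $\epsilon + \epsilon = \epsilon$, and that "hidden extension" really does translate to strict $\mb{N}^\epsilon$-inequality of filtrations. Both facts are built into the multiplicative structure established immediately before the lemma, so the whole proof reduces to the short numerical case analysis above; I do not anticipate any substantive obstacle.
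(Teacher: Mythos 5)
Your proof is correct, and it matches the (unwritten) argument the paper has in mind: the paper states the lemma without proof, immediately after the remark that ``for dimensional reasons, only certain kinds of hidden extensions can occur,'' so the intended argument is precisely the filtration/degree bookkeeping you carry out. The two key inputs — additivity of $\nu(-) = n(-) + \alpha(-)\epsilon$ under the AKSS product (with $\epsilon + \epsilon = \epsilon$) and the convergence identity $n(z) + s(z) = n(x) + s(x) + n(y) + s(y)$, combined with the fact that evil classes live only in $s = 0$ — are exactly the facts established in the paragraphs preceding the lemma, and your case analysis on where $\nu(z) > \nu(x) + \nu(y)$ can land in $\mb{N}^\epsilon$ correctly exhausts the possibilities. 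One very small remark you could add for completeness: in the case where both $x$ and $y$ are evil, your inequality gives $s(z) \le -1$, which is impossible, so that subcase is vacuous — but since any such hypothetical hidden extension would still formally satisfy $s(z) < s(x) + s(y)$, the proof as written remains logically sound.
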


\begin{rmk}
When $s(x) \leq 1$, this imposes strict restrictions on $y$. If $s(x)=0$ (for example if $x=h_i$ when $i\geq 2$), $y$ cannot be evil, for this would force $s(z) < 0$. 
Similarly, if $s(x)=1$ (for example, if $x = v_0$ or $h_1$), if $y$ is evil, then $s(z) = 0$. 
\end{rmk}

\subsection*{Comparison with $\AA0$}

The entire construction of the AKSS goes through without modification when the $A_*$-comodule $\FF_2$ is replaced by $\AA0$:
\[ \{ \E{akss}{alg}_{r+\beta\epsilon}^{n+\alpha \epsilon,s,t}(\AA0) \} \Rightarrow \Ext_{A_*}^{*,*}(\AA0) \]
with 
\[\E{akss}{alg}^{n+\alpha \epsilon, s, t}_{1+\epsilon}(\AA0) = 
\begin{cases}
H^{n, s, t}(\mc{C}_{alg}(\AA0)), & \alpha = 0, \\
H^{n, t}(V(\AA0)), & \alpha = 1, \: s = 0, \\
0, & \text{otherwise}.
\end{cases}
\]
The analysis of Section~\ref{sec:AA0}, in particular Proposition~\ref{prop:E2boMSSAA0}, comparing the $\bo$-MSS's of $\FF_2$ and $\AA0$ refines to give a comparison between the respective AKSS's.

\begin{thm}
We have
\[\E{akss}{alg}^{n+\alpha \epsilon, s, t}_{1+\epsilon}(\AA0) \cong 
\begin{cases}
\E{akss}{alg}^{n+1, s, t}_{1+\epsilon}(\FF_2), & \alpha = 0, t-s \equiv 0 \mod 4, \\
\E{akss}{alg}^{n, s+1, t}_{1+\epsilon}(\FF_2), & \alpha = 0, t-s \not\equiv 0 \mod 4, \\
\E{akss}{alg}^{n+1+\epsilon,0,t}_{1+\epsilon}(\FF_2), & \alpha = 1, \: s = 0, t \equiv 0 \mod 4, \\
\E{akss}{alg}^{n+1+\epsilon,0,t}_2(\FF_2)  \\
\oplus \E{akss}{alg}^{n+1,0,t}_2(\FF_2), & \alpha = 1, \: s = 0, t \not\equiv 0 \mod 4, \\
0, & \text{otherwise}.
\end{cases}
\]
Moreover, under this isomorphism, all differentials commute (but potentially changing lengths as the indexing changes dictate).
\end{thm}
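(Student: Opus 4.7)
The plan is to derive both the isomorphism and the compatibility of differentials as a direct translation of Proposition~\ref{prop:E2boMSSAA0} and Theorem~\ref{thm:AA0} into the AKSS indexing conventions, case by case.

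First I would handle the good cases ($\alpha = 0$) by exploiting the splitting $\mc{C}^{n,s,t}_{alg}(M) = \mc{C}^{n,s,t}_{v_0}(M) \oplus \mc{C}^{n,s,t}_{h_1}(M)$, which is by definition concentrated in $t-s \equiv 0 \pmod{4}$ and $t-s \not\equiv 0 \pmod{4}$ respectively. Because $d_1^{good,alg}$ is $v_0$- and $h_1$-linear and preserves internal degree, each summand is a subcomplex, so the first two isomorphisms of Proposition~\ref{prop:E2boMSSAA0} translate immediately into the first two cases of the theorem once one rewrites $H(\mc{C}_{v_0})$ and $H(\mc{C}_{h_1})$ as the $E_{1+\epsilon}$-terms of the AKSS in the appropriate $\bo$-filtrations.

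Next I would handle the evil cases ($\alpha = 1$, $s=0$) starting from the third isomorphism of Proposition~\ref{prop:E2boMSSAA0},
\[ H^{n,t}(V(\AA0)) \xrightarrow[\cong]{\partial'} H^{n+1,t}\bigl(V(\FF_2) \oplus \mc{C}^{*,0,*}_{h_1}(\FF_2)\bigr). \]
When $t \equiv 0 \pmod{4}$, the complex $\mc{C}_{h_1}^{*,0,t}(\FF_2)$ vanishes, so the right side collapses to $H^{n+1,t}(V(\FF_2)) = \E{akss}{alg}^{n+1+\epsilon,0,t}_{1+\epsilon}(\FF_2)$, giving the third case. When $t \not\equiv 0 \pmod{4}$, Proposition~\ref{prop:E2boMSSAA0} further identifies the right side with $\E{bo}{alg}_2^{n+1,0,t}(\FF_2)$, which by construction of the AKSS sits in a short exact sequence of $\FF_2$-vector spaces
\[ 0 \to \E{akss}{alg}^{n+1+\epsilon,0,t}_{2}(\FF_2) \to \E{bo}{alg}_2^{n+1,0,t}(\FF_2) \to \E{akss}{alg}^{n+1,0,t}_{2}(\FF_2) \to 0, \]
whose splitting yields the required direct sum decomposition.

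For the commutativity of differentials, my plan is to funnel everything through the four cases of Theorem~\ref{thm:AA0} and check that each case lines up with the AKSS differentials $d_{r-\epsilon}^{akss}$, $d_r^{akss}$, $d_{r+\epsilon}^{akss}$. Cases~(2) and (4) of Theorem~\ref{thm:AA0} (both classes $h_1$-good, or both not $h_1$-good) preserve the length $r$ and match differentials of the form $d_r^{akss}$ on either good or evil classes; Case~(1) (good source, evil target with length increased by one) matches the $\epsilon$-differential $d_{r+\epsilon}^{akss}$ that is built into the AKSS precisely to record $\partial_{alg}$; Case~(3) (evil source, good target with length decreased by one) matches $d_{r-\epsilon}^{akss}$. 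The main obstacle will be purely bookkeeping: one must verify, for each of the four $\alpha/t \bmod 4$ cases of the isomorphism, that the simultaneous shifts in $n$, $s$, and $\epsilon$ appearing in the statement cancel the length-one corrections appearing in Cases~(1) and (3) of Theorem~\ref{thm:AA0}. Once this accounting is done, no new computation is required, and the statement follows.
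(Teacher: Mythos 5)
Your proposal is correct and matches the paper's approach: the paper itself says the theorem is ``essentially just a translation of the results of Section~\ref{sec:AA0} into agathokakological indexing'' and explicitly funnels the differentials through Proposition~\ref{prop:E2boMSSAA0} and Theorem~\ref{thm:AA0}, which is exactly what you do. The one point the paper singles out for clarification --- that for $t \not\equiv 0 \bmod 4$ the $h_1$-good classes of $\E{akss}{alg}^{*,0,t}(\FF_2)$ become \emph{evil} in the $\AA0$-AKSS, converting $d_{1+\epsilon}^{akss}$ differentials into $d_1^{akss}$ differentials and forcing the $E_2$-page comparison --- is the same content as your short exact sequence argument in the fourth case, though you might emphasize that this page shift is why $E_2$ rather than $E_{1+\epsilon}$ appears on the right.
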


As this theorem is essentially just a translation of the results of Section~\ref{sec:AA0} into agathokakological indexing, we will not say anything more about the proof.  However, the second to last case in the statement of the theorem ($\alpha = 1, s = 0, t \not\equiv 0 \mod 4$) does merit clarification.  The $h_1$-good classes in $\E{akss}{alg}^{n,s,t}_{alg}(\FF_2)$ become $h_1$-good classes in $\E{akss}{alg}^{n, s-1, t}(\AA0)$ for $s > 0$, but for $s = 0$ they become evil classes (in $\E{akss}{alg}^{n-1+\epsilon,0,t}_{alg}(\AA0)$).
A $d_{1+\epsilon}^{akss}$ differential from an $h_1$-good class to an evil class in the AKSS for $\FF_2$ becomes a $d_1^{akss}$ differential between the corresponding evil classes in the AKSS for $\AA0$.

\subsection*{The principle of dichotomy}

Given a non-trivial class $x \in \Ext^{*,*}_{A_*}(M)$, for $M = \FF_2$ or $\AA0$,  we will say that $x$ is \emph{good} if it is detected by a good class in the AKSS, and we will say $x$ is \emph{evil} if it is detected by an evil class.  We will say $x$ is \emph{$v_1$-periodic} if its image in $v_1^{-1}\Ext$ is non-trivial, and otherwise we will say that $x$ is \emph{$v_1$-torsion}.
We will now answer the following fundamental question:
\begin{quote}
{\it How do you determine if a given Ext class is good or evil?}
\end{quote}

Recall from Theorem~\ref{thm:AdamsP} (in the case of $\AA0$) and Proposition~\ref{prop:v1ext} (in the case of $\FF_2$), for every class $x \in \Ext^{s,t}_{A_*}(\FF_2)$ with $t-s > 0$, there is an $N = N(x)$ such that
\begin{equation}\label{eq:xxxxNxxx} v_1^{2^N \cdot k} x \in \Ext^{s+2^N\cdot k,t+3\cdot 2^N\cdot k}_{A_*}(\FF_2) \end{equation}
is defined for all $k$. For these classes, being $v_1$-periodic is equivalent to requiring 
$$ v_1^{2^N\cdot k} x \ne 0 $$
for all $k$.  

For $\FF_2$, the only classes in $t - s = 0$ are $v_0^i$, and these are all good (and technically are $v_1$-periodic in the sense that their image in $v_1^{-1}\Ext$ is non-trivial, though they are $v_1$-periodic of ``period $\infty$'').  For $\AA0$, there are no non-trivial classes in $t-s \le 1$.  We may therefore restrict our attention to those classes with $t-s > 0$.

A naive hope would be: ``$x$ is $v_1$-periodic if and only if $x$ is good.''
\emph{This is not true.}  However, something approximating it is.

By Proposition~\ref{prop:v1ext}, any class 
$$ x \in \Ext^{s,t}_{A_*} $$
with $s > 1/3(t-s)$ is automatically $v_1$-periodic.  We shall refer to these classes as being \emph{above the 1/3 line}.

\begin{lem}\label{lem:onethird}
Every class above the $1/3$-line is good.
\end{lem}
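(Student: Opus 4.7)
The plan is to argue by contradiction, exploiting the very restrictive connectivity of the evil subcomplex $V^{*,*}$. First I would unpack the hypothesis: ``above the $1/3$ line'' means $t - s < 3s$, equivalently $t < 4s$.

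Suppose, for contradiction, that some $x \in \Ext^{s, t}_{A_*}(\FF_2)$ lies above the $1/3$ line but is evil, i.e.\ detected in the AKSS by a nonzero permanent cycle in the evil part of $\E{akss}{alg}_\infty$. By construction the evil classes on $\E{akss}{alg}_{1+\epsilon}$ are concentrated in internal cohomological degree $s' = 0$, where they equal $H^{n,t}(V)$, and such an evil class in AKSS filtration $n + \epsilon$ converges to $\Ext^{n,t}_{A_*}(\FF_2)$. Matching the convergence, the class detecting $x$ must live in a subquotient of $H^{s, t}(V)$ (with $n = s$).

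The crux is then the observation that $V^{s,t} = 0$ whenever $t < 4s$. Indeed, from
\[ V^{s,*} = \bigoplus_{|I| = s} \Sigma^{4\norm{I}} V_I, \]
and the fact that each $V_I \subseteq \Ext^{0, *}_{A(1)_*}(\ull{B_I})$ is concentrated in non-negative internal degrees (because $\ull{B_I}$ is connective), every summand $\Sigma^{4\norm{I}} V_I$ lives in degrees $\ge 4\norm{I}$. Since each entry of $I$ is a positive integer, $\norm{I} = i_1 + \cdots + i_s \ge s$, so the whole direct sum vanishes below internal degree $4s$. As a subquotient, $H^{s,t}(V)$ vanishes in the same range, contradicting the existence of a detecting evil class.

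I do not anticipate a genuine obstacle here, since the argument reduces to a connectivity count on $V$. The only place requiring care is the bookkeeping at the start: namely, making sure that a detecting evil class in AKSS filtration $n+\epsilon$ sees $x$ in Ext-degree exactly $n$ rather than $n+s'$ for some nonzero $s'$. This is immediate from the definition once one notes that the evil part of $\E{akss}{alg}_1$ is concentrated in the column $s' = 0$, so no evil class can carry additional cohomological degree.
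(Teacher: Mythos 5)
Your proof is correct and follows the same route as the paper: the paper's proof is exactly to observe that $V^{n,t} = 0$ for $t < 4n$ (what the authors call an ``elementary connectivity estimate'') together with the fact that evil classes in $V^{n,t}$ detect elements of $\Ext^{n,t}_{A_*}$. Your argument simply spells out the connectivity estimate, namely that each $\Sigma^{4\norm{I}}V_I$ is concentrated in internal degrees $\geq 4\norm{I} \geq 4\abs{I}$, which is the intended content.
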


\begin{proof}
Since $V^{n,t}$ is a subgroup of $\pi_t bo \wedge \br{\bo}^{\wedge n}$, and $\br{\bo}$ is $3$-connected, it follows that $V^{n,t} = 0$ for $t < 4n$.  The lemma follows from the fact that, in the AKSS, evil classes in $V^{n,t}$ detect elements of $\Ext^{n,t}_{A_*}$. 
\end{proof}

The following is easily checked from the structure of $\mc{C}_{alg}(\FF_2)$ or $\mc{C}_{alg}(\AA0)$.
\begin{lem}\label{lem:v1div}
Suppose $x$ is either an element of $\mc{C}^{n,s,t}_{alg}(\FF_2)$ or $\mc{C}^{n,s,t}_{alg}(\AA0)$.  Then $v_1^{-4k}x$ exists if and only if $4k \le s$.
\end{lem}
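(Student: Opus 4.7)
The strategy is to verify the lemma by unpacking the explicit description of $\mc{C}^{n,s,t}_{alg}$ established in Sections~\ref{sec:alg} and~\ref{sec:AA0}. Recall that $\mc{C}^{n,s,t}_{alg}(\FF_2) = \bigoplus_{|I|=n} \E{ass}{}_2^{s,t}(\Sigma^{4\|I\|} b_I)$, with the analogous decomposition involving $b_I \wedge \br{H\ZZ}$ for the $\AA0$ version; generators are denoted in paper notation by $v_0^{k_0} w^m h_1^a(I)$, with Ext degree $s = k_0 + a$. The key observation is that $v_1^4 = v_0^4 w^2$ acts on each summand by raising $s$ by $4$ and $t$ by $12$, while preserving both the $\bo$-filtration $n$ and the summand index $I$.

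The only-if direction is then immediate from the grading: since $\mc{C}_{alg}$ vanishes in negative Ext filtration, the target bidegree $(n, s-4k, t-12k)$ of $v_1^{-4k} x$ forces $s - 4k \ge 0$, i.e., $4k \le s$.

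For the if direction, assume $s \ge 4k$; the plan is to reduce to a single summand and argue at the level of generators. Writing $x = v_0^{k_0} w^m h_1^a(I)$, the formal division yields $v_1^{-4k} x = v_0^{k_0 - 4k} w^{m - 2k} h_1^a(I)$. In the $\FF_2$ case, the $v_0$-good towers in each Brown--Gitler summand are infinite in the $v_0$-direction, so this formal expression represents a genuine element of $\mc{C}^{n, s-4k, t-12k}_{alg}(\FF_2)$ whenever $s - 4k \ge 0$; the $h_1$-good contributions have $s \in \{1,2\}$, so the hypothesis forces $k = 0$ in those cases, rendering the claim trivial. In the $\AA0$ case, Proposition~\ref{prop:MSSSESE1} identifies the $v_0$-good classes as truncated towers of length exactly equal to the $s$-degree of the top of the corresponding $\FF_2$-tower, and the $h_1$-good classes as shifts via the boundary $\partial$; the division by $v_1^{4k}$ lands inside the truncated range precisely when $s - 4k \ge 0$.

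The main obstacle will be the $\AA0$ case, where the bookkeeping between $v_0$-good and $h_1$-good components, and their respective truncated/shifted structures, must be handled correctly; this is exactly what the description in Proposition~\ref{prop:MSSSESE1} and the comparison developed in Section~\ref{sec:AA0} accomplish, which is why the paper advertises this lemma as ``easily checked from the structure''.
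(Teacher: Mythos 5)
The only-if direction is fine: $\mc{C}_{alg}^{n,s,t}$ vanishes for $s<0$, so any $y$ with $v_1^{4k}y = x$ forces $s-4k\ge 0$.

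The if direction for the $\FF_2$ case has a genuine gap. You assert that because the $v_0$-good towers in each Brown--Gitler summand are infinite in the $v_0$-direction, the formal expression $v_0^{k_0-4k}w^{m-2k}h_1^a(I)$ is a genuine element whenever $s-4k\ge 0$. This conflates divisibility by $v_0^{4k}$ with divisibility by $v_1^{4k}=v_0^{4k}w^{2k}$: the latter also demands $m-2k\ge 0$, i.e.\ that the would-be target land in internal stem $(t-s)-8k\ge 4\norm{I}$, which is the bottom of the summand $\E{ass}{}_2(\Sigma^{4\norm{I}}b_I)$. Infinitude of the towers in the $v_0$-direction is irrelevant to this $w$-direction constraint, which is bounded below. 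Concretely, $x=v_0^{10}(I)\in\mc{C}_{alg}^{|I|,\,10,\,10+4\norm{I}}(\FF_2)$ has $s=10\ge 4$, yet $v_1^{-4}x$ would sit in internal stem $4\norm{I}-8$ and therefore does not exist. (The parenthetical claim that the $h_1$-good contributions have $s\in\{1,2\}$ is also false in general---for instance $h_1v_1^4$ already has $s=5$ when $b_I=\bo$, and for Adams covers the $h_1$-good classes occur in a range of $s$-degrees.)

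By contrast, your reasoning for the $\AA0$ half identifies the correct mechanism: by Proposition~\ref{prop:MSSSESE1} the $v_0$-good towers there are truncated to length equal to the $s$-degree of the generator of the corresponding $\FF_2$-tower, and that truncation forces the stem constraint to be subsumed by $s-4k\ge 0$. It is precisely the absence of any such truncation in $\mc{C}_{alg}(\FF_2)$ that makes the $\FF_2$ half of the if direction fail on the $v_0$-towers sitting at internal stem $4\norm{I}$; so the issue here is not merely in your write-up but in the literal statement of Lemma~\ref{lem:v1div}, for which the paper offers no proof. A correct treatment should either add a hypothesis excluding these boundary classes or observe that the only use of the lemma, in the proof of Lemma~\ref{lem:v1torsion}, never invokes it on such classes.
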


\begin{rmk} Note that among good classes in the AKSS, notions of $v_1$-periodicity extend to all the pages in the following manner. There are variants of the AKSS for computing $\Ext^{*,*}_{A(N)_*}(\AA0)$ and $\Ext^{*,*}_{A(N)_*}(\FF_2)$. Thus, we can define $N=N(x)$ as in \eqref{eq:xxxxNxxx} on any page of the AKSS. Moreover $v_1^{2^N}$ is an actual element of $\Ext_{A(N)_*}(\FF_2)$, and the AKSS for $\Ext^{*,*}_{A(N)_*}(\AA0)$ and  $\Ext^{*,*}_{A(N)_*}(\FF_2)$ are spectral sequences of modules over the AKSS for $\Ext^{*,*}_{A(N)_*}(\FF_2)$.  Thus multiplication by $v_1^{2^N}$ commutes with differentials in these AKSS's.
\end{rmk}

The following observation is crucial --- it implies that good classes cannot detect $v_1$-torsion classes in Ext.
\begin{lem}\label{lem:v1torsion}
Suppose that $x$ and $y$ are good classes in the AKSS for $\FF_2$ or $\AA0$, and suppose that there is a non-trivial differential $d^{akss}_r(x) = y$.  Let $N = N(x)$.
\begin{enumerate}
\item If $v_1^{-2^N\cdot k} y$ exists then $v_1^{-2^N\cdot k}x$ exists.
\item For all such $k$, 
$$d_r^{akss}(v_1^{-2^N\cdot k}x) = v_1^{-2^N\cdot k}y. $$
\end{enumerate} 
\end{lem}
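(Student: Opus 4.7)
The proof rests on two inputs that have already been set up: (a) Lemma~\ref{lem:v1div}, which says that on a good class, $v_1^{-4k}$-division is controlled purely by the $s$-degree, and (b) the remark immediately preceding this lemma, which states that the AKSS (for $\FF_2$ or $\AA0$) is a module over the AKSS for $\Ext_{A(N)_*}(\FF_2)$, so that multiplication by $v_1^{2^N}$ commutes with differentials. Recall also that by Adams periodicity $N \ge 2$, so $2^N$ is always a multiple of $4$ and Lemma~\ref{lem:v1div} applies directly.

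\textbf{Step 1 (part (1)).} By Lemma~\ref{lem:akdr}, a good-to-good differential takes the form
\[ d_r^{akss} \colon \E{akss}{alg}^{n+\alpha\epsilon,s,t}_r \longrightarrow \E{akss}{alg}^{n+r+\alpha\epsilon,s-r+1,t}_r, \]
so $s(y) = s(x) - r + 1 \le s(x)$. Now suppose $v_1^{-2^N\cdot k}y$ exists; by Lemma~\ref{lem:v1div} this is equivalent to $2^N \cdot k \le s(y)$. Combining these inequalities gives $2^N \cdot k \le s(x)$, and a second application of Lemma~\ref{lem:v1div} shows $v_1^{-2^N\cdot k}x$ exists. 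This handles part~(1).

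\textbf{Step 2 (part (2)).} For the identity of differentials, set $x' := v_1^{-2^N\cdot k}x$ and $y' := v_1^{-2^N\cdot k}y$, which are well-defined by Step~1 and the hypothesis respectively. Since multiplication by $v_1^{2^N\cdot k}$ commutes with $d_r^{akss}$, we have
\[ v_1^{2^N\cdot k} \cdot d_r^{akss}(x') \;=\; d_r^{akss}\bigl(v_1^{2^N\cdot k} x'\bigr) \;=\; d_r^{akss}(x) \;=\; y \;=\; v_1^{2^N\cdot k} \cdot y'. \]
To conclude $d_r^{akss}(x') = y'$, one just needs injectivity of $v_1^{2^N \cdot k}$-multiplication on the relevant summand of $\mc{C}_{alg}$ in the bidegree of $x'$. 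This is immediate from the explicit description (\ref{eq:Calg}): each good summand is an Adams cover of (a suspension of) $\bo$ or $\bsp$, and on such a summand $v_1^4$-multiplication is injective (it moves a basis element strictly upward in $s$ within an unbounded tower). Since $2^N \cdot k$ is a multiple of $4$, the required injectivity follows.

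\textbf{Main obstacle.} There is essentially no hard computation here; the only delicate point is making sure that ``$v_1^{2^N\cdot k}$ commutes with $d_r^{akss}$'' genuinely applies in our setting, since $v_1^{2^N}$ is defined only as a class in $\Ext_{A(N)_*}$, not in $\Ext_A$. This is precisely the content of the remark preceding the lemma: the AKSS for $\Ext_{A_*}$ and $\Ext_{A_*}(\AA0)$ are modules over the AKSS for $\Ext_{A(N)_*}$, and within the range where $v_1^{-2^N\cdot k}$-division is defined (as pinned down by Step~1 via Lemma~\ref{lem:v1div}), this module structure is exactly what permits one to move the inverse power of $v_1$ past the differential.
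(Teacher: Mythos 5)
Your Step~1 matches the paper's argument for part~(1) exactly: $s(y) \le s(x)$ plus Lemma~\ref{lem:v1div}. But Step~2 has a genuine gap. You write $d_r^{akss}(x')$ and then compare $v_1^{2^N\cdot k} d_r^{akss}(x')$ with $y$, but this presupposes that $x' = v_1^{-2^N\cdot k}x$ actually survives to the $E_r$-page — that is, that $x'$ is a $d_{r'}$-cycle for every $r' < r$. The module structure only tells you that \emph{if} $d_{r'}(x') = z$, then $d_{r'}(x) = d_{r'}(v_1^{2^N\cdot k}x') = v_1^{2^N\cdot k}z$; so the survival of $x$ to $E_r$ (i.e. $d_{r'}(x) = 0$ for $r' < r$) implies $v_1^{2^N\cdot k}z = 0$, \emph{not} that $z = 0$. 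Since $z$ could a priori be an evil class — and evil classes are $v_1$-torsion by definition — the ``injectivity of $v_1^{2^N\cdot k}$'' you invoke is only valid on the good part $\mc{C}_{alg}$ and says nothing here. So $x'$ could, for all your argument shows, support a shorter differential into an evil class and never reach $E_r$, making $d_r^{akss}(x')$ undefined.

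This is precisely the subtlety the paper's proof is designed to handle, and it does so by induction on $r$ plus a filtration count. Assuming the lemma for $r' < r$, it supposes $x'$ supports a shorter differential $d_{r'+\beta\epsilon}(x') = z$; the inductive hypothesis (or just the injectivity you cite) rules out $z$ good, so $z$ is evil and hence $s(z) = 0$. Since by Lemma~\ref{lem:akdr} a good-to-evil differential is a $d_{s(x')+1+\epsilon}$, this forces $r' = s(x') + 1 = s(x) - 2^N k + 1$. Combining with $s(y) = s(x) - r + 1$ and the hypothesis $s(y) \ge 2^N k$ (needed for $y'$ to exist) gives $r' \ge r$, contradicting $r' < r$. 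You need to add this — or some equivalent analysis ruling out an earlier differential from $x'$ into the evil part — before the chain of equalities in Step~2 is legitimate.
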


\begin{proof}
We use the comparison with the AKSS for computing $\Ext^{*,*}_{A(N)_*}(\FF_2)$ (respectively $\Ext^{*,*}_{A(N)_*}(\AA0)$). By Theorem~\ref{thm:AdamsP}, this spectral sequence is isomorphic to the AKSS for $\Ext^{*,*}_{A_*}(\FF_2)$ (respectively $\Ext^{*,*}_{A_*}(\AA0)$) in a range which includes both $x$ and $y$.

Suppose inductively that we have proven the lemma for $r < r_0$.  We need to prove it for $r = r_0$.  Since we have
$$ s(y) \le s(x) $$
it follows from Lemma~\ref{lem:v1div} that if $v_1^{-2^N\cdot k} y$ exists, then $v_1^{-2^N\cdot k} x$ exists.  Suppose it is not the case that  
$$ d_r(v_1^{-2^N\cdot k} x) = v_1^{-2^N\cdot k} y. $$
Then $v_1^{-2^N\cdot k} x$ must support a shorter non-trivial differential
$$   d_{r'+\beta\epsilon}(v_1^{-2^N\cdot k} x) = z $$
for $r' < r$.  By the inductive hypothesis $z$ must be evil (because if it was good we would have $d_{r'}(x) = v_1^{2^N\cdot k} z$).  In particular, since $s(z) = 0$, we have
$$ s(x)-2^{N}\cdot k-r'+1 = 0. $$
Since $v_1^{-2^N\cdot k} y$ exists, we have
$$ s(y) \ge 2^N\cdot k. $$
Finally, we have
$$ s(x) - r+1 = s(y). $$
From all these equations we deduce $r \le r'$, a contradiction.
\end{proof}

\begin{thm}\label{thm:dichotomyAA0}
Suppose $x$ is a non-trivial class in $\Ext^{s,t}_{A_*}(\AA0)$.
\begin{enumerate}
\item
If $x$ is $v_1$-torsion, it is evil.

\item 
Suppose $x$ is $v_1$-periodic, and let $N = N(x)$.  Suppose $k$ is taken large enough so that  $v_1^{2^N\cdot k} x$ lies above the $1/3$-line.  Suppose $y$ is a class
which detects $v_1^{2^N\cdot k} x$ in the AKSS ($y$ is necessarily good).  The class $x$ is good if and only if 
\[ s \ge n(y). \]
\end{enumerate}
\end{thm}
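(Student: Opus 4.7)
The plan is to exploit the multiplicative structure of the AKSS together with Lemmas~\ref{lem:akdr}, \ref{lem:v1torsion}, and \ref{lem:v1div}. For part (1), I argue the contrapositive; for part (2), I track the $n$-filtration of the detector of $v_1^{2^N k}x$ and compare it to that of any hypothetical detector of $x$.

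\emph{Part (1).} Suppose $x$ is good with detector $y \in \E{akss}{alg}_\infty$. By Theorem~\ref{thm:HCalg}, translated to $\AA0$ via Proposition~\ref{prop:E2boMSSAA0}, the good part of the $E_{1+\epsilon}$-page consists of $v_1$-periodic $\bo$- and $\bsp$-patterns, so $v_1^{2^N k} y$ is nonzero at $\E{akss}{alg}_{1+\epsilon}$ for every $k \ge 0$, and Leibniz (with $v_1^{2^N}$ a permanent cycle) makes it a permanent cycle on all later pages. The remaining step is ruling out its being hit: a hypothetical differential $d_r(w) = v_1^{2^N k} y$ cannot have evil source, since Lemma~\ref{lem:akdr} forbids evil-to-good differentials. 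If $w$ is good, the bidegree identity $s(w) = s(y) + 2^N k + r - 1 \ge 2^N k$ combined with Lemma~\ref{lem:v1div} produces $v_1^{-2^N k}w$, and Lemma~\ref{lem:v1torsion} then gives $d_r(y) = v_1^{-2^N k}(v_1^{2^N k} y) = y$, contradicting $y$ being a permanent cycle. Hence $v_1^{2^N k}y$ survives in $\E{akss}{alg}_\infty$ and detects $v_1^{2^N k}x \ne 0$, so $x$ is $v_1$-periodic.

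\emph{Part (2).} If $x$ is good with detector $\td y$, then $n(\td y) + s(\td y) = s$ forces $n(\td y) \le s$; by part (1) the class $v_1^{2^N k}\td y$ is a nonzero permanent cycle and, since $v_1^{2^N k}$ has $n$-degree $0$, it detects $v_1^{2^N k}x$ at $n$-filtration $n(\td y)$, so that $n(y) = n(\td y) \le s$. For the converse I argue the contrapositive: suppose $x$ is evil, detected by $\td y$ at filtration $(s + \epsilon, 0, t)$. Multiplication by $v_1^{2^N k}$ preserves $n$-filtration, so $v_1^{2^N k}x \in F^{s+\epsilon}$, and its image in the graded quotient $F^{s+\epsilon}/F^{s+1} = V^{s,*}(\AA0)$ is the product $v_1^{2^N k}\td y$ computed in the associated graded. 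But this product sits at $s$-degree $2^N k > 0$, while evil classes in the AKSS live only at $s$-degree $0$; its image must therefore vanish. Thus $v_1^{2^N k}x \in F^{s+1}$, giving $n(y) \ge s+1 > s$.

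The main subtlety is justifying that $v_1^{2^N k}\td y$ vanishes in the evil graded piece. This rests on $V^{*,*}$ being the kernel of the algebra map $g_{alg}$: the induced multiplication on the AKSS sends a product of $v_1^{2^N k}$ with an evil class to zero in the evil layer (the underlying $\bo$-MSS product lands in $\mc{C}_{alg}$ at $s > 0$, which by the AKSS indexing contributes only to the good filtration quotient). A secondary technicality is that $v_1^{2^N}$ is only genuinely defined in $\Ext_{A(N)_*}$; in the range where $v_1^{2^N k}x$ lies above the $1/3$-line, Theorem~\ref{thm:AdamsP} and Corollary~\ref{cor:stabilization} allow the entire argument to be carried out in the AKSS for $\Ext_{A(N)_*}(\AA0)$, where the multiplication is honestly defined.
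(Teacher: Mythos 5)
Your proof is correct and follows essentially the same strategy as the paper's: both directions rest on Lemma~\ref{lem:v1torsion}, Lemma~\ref{lem:akdr}, Lemma~\ref{lem:v1div}, the passage to the AKSS for $\Ext_{A(N)_*}$ via Theorem~\ref{thm:AdamsP}/Corollary~\ref{cor:stabilization}, and the observation that multiplication by $v_1^{2^N k}$ on an evil class must vanish for $s$-degree reasons, forcing a filtration jump. Your Part (1) is phrased as the contrapositive (good $\Rightarrow$ $v_1$-periodic) of the paper's reductio, which is a cosmetic rearrangement; note the minor slip where ``$d_r(y) = v_1^{-2^N k}(v_1^{2^N k} y) = y$'' should read ``$d_r(v_1^{-2^N k}w) = v_1^{-2^N k}(v_1^{2^N k} y) = y$,'' and the conclusion is that $y$ is a boundary (contradicting that $y$ detects a nonzero class), not that $y$ fails to be a permanent cycle.
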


\begin{proof}
Suppose $x$ is $v_1$-torsion, but is detected by a good class $\td{x}$ in the AKSS.  Let $N = N(x)$ and suppose that $k$ is chosen so that $v_1^{2^N\cdot k} x$ lies above the 1/3-line and is trivial.  Then 
$v_1^{2^N\cdot k} \td{x}$ is killed by a good class in the AKSS.  Lemma~\ref{lem:v1torsion} then implies that $\td{x}$ is the target of a differential, which violates the non-triviality of $x$.  

Now suppose that $x$ is $v_1$-periodic, detected by an evil class $\td{x}$ in the AKSS.  Again we use the fact that both $x$ and $y$ lie in a range where the AKSS for $\Ext_{A_*}(\AA0)$ is isomorphic to the AKSS for $\Ext_{A(N)_*}(\AA0)$.
The AKSS for $\Ext^{*,*}_{A(N)_*}(\AA0)$ is a spectral sequence of modules over the AKSS for $\Ext^{*,*}_{A(N)_*}(\FF_2)$. Since $\td{x}$ is evil, we have  
\[ v_1^{2^N \cdot k}\td{x}  = 0. \]
We deduce that multiplication by $v_1^{2^N}$ must increase $n$ (hidden extension).  Thus
$$ n(\td{x}) < n(y). $$
Suppose that 
$s \ge n(y).$  Since we have
$$ n(\td{x})+s(\td{x}) = s \ge n(y) $$
we deduce that $s(\td{x}) > 0$.  This violates the assumption that $\td{x}$ is evil, so we deduce that $\td{x}$ is good.

Suppose that $x$ is good, detected by $\td{x}$ in the AKSS.  Then by Lemma~\ref{lem:v1torsion}, $v_1^{2^N\cdot k}\td{x}$ detects $v_1^{2^N\cdot k}x$.  
Then
$$ s = s(\td{x}) + n(\td{x}) = s(\td{x}) + n(y) \ge n(y). $$
\end{proof}

\begin{thm}[Dichotomy Principle]\label{thm:dichotomy}
Suppose $x$ is a non-trivial class in $\Ext^{s,t}_{A_*}(\FF_2)$.
\begin{enumerate}
\item
If $x$ is $v_1$-torsion, it is evil.

\item 
Suppose $x$ is $v_1$-periodic, and let $N = N(x)$.  Suppose $k$ is taken large enough so that  $v_1^{2^N\cdot k} x$ lies above the $1/3$-line.  Suppose $y$ is a class
which detects $v_1^{2^N\cdot k} x$ in the AKSS ($y$ is necessarily good).  The class $x$ is good if and only if 
$$ s \ge n(y). $$
\end{enumerate}
\end{thm}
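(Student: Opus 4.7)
The plan is to replay the proof of Theorem~\ref{thm:dichotomyAA0} almost verbatim, substituting Proposition~\ref{prop:v1ext} for Theorem~\ref{thm:AdamsP}. The inputs that transfer without modification are Lemma~\ref{lem:v1torsion}, stated uniformly for $\FF_2$ and $\AA0$, together with the multiplicative structure of the AKSS: both the AKSS for $\Ext^{*,*}_{A_*}(\FF_2)$ and the AKSS for $\Ext^{*,*}_{A(N)_*}(\FF_2)$ are modules over the latter, and in the $A(N)_*$-setting $v_1^{2^N}$ is an actual element whose multiplication commutes with all AKSS differentials. Proposition~\ref{prop:v1ext}(2) provides the needed bridge: for any fixed tri-degree, the natural map from the AKSS for $\Ext^{*,*}_{A_*}(\FF_2)$ to the AKSS for $\Ext^{*,*}_{A(N)_*}(\FF_2)$ is an isomorphism once $N$ is taken large enough.

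For Part~(1), suppose $x$ is $v_1$-torsion and yet is detected by a good class $\td{x}$ in the AKSS. Set $N = N(x)$ and choose $k$ large so that $v_1^{2^N k} x$ is defined, lies above the $1/3$-line, and vanishes in $\Ext$ (such a $k$ exists by Proposition~\ref{prop:v1ext} together with the $v_1$-torsion hypothesis). Then $v_1^{2^N k}\td{x}$ is a good AKSS class detecting zero, hence it must be the target of some AKSS differential $d^{akss}_r(z) = v_1^{2^N k}\td{x}$. Applying Lemma~\ref{lem:v1torsion} to divide by $v_1^{2^N k}$ yields a differential $d^{akss}_r(v_1^{-2^N k} z) = \td{x}$, contradicting the non-triviality of $x$.

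For Part~(2), suppose $x$ is $v_1$-periodic. If $x$ is detected by an evil class $\td{x}$ at AKSS position $(s+\epsilon, 0, t)$, then because $v_1^{2^N}$ has AKSS tri-degree $(0, 2^N, 3\cdot 2^N)$ and evil classes live only in $s_{akss}=0$, the product $v_1^{2^N}\td{x}$ vanishes in the AKSS; the non-vanishing of $v_1^{2^N} x$ in $\Ext$ then forces a hidden extension pushing the detection of $v_1^{2^N k} x$ into strictly higher $\bo$-filtration, so $n(y) > n(\td{x}) = s$ and the condition $s \geq n(y)$ fails. Conversely, if $\td{x}$ is good then Lemma~\ref{lem:v1torsion} ensures that $v_1^{2^N k}\td{x}$ detects $v_1^{2^N k} x$, and since $v_1$-multiplication preserves $\bo$-filtration we may take $y = v_1^{2^N k}\td{x}$, giving $n(y) = n(\td{x})$ and $s = n(\td{x}) + s(\td{x}) \geq n(y)$. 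The $t = s$ edge case (the $v_0$-tower) is automatically handled: each class $v_0^i$ is good with $n(y)=0$, so the condition $s \geq 0$ holds vacuously. The principal obstacle is verifying that $v_1$-multiplication behaves correctly in the $\FF_2$-AKSS despite $\FF_2$ not being $A(0)$-free; this is exactly where Proposition~\ref{prop:v1ext} is invoked to pass safely to $A(N)_*$ in the relevant range.
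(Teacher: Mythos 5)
Your proposal is correct, and it takes a genuinely different route from the paper. The paper's own proof is a transfer argument: it applies Theorem~\ref{thm:dichotomyAA0} to the class $x' = \bar{\partial}^{-1}(x) \in \Ext^{s-1,t}_{A_*}(\AA0)$ and then imports the answer across $\bar{\partial}$ using the $\partial,\partial'$ dictionary developed in Section~\ref{sec:AA0}. This requires splitting into cases according to whether the detecting class $y$ is $v_0$-good (where $n(y) = n(y')+1$) or $h_1$-good (where $n(y) = n(y')$), and in the latter case handles a genuine edge case $s = n(y)$ where $x'$ is evil in the $\AA0$ AKSS while $x$ is nonetheless good in the $\FF_2$ AKSS. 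Your direct argument bypasses all of this bookkeeping: by rerunning the proof of Theorem~\ref{thm:dichotomyAA0} internally to the $\FF_2$-AKSS, with Proposition~\ref{prop:v1ext}(2) standing in for Theorem~\ref{thm:AdamsP}(2) to stabilize against $A(N)_*$, you never need the $h_1$-good/$v_0$-good case split and the $s = n(y)$ edge case simply does not arise. What the paper's route buys is that it literally reuses the already-proved $\AA0$ theorem and showcases the comparison machinery of Section~\ref{sec:AA0}; what your route buys is a shorter and more uniform argument, essentially the observation that the $A(N)$-stabilization needed (which Lemma~\ref{lem:v1torsion} already invokes for $\FF_2$) is all that the $A(0)$-free hypothesis was ever being used for.

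Two small corrections. First, your remark that the $t = s$ edge case ``holds vacuously because $s \geq 0$'' is not quite the right reason: for the classes $v_0^i$ in $t - s = 0$, the element $N(x)$ is not defined (there is no finite $v_1^{2^N}$-multiplication on them, per Proposition~\ref{prop:v1ext}(1), which requires $t-s>0$), so Part~(2) of the theorem simply does not apply to them; they are excluded from the hypothesis rather than satisfying the conclusion trivially. Second, when you write ``$v_1$-multiplication preserves $\bo$-filtration so we may take $y = v_1^{2^N k}\td{x}$,'' the justification is really that $v_1^{2^N}$ lives in $\bo$-filtration $n=0$ in the $A(N)_*$-AKSS and that Lemma~\ref{lem:v1torsion} rules out $v_1^{2^N k}\td{x}$ being a boundary; you should say this explicitly, since the preservation of $\bo$-filtration under $v_1$-multiplication is exactly the statement that there is no hidden extension, which is the point being proved.
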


\begin{proof}
This theorem follows from Theorem~\ref{thm:dichotomyAA0} using the fact that for $t \ne s$, there is an isomorphism
$$ \Ext^{s-1,t}_{A_*}(\AA0) \xrightarrow[\cong]{\bar{\partial}} \Ext^{s,t}_{A_*}(\FF_2). $$
so we have $\bar{\partial}(x') = x$.  Suppose $x$ is $v_1$-torsion (then so is $x'$).  By Theorem~\ref{thm:dichotomyAA0}, $x'$ is evil.  The map $\bar{\partial}$ takes $v_1$-torsion evil classes to evil classes.

Suppose now that $x$ is good, with $y$ detecting $v_1^{2^N\cdot k}x$.
First suppose that $y$ is $v_0$-good. Then we have
$$ y = \partial'(y') $$
so
$$ n(y)-1 = n(y'). $$
and so 
$$ s \ge n(y) \Leftrightarrow s-1 \ge n(y') \Leftrightarrow x' \: \mr{good}. $$
The theorem follows from the fact that $x'$ is good if and only if $x$ is good.

Now suppose that $y$ is $h_1$-good, so we have $y = \partial(y')$ with 
$$ n(y) = n(y'). $$
This would seem to pose a problem for $s = n(y)$; in this case 
$$ s-1 < n(y') $$
and so $x'$ is evil by Theorem~\ref{thm:dichotomyAA0}. We claim  such an $x'$ is detected by an evil class $\td{x}'$ with
$$ \td{x} := \partial'(\td{x}') $$
$h_1$-good, detecting $x$.
Indeed, suppose not.  Then $\td{x}$ is evil.  Since
$$ n(\td{x}') = s-1, $$
we have $n(\td{x}) = s$. In the AKSS for $\Ext_{A(N)}(\FF_2)$, there cannot be a $v_1^{2^N\cdot k}$-extension from filtration $s+\epsilon$ to filtration $s$.
\end{proof}

\subsection*{The topological AKSS}

There is a topological analog of the AKSS, which refines the $\bo$-ASS just as the (algebraic) AKSS constructed in the beginning of this section refines the $\bo$-MSS.

Fix a splitting of the short exact sequence
\begin{equation}\label{eq:topSES} 
0 \rightarrow V^{n,*} \rightarrow \E{bo}{}^{n,*}_1 \xrightarrow{g} \mc{C}^{n,*} \rightarrow 0.
\end{equation}
Just as in the algebraic case, we will regard the evil subcomplex $V^{n,*}$ as being in filtration $n + \epsilon$.
The result is a topological AKSS:
$$ \{ \E{akss}{}^{n+\alpha\epsilon,t}_{r + \beta \epsilon} \} \Rightarrow \pi_{t-n}^s $$
with differentials 
\begin{align*}
d^{akss}_{r-\epsilon}:  & \E{akss}{}_{r - \epsilon}^{n+ \epsilon,t} \rightarrow \E{akss}{}_{r- \epsilon}^{n+r,t}, \\
d^{akss}_{r}:  & \E{akss}{}_{r}^{n+ \alpha\epsilon,t} \rightarrow \E{akss}{}_{r}^{n+r +\alpha\epsilon, t}, \\
d^{akss}_{r+\epsilon}:  & \E{akss}{}_{r+\epsilon}^{n, t} \rightarrow \E{akss}{}_{r+ \epsilon}^{n+r +\epsilon, t}.
\end{align*}
The $E_1$-term takes the form
$$\E{akss}{}^{n+\alpha \epsilon, t}_1 = 
\begin{cases}
\mc{C}^{n,t}, & \alpha = 0, \\
V^{n, t}, & \alpha = 1. 
\end{cases}
$$
The $d_1$-differential 
$$ d^{akss}_{1}: \E{akss}{}^{n+\alpha \epsilon, t}_1 \rightarrow \E{akss}{}^{n+ 1+\alpha\epsilon, t}_1
$$
is given by
$$d_1 = 
\begin{cases}
d_1^{good}, & \alpha = 0, \\
d_1^{evil}, & \alpha = 1. 
\end{cases}
$$
We therefore have
$$\E{akss}{}^{n+\alpha \epsilon, t}_{1+\epsilon} = 
\begin{cases}
H^{n,t}(\mc{C}), & \alpha = 0, \\
H^{n, t}(V), & \alpha = 1.
\end{cases}
$$
The only nonzero $d_{1+\epsilon}$-differentials are of the form
$$ H^{n, t}(\mc{C}) = \E{akss}{}^{n, t}_{1+\epsilon} \xrightarrow{d_{1+\epsilon}} \E{akss}{}^{n+1 + \epsilon, t}_{1+\epsilon} = H^{n+1, t}(V), $$
for which we have 
$$ d_{1+\epsilon} = \partial $$
where $\partial$ is the connecting homomorphism of (\ref{eq:LES}).  It turns out all of these differentials can be derived from the algebraic AKSS.

\begin{lem}
For $n = 0,1$, the differentials
$$ d_{1+\epsilon}: \E{akss}{}^{n, t}_{1+\epsilon} \xrightarrow{d_{1+\epsilon}} \E{akss}{}^{n+1 + \epsilon, t}_{1+\epsilon} $$
are trivial.  For $n \ge 2$, they are determined by the following commutative diagram (see Corollary~\ref{cor:HC}).
$$
\xymatrix{
\E{akss}{}^{n, t}_{1+\epsilon} \ar[d] \ar[r]^-{d_{1+\epsilon}} &
\E{akss}{}^{n+1 + \epsilon, t}_{1+\epsilon} \ar@{=}[d] \\
\E{akss}{alg}^{n,{0}, t}_{1+\epsilon}  \ar[r]_-{d^{alg}_{1+\epsilon}} &
\E{akss}{alg}^{n+1 + \epsilon, {0}, t}_{1+\epsilon}
}
$$
\end{lem}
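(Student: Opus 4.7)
The proof splits into the low-filtration cases $n \in \{0,1\}$ and the generic case $n \ge 2$, since Corollary~\ref{cor:HC} treats these separately: for $n \ge 2$ the topological $H^{n,*}(\mathcal{C})$ sits inside the algebraic $H^{n,0,*}(\mathcal{C}_{alg})$, while for $n = 0,1$ it does not.

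For $n = 0, 1$, the plan is to show that every generator of $H^{n,*}(\mathcal{C})$ enumerated in Corollary~\ref{cor:HC} already lifts to a class in $\E{bo}{}_2^{n,*}$. Exactness of the long exact sequence (\ref{eq:LES}) then forces $\partial$ to vanish on that generator, giving $d_{1+\epsilon} = 0$. Concretely, for $n = 0$ the generators $1$, $v_1^{4i}h_1$, and $v_1^{4i}h_1^2$ are detected in the $\bo$-ASS respectively by the unit, the Mahowald $v_1$-periodic $\eta$-family, and its square. For $n = 1$ the generators $v_1^{4i}h_2$, $v_1^{4i}\cdot v_0 w h_2$, and their $h_1$-multiples are detected by Adams's $\alpha$-family and the further image-of-$J$ classes in the corresponding stems. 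Since each listed generator corresponds to a genuine element of $\pi_*^s$, it lifts to $\E{bo}{}_\infty^{n,*}$ and hence to $\E{bo}{}_2^{n,*}$, which suffices.

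For $n \ge 2$, the plan is a naturality argument. The right-hand vertical arrow of the diagram is the identification $H^{n+1,t}(V) = H^{n+1,0,t}(V_{alg})$ supplied by the Observation in the introduction, since $V^{*,*}$ and $V_{alg}^{*,0,*}$ coincide as cochain complexes. The left-hand vertical arrow is the inclusion $H^{n,*}(\mathcal{C}) \hookrightarrow H^{n,0,*}(\mathcal{C}_{alg})$ provided by Corollary~\ref{cor:HC}, which itself rests on the Bounded Torsion Theorem (Corollary~\ref{cor:BTT}): for $n \ge 2$ every class in $H^{n,*}(\mathcal{C})$ admits a representative of Adams filtration zero. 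Starting from a cocycle $x \in H^{n,*}(\mathcal{C})$, I would lift to such a representative $\tilde{x} \in \mathcal{C}^{n,0,t}_{alg}$ and then lift further via the splittings (\ref{eq:topSES}) and (\ref{eq:SES}) to elements of $\E{bo}{}_1^{n,t}$ and $\E{bo}{alg}_1^{n,0,t}$ respectively. The formulas of Theorem~\ref{thm:d1} and Proposition~\ref{prop:d1alg} agree modulo terms of higher Adams filtration, and since $V_{alg}^{n+1,*,t}$ is concentrated in Adams filtration zero, that higher-filtration ambiguity is invisible upon projecting onto the $V$-component. Hence the topological connecting map $\partial(x)$ coincides with $\partial_{alg}(x)$ in $H^{n+1,t}(V)$, which is precisely the asserted commutativity.

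The main obstacle is the case-by-case inventory in the $n = 0, 1$ argument: one must verify, for every generator listed in Corollary~\ref{cor:HC}, that it is detected by an explicit permanent cycle in the $\bo$-ASS of the sphere. All such identifications are classical --- going back to Mahowald's analysis of the $\eta$-family and the image of $J$ in the $\bo$-ASS --- but each generator must be accounted for, and the $v_1$-periodic multiples need the relevant results of Mahowald's work on the $2$-primary telescope conjecture.
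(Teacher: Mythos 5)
For $n = 0,1$ you take a genuinely different route, and for $n \ge 2$ there is a gap.

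The paper's argument for $n = 0,1$ is a single filtration observation: $\partial$ is induced on $\pi_*$ by a map of spectra whose target is a wedge of Eilenberg--MacLane spectra $HV_I$, so its target sits entirely in Adams filtration $0$; Corollary~\ref{cor:HC} shows every positive-stem class in $H^{n,*}(\mathcal{C})$ for $n = 0,1$ has Adams filtration $\ge 1$, and maps of spectra do not decrease Adams filtration, so $\partial = 0$. Your detection/inventory argument --- identify each generator of $H^{n,*}(\mathcal{C})$ with the class detecting a known element of $\pi_*^s$ and conclude it lifts to $\E{bo}{}_2^{n,*}$ --- is correct in spirit, but it needs considerable external input from Mahowald's analysis of the $v_1$-periodic families, and it is on the edge of circularity: knowing that those generators survive to $\E{bo}{}_\infty$ is essentially the same as knowing $\partial$ vanishes on them. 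The filtration argument sidesteps all of this.

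For $n \ge 2$ the step "the formulas of Theorem~\ref{thm:d1} and Proposition~\ref{prop:d1alg} agree modulo higher Adams filtration, and the ambiguity is invisible on the $V$-component" does not go through. Those two results describe only $d_1^{good}$, that is, the $\mathcal{C}$-component of the $d_1$-differential. The connecting homomorphism $\partial$ is computed by lifting a cocycle from $\mathcal{C}^{n,*}$ into $\E{bo}{}_1^{n,*}$, applying the full $d_1$, and projecting onto the $V$-component $V^{n+1,*}$ --- precisely the component about which Theorem~\ref{thm:d1} and Proposition~\ref{prop:d1alg} say nothing. You therefore have no control over what needs comparing. The paper supplies the missing ingredient: both $\partial$ and $\partial_{alg}$ are induced by one and the same composite of spectrum maps $\bigvee_I \Sigma^{4\norm{I}} b_I \to \bo \wedge \br{\bo}^n \to \br{\bo}^{n+1} \to \bo \wedge \br{\bo}^{n+1} \to \bigvee_I \Sigma^{4\norm{I}} HV_I$, the first by applying $\pi_*$ and the second by applying the ASS edge map $\Ext^{0,*}$. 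Commutativity of the square then follows from naturality of the edge map for the Adams-filtration-zero classes that, by Corollary~\ref{cor:BTT}, make up all of $H^{n,*}(\mathcal{C})$ once $n \ge 2$. Your outline never reaches this identification, which is the substance of the lemma.
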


\begin{proof}
Topologically, this connecting homomorphism derives from applying $\pi_*$ to the composite
\begin{equation}\label{eq:topconnect}
\bigvee_{\abs{I} = n} \Sigma^{4\norm{I}}b_I \hookrightarrow \bo \wedge \br{\bo}^n \rightarrow \br{\bo}^{n+1} \rightarrow \bo \wedge \br{\bo}^{n+1} \rightarrow \bigvee_{\abs{I} = n+1} \Sigma^{4\norm{I}} HV_I.
\end{equation}
The first statement follows from the fact that the only elements in $H^{n,*}(\mc{C})$ for $n = 0, 1$ and $* > 0$ have Adams filtration greater than $0$, and therefore cannot map topologically to elements of Adams filtration $0$.
The second statement follows from the fact that the algebraic connecting homomorphism derives from the ASS edge homomorphism of the composite (\ref{eq:topconnect}):
$$
\mc{C}_{alg}^{n,0,*} = \E{ass}{}^{0,*}\left(\bigvee_{\abs{I} = n} \Sigma^{4\norm{I}}b_I\right) \rightarrow \E{ass}{}^{0,*}\left(\bigvee_{\abs{I} = n+1} \Sigma^{4\norm{I}}HV_I\right) = V^{n+1,*}.
$$
\end{proof}

The $E_2$-term of the $\bo$-ASS is deduced from the short exact sequence
$$ 0 \rightarrow \E{akss}{}^{n+\epsilon,t}_2 \rightarrow \E{bo}{}^{n,t}_2 \rightarrow \E{akss}{}^{n,t}_2 \rightarrow 0.
$$

At this point we can deduce Mahowald's Vanishing Line Theorem.  We only sketch the proof to emphasize the conceptual origin of this vanishing line without getting lost in the details.

\begin{thm}[Mahowald \cite{Mahowaldbo}]\label{thm:vanishing}
There is a $C$ so that 
$$ \E{bo}{}_2^{n,t} = 0 $$
for $n > 1/5(t-s)+C$. 
\end{thm}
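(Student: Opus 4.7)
The plan is to invoke the short exact sequence
\[
0 \longrightarrow \E{akss}{}^{n+\epsilon,t}_2 \longrightarrow \E{bo}{}^{n,t}_2 \longrightarrow \E{akss}{}^{n,t}_2 \longrightarrow 0
\]
from the topological AKSS and bound each outer term above a slope-$1/5$ line. The good side is essentially immediate: $\E{akss}{}^{n,t}_2$ is a subquotient of $\E{akss}{}^{n,t}_{1+\epsilon} = H^{n,t}(\mc{C})$, and Corollary~\ref{cor:HCvanishing} already furnishes the slope-$1/5$ vanishing line $n \ge \tfrac{1}{5}(t-n)+2$.

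The main work is the evil side $\E{akss}{}^{n+\epsilon,t}_2 \subseteq H^{n,t}(V)$. The approach is to combine the Dichotomy Principle (Theorem~\ref{thm:dichotomy}) with the slope-$1/5$ Adams periodicity bound of Remark~\ref{rmk:onefifth}. By connectivity, $V^{n,t}=0$ for $t<4n$, so evil classes can only contribute in the region $n\le \tfrac{1}{3}(t-n)$. Within this region, any non-trivial evil $E_\infty$-class detects an Ext class which by the Dichotomy Principle is either $v_1$-torsion or evil $v_1$-periodic. The $v_1$-torsion part of $\Ext_{A_*}(\FF_2)$ vanishes above a slope-$1/5$ line by Remark~\ref{rmk:onefifth}. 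For the evil $v_1$-periodic classes, Dichotomy Part (3) forces the Ext filtration $s$ to be strictly below the $\bo$-filtration $n(y)$ of a good continuation $y = v_1^{2^N k} x$ lying above the $1/3$-line; since $y$ is good, $n(y)$ is bounded by the slope-$1/5$ line of Corollary~\ref{cor:HCvanishing}, and tracing back through the $v_1^{2^N}$-multiplication yields the desired slope-$1/5$ bound on the original evil class. Passing from $E_\infty$ to $E_2$ uses Lemma~\ref{lem:akdr}: any evil $E_2$-class not surviving to $E_\infty$ must be killed by a differential $d_r^{akss}$ whose source is good, and hence already obeys the good slope-$1/5$ bound.

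The main obstacle is the bookkeeping in the evil $v_1$-periodic case: one must convert the pointwise Dichotomy bound, stated in terms of $s$ and $n(y)$, into a uniform slope-$1/5$ vanishing line for $H^{*,*}(V)$, ensuring that the $\bo$-filtration loss under $v_1^{2^N}$-continuation is bounded by a constant independent of $(n,t)$. All required inputs — Adams periodicity, connectivity of $V$, the Dichotomy Principle, and the slope-$1/5$ vanishing of $H^{*,*}(\mc{C})$ — are already in place, so this step is technical rather than conceptually deep, and produces the single constant $C$ advertised in the theorem.
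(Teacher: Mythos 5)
Your overall strategy -- splitting $\E{bo}{}_2^{n,t}$ via the topological AKSS short exact sequence, using Corollary~\ref{cor:HCvanishing} for the good side, and analyzing $H^{*,*}(V)$ via the algebraic AKSS and the Dichotomy Principle -- matches the paper's sketch. (Minor: $\E{akss}{}_2^{n+\epsilon,t}$ is a \emph{quotient} of $H^{n,t}(V)$ by the image of $\partial$, not a subgroup; this does not harm the vanishing argument.) The gap is in the evil $v_1$-periodic case, which you describe as ``technical bookkeeping'' but which is precisely where the paper's argument takes a different route, and for good reason.

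You propose to bound $n(y)$ by Corollary~\ref{cor:HCvanishing} and then push that bound back to $x$ through the $v_1^{2^N k}$-continuation. There are two problems. (i) Corollary~\ref{cor:HCvanishing} bounds $H^{n,t}(\mc{C})$, which sits in cohomological degree $s=0$; since $y$ lies above the $1/3$-line it has $s(y)>0$ and lives in $H^{n(y),s(y),t(y)}(\mc{C}_{alg})$, a strictly larger group (Theorem~\ref{thm:HCalg}), so a separate estimate is needed. (ii) More fundamentally, even with a slope-$1/5$ bound on $n(y)$ in $y$'s coordinates, shifting back to $x$'s coordinates does not preserve the slope: the continuation moves the stem by $2\cdot 2^N k$, and by Lemma~\ref{lem:onethird} every evil class has $t \ge 4n$, so one needs $2^N k$ at least on the order of $(t-s)-3s \ge 0$ to reach the $1/3$-line. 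Substituting, the Dichotomy inequality $s < n(y)$ degrades to roughly $s < \tfrac{3}{11}(t-s)+C$, strictly weaker than slope $1/5$. The paper avoids the back-shift entirely: using the same inequalities as Corollary~\ref{cor:HCvanishing}, it shows the localization maps $H^{n,s,t}(\mc{C}_{alg}) \to H^{n,s,t}(v_1^{-1}\mc{C}_{alg})$ are isomorphisms for $n+s > \tfrac{1}{5}(t-s-n)+C'$. An evil $v_1$-periodic Ext class is detected in $v_1^{-1}\E{bo}{alg}_2^{n',s',t}$ with $s'<0$, but above the $1/5$-line $H^{n',s',t}(v_1^{-1}\mc{C}_{alg}) \cong H^{n',s',t}(\mc{C}_{alg}) = 0$ since $s'<0$ unlocalized, which pins $x$ below the $1/5$-line directly, in $x$'s own coordinates.
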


\begin{proof}[Sketch of proof]
Since $H^{*,*}(\mc{C})$ has a $1/5$-vanishing line by Corollary~\ref{cor:HCvanishing}, it suffices to establish that the cohomology of the evil complex $H^{*,*}(V)$ has a $1/5$-vanishing line.  Suppose that $x$ is a nontrivial class in $H^{*,*}(V)$.  By the Dichotomy Principle (Theorem~\ref{thm:dichotomy}), there are three possibilities.
\begin{enumerate}
\item $x$ detects a $v_1$-torsion class of $\Ext$ in the algebraic AKSS.
\item $x$ detects a $v_1$-periodic class of $\Ext$, which in the $v_1$-periodic $\bo$-MSS, is detected in $v_1^{-1}\E{bo}{alg}^{n,s,t}_2$ with $s < 0$.
\item $x = \partial_{alg}(y)$, for $y \in H^{*,*}(\mc{C})$.
\end{enumerate}
Recall (Remark~\ref{rmk:onefifth}) that $\Ext$ is entirely $v_1$-periodic above a ``periodicity line'' of slope $1/5$ (and above this periodicity line the $\Ext$-groups are isomorphic to the $v_1^{-1}\Ext$-groups). 
Thus if we are in case (1), we deduce that $x$ must lie below this periodicity line.  
The same inequalities used in the proof of Corollary~\ref{cor:HCvanishing} also prove that there is a $C'$ so that the maps
$$ H^{n,s,t}(\mc{C}_{alg}) \rightarrow H^{n,s,t}(v_1^{-1}\mc{C}_{alg}) $$
are isomorphisms for
\begin{equation}\label{eq:goodperiodicity}
 n+s > 1/5(t-s-n) + C'.
\end{equation} 
Thus in case (2), we deduce that $x$ must lie below this line of slope $1/5$.  In case (3) for $M \gg 0$, the class $v_1^{M}y$ is either non-trivial, or is the source or target of a differential in the algebraic AKSS.  In the former case, $y$ cannot lie above the periodicity line.  In the latter case, Lemma~\ref{lem:v1torsion} implies that $y$ must lie below both the 1/5 line given by (\ref{eq:goodperiodicity}).
\end{proof}

The differentials in the topological AKSS determine and are determined by the differentials in the $\bo$-ASS, with lengths dictated by whether the sources and targets of the $\bo$-ASS differentials are good or evil.  Unlike the algebraic case, in the topological case there are no dimensional restrictions: in principle good or evil classes can each kill either good or evil classes.  

\begin{rmk}
There is no Dichotomy Principle in the topological AKSS.  Many $v_1$-torsion elements of $\pi_*^s$ are detected by good classes in the $\bo$-ASS (e.g. $\nu^2, \kappa, \theta_{3}, \bar{\kappa}, \ldots$).  In fact, Mahowald showed in \cite{Mahowaldbo} that a non-trivial class in $\pi_*^s$ is $v_1$-periodic if and only if it has $\bo$-filtration $\le 1$.
\end{rmk}


\section{Computation of the algebraic $\bo$-resolution}\label{sec:algcomp}

In this section we will compute the algebraic $\bo$-resolution, or more specifically, the (algebraic) AKSS, through dimension $42$.  We use the known computation of $\Ext_{A_*}(\FF_2)$ through this range (see, for example, the May spectral sequence computation of \cite{Tangora} or the computer computation of \cite{Bruner},\cite{Brunertable}), as well as our computation of $H^{*,*,*}(\mc{C}_{alg})$ (Theorem~\ref{thm:HCalg}) to deduce the groups $H^{*,*,*}(V)$, and the subsequent differentials.  Key to this is the determination of which classes of $\Ext_{A_*}(\FF_2)$ are good and which are evil.  This is done with the Dichotomy Prinicple (Theorem~\ref{thm:dichotomy}).  A prerequisite to applying the Dichotomy Principle is the determination of which elements in $\Ext_{A_*}(\FF_2)$ in our range are $v_1$-periodic, and which are $v_1$-torsion.

To this end we begin this section with an analysis of $v_1$-periodicity and torsion in our range.  We then explicitly write out $H^{*,*,*}(\mc{C}_{alg})$ and apply the Dichotomy Principle to determine which classes in $\Ext_{A_*}(\FF_2)$ are good and which are evil.  We then prove a couple of convenient lemmas which relate the $d_2$ differentials in the AKSS to $v_1^4$-multiplication.  We then do a  stem by stem computation of the AKSS through dimension $42$. 

\subsection*{$v_1$-periodicity and $v_1$-torsion in low degrees}

In order to invoke the Dichotomy Principle to determine which classes in $\Ext_{A_*}(\FF_2)$ are good and which classes are evil in low degrees, it is necessary to determine which classes in this range are $v_1$-periodic, and which are $v_1$-torsion.

Ideally, this would be accomplished by actually having a complete computation of $v_1^{-1}\Ext_{A_*}(\FF_2)$.  To date, this has not been done.  However, Davis and Mahowald have computed $v_1^{-1}\Ext_{A_*}(H_*Y)$ where $Y = M(2) \wedge C\eta$ \cite{DavisMahowaldv1}.  Note that Andrews \cite{Andrews} has computed $v_1^{-1}\Ext_{A_*}(\FF_p)$ for $p$ odd.

\begin{rmk}
The best available tool to compute $v_1^{-1}\Ext_{A_*}(\FF_2)$ is probably the localized algebraic $\bo$-resolution
\begin{equation}\label{eq:v1algbo}
 v_1^{-1}H^{*,*,*}(\mc{C}_{alg}) \Rightarrow v_1^{-1}\Ext_{A_*}(\FF_2).
 \end{equation}
The only difficulty is that this spectral sequence has many long differentials (as will be demonstrated in our low degree calculations in this section).  Nevertheless, it seems plausible that with enough care the spectral sequence (\ref{eq:v1algbo}) could be completely computed.  The odd primary analog of (\ref{eq:v1algbo}) actually collapses, giving a convenient alternative approach to Andrews' computation of $v_1^{-1}\Ext_{A_*}(\FF_p)$ for $p$ odd.
\end{rmk}

In the absence of a complete understanding of $v_1^{-1}\Ext$, we instead manually identify the kernel of the homomorphism
(a.k.a. the $v_1$-torsion)
$$ \Ext^{s,t}_{A_*}(\FF_2) \rightarrow v_1^{-1}\Ext^{s,t}_{A_*}(\FF_2) $$
in low degrees.  Figure~\ref{fig:v1Extchart} shows an Ext chart in low degrees (courtesy of Perry's Ext software).  The chart is broken into regions which indicate for which $N$ multiplication by $v_1^{2^N}$ is well-defined (c.f. Proposition~\ref{prop:v1ext}).  The circled classes span the $v_1$-torsion (see the following proposition).  The classes decorated with triangles represent the only classes in this range which are $v_1$-periodic, but evil --- this will be explained in the next subsection.

\begin{figure}
\includegraphics[angle=90,height=\textheight]{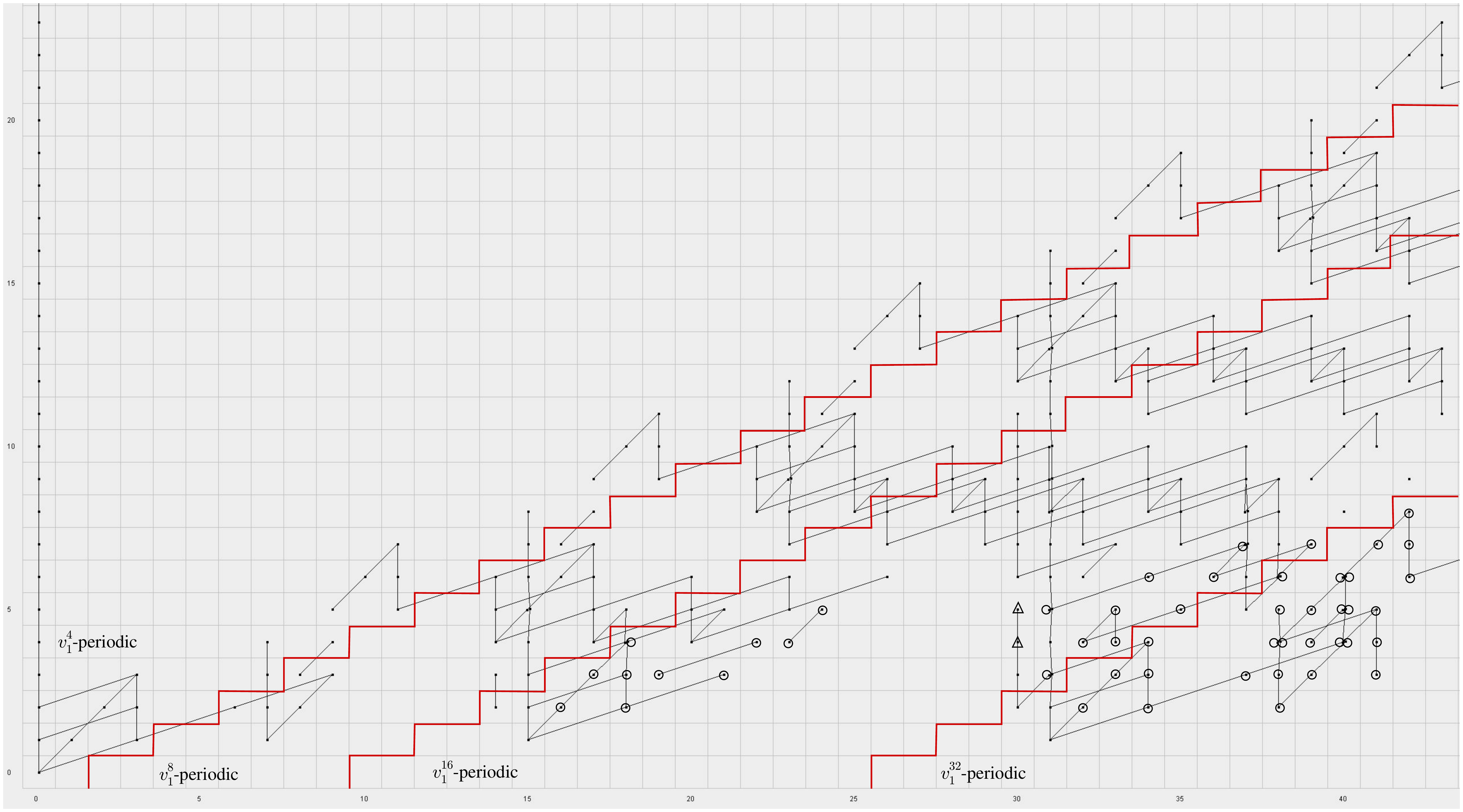}
\caption{$v_1$-periodicity in $\Ext_{A_*}(\FF_2)$.}\label{fig:v1Extchart}
\end{figure}

\begin{prop}\label{prop:v1Extchart}
The $v_1$-torsion in $\Ext^{s,t}_{A_*}(\FF_2)$ for $0\leq t-s\leq 43$ is spanned by the circled classes in Figure~\ref{fig:v1Extchart}.
\end{prop}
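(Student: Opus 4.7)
The plan is to verify the list bidegree-by-bidegree by combining the automatic periodicity above the periodicity line with explicit $v_1^{2^N}$-multiplications carried out in $\Ext_{A(N)_*}(\FF_2)$. By Proposition~\ref{prop:v1ext}(1), together with the sharper $1/5$-line upgrade of Remark~\ref{rmk:onefifth}, every class $x \in \Ext^{s,t}_{A_*}(\FF_2)$ lying strictly above a line of slope $1/5$ is automatically $v_1$-periodic, so the only candidates for $v_1$-torsion in our range are the finitely many classes lying on or below this line. These can be read off from the known additive structure of $\Ext_{A_*}(\FF_2)$ through the $43$-stem as tabulated in \cite{Tangora}, \cite{Bruner}, \cite{Brunertable}, and this short list is exactly what one needs to resolve.

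For each such candidate $x$, I would pick $N = N(x)$ large enough that three things hold simultaneously: $v_1^{2^N}$ acts as an honest element of $\Ext_{A(N)_*}(\FF_2)$; Proposition~\ref{prop:v1ext}(2) identifies $\Ext_{A_*}^{s,t}(\FF_2)$ with $\Ext_{A(N)_*}^{s,t}(\FF_2)$ at the bidegree of $x$; and a bounded number of iterated multiplications $v_1^{2^N \cdot k}$ pushes the class strictly above the $1/5$-line into the $v_1$-periodic region. Once in that region, whether $v_1^{2^N \cdot k}x$ is nonzero can be read off directly from the same tables of $\Ext_{A_*}(\FF_2)$, because $\Ext_{A_*}(\FF_2) \cong \Ext_{A(N)_*}(\FF_2)$ there. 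The class $x$ is $v_1$-torsion exactly when some (and hence every) sufficiently large iterate vanishes, and $v_1$-periodic otherwise.

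As an organizing framework, I would cross-check against the localized algebraic $\bo$-resolution
\[ v_1^{-1}H^{*,*,*}(\mc{C}_{alg}) \Rightarrow v_1^{-1}\Ext^{*,*}_{A_*}(\FF_2) \]
noted in the remark preceding the statement. Theorem~\ref{thm:HCalg} describes $H^{*,*,*}(\mc{C}_{alg})$ explicitly as a direct sum of truncated $\bo$- and $\bsp$-patterns; inverting $v_1$ untruncates these into easily enumerable infinite towers, and the nonzero classes of $v_1^{-1}\Ext^{*,*}_{A_*}(\FF_2)$ in our range, when lifted back to $\Ext$, correspond precisely to the uncircled classes of Figure~\ref{fig:v1Extchart}. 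Any discrepancy between the candidate list produced by direct $v_1^{2^N}$-multiplication and the output of the localized resolution would indicate an arithmetic error and flag the bidegree for closer inspection.

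The main obstacle is purely one of bookkeeping: identifying in each bidegree the exact target of $v_1^{2^N}$-multiplication, which can involve hidden extensions between the good and evil pieces of the (algebraic) AKSS, and confirming that this target is accurately reflected by the classical tables. No single verification is deep in isolation, since the range $t-s \le 43$ is well within the reach of the existing computer computations of $\Ext$ and of the explicit structure of $\Ext_{A(2)_*}(\FF_2)$; the substance of the proposition lies in the assertion that the circled set in Figure~\ref{fig:v1Extchart} matches the union of all classes whose iterated $v_1$-multiples eventually land in a zero Ext group, across every bidegree with $0 \le t-s \le 43$.
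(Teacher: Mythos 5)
Your high-level strategy matches the paper's: push each candidate class up above the periodicity line by iterated $v_1^{2^N}$-multiplication (with $N$ chosen via Proposition~\ref{prop:v1ext}) and decide $v_1$-torsion versus $v_1$-periodicity by whether the resulting class vanishes. The real difficulty, which your proposal elides, is that ``read off directly from the same tables'' is not an operation one can actually perform: Bruner's tables give the additive (and partially multiplicative) structure of $\Ext_{A_*}(\FF_2)$, but they do not tell you the $v_1^{2^N}$-action, and the isomorphism $\Ext_{A_*} \cong \Ext_{A(N)_*}$ in a range transfers the group but not the value of $v_1^{2^N} \cdot x$. The paper's proof is essentially a collection of tricks for determining this value, and those tricks are what is missing from your argument.

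Concretely, the paper handles the easy cases by showing the iterated Adams $P$-operator $P^N x$ (a Massey product $\langle -, h_0^{2^N}, h_{N+2}\rangle$, which \emph{is} visible in Bruner's data) is nonzero, often shortcut by checking a nontrivial image in $\Ext_{A(2)_*}$. For the harder cases --- the broken $v_0$-towers in stems $2^i - 2$ and the towers based at $(37,5)$ and $(38,6)$ --- the $P$-operator is not directly applicable, and the paper instead compares differentials between the localized and unlocalized Adams spectral sequences (e.g.\ pulling $d_2(v_0^4 Q') = v_0^5 i^2$ back through the localization to deduce $v_1^{16}\,v_0 h_3^2 = v_0^5 i^2$) or exploits the $h_2$-module structure of the map into $\Ext_{A(4)_*}(\br{A\mmod A(0)}_*)$. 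For the torsion direction, the paper uses the relation $v_1^{16}h_4 = v_0^4 Q'$ so that $v_1^{16}(h_4 y) = v_0^4 Q' y = 0$ whenever $v_0^4 y = 0$, reducing most circled classes to a single verified identity. None of these mechanisms is present in your sketch; without them, you have a plan for deciding periodicity that stalls at the step of actually computing $v_1^{2^N}\cdot x$. Finally, your suggestion to cross-check against the localized algebraic $\bo$-resolution is explicitly flagged in the paper's remark as impractical for this purpose because of long differentials --- it is not an independent shortcut, but something whose computation would itself require the answer you are trying to establish.
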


\begin{proof}
This represents a tedious analysis of available Ext data, the highlights of which we summarize here.
Basically, one must check that the images of the uncircled classes are linearly independent in $v_1^{-1}\Ext_{A_*}(\FF_2)$, and that the circled classes are $v_1$-torsion.

The vast majority of the uncircled classes are $v_1^4$-periodic.  This can be verified by checking that for such classes $x$, the image of the iterated Adams $P$ operator $P^N x$ is non-trivial, for $N$ sufficiently large that $P^N x$ lies above the $1/3$-line.  These verifications can be done using Bruner's tables \cite{Bruner}.  In many instances, the process is expedited by simply observing that the corresponding classes map non-trivially to $\Ext_{A(2)_*}$, where everything is $v_1^4$-periodic.

The only $v_1$-periodic classes this technique does not apply to are those in the $v_0$-towers $v_0^jh_i$ in $t-s = 2^i-1$, the ``broken'' $v_0$-towers in $t-s = 2^i-2$, and the $v_0$-towers with bottoms in bidegrees $(t-s,s) = (37, 5)$ and $(38,6)$.

In the case of the $v_0$-towers $v_0^jh_i$, the tops of the towers are actually $v_1^4$-periodic (by the $P$-operator argument).  That implies that the images of the top of the towers in $v_1^{-1}\Ext_{A_*}(\FF_2)$ are non-trivial.  It must therefore be the case that all of the classes $v_0^jh_i$ are non-trivial.

In the case of the broken $v_0$ towers in $t-s = 2^i-2$, we employ a bit of a cheat.
The idea is that in the localized ASS
$$ v_1^{-1}\Ext_{A_*}(\FF_2) \Rightarrow v_1^{-1}\pi_* S $$
the tops of the broken $v_0$-towers in dimensions $s2^i-2$ are the targets of differentials on the $v_0$-towers in dimensions $s2^i-1$.  If differentials of the same length happen in the unlocalized ASS, then the targets of these differentials must be non-trivial under the map of ASS's:
$$
\xymatrix{
\Ext_{A_*}(\FF_2) \ar[d] \ar@{=>}[r] &  \pi_* S \ar[d]
\\
v_1^{-1}\Ext_{A_*}(\FF_2) \ar@{=>}[r] &  v_1^{-1}\pi_* S
}
$$
We illustrate this principle with an example: we will show the class $v_0h_3^2$ is $v_1^8$-periodic.  In the unlocalized ASS, there is a differential
$$ d_2 (v_0^4 Q') = v_0^5 i^2 $$
(where $Q'$ is in degree $(47,13)$ and $i^2$ in $(46,14)$)
and this differential must map to a non-trivial $d_2$-differential in the localized ASS since we are working in a region where the map
$$ \Ext_{A_*}(\FF_2) \rightarrow v_1^{-1}\Ext_{A_*}(\FF_2) $$
is an isomorphism.  In the localized ASS, this differential interpolates back to a differential
$$ d_2 (h_4 ) = v_1^{-16} v_0^5 i^2. $$
Since in the unlocalized ASS we have
$$ d_2  (h_4)  = v_0 h_3^2 $$
we deduce that $v_0 h_3^2$ must map to $v_1^{-16} v_0^5 i^2$ in $v_1^{-1}\Ext$, and therefore in $\Ext$ we have
$$ v_1^{16} v_0 h_3^2 = v_0^5 i^2. $$
Since $v_0^5 i^2$ is above the 1/3 line, we deduce that $v_0 h_3^2$ is $v_1$-periodic.  Because it is located in the region of $\Ext$ where multiplication by $v_1^8$ is well-defined, we deduce that in fact $v_0h_3^2$ is $v_1^8$-periodic.

In the case of the towers in $(t-s,s) = (37, 5)$ and $(38,6)$, we deduce that they are $v_1$-periodic as follows.  It suffices to show the tops of these towers $v_0^5 x$ and $v_0^3 y$ are $v_1$-periodic.
We have
\begin{align*}
v_0^5 x & = {h_2^2 d_0e_0}, \\
v_0^3 y & = h_2^2 l.
\end{align*}
The classes $h_2 d_0$ and $h_2 l$ have already been shown to be $v_1^4$-periodic, and we have
\begin{align*}
v_0^8 P^2 x' & = {h_2^2 P^4 d_0e_0}, \\
v_0^8 R'_1 & = h_2 P^4 h_2 l,
\end{align*}
for $P^2 x' $ in $(69,18)$ and $R_1'$ in $(70,17)$.
Moreover, we are working in a range (c.f. Theorem~\ref{thm:AdamsP} where the composite
\begin{equation}\label{eq:A4}
\Ext^{s,t}_{A_*} \rightarrow \Ext^{s-1,t}_{A_*}(\br{A \mmod A(0)}_*) \rightarrow \Ext_{A(4)_*}(\br{A \mmod A(0)}_*)
\end{equation}
is an isomorphism.  Since $v_1^{16} \in \Ext_{A(4)}(\FF_2)$, and the composite (\ref{eq:A4}) is $h_2$-linear, we deduce that
\begin{align*}
v_1^{16} v_0^5 x & = v_0^8 P^2 x', \\
v_1^{16} v_0^3 y & = v_0^8 R'_1.
\end{align*}
Since $v_0^8 P^2 x'$ and $v_0^8 R'_1$ lie above the 1/3 line, they are $v_1$-periodic.  It follows that $v_0^5 x$ and $v_0^3 y$ are both $v_1$-periodic.

We now explain how to check that the circled classes are $v_1$-torsion. The classes which are divisible by $h_i$ for $i \ge 4$ can be handled with the following trick, which we illustrate in the case of $i = 4$.  Using the fact that $h_4$ is $v_1$-periodic, we can deduce that
$$ v_1^{16} h_4 = v_0^4 Q'. $$
It follows that for all $x = h_4 y$,
$$ v_1^{16}x = v_1^{16} h_4 y = v_0^4 Q' y. $$
Therefore, if $y$ is annihilated by $v_0^4$, then $v_1^{16} h_4 y = 0$.
It follows that $v_1^{16}$ times the classes
$$ h_4 h_1, \: h_4 h_2, h_4c_0 $$
are all zero. So the same is true of their $v_0$, $h_1$, and $h_2$-multiples.
The same trick shows that $v_2^{32} h_5 y = 0$ is true for all of the relevant values of $y$.

The remaining cases, such as $c_1$, $n$, etc. are handled by observing that the only classes which could detect their $v_1^{16}$ (or $v_1^{32}$, as appropriate) multiples either (1) have non-trivial products with $v_0$ or $h_1$ which the original class does not have, or (2) map non-trivially to $v_1^{-1}\Ext_{A(2)_*}(\FF_2)$ (and the original class maps trivially into this Ext group.)  The case of $f_1$ in $(t-s,s) = (40,4)$ is somewhat tricky.  The only candidate to detect $v_1^{32}f_1$ has a non-trivial product with $d_0$, whereas $f_1d_0 = 0$.  The same argument shows the class $c_2$ in $(41,3)$ is $v_1$-torsion.
\end{proof}

\subsection*{The cohomology of $\mc{C}_{alg}$}
Throughout this section, by AKSS we refer to the \emph{algebraic} AKSS. The cohomology groups $H^{n,s,t}(\mc{C}_{alg})$ can be read off from Theorem~\ref{thm:HCalg}.  The result, in low degrees, is depicted in Figure~\ref{fig:HCchart}. In this chart, the $x$-axis denotes $t-(s+n)$, and the $y$-axis denotes $s+n$.  The $\bo$-filtration $n$ is encoded by a color given in Figure~\ref{fig:colorbo}.
Application of the Dichotomy Principle (Theorem~\ref{thm:dichotomy}) gives the following.

\begin{figure}
\begin{tabular}{ | c | c | }
  \hline
  $n$ & color \\
  \hline
  \hline
  0 & black  \\
  \hline
  1 & {\color{blue}blue}  \\
  \hline
  2 & {\color{red}red}   \\
  \hline
   3 & {\color{orange}orange}  \\
  \hline
   4 & {\color{limegreen}green} \\
  \hline
     5 & {\color{cyan}cyan} \\
  \hline
     6 & {\color{lavenderrose}pink} \\
  \hline
       7 & {\color{darkmagenta}purple} \\
  \hline
       8 & {\color{goldenpoppy}yellow} \\
  \hline
       9 & {\color{seagreen}forest green} \\
  \hline
\end{tabular}
\caption{The $\bo$-filtration.}
\label{fig:colorbo}
\end{figure}

\begin{prop}
The classes in Figure~\ref{fig:v1Extchart} which are evil are precisely those classes which are marked with a circle (case (1) of Theorem~\ref{thm:dichotomy}) or a triangle (case (2) of Theorem~\ref{thm:dichotomy}).
\end{prop}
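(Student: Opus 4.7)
The plan is to apply the Dichotomy Principle (Theorem~\ref{thm:dichotomy}) classwise to each non-trivial element of $\Ext^{s,t}_{A_*}(\FF_2)$ in the range depicted in Figure~\ref{fig:v1Extchart}, using the identification of $v_1$-torsion provided by Proposition~\ref{prop:v1Extchart}. The circled classes are handled immediately: they are $v_1$-torsion by Proposition~\ref{prop:v1Extchart}, and so by case~(1) of the Dichotomy Principle they are evil, with no further work required.

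For every uncircled (i.e.\ $v_1$-periodic) class $x$, case~(2) of the Dichotomy Principle applies, and the classification reduces to a finite calculation. Specifically, for each such $x \in \Ext^{s,t}_{A_*}(\FF_2)$, I would choose $N = N(x)$ as in Proposition~\ref{prop:v1ext} and a sufficiently large $k$ so that $y := v_1^{2^N\cdot k}\cdot x$ lies above the $1/3$-line $t-s = 3s$; by Lemma~\ref{lem:onethird}, $y$ is good, so is detected in the AKSS by a unique good class whose $\bo$-filtration $n(y)$ can be read off from the structure of $\mc{C}_{alg}$ described in Theorem~\ref{thm:HCalg} and Figure~\ref{fig:HCchart}. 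The Dichotomy Principle then says $x$ is good if and only if $s\ge n(y)$. The proposition is the assertion that the classes for which $s < n(y)$ are exactly the triangle-decorated ones.

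The main obstacle is not a conceptual one but rather a bookkeeping one: to apply the criterion $s \ge n(y)$ one has to track, for each $v_1$-periodic class $x$, the actual AKSS detecting class of its $v_1^{2^N\cdot k}$-multiple above the $1/3$-line. Concretely, one iterates multiplication by the relevant Adams periodicity operator $P^j$ (with $P^j$ realizing $v_1^{2^{N}\cdot k}$ in the appropriate range), identifies the resulting class in $\Ext$ via Bruner's tables, matches it to its description in $H^{*,*,*}(\mc{C}_{alg})$ according to the basis of Theorem~\ref{thm:HCalg}, and reads off $n$ from the weight $\norm{I[K]}$ and the $\bo$-cover combinatorics. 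This bookkeeping is most delicate for classes $x$ whose $s$ is barely smaller (or barely greater) than the filtration of their $v_1$-periodic avatar, since these borderline cases are precisely where triangle vs.\ non-triangle is decided. In practice, every triangle in Figure~\ref{fig:v1Extchart} corresponds to a $v_1$-periodic class lying on (or just below) a line in which its $v_1^4$- or $v_1^8$- or $v_1^{16}$-multiple jumps into a strictly higher $\bo$-filtration stratum, while the uncircled, untriangled classes lie in $\bo$-filtration $\le s$.

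Finally, to complete the proof one just compiles these case-by-case verifications into a table. In each row one records the class, its periodicity $2^N$, a specific iterate $v_1^{2^N\cdot k}\cdot x$ above the $1/3$-line (with reference to \cite{Bruner,Brunertable} or the May spectral sequence of \cite{Tangora}), the $\bo$-filtration $n(y)$ of the detecting AKSS class (from Theorem~\ref{thm:HCalg}), and the resulting sign of $s - n(y)$. The classes giving a negative sign are precisely the triangles, and the union with the circled classes gives the evil part of $\Ext_{A_*}(\FF_2)$ in this range, which is what the proposition claims.
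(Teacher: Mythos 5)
Your proposal is correct and follows exactly the route the paper intends: circled classes are $v_1$-torsion by Proposition~\ref{prop:v1Extchart}, hence evil by case~(1) of the Dichotomy Principle; uncircled classes are $v_1$-periodic and one checks $s \ge n(y)$ against the good class $y$ detecting $v_1^{2^N\cdot k}x$ above the $1/3$-line, whose $\bo$-filtration is read off from Theorem~\ref{thm:HCalg}. The paper gives no written proof for this proposition beyond "Application of the Dichotomy Principle," so your reconstruction of the bookkeeping is precisely the intended argument.
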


\subsection*{The propagation of good : $v_1$--periodic differentials}

We now begin our stemwise computation of the algebraic AKSS
$$ \E{akss}{alg}_{1+\epsilon}^{n+\alpha\epsilon, s,t} \Rightarrow \Ext^{s+n,t}_{A_*}(\FF_2). $$
The full chart of this spectral sequence in the range we will be considering (starting at the $E_{1+\epsilon}$ page) is shown in Figure~\ref{fig:AAKSSchart}.  

\begin{figure}
\includegraphics[angle=90,height=\textheight]{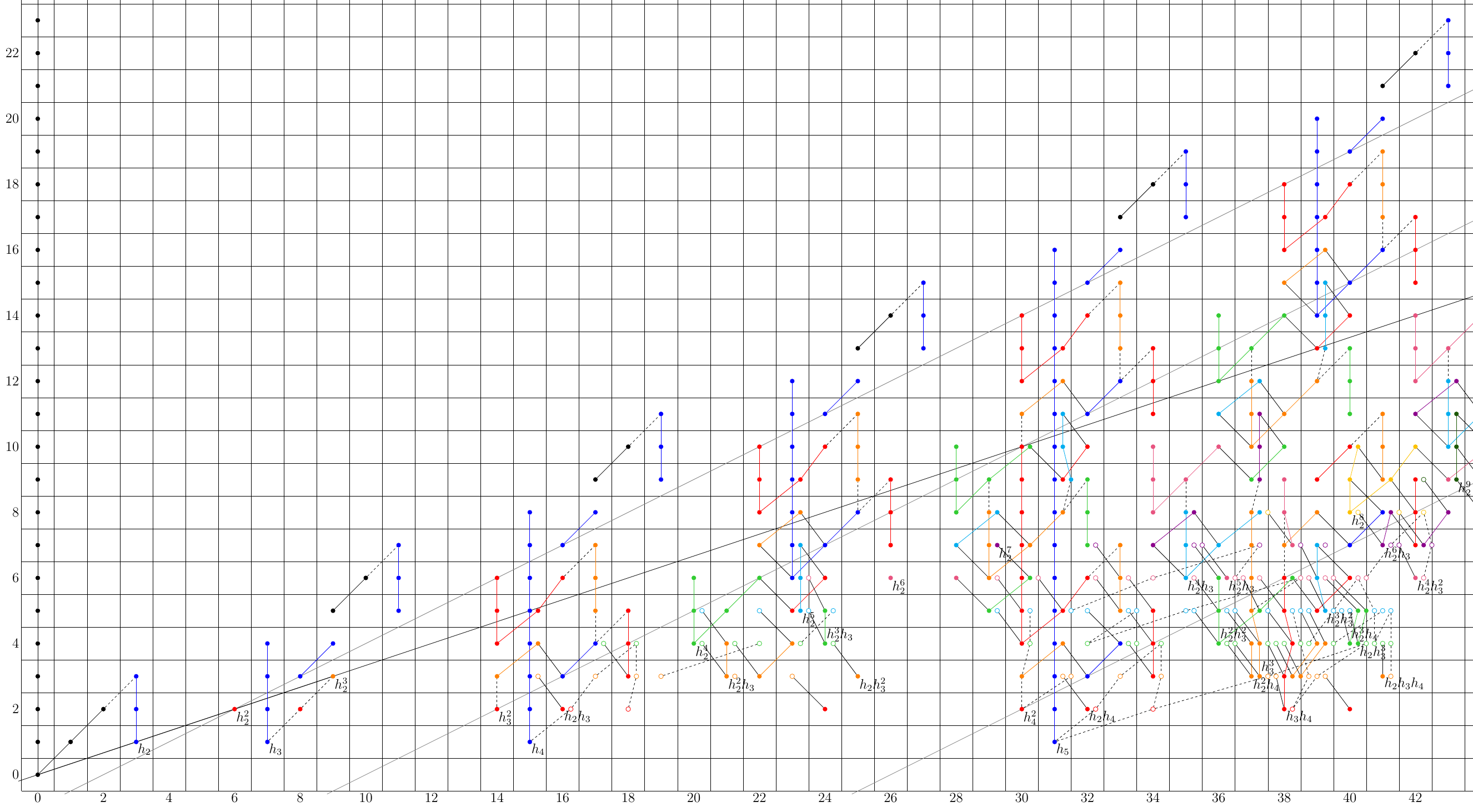}
\caption{The algebraic AKSS. 
}\label{fig:AAKSSchart}\label{fig:HCchart}.
\end{figure}

\begin{not*}We name the classes in Figure~\ref{fig:AAKSSchart} by
\[(x,y: n),\]
where $(x,y)=(t-(s+n),s+n)$ is the Adams coordinate and $n$ is the $\bo$--filtration. We use the same notation, but add a superscript to denote evil classes, i.e., $(x,y: n)^{ev}$. We call this the \emph{nature} of the class.
If multiple classes have the same nature, we distinguish them by a subscript $(x,y: n)_k$, respectively $(x,y: n)_k^{ev}$. The subscript $k$ denotes that this is the $k$'th class from the left in our chart with this nature, counting evil and good separately. 
\end{not*}

We will use the following lemma.
\begin{lem}\label{lem:v14}
Let $a$ and $b$ be elements on the $E_2$-page of the AKSS, which are in the Adams coordinate $(x, y)$ and $(x-1, y+1)$. Suppose firstly that $v_0  a =0$ and $v_1^4 a \neq 0$ on the $E_2$-page of the AKSS, and secondly that the element $v_0^3 {\color{blue}(7,1:1)}$ annihilates all elements in the Adams coordinates $(x+1, y)$ and $(x, y+1)$. Then $d_2(a) = b$ implies that $d_2(v_1^4  a) = v_1^4  b$.
\end{lem}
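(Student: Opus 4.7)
The plan is to apply the Leibniz rule for $d_2$ after replacing $v_1^4$ by an actual $\Ext$ element via comparison with the AKSS for $\Ext_{A(N)_*}(\FF_2)$ for some $N \ge 2$.

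First, by Theorem~\ref{thm:AdamsP} and Proposition~\ref{prop:v1ext}, choose $N \ge 2$ large enough that the comparison map from the $A$-AKSS to the $A(N)$-AKSS is an isomorphism in the bidegrees containing $a$, $b$, $v_1^4 a$, and $v_1^4 b$. In the $A(N)$-AKSS, the element $v_1^4 \in \Ext^{4, 12}_{A(N)_*}(\FF_2)$ is an actual $\Ext$ class living in $\bo$-filtration $0$, and is therefore a permanent cycle. Since the AKSS is a multiplicative spectral sequence whose differentials satisfy the Leibniz rule,
\[
d_2(v_1^4 \cdot a) = v_1^4 \cdot d_2(a) + d_2(v_1^4) \cdot a = v_1^4 \cdot b
\]
in the $A(N)$-AKSS.

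The main obstacle is transporting this identity back to the $A$-AKSS, where $v_1^4$ does not exist as a literal element but only acts via the Adams periodicity isomorphism. The classes denoted $v_1^4 a$ and $v_1^4 b$ in the $A$-AKSS are pinned down only up to a potential filtration indeterminacy arising from the Adams periodicity map, and the Leibniz rule in the $A$-AKSS may carry correction terms reflecting the failure of $v_1^4$ to literally be a cycle. The hypotheses in the lemma are precisely the conditions needed to rule out these corrections: the condition $v_0 \cdot a = 0$ kills the indeterminacy in the identification of $v_1^4 a$ arising from $v_0$-multiples (as it appears in the Massey product expression for $v_1^4$-multiplication), while the annihilation condition on $v_0^3 (7, 1:1)$ kills the remaining potential correction terms in $d_2(v_1^4 a)$, which would take the form $v_0^3 (7, 1:1) \cdot c$ for some class $c$ of the form specified in the hypothesis.

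The hard part is verifying that these two hypotheses precisely cover all sources of potential discrepancy between the $A(N)$-AKSS and the $A$-AKSS computations. This requires a careful bidegree analysis enumerating every possible error term in the definition of $v_1^4 a$ and in the output of $d_2$, and checking that each such term is either a $v_0$-multiple of $a$ or a $v_0^3 (7,1:1)$-multiple of a class in bidegree $(x+1,y)$ or $(x, y+1)$. Once this bookkeeping is completed, the desired identity $d_2(v_1^4 a) = v_1^4 b$ in the $A$-AKSS follows directly.
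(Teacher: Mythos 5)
Your proposal takes a genuinely different route from the paper, and unfortunately it has a gap that I don't think can be closed along the lines you sketch.

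The paper's proof never leaves the AKSS for $\Ext_{A_*}(\FF_2)$. It observes that $v_1^4$ is a literal element on the $E_1$-page (in $\bo$-filtration~$0$) with $d_1(v_1^4)=v_0^4\gdr{7}{1}{1}{blue}$, and consequently, on the $E_2$-page, $v_1^4$-multiplication is realized by the Massey product $\langle a, v_0, v_0^3\gdr{7}{1}{1}{blue}\rangle$. The first hypothesis ($v_0a=0$, hence $v_0b=0$ by $v_0$-linearity) guarantees the brackets are defined; the second hypothesis guarantees zero indeterminacy; and then the result is a direct application of the Leibniz rule for Massey products in spectral sequences.

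Your proposal instead compares with the AKSS for $\Ext_{A(N)_*}(\FF_2)$ for $N$ large. The problem is the very first step: you assert that $v_1^4\in\Ext^{4,12}_{A(N)_*}(\FF_2)$ is an actual $\Ext$ class for any $N\ge 2$, but Theorem~\ref{thm:AdamsP} only produces the element $v_1^{2^N}\in\Ext^{2^N,3\cdot 2^N}_{A(N)_*}(\FF_2)$. The element $v_1^4$ lives in $\Ext_{A(2)_*}$ and does not lift along $\Ext_{A(N)_*}\to\Ext_{A(2)_*}$ for $N>2$. So for $N=2$, where $v_1^4$ does exist, the isomorphism range of Proposition~\ref{prop:v1ext}(2) ($t-s<-s+2^{N+1}$) is far too small to contain $a$ and $v_1^4a$ simultaneously; and for $N$ large enough to cover those bidegrees, the element $v_1^4$ is no longer available and you would at best establish $v_1^{2^N}$-linearity rather than $v_1^4$-linearity. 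The paper does use the $A(N)$-comparison trick you describe, but only in the places where $v_1^{2^N}$-linearity is what is needed (Lemma~\ref{lem:v1torsion}, Proposition~\ref{prop:blueyellow}); Lemma~\ref{lem:v14} exists precisely to handle the shorter $v_1^4$-period that the comparison cannot reach, and the Massey-product formulation is what makes that possible. Finally, your "transport back" step is not carried out: you acknowledge that a careful bidegree analysis is required and that the hypotheses should kill the error terms, but this enumeration is exactly the content of the lemma, and the brief gesture toward the Massey-product expression in your second paragraph is not a substitute for actually running the argument inside the $A$-AKSS as the paper does.
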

\begin{proof}
We have a $d_1$ differential $d_1(v_1^4) = v_0^4 {\color{blue}(7,1:1)}$. Therefore, on the $E_2$-page of the AKSS, we have the following Massey products with zero indeterminacies:
\[v_1^4 a = \left<a, v_0, v_0^3{\color{blue}(7,1:1)}\right>,\]
\[v_1^4 b = \left<b, v_0, v_0^3{\color{blue}(7,1:1)}\right>.\]
The first condition implies that the Massey products are well-defined. Note that by $v_0$-linearity, $v_0  a =0$ and $d_2(a) = b$ imply that $v_0  b = 0$. The second condition implies that the Massey products have zero indeterminacies. Note that the Adams bidegree of $v_1^4$ is zero starting from the $E_2$-page. Therefore, the lemma follows from Leibniz's rule of Massey products.
\end{proof}

Finally, recall from Proposition~\ref{prop:v1ext} that $N(x) \leq n$ if, for $(t-s,s)$ the Adams coordinates of $x$, we have $0<t-s<2s+2^{n+1}-6$. This is illustrated in Figure~\ref{fig:v1Extchart}.

We begin by establishing families of periodic differentials between good classes. By the Dichotomy Principle
there can be no evil classes above the 1/3-line. This reduces the possibilities and allows us to make easy arguments. The differentials established in this region can then be ``pulled back'' using Lemma~\ref{lem:v1torsion}. Further, note that Lemma~\ref{lem:v1torsion} implies that the relevant classes must survive long enough for the differentials stated below to occur.

\begin{prop}\label{prop:blueoranage}
There is a $v_1^4$--periodic family of differentials starting with
\[d_2( {\color{blue}(16,3:1)}) = {\color{orange}(15,4:3)}\]
and a $v_1^8$--periodic family starting with
\[d_2( {\color{blue}(23,6:1)} ) = {\color{orange}(22,7:3)} .\]
\end{prop}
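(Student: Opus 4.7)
The strategy is to establish each base $d_2$-differential by a convergence argument near the $1/3$-line, and then propagate it to a periodic family using Lemma~\ref{lem:v14}. The key input is that both the source and the target of each stated differential detect $v_1$-periodic classes in $\Ext_{A_*}(\FF_2)$, as recorded in Proposition~\ref{prop:v1Extchart} and Figure~\ref{fig:v1Extchart}: the classes for the first differential lie in the $v_1^4$-periodic band and those for the second in the $v_1^8$-periodic band.

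First I would multiply the source ${\color{blue}(16,3:1)}$ and the target ${\color{orange}(15,4:3)}$ by a sufficiently large power of $v_1^4$ to push both above the $1/3$-line. In that region Lemma~\ref{lem:onethird} guarantees all Ext classes are good, so Theorem~\ref{thm:HCalg} pins down exactly which classes of $H^{*,*,*}(\mc{C}_{alg})$ detect them. Matching against the known structure of $\Ext_{A_*}(\FF_2)$ shows that the $v_1$-multiples of the two classes cannot both survive to $E_\infty$; the bidegree shift $(n,s)\mapsto(n+2,s-1)$ of the AKSS $d_2$ is the only available differential that can resolve this, yielding
\[
d_2\bigl(v_1^{4k}\cdot{\color{blue}(16,3:1)}\bigr)=v_1^{4k}\cdot{\color{orange}(15,4:3)}
\]
for $k$ large enough. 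By Lemma~\ref{lem:v1torsion} this back-propagates to $d_2({\color{blue}(16,3:1)})={\color{orange}(15,4:3)}$.

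To upgrade to a $v_1^4$-periodic family of differentials, I would invoke Lemma~\ref{lem:v14} iteratively. Its hypotheses split into two parts: first, that $v_0\cdot{\color{blue}(16,3:1)}=0$ and $v_1^4\cdot{\color{blue}(16,3:1)}\neq 0$ on the $E_2$-page of the AKSS, which follows from a direct inspection of the relevant $v_0$-tower in $H^{1,*,*}(\mc{C}_{alg})$ together with the $v_1^4$-periodicity of the detected Ext class; and second, that $v_0^3\cdot{\color{blue}(7,1:1)}$ annihilates all classes in the Adams bidegrees $(17,3)$ and $(16,4)$, which is a local check that can be verified by enumerating the few candidate classes from Theorem~\ref{thm:HCalg}. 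The second differential $d_2({\color{blue}(23,6:1)})={\color{orange}(22,7:3)}$ is handled in an entirely parallel fashion, using $v_1^8$ in place of $v_1^4$ throughout, since the relevant Ext classes lie in the $v_1^8$-periodic band and no higher power is available.

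The main obstacle is the identification step above the $1/3$-line. Although both $\Ext_{A_*}(\FF_2)$ and $H^{*,*,*}(\mc{C}_{alg})$ are explicitly known through this range, ruling out alternative differentials of the same or shorter length, and checking that no unexpected $d_1$-differential has already killed the relevant classes, requires careful bookkeeping over several nearby bidegrees. A secondary technical point is confirming that the annihilation condition of Lemma~\ref{lem:v14} continues to hold as one passes up the $v_1^4$- (respectively $v_1^8$-) tower, rather than just at the base.
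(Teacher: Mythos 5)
Your overall framework matches the paper's: establish the base differential somewhere above the $1/3$-line, then transport it using Lemma~\ref{lem:v1torsion} and Lemma~\ref{lem:v14}. But there are several genuine gaps.

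First, the two families are not independent and cannot be handled ``in an entirely parallel fashion.'' The paper's proof identifies the single concrete vanishing $\Ex^{*,*}=0$ in Adams bidegree $(38,15)$, which forces $d_2({\color{blue}(39,14:1)})={\color{orange}(38,15:3)}$ -- this is the member of the \emph{second} family. It then applies $h_1$-linearity to deduce $d_2({\color{blue}(40,15:1)})={\color{orange}(39,16:3)}$, the member of the \emph{first} family. Only the latter pair is $v_1^4$-periodic on the $E_{1+\epsilon}$-term, so only there can Lemma~\ref{lem:v14} (which is stated specifically for $v_1^4$) be applied. The $v_1^8$-periodicity of the second family is then \emph{derived from} the $v_1^4$-periodicity of the first family via $h_1$-linearity, not established by a parallel $v_1^8$ version of Lemma~\ref{lem:v14} (no such version is provided in the paper). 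Your proposal drops $h_1$-linearity entirely, which removes the bridge between the two families and leaves the second family unjustified.

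Second, your statement that ``the classes for the first differential lie in the $v_1^4$-periodic band and those for the second in the $v_1^8$-periodic band'' conflates two different notions of periodicity. By Proposition~\ref{prop:v1ext}, both $(16,3)$ and $(23,6)$ lie in the region where one needs $v_1^{16}$ (i.e.\ $N=4$) for multiplication to be defined in $\Ext_{A_*}(\FF_2)$. The $v_1^4$ and $v_1^8$ labels in the proposition refer to the periodicity of the AKSS $E_{1+\epsilon}$-classes (i.e.\ the $H(\mc{C}_{alg})$-patterns of Theorem~\ref{thm:HCalg}), which is finer than the $\Ext$-band structure. This matters for your pull-back step: Lemma~\ref{lem:v1torsion} pulls back by $v_1^{-2^N k}$ with $N=N(x)=4$, so you must pull back by a power of $v_1^{16}$, not an arbitrary power of $v_1^4$. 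Your phrase ``sufficiently large power of $v_1^4$'' glosses over this. The paper handles it by first applying Lemma~\ref{lem:v14} to extend the differential forward $v_1^4$-periodically (so that the $v_1^{16}$-multiple needed for the pull-back is available), and only then invokes Lemma~\ref{lem:v1torsion}; your proposed order (pull back first, then propagate forward) does not make the $v_1^{16}$-multiple available when you need it.

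Finally, ``matching against the known structure of $\Ext$ shows that the $v_1$-multiples of the two classes cannot both survive'' is not a substitute for naming the vanishing that actually initiates the argument. The concrete fact is that $\Ex$ vanishes in $(38,15)$, and it triggers the differential on the second family, not the first. Whether a direct vanishing argument in a bidegree relevant to the first family (such as $(39,16)$) would work is precisely what the paper's detour through $h_1$-linearity suggests it does not.
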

\begin{proof}
$\Ex$ is zero in degree $(38,15)$. Therefore, the class ${\color{orange}(38,15:3)}$ cannot survive. This forces the differential $d_2( {\color{blue}(39,14:1)}) = {\color{orange}(38,15:3)}$. By $h_1$--linearity, this implies that $d_2( {\color{blue}(40,15:1)}) = {\color{orange}(39,16:3)}$. The classes $ {\color{blue}(40,15:1)}$ and $ {\color{orange}(39,16:3)}$ are $v_1^4$--periodic on the $E_{1+\epsilon}$--term of the AKSS. By Lemma~\ref{lem:v14}, this differential is $v_1^4$--periodic (and, therefore, $v_1^8$--periodic). The classes $ {\color{blue}(39,14:1)}$ and ${\color{orange}(38,15:3)}$ are $v_1^8$--periodic on the $E_{1+\epsilon}$--term. The reoccurrence of $d_2( {\color{blue}(40,15:1)}) = {\color{orange}(39,16:3)}$ and $h_1$--linearity force the reoccurrence of $d_2( {\color{blue}(39,14:1)}) = {\color{orange}(38,15:3)}$. 
Therefore, from this point on, the $v_1^4$, respectively $v_1^8$, multiples of these differentials always occur. We apply Lemma~\ref{lem:v1torsion} with $N=4$ and appropriate values of $k$ to pull-back these differentials. For example, using $k=1$, the differential $d_2( {\color{blue}(48,19:1)}) = {\color{orange}(47,20:3)}$ implies that $d_2( {\color{blue}(16,3:1)}) = {\color{orange}(15,4:3)}$ and the differential $d_2( {\color{blue}(55,22:1)}) = {\color{orange}(54,23:3)}$ implies that $d_2( {\color{blue}(23,6:1)} ) = {\color{orange}(22,7:3)}$. 
\end{proof}

\begin{prop}\label{prop:blueoranage1}
There is a $v_1^4$--periodic family of differentials starting with
\[d_2( {\color{blue}(32,3:1)}) = {\color{orange}(31,4:3)}.\]
\end{prop}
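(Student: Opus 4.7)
The plan is to mimic the three-step strategy used in the proof of Proposition~\ref{prop:blueoranage}. First, I would locate a sufficiently high $v_1^{4k}$-multiple of the proposed differential at which its existence is forced by the behavior of $\Ex(\FF_2)$ in the target's Adams bidegree; second, I would propagate the base differential up the entire $v_1^4$-tower using Lemma~\ref{lem:v14}; and third, I would pull back to the claimed base case via Lemma~\ref{lem:v1torsion}.

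Concretely, the $v_1^{4k}$-multiples of the source and target live at Adams bidegrees $(32+8k,\,3+4k)$ and $(31+8k,\,4+4k)$, and Lemma~\ref{lem:onethird} guarantees that once $k$ is large enough to push both classes above the $1/3$-line (so $k \ge 5$), every candidate class in those bidegrees is good. I would then hunt for the smallest such $k$ for which the target bidegree of $\Ex(\FF_2)$ contains no class that could survive any alternative fate — either because $\Ex(\FF_2)$ vanishes outright there (as is the case for $(38,15)$ in Proposition~\ref{prop:blueoranage}), or because all other classes in that bidegree are already accounted for by independently established differentials or by the $v_1$-periodicity catalog of Proposition~\ref{prop:v1Extchart}. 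Should no direct vanishing be available, the alternative is to first force an $h_1$-translate of the differential, where the shifted target does vanish, and then recover the original by $h_1$-linearity, exactly as was done with the pair $(40,15)$ and $(39,14)$ in Proposition~\ref{prop:blueoranage}.

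Once the base differential is in hand, I would verify the hypotheses of Lemma~\ref{lem:v14} — namely that $v_0$ annihilates the source on the $E_2$-page of the AKSS, and that $v_0^3 {\color{blue}(7,1:1)}$ annihilates the two relevant neighboring Adams bidegrees — by direct inspection of the additive structure of $H^{*,*,*}(\mc{C}_{alg})$ furnished by Theorem~\ref{thm:HCalg}. Iterating Lemma~\ref{lem:v14} then propagates the base differential into the full $v_1^4$-periodic family, after which Lemma~\ref{lem:v1torsion} (applied with $N=2$ and the appropriate $k$) divides by $v_1^{4k}$ and delivers the stated base differential $d_2({\color{blue}(32,3:1)}) = {\color{orange}(31,4:3)}$.

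The principal obstacle will be Step~1: precisely pinpointing the correct high-degree analog. Unlike the situation in Proposition~\ref{prop:blueoranage}, where the vanishing at $(38,15)$ is immediate from the standard Ext tables, here the relevant bidegrees $(31+8k,\,4+4k)$ lie in a more heavily populated region of $\Ex(\FF_2)$, so a careful case analysis is required — either locating an outright vanishing sufficiently far up the $v_1^4$-tower, or combining an $h_1$-translate with the exclusion arguments of Proposition~\ref{prop:v1Extchart} to rule out every competing differential or permanent-cycle fate for the target class. Once that bookkeeping is resolved, Steps~2 and~3 are essentially formal.
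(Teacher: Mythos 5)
Your overall strategy is the one the paper uses: force the differential high in the chart where $\Ex$ vanishes, propagate the $v_1^4$-periodic family upward with Lemma~\ref{lem:v14}, and pull back with Lemma~\ref{lem:v1torsion}. The paper locates the forcing bidegree immediately --- $\Ex$ is zero at $(71,24)$, so ${\color{orange}(71,24:3)}$ cannot survive and $d_2({\color{blue}(72,23:1)})={\color{orange}(71,24:3)}$ is forced --- so the hedge you build in (``should no direct vanishing be available, fall back on an $h_1$-translate'') is unnecessary here, though not wrong in spirit.

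The one genuine error is your application of Lemma~\ref{lem:v1torsion} ``with $N=2$.'' In that lemma, $N$ is not a free parameter: it is $N(x)$ for the source class $x$ of the established differential, fixed by Proposition~\ref{prop:v1ext} (it is the smallest $n$ for which $t-s < 2s + 2^{n+1}-6$), and its proof depends on being in the range where $\Ext_{A_*}$ agrees with $\Ext_{A(N)_*}$, so you cannot simply use $N=2$ because the period you want happens to be $v_1^4=v_1^{2^2}$. For the classes involved here the correct value is $N=5$, which is what the paper uses. Concretely, from the forced $d_2$ at $(72,23)$ you apply Lemma~\ref{lem:v14} three more times to land at $(96,35)$, which is $v_1^{32}=v_1^{2^5}$ times the Adams bidegree $(32,3)$; Lemma~\ref{lem:v1torsion} with $N=5$, $k=1$ then divides by $v_1^{32}$ to produce the claimed $d_2({\color{blue}(32,3:1)})={\color{orange}(31,4:3)}$. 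With $N=2$ the required isomorphism $\Ext_{A_*}\cong\Ext_{A(2)_*}$ fails in these stems, so the pullback step as you wrote it does not go through.
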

\begin{proof}
Since $\Ex$ is zero in degree $(71,24)$, we must have $d_2( {\color{blue}(72,23:1)}) = {\color{orange}(71,24:3)}$. The classes are $v_1^4$--periodic, so as in Proposition~\ref{prop:blueoranage}, we apply Lemma~\ref{lem:v14} and Lemma~\ref{lem:v1torsion} with $N=5$ and appropriate values of $k$ to conclude the result.
\end{proof}

\begin{prop}\label{prop:oragnecyan}
There is a $v_1^4$--periodic family of differentials starting with
\[d_2( {\color{orange}(29,6:3)}) = {\color{cyan}(28,7:5)} \ \ \ \text{and} \ \ \ d_2( {\color{orange}(30,7:3)}) = {\color{cyan}(29,8:5)}  .\]
\end{prop}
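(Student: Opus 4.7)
The plan is to mimic the two-step argument used in Propositions~\ref{prop:blueoranage} and~\ref{prop:blueoranage1}. I would first identify a value of $k \ge 0$ large enough that the $v_1^{4k}$-multiple of ${\color{cyan}(28,7:5)}$ lies in a bidegree where $\Ex$ is known to vanish (or is already detected by classes other than the cyan one), so that this $v_1^{4k}$-multiple of the cyan class must die in the AKSS. Inspection of Figure~\ref{fig:AAKSSchart} in that bidegree should then show that the only class capable of killing it by an AKSS differential is $v_1^{4k} \cdot {\color{orange}(29,6:3)}$ via $d_2$, so the differential is forced at this particular $k$.

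Next, I would check the hypotheses of Lemma~\ref{lem:v14} for the source ${\color{orange}(29,6:3)}$: $v_0$-annihilation on $E_2$, nontriviality of $v_1^4$-multiplication, and vanishing of $v_0^3 {\color{blue}(7,1:1)}$-multiplication in the adjacent Adams bidegrees $(30,6)$ and $(29,7)$. These properties are read off directly from the description of $H^{*,*,*}(\mc{C}_{alg})$ in Theorem~\ref{thm:HCalg} together with Figure~\ref{fig:AAKSSchart}. Lemma~\ref{lem:v14} then propagates the differential $v_1^4$-periodically to all larger $k$. Finally, Lemma~\ref{lem:v1torsion} applied with $N = 4$ pulls the family back from large $k$ down to $k = 0$, yielding the first claimed differential. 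The second is the $h_1$-multiple of the first and follows by $h_1$-linearity of $d_2$ (using that $h_1 \cdot {\color{orange}(29,6:3)} = {\color{orange}(30,7:3)}$ and $h_1 \cdot {\color{cyan}(28,7:5)} = {\color{cyan}(29,8:5)}$, as is visible from the chart positions).

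The main obstacle will be identifying the correct $k$ at which the $d_2$ is first forced, and ruling out competing sources into the cyan target at that $k$. Unlike in Proposition~\ref{prop:blueoranage}, where the target lies at very high filtration with essentially no competition, the class ${\color{cyan}(28+8k,7+4k:5)}$ approaches the $1/3$-line only moderately quickly, so some care is needed. Fortunately, the Dichotomy Principle (Theorem~\ref{thm:dichotomy}) rules out evil competitors above the $1/3$-line, and Proposition~\ref{prop:v1Extchart} pins down the $v_1$-periodic structure of $\Ex$ in the relevant range. Together these reduce the problem to a finite enumeration of good classes in neighbouring AKSS bidegrees, using Theorem~\ref{thm:HCalg} as input.
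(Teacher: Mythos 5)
Your proposal matches the paper's proof in all essential respects: the paper forces the differential at the $v_1^8$-multiple by observing $\Ex = 0$ in bidegree $(44,15)$, then uses $h_1$-linearity to obtain the companion differential, Lemma~\ref{lem:v14} to propagate $v_1^4$-periodically, and Lemma~\ref{lem:v1torsion} with $N=4$ to pull back to $k=0$. The only gap you leave open is explicitly identifying $(44,15)$ as the vanishing bidegree, but this is a routine check you correctly flag as the remaining step.
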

\begin{proof}
$\Ex$ is zero in degree $(44,15)$. This forces the differential $d_2( {\color{orange}(45,14:3)}) = {\color{cyan}(44,15:5)}$. Then, $h_1$--linearity forces the differential $d_2( {\color{orange}(46,15:3)}) = {\color{cyan}(45,16:5)}$. These classes are $v_1^4$--periodic so Lemma~\ref{lem:v14} implies the periodicity of the differential. Finally,  Lemma~\ref{lem:v1torsion} with $N=4$ allows us to pull back the differentials.
\end{proof}

\begin{prop}\label{prop:cyanmagenta}
There is a $v_1^4$--periodic family of differentials starting with
\[d_2( {\color{cyan}(35,6:5)}) = {\color{darkmagenta}(34,7:7)} \ \ \ \text{and} \ \ \ d_2( {\color{cyan}(36,7:5)}) = {\color{darkmagenta}(35,8:7) }  .\]
\end{prop}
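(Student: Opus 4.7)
The plan is to mirror, essentially verbatim, the proof strategy of Proposition~\ref{prop:oragnecyan}, pushing the source filtration $n$ up from cyan ($n=5$) to magenta ($n=7$) and working with a $v_1^8$-shifted version of the claimed differentials. First I would look at the Adams bidegree obtained by multiplying the target $(34,7:7)$ by $v_1^8$, placing a shifted magenta class in bidegree $(50,15:7)$, and verify using the known low-dimensional computation of $\Ext_{A_*}(\FF_2)$ (together with the $v_1^{2^n}$-periodicity bands of Proposition~\ref{prop:v1ext}) that $\Ex^{15,65}=0$. Since the shifted magenta class then cannot survive to $E_\infty$ of the AKSS, some differential must kill it; by enumerating potential sources in that Adams bidegree and using that above the $1/3$-line only good classes exist (Lemma~\ref{lem:onethird}), the only candidate is the shifted cyan class at $(51,14:5)$ hitting it via a $d_2$:
\[ d_2({\color{cyan}(51,14:5)}) = {\color{magenta}(50,15:7)}. \]

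Next I would apply $h_1$-linearity (exactly as in Propositions~\ref{prop:blueoranage} and~\ref{prop:oragnecyan}) to obtain the companion differential
\[ d_2({\color{cyan}(52,15:5)}) = {\color{magenta}(51,16:7)}. \]
Then I would verify the hypotheses of Lemma~\ref{lem:v14} for these two classes ($v_0$-annihilation of the source, non-triviality of $v_1^4$ times the source on $E_2$, and annihilation by $v_0^3\,{\color{blue}(7,1:1)}$ of the relevant neighboring Adams bidegrees), which is a direct inspection of Figure~\ref{fig:AAKSSchart}. Lemma~\ref{lem:v14} then propagates both differentials as a $v_1^4$-periodic family from that point onward.

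Finally I would invoke Lemma~\ref{lem:v1torsion} with $N=4$ and the appropriate value of $k$ to pull the $v_1^8$-shifted differentials back down to the claimed
\[ d_2( {\color{cyan}(35,6:5)}) = {\color{magenta}(34,7:7)}, \qquad d_2( {\color{cyan}(36,7:5)}) = {\color{magenta}(35,8:7)}, \]
and to transport them across the entire $v_1^4$-periodic family. The main obstacle in this argument is the first step: verifying the Ext vanishing at bidegree $(50,15)$ sits just outside the 40-stem range in which Bruner's tables have been used most heavily in the paper, so I would in practice either appeal to the $v_1$-periodicity of that region (this bidegree lies in a band where $\Ex^{*,*}\cong \Ext_{A(n)_*}^{*,*}$ for appropriate $n$, reducing the check to a smaller Hopf algebra) or, as a safer fallback, shift by $v_1^{16}$ instead of $v_1^8$ to land strictly above the $1/3$-line, where Lemma~\ref{lem:onethird} and the explicit description of $v_1^{-1}\Ext$ in that region make the vanishing transparent; the rest of the argument is unchanged.
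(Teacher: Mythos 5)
Your fallback route (shift by $v_1^{16}$ so the target lands at $(66,23:7)$ and the source at $(67,22:5)$, both above the $1/3$-line) is precisely the paper's proof: it cites the vanishing of $\Ext_{A_*}$ in bidegree $(66,23)$, deduces the $d_2$ there, uses $h_1$-linearity for the companion, applies Lemma~\ref{lem:v14} for $v_1^4$-periodicity, and pulls back with Lemma~\ref{lem:v1torsion} at $N=4$. So the proposal does reach the paper's argument.

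Your primary plan ($v_1^8$ shift, target at $(50,15:7)$) has a genuine gap, and it is not just a matter of the $40$-stem cutoff. You invoke Lemma~\ref{lem:onethird} to say that the only candidate sources for a differential hitting the shifted class are good; but Lemma~\ref{lem:onethird} only applies above the $1/3$-line, and $(50,15)$ is \emph{below} it ($3\cdot 15 = 45 < 50$), so the lemma gives you nothing there. This is exactly why the paper jumps to $(66,23)$ for this proposition (where $3\cdot 23 = 69 > 66$), whereas for Proposition~\ref{prop:oragnecyan} the analogous $v_1^8$-shift already lands above the line at $(44,15)$ ($3\cdot 15 = 45 > 44$) and the shorter shift suffices. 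A secondary point: even if the $d_2$ at $(51,14:5)$ were established, you cannot pull it back to $(35,6:5)$ by a single application of Lemma~\ref{lem:v1torsion}, since $N(x)=4$ forces pullbacks in steps of $v_1^{16}$, not $v_1^8$; one first has to push up once more with Lemma~\ref{lem:v14} to $(67,22:5)$ before pulling back --- at which point you might as well have started there, as the paper does.
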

\begin{proof}
This is the same argument as in the proof of Proposition~\ref{prop:oragnecyan}, using the fact that
$\Ex$ is zero in degree $(66,23)$.
\end{proof}

\begin{prop}\label{prop:greenred}
There are $v_1^4$--periodic families of differentials starting with
\[d_2( {\color{red}(23,5:2)}) = {\color{limegreen}(22,6:4)} \ \ \ \text{and} \ \ \  d_3( {\color{red}(24,6:2)} ) = {\color{cyan}(23,7:5)}.\]
\end{prop}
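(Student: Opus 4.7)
The plan is to mirror the strategy of Propositions~\ref{prop:blueoranage}--\ref{prop:cyanmagenta}. For each differential, I will first establish a $v_1^{4k}$-shifted version of the differential in the region above the $1/3$-line of Figure~\ref{fig:AAKSSchart}, using a vanishing result for $\Ext^{*,*}_{A_*}(\FF_2)$ to force the differential; then Lemma~\ref{lem:v14} (or its $d_3$-analog) propagates the differential $v_1^4$-periodically, and Lemma~\ref{lem:v1torsion} with $N=4$ pulls it back to the stated low-dimensional form.

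For the $d_2$ differential, I choose $k\geq 3$ so that both $v_1^{4k}{\color{red}(23,5:2)}$ and $v_1^{4k}{\color{green}(22,6:4)}$ lie above the $1/3$-line. Locating a bidegree in this shifted range where $\Ext^{*,*}_{A_*}(\FF_2)$ is known to vanish forbids the shifted green class from being a permanent cycle; inspecting the AKSS using Theorem~\ref{thm:HCalg}, the only potential source for this class is a $d_2$ from the shifted red class, so the differential is forced. After checking the two hypotheses of Lemma~\ref{lem:v14}, namely $v_0$-annihilation of the shifted red class together with $v_0^3{\color{blue}(7,1:1)}$-annihilation of adjacent bidegrees, the Massey product argument gives $v_1^4$-periodicity, and Lemma~\ref{lem:v1torsion} concludes.

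For the $d_3$ differential, the shift needed is smaller but the propagation is more delicate. I will first verify that ${\color{red}(24,6:2)}$ survives $d_2$ using Theorem~\ref{thm:HCalg} together with the already-computed $d_2$ differentials in this range. Choosing $k\geq 2$ to put both classes above the $1/3$-line, I find a vanishing of $\Ext_{A_*}^{*,*}(\FF_2)$ that forces the shifted $d_3$. Since Lemma~\ref{lem:v14} is stated for $d_2$, I will prove the corresponding statement for $d_3$ by an identical Massey product calculation: assuming $d_3(a)=b$ with $v_0\cdot a = 0$ and with all $v_0^3{\color{blue}(7,1:1)}$-multiples in neighboring bidegrees vanishing, the Leibniz rule applied to $v_1^4\cdot a = \langle a, v_0, v_0^3{\color{blue}(7,1:1)}\rangle$ yields $d_3(v_1^4 a) = v_1^4 b$. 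Lemma~\ref{lem:v1torsion} then pulls the differential back to $d_3({\color{red}(24,6:2)}) = {\color{cyan}(23,7:5)}$.

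The main obstacle is the $d_3$ case: extending the Massey product argument requires verifying the vanishing of the indeterminacy of $\langle a, v_0, v_0^3{\color{blue}(7,1:1)}\rangle$ in each shifted bidegree and confirming that no shorter $d_2$-differential intervenes to kill the source or target before $d_3$ can run. The $d_2$ portion of the proposition, and the final pull-back step in both cases, is comparatively routine.
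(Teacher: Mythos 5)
For the $d_2$ differential your outline matches the paper exactly: the paper uses the vanishing of $\Ext_{A_*}$ in bidegree $(38,14)$ to force $d_2({\color{red}(39,13:2)}) = {\color{green}(38,14:4)}$, applies Lemma~\ref{lem:v14} to propagate it $v_1^4$-periodically, and uses Lemma~\ref{lem:v1torsion} with $N=4$ to pull it back. This half of your argument is fine.

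For the $d_3$ differential, however, you have proposed a genuinely different route from the paper, and that route has unfilled gaps. The paper does \emph{not} establish the $d_3$ by an independent vanishing argument plus a $d_3$-analog of the Massey-product lemma. Instead it deduces the $d_3$ directly from the already-established $d_2$ by passing through the AKSS for $\AA0$ via Theorem~\ref{thm:AA0}. Concretely: writing $\partial w = {\color{red}(39,13:2)}$, $\partial h_1 y = {\color{green}(38,14:4)}$, and $\partial' z = {\color{cyan}(39,13:5)} = h_2 x$, the $d_2$ on $\partial w$ in the $\FF_2$ spectral sequence corresponds to $d_2(w) = h_1 y$ in the $\AA0$ spectral sequence (Theorem~\ref{thm:AA0}, case~(2)). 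Then $h_1$-linearity together with the Gross--Hopkins hidden extension $h_1^2 y = v_0^2 z$ of Proposition~\ref{prop:MSSSESE1} yields $d_2(h_1 w) = v_0^2 z$ in the $\AA0$ spectral sequence, which case~(1) of Theorem~\ref{thm:AA0} translates back into a $d_3$ in the $\FF_2$ spectral sequence because the target is not $h_1$-good (so $\partial$ and $\partial'$ shift $n$ and $s$ differently). The $v_1^4$-periodicity of the $d_3$ then comes for free, since it is a formal consequence of the $v_1^4$-periodic $d_2$. The dependence on the hidden $h_1$-extension is essential to the length shift $d_2 \rightsquigarrow d_3$ and is what your plan entirely bypasses.

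Your plan asserts, but does not demonstrate, that $\Ext_{A_*}$ vanishes in a bidegree that would force the shifted $d_3$; this is a nontrivial claim about $\Ext$ in the stem-$39$ or stem-$47$ region that you have not checked. You also assert that the Massey-product Leibniz argument of Lemma~\ref{lem:v14} extends to $d_3$, but you only gesture at the two issues that make this delicate (the indeterminacy and the interference of a shorter $d_2$) rather than resolving them. Even if both of those could be verified, your argument would be substantially longer and more fragile than the paper's. The crisper and more robust mechanism is the $\AA0$-comparison, which you should use instead.
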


\begin{proof}
$\Ex$ is zero in degree $(38,14)$. Therefore, ${\color{limegreen}(38,14:4)}$ cannot survive and this forces the differential $d_2( {\color{red}(39,13:2)}) = {\color{limegreen}(38,14:4)}$.
As in Figure~\ref{fig:AA0}, let $x =  {\color{limegreen}(36,12:4)}$, $\partial y =  {\color{limegreen}(37,13:4)}$, $\partial h_1y =  {\color{limegreen}(38,14:4)}$, $\partial w = {\color{red}(39,13:2)}$ and $\partial'z = {\color{cyan}(39,13:5)}$ and note that $\partial'z = h_2x$.
The differential
$d_2( \partial w) = \partial h_1 y $ having been established, part (2)  of Theorem~\ref{thm:AA0} implies that $d_2(w) = h_1y$. By $h_1$--linearity, $d_2(h_1w) = h_1^2 y = v_0^2 z$. Then part (1) of Theorem~\ref{thm:AA0} implies that $d_3(\partial h_1w) = \partial' v_0^2 z = v_0^2 h_2x$.

Note that once the $d_2$--differential is established, the $d_3$--differential is a direct consequence of Theorem~\ref{thm:AA0}. By Lemma~\ref{lem:v14}, the $d_2$-differential is $v_1^4$--periodic, and hence, we have the same periodicity for the $d_3$--differential.

Finally, we use Lemma~\ref{lem:v1torsion} with $N=4$ to pull back these differentials. 
\end{proof}

\begin{figure}
\includegraphics[height=0.5\textheight]{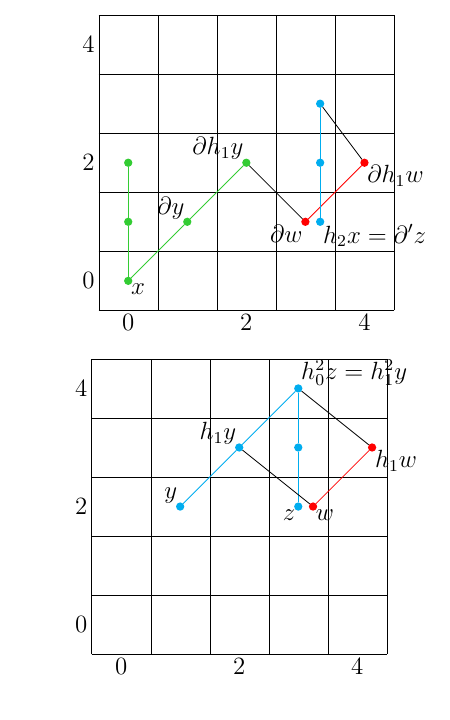}
\caption{An application of Theorem~\ref{thm:AA0}: a $d_2$--differential in the AKSS for $\AA0$ (bottom) implies a $d_3$--differential in the AKSS for $\FF_2$ (top).}\label{fig:AA0}
\end{figure}

\begin{prop}
There are $v_1^4$--periodic families of differentials starting with
\[d_2( {\color{limegreen}(29,5:4)}) = {\color{lavenderrose}(28,6:6)} \ \ \ \text{and} \ \ \  d_3( {\color{limegreen}(30,6:4)}) = {\color{darkmagenta}(29,7:7)} .\]
\end{prop}
\begin{proof}
This is the same argument as in Proposition~\ref{prop:greenred}, starting with the fact that $\Ex$ is zero in degree $(52,18)$, which forces the $d_2$--differential $d_2( {\color{limegreen}(53,17:4)}) = {\color{lavenderrose}(52,18:6)}$.
\end{proof}

\begin{prop}
There are $v_1^4$--periodic families of differentials starting with
\[d_2( {\color{red}(39,5:2)}) = {\color{limegreen}(38,6:4)} \ \ \ \text{and} \ \ \  d_3( {\color{red}(40,6:2)} ) = {\color{cyan}(39,7:5)}.\]
\end{prop}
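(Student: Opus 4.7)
The plan is to mirror the proof of Proposition~\ref{prop:greenred}, shifted up by the appropriate amount. First I would identify a bidegree in which $\Ex$ vanishes and which forces the analogous $d_2$-differential above the $1/3$-line. By multiplying the source--target pair $({\color{red}(39,5:2)},{\color{green}(38,6:4)})$ by a suitable power of $v_1^4$, we obtain a pair $({\color{red}(39+8k,5+4k:2)},{\color{green}(38+8k,6+4k:4)})$ which eventually lies above the $1/3$-line, in a bidegree where the known computation of $\Ex$ provides vanishing. Since no other AKSS class has the right tridegree to kill ${\color{green}(38+8k,6+4k:4)}$, this vanishing forces the $d_2$-differential at the top of the $v_1^4$-tower.

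Next I would apply Lemma~\ref{lem:v14} to propagate this $d_2$-differential through the full $v_1^4$-family; the hypotheses on $v_0$-annihilation and on the absence of $v_0^3{\color{blue}(7,1:1)}$-extensions into the relevant Adams bidegrees are routine checks from the AKSS chart in this range. Then, exactly mirroring the argument of Proposition~\ref{prop:greenred}, I would use Theorem~\ref{thm:AA0} twice, interleaved with $h_1$-linearity in the AKSS for $\AA0$: push the forced $d_2$-differential in the AKSS for $\FF_2$ to a corresponding $d_2$-differential in the AKSS for $\AA0$, multiply by $h_1$ there, and pull the result back via Theorem~\ref{thm:AA0} to obtain the $d_3$-differential from ${\color{red}(40+8k,6+4k:2)}$ to ${\color{cyan}(39+8k,7+4k:5)}$. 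By construction this $d_3$-differential inherits the $v_1^4$-periodicity of the $d_2$.

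Finally, Lemma~\ref{lem:v1torsion} applied with $N=4$ and the appropriate value of $k$ pulls the entire family back into low degrees, yielding in particular the stated differentials
\[ d_2({\color{red}(39,5:2)}) = {\color{green}(38,6:4)} \quad \text{and} \quad d_3({\color{red}(40,6:2)}) = {\color{cyan}(39,7:5)}. \]

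The main obstacle is the first step: pinpointing precisely the high-filtration bidegree in which $\Ex$ vanishes and confirming, from the structure of the AKSS in that neighborhood, that a $d_2$ from the red class is the unique mechanism that can kill the green class there. Once that vanishing is in hand, the rest of the argument is a direct transcription of Proposition~\ref{prop:greenred}, since the algebraic relations between the classes $({\color{red},{\color{green}},{\color{cyan}})$ and the $\AA0$-comparison machinery are formally identical.
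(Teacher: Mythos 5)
Your overall strategy is exactly the paper's: force an initial $d_2$ via a vanishing $\Ex$ bidegree, propagate by Lemma~\ref{lem:v14}, derive the $d_3$ via Theorem~\ref{thm:AA0} (and $h_1$-linearity) as in Proposition~\ref{prop:greenred}, and then pull back with Lemma~\ref{lem:v1torsion}. Two issues, one genuine:

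\textbf{The value of $N$ is wrong.} You apply Lemma~\ref{lem:v1torsion} with $N=4$, but the classes in this proposition (e.g.\ $(39,5)$, $(40,6)$) lie outside the region where $v_1^{16}$-periodicity holds: by Proposition~\ref{prop:v1ext}, $v_1^{2^n}$ on $\Ext^{s,t}_{A_*}$ is only well-defined for $t-s < 2s + 2^{n+1}-6$, and with $t-s = 39$, $s = 5$ one needs $n \geq 5$. The proof of Lemma~\ref{lem:v1torsion} relies on comparing with the AKSS for $\Ext_{A(N)_*}$ in a range including both source and target, and with $N=4$ that comparison fails in the $39$-stem at filtration $5$. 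The paper therefore takes $N=5$. Using the wrong $N$ would leave the pull-back step unjustified.

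The secondary point is that you leave the concrete vanishing bidegree unidentified; the paper uses $\Ex = 0$ in degree $(78,26)$, which forces $d_2(\gdr{79}{25}{2}{red}) = \gdr{78}{26}{4}{green}$. With that fixed and $N=5$, the rest of your argument goes through as written.
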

\begin{proof}
This is the same argument as above, starting with the fact that $\Ex$ is zero in degree $(78,26)$, so that $d_2( {\color{red}(79,25:2)}) = {\color{limegreen}(78,26:4)}$. When applying Lemma~\ref{lem:v1torsion}, we use $N=5$.
\end{proof}

\begin{prop}\label{prop:pinkyellow}
There are $v_1^4$--periodic families of differentials starting with
\[d_2( {\color{lavenderrose}(43,9:6)}) = {\color{goldenpoppy}(42,10:8)} \ \ \ \text{and} \ \ \  d_3( {\color{lavenderrose}(44,10:6)}) = {\color{seagreen}(43,11:9)} .\]
\end{prop}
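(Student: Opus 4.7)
The plan is to apply the same template as Proposition~\ref{prop:greenred} and its green/pink/magenta analog, with the colors shifted upward by two $\bo$-filtrations. I will first locate a high-degree bidegree $(t-s,s)$ at which $\Ex$ vanishes and which sits at a $v_1^{4k}$-multiple of the base yellow class ${\color{yellow}(42,10:8)}$ in the AKSS. The vanishing forces the yellow class at that tridegree to die; inspection of Figure~\ref{fig:AAKSSchart} shows that the only candidate at the correct tridegree is the corresponding pink class in $\bo$-filtration $n=6$, which therefore must support a $d_2$-differential in high degree.

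Next, following the $\AA0$-bootstrapping used in Proposition~\ref{prop:greenred}, I take the $h_1$-multiple of this $d_2$ and transfer through the maps $\partial$ and $\partial'$ of Theorem~\ref{thm:AA0}. The key structural input is the hidden $h_1$-extension
\[h_1\,\partial^{-1}(h_1^2 x)=p\!\left(\tfrac{i(v_0^3 w x)}{v_0}\right)\]
in the AKSS for $\AA0$ (Proposition~\ref{prop:MSSSESE1}); this identifies the $h_1$-multiple of the $d_2$-target with a $v_0^2$-multiple of the corresponding seagreen-level class. Part~(1) of Theorem~\ref{thm:AA0} then promotes the resulting $d_2$ in the AKSS for $\AA0$ into a $d_3$-differential in the AKSS for $\FF_2$ from the pink class in filtration $n=6$ to the seagreen class in filtration $n=9$, in the appropriate high-degree position.

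Finally, Lemma~\ref{lem:v14} propagates both differentials as $v_1^4$-periodic families, once one verifies that the hypothesis on annihilation by $v_0^3\,{\color{blue}(7,1:1)}$ holds at the relevant neighboring bidegrees at each intermediate step. Lemma~\ref{lem:v1torsion}, applied with $N=N({\color{pink}(43,9:6)})$ as determined from Proposition~\ref{prop:v1ext} and the appropriate value of $k$, then pulls the families down to the stated starting differentials $d_2({\color{pink}(43,9:6)})={\color{yellow}(42,10:8)}$ and $d_3({\color{pink}(44,10:6)})={\color{seagreen}(43,11:9)}$.

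The main obstacle is to pinpoint the initial Ext-vanishing degree serving as the top of the $v_1^4$-family, and to verify the $v_0^3\,{\color{blue}(7,1:1)}$-annihilation hypothesis of Lemma~\ref{lem:v14} at each intermediate $v_1^4$-step; both are finite checks against Bruner's Ext tables, entirely parallel to the verifications already performed in the preceding propositions of this section.
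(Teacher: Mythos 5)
Your proposal follows the paper's argument exactly: the paper's proof is one sentence, "This is the same argument as in Proposition~\ref{prop:greenred}, starting with the fact that $\Ex$ is zero in degree $(66,22)$, which forces the $d_2$--differential $d_2({\color{pink}(67,21:6)}) = {\color{yellow}(66,22:8)}$," and your account of the greenred template (forced $d_2$ from $\Ex$-vanishing, $h_1$-linearity plus the $\AA0$-bootstrap via Theorem~\ref{thm:AA0} to get the $d_3$, Lemma~\ref{lem:v14} for $v_1^4$-periodicity, Lemma~\ref{lem:v1torsion} with $N=4$ to pull back) is accurate. The one piece you leave unresolved is the concrete vanishing bidegree; the $k=3$ $v_1^4$-multiple of the Adams coordinate $(42,10)$ is $(66,22)$, and that is precisely the zero of $\Ex$ the paper cites.
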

\begin{proof}
This is the same argument as in Proposition~\ref{prop:greenred}, starting with the fact that $\Ex$ is zero in degree $(66,22)$, which forces the $d_2$--differential $d_2({\color{lavenderrose}(67,21:6)}) = {\color{goldenpoppy}(66,22:8)}$.
\end{proof}

\begin{prop}\label{prop:blueyellow}
There is a $v_1^{4}$--periodic family of differentials starting with
\[d_7( {\color{blue}(41,8:1)}) = {\color{goldenpoppy}(40,9:8)} .\]
\end{prop}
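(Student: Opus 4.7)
The plan is to establish this $d_7$-differential at a sufficient $v_1^{4k}$-shift (where $\Ext_{A_*}(\FF_2)$ is controlled and the Dichotomy Principle gives a sharp answer), then propagate $v_1^4$-periodically and pull back via Lemma~\ref{lem:v1torsion}. Both ${\color{blue}(41,8:1)}$ and ${\color{yellow}(40,9:8)}$ lie in internal degree $t=49$, so their $v_1^{4k}$-multiples ${\color{blue}(41+8k,8+4k:1)}$ and ${\color{yellow}(40+8k,9+4k:8)}$ sit in the common internal degree $49+12k$, with both classes above the $1/3$-line of Proposition~\ref{prop:v1ext} once $k\geq 5$.

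First I would use the computation of $\Ext^{*,*}_{A_*}(\FF_2)$ in this shifted region (via \cite{Brunertable}), combined with Proposition~\ref{prop:v1Extchart} and the Dichotomy Principle (Theorem~\ref{thm:dichotomy}), to classify which Ext classes in this internal degree are good and which are evil. I expect this analysis to show that ${\color{yellow}(40+8k,9+4k:8)}$ cannot detect an Ext class --- either no good Ext class of its $(s,t)$-bidegree exists, or all candidates are already detected by AKSS elements in strictly lower $\bo$-filtration --- and hence it must be hit by a differential. Using Figure~\ref{fig:AAKSSchart} and the $v_1^{4k}$-shifted versions of the differentials established in Propositions~\ref{prop:blueoranage}--\ref{prop:pinkyellow} together with $h_1$- and $v_0$-linearity, one would confirm that $v_1^{4k}{\color{blue}(41,8:1)}$ is the only candidate source surviving to $E_7$, forcing the $d_7$-differential.

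To extend to a $v_1^4$-periodic family I would reuse the Massey-product identity
\[
v_1^4 \cdot a = \left\langle a,\, v_0,\, v_0^3\,{\color{blue}(7,1:1)} \right\rangle
\]
from the proof of Lemma~\ref{lem:v14}. The Leibniz rule for Massey products gives the $d_7$-analog provided $v_1^{4k}{\color{blue}(41,8:1)}$ is $v_0$-annihilated on $E_7$ and the adjacent Adams bidegrees in total degree $49+12k$ contribute no indeterminacy, both of which can be read off from the AKSS chart in the range where the differential is first established. Finally, Lemma~\ref{lem:v1torsion} with $N=4$ pulls the periodic differential back to its initial occurrence at $d_7({\color{blue}(41,8:1)}) = {\color{yellow}(40,9:8)}$.

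The hard part will be excluding competing differentials on the intermediate pages $E_2,\dots,E_6$: because $d_7$ is unusually long for the AKSS, many tri-degrees could in principle host shorter differentials into $v_1^{4k}{\color{yellow}(40,9:8)}$ or out of $v_1^{4k}{\color{blue}(41,8:1)}$. Each potential competitor must be dispatched either for internal-degree reasons, by showing the relevant tri-degrees empty on the appropriate page, or by invoking a previously established $v_1^4$-periodic family at its corresponding $v_1^{4k}$-shift. This stemwise bookkeeping follows the template of this section's earlier propositions, but carrying it out cleanly across six intermediate pages is the substantive work.
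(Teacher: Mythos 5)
There is a genuine gap. The paper's proof explicitly flags that both classes lie in the $v_1^{16}$-periodic region of $\Ext$ (so $N=4$ in the notation of Proposition~\ref{prop:v1ext}), which is precisely why it does \emph{not} rely on a $v_1^4$-Massey-product propagation. Instead, it establishes the $d_7$-differential directly at \emph{four consecutive} $v_1^4$-shifts --- $(65,20:1)$, $(73,24:1)$, $(81,28:1)$, $(89,32:1)$ mapping to $(64,21:8)$, $(72,25:8)$, $(80,29:8)$, $(88,33:8)$ --- by checking that $\Ext_{A_*}(\FF_2)$ vanishes in those four bidegrees. These four cover one full $v_1^{16}$-period, and the family is then completed by $v_1^{16}$-linearity: in the relevant range the AKSS for $\Ext_{A(4)_*}(\FF_2)$ agrees with that for $\Ext_{A_*}(\FF_2)$, and $v_1^{16}$ is an honest element of $\Ext_{A(4)_*}(\FF_2)$, so multiplication by it commutes with all differentials unconditionally. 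Lemma~\ref{lem:v1torsion} is then used to pull back, and since $N=4$ it pulls back only by powers of $v_1^{16}$, which is why four separate seeds are required.

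Your proposal establishes only one instance at a single large $v_1^{4k}$-shift and then tries to propagate $v_1^4$-periodically via the Massey-product identity from the proof of Lemma~\ref{lem:v14}. Two things go wrong. First, Lemma~\ref{lem:v14} is stated and proved only for $d_2$-differentials; the Leibniz rule for a Massey product formed from a $d_1$-relation does not automatically govern a $d_7$-differential --- one would need to check that the Massey product and all of its ingredients survive pages $E_2$ through $E_6$ and that May-type convergence holds, which is substantive and not carried out. Second, and more fundamentally, because the classes sit in the $v_1^{16}$-region, only $v_1^{16}$-multiplication is available as an honest module-map operation on the AKSS; $v_1^4$ is not. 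So even granting the $d_7$-Massey-product argument formally, it would not interact with Lemma~\ref{lem:v1torsion} in the way you need, since that lemma only pulls back by $v_1^{-16k}$ when $N=4$. The single-seed approach would leave three of the four $v_1^4$-residue classes of the family unaccounted for. The paper's ``proceed with appropriate care'' remark is exactly this obstruction, and its workaround --- four direct vanishing checks plus honest $v_1^{16}$-linearity --- sidesteps both issues.
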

\begin{proof}
These classes are $v_1^4$--periodic on the $E_{1+\epsilon}$--term of the AKSS, but they lie in the $v_1^{16}$--periodic region of $\Ex$. So we proceed with appropriate care.

The fact that $\Ex$ is zero in degrees $(64,21)$, $(72,25)$, $(80,29)$ and $(88,33)$ forces differentials
\begin{align*}
d_7( {\color{blue}(65,20:1)}) &= {\color{goldenpoppy}(64,21:8)} &
d_7( {\color{blue}(73,24:1)}) &= {\color{goldenpoppy}(72,25:8)} \\
d_7( {\color{blue}(81,28:1)}) &= {\color{goldenpoppy}(80,29:8)} &
d_7( {\color{blue}(89,32:1)}) &= {\color{goldenpoppy}(88,33: 8)}.
\end{align*}
We are in a range where the AKSS for $\Ext_{A(4)_*}(\FF_2)$ is isomorphic to that of $\Ext_{A_*}(\FF_2)$. Since $v_1^{16} \in
\Ext_{A(4)_*}(\FF_2)$, this differential is $v_1^{16}$--linear in the AKSS for $\Ext_{A(4)_*}(\FF_2)$, and thus, also in that of $\Ext_{A_*}(\FF_2)$. Using  Lemma~\ref{lem:v1torsion}, we can pull back the differentials as claimed.
\end{proof}

\begin{prop}\label{prop:orangeyellowred}
There is a $v_1^{4}$--periodic family of differentials starting with
\[d_5( {\color{orange}(41,9:3)}) = {\color{goldenpoppy}(40,10:8)} .\]
\end{prop}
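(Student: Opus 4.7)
The plan is to follow the template established by Propositions~\ref{prop:blueoranage} through~\ref{prop:blueyellow}: force the $d_5$-differential at a sufficiently large $v_1^{4k}$-multiple via a vanishing/counting argument in $\Ex$, establish its $v_1^4$-periodicity via Lemma~\ref{lem:v14}, and finally pull the family back to the stated starting differential via Lemma~\ref{lem:v1torsion}.

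First I would choose $k$ large enough that the $v_1^{4k}$-multiple of the target, $v_1^{4k}\cdot{\color{yellow}(40,10:8)}$, which sits at chart coordinate $(40+8k,10+4k)$ (Adams bidegree $(t-s,s)=(48+8k,2+4k)$), lies in a bidegree of $\Ex^{*,*}_{A_*}(\FF_2)$ where $\Ex$ is either zero or is generated entirely by classes already accounted for by lower-filtration sources. Inspection of $H^{*,*,*}(\mc{C}_{alg})$ together with the Dichotomy Principle (Theorem~\ref{thm:dichotomy}) and Lemma~\ref{lem:akdr} then shows that if the target is forced to be a boundary, the only candidate source of a $d_r$-differential (for $r\le 5$) at the correct filtration is $v_1^{4k}\cdot{\color{orange}(41,9:3)}$, giving the claimed $d_5$-differential at that $v_1^{4k}$-multiple.

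Second, I would verify the hypotheses of Lemma~\ref{lem:v14}: that $v_0$ annihilates the source on $E_2$, that $v_1^4$-multiplication on the source is nonzero on $E_2$, and that $v_0^3\cdot{\color{blue}(7,1:1)}$ annihilates the two neighboring Adams bidegrees of the target. These are direct inspections of the AKSS chart. Granted these, Lemma~\ref{lem:v14} propagates the $d_5$-differential through every subsequent $v_1^{4k'}$-multiple, yielding a $v_1^4$-periodic family from the forced base case onward. Since both source and target are themselves $v_1^4$-periodic on $E_{1+\epsilon}$, an application of Lemma~\ref{lem:v1torsion} with $N=4$ and the chosen $k$ then pulls this family back to the initial differential $d_5({\color{orange}(41,9:3)}) = {\color{yellow}(40,10:8)}$.

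The hardest step will be the first one: pinning down a usable $k$ for which one can rigorously exclude every other candidate source of a differential of length $\le 5$ hitting $v_1^{4k}\cdot{\color{yellow}(40,10:8)}$. Because the differential crosses five bo-filtrations (from $n=3$ to $n=8$), one must scan $H^{*,s,t}(\mc{C}_{alg})$ across the intermediate filtrations $n=4,5,6,7$ at the correct stem and rule out each class there as a possible alternate source, in the same spirit as Proposition~\ref{prop:blueyellow}. Fortunately, since both classes lie in the $v_1^{16}$-periodic regime of Figure~\ref{fig:v1Extchart}, for large enough $k$ we can do this bookkeeping inside the AKSS for $\Ext_{A(4)_*}(\FF_2)$, where $v_1^{16}$ is an actual element and the explicit description of $H^{*,*,*}(\mc{C}_{alg})$ from Theorem~\ref{thm:HCalg} makes the enumeration tractable.
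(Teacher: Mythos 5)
Your general template is right—force the differential far out, establish periodicity, pull back with Lemma~\ref{lem:v1torsion}—but the periodicity step has a genuine gap. Lemma~\ref{lem:v14} is stated and proved \emph{only for $d_2$-differentials}: its hypothesis and conclusion concern $d_2$, and its proof relies on the Leibniz rule for a Massey product on the $E_2$-page of the AKSS. The differential here jumps five $\bo$-filtrations, so there is no $E_2$-page Massey-product argument to invoke, and you cannot ``propagate the $d_5$-differential'' by citing that lemma. The paper explicitly flags this subtlety (in the proof of Proposition~\ref{prop:blueyellow}, of which the present proposition is said to be a variant): the classes are $v_1^4$-periodic on $E_{1+\epsilon}$ but sit in the $v_1^{16}$-periodic regime of $\Ex$, so ``one must proceed with appropriate care.''

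The fix, which is the route the paper takes, is the one you gesture at in your last sentence but use only for bookkeeping: pass to the AKSS for $\Ext_{A(4)_*}(\FF_2)$, where $v_1^{16}$ is an honest element and the spectral sequence is a module over it, so that multiplication by $v_1^{16}$ commutes with all differentials for free. But then a single forced instance of the $d_5$-differential is not enough; to conclude $v_1^4$-periodicity you must establish the $d_5$ at \emph{four consecutive} $v_1^4$-multiples (a fundamental domain for $v_1^{16}$), which the paper does by observing that $\Ex$ vanishes in degrees $(65,21)$, $(73,25)$, $(81,29)$, $(89,33)$ and analyzing, for degree reasons, the accompanying $d_6$-differentials from $\bo$-filtration $2$ that also must occur. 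Only with all four in hand does $v_1^{16}$-linearity give the full periodic family, after which Lemma~\ref{lem:v1torsion} with $N=4$ (that is, dividing by powers of $v_1^{16}$, not $v_1^4$, so $(73,25)\mapsto(41,9)$, $(81,29)\mapsto(49,13)$, etc.) pulls the family back to the stated starting case. Your proposal, as written, establishes the differential at only one far-out multiple and then appeals to a lemma that does not apply, so the argument does not go through.
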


\begin{proof}
This is an argument similar to Proposition~\ref{prop:blueyellow}.
$\Ex$ is zero in degrees $(65,21)$, $(73,25)$, $(81,29)$ and $(89,33)$. For degree reasons, the only way the AKSS can realize this is if
\begin{align*}
d_5( {\color{orange}(65,21:3)}) &= {\color{goldenpoppy}(64,22:8)} & d_6( {\color{red}(66,20:2)}) &= {\color{goldenpoppy}(65,21:8)} \\
d_5( {\color{orange}(73,25:3)}) &= {\color{goldenpoppy}(72,26:8)} & d_6( {\color{red}(74,24:2)}) &= {\color{goldenpoppy}(73,25:8)} \\
d_5( {\color{orange}(81,29:3)}) &= {\color{goldenpoppy}(80,30:8)} & d_6( {\color{red}(82,28:2)}) &= {\color{goldenpoppy}(81,29:8)} \\
d_5( {\color{orange}(89,33:3)}) &= {\color{goldenpoppy}(88,34:8)} & d_6( {\color{red}(90,32:2)}) &= {\color{goldenpoppy}(89,33:8)}.
\end{align*}
Now apply Lemma~\ref{lem:v1torsion} with $N=4$.
\end{proof}

\begin{rmk}
We will see that the only differentials between good classes which have not been accounted for in our range are
\begin{align*}
d_5( {\color{red}(42,7:2}) &= {\color{darkmagenta}(41,8:7)}  \\
d_6( {\color{red}(42,8:2)}) &= {\color{goldenpoppy}(41,9:8)} .
\end{align*}
The same methods as above using $v_1^{32}$--periodicity would apply, but would require studying $\Ex$ in the stems $\geq 100$ so we have decided to use direct arguments (see Proposition~\ref{prop:exception}).
\end{rmk}

\subsection*{The calm before the storm : Stems 0-32}
In the following subsections, we turn to differentials involving evil classes and the few good differentials our previous analysis missed. We make extensive use of Figure~\ref{fig:v1Extchart}.

\ \\
\noindent
{\bf Stem 0-14}

\begin{prop}
There are no non-trivial differentials $d_r$ for $r \geq 1+\epsilon$ in the AKSS in the range $0 \leq t-(s+n) \leq 14$. In this range, $  \E{akss}{alg}^{n+\epsilon,s,t}_{1+\epsilon} =0$ and
\[H^{n,s,t}(\mc{C}_{alg}) \cong  \E{akss}{alg}^{n,s,t}_{1+\epsilon} \cong  \E{akss}{alg}^{n,s,t}_{\infty} .\]
\end{prop}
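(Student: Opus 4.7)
The three assertions are tightly linked: once we show the evil cohomology $\E{akss}{alg}^{n+\epsilon,s,t}_{1+\epsilon} = H^{n,t}(V)$ vanishes whenever $t - n \le 14$, Lemma~\ref{lem:akdr} immediately rules out differentials of types (1) and (3) in the range, and the remaining good-to-good differentials together with the two displayed isomorphisms follow from a dimension count against the known tabulation of $\Ext^{*,*}_{A_*}(\FF_2)$ for $t-s \le 14$. So the real task is to verify $H^{n,t}(V) = 0$ throughout the range.

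I would proceed class by class via the Dichotomy Principle (Theorem~\ref{thm:dichotomy}). First, an inspection of Figure~\ref{fig:v1Extchart} (that is, Proposition~\ref{prop:v1Extchart}) shows that no circled class appears in the stem range $0 \le t - s \le 14$, so every class in $\Ext^{s,t}_{A_*}(\FF_2)$ in this range is $v_1$-periodic and case~(1) of the Dichotomy Principle produces no evil classes here. Second, for each such $v_1$-periodic $x$, I would pick $k$ large enough that $v_1^{2^{N(x)} k} x$ lies strictly above the $1/3$-line, read off its $\bo$-filtration $n$ from Figure~\ref{fig:AAKSSchart} (this is explicit via Theorem~\ref{thm:HCalg}), and verify that the Adams filtration $s$ of $x$ satisfies $s \ge n$. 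In the band $t-s \le 14$ the $v_1$-periodic part of $\Ext$ is generated by $h_0, h_1, h_2, h_3$ and their products; each has a representative in $\mc{C}_{alg}$ of $\bo$-filtration $n \le 2$, and its Adams filtration always meets or exceeds this bound, so case~(2) of the Dichotomy Principle also produces only good classes. The triangular (evil $v_1$-periodic) classes of Figure~\ref{fig:v1Extchart} first occur well past stem $14$, so they do not intervene.

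With $H^{n,t}(V) = 0$ established for $t - n \le 14$, the vanishing of $d_r$-differentials for $r \ge 1+\epsilon$ in the range is immediate: types (1) and (3) of Lemma~\ref{lem:akdr} are excluded by the absence of evil sources or targets, while a type (2) good-to-good differential would drop the rank of $E_\infty$ below that of $\Ext^{*,*}_{A_*}(\FF_2)$ against which the $E_{1+\epsilon}$ good ranks already agree by the same case-by-case verification. The chain of isomorphisms $H^{n,s,t}(\mc{C}_{alg}) \cong \E{akss}{alg}^{n,s,t}_{1+\epsilon} \cong \E{akss}{alg}^{n,s,t}_\infty$ then follows formally. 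The only obstacle, and a mild one, is the bidegree-by-bidegree filtration check in the Dichotomy step; in the narrow range $t-s \le 14$ this is routine bookkeeping.
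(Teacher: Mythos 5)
Your overall strategy -- use the Dichotomy Principle together with a dimension count against $\Ext_{A_*}(\FF_2)$ -- is the same as the paper's (whose one-line proof cites Figure~\ref{fig:v1Extchart} and the bijection between $H^{*,*,*}(\mc{C}_{alg})$ and $\Ext_{A(2)_*}^{*,*}(\FF_2)$). However, as written your argument has a logical gap in the middle. You announce that the ``real task'' is to verify $H^{n,t}(V)=0$, then carry out the Dichotomy check, then immediately assert ``With $H^{n,t}(V)=0$ established \ldots''. But the Dichotomy Principle establishes a different statement: that every class of $\Ext_{A_*}(\FF_2)$ in the range is detected by a \emph{good} class. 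This does \emph{not} by itself force $H^{n,t}(V)$ to vanish, because a nonzero evil class could in principle die by a $d_{r+\epsilon}$-differential out of a good class one stem higher. The implication ``all of $\Ext$ is good $\Rightarrow H(V)=0$ and no differentials'' requires the additional input that $\dim H^{*,*,*}(\mc{C}_{alg})$ already equals $\dim\Ext_{A_*}(\FF_2)$ in each bidegree of the range (read off from Theorem~\ref{thm:HCalg} versus the Ext tables). You allude to this dimension count at the start and end, but you never actually carry it out, and without it the step where you declare $H(V)=0$ is unjustified.

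There is also a boundary-effect issue tied to the same gap. An evil class in stem $14$ is only killable by a good class in stem $15$, so to close the argument you must verify ``all of $\Ext$ is good'' \emph{and} the dimension agreement through stem $15$, not $14$; the paper's proof is phrased with the range $0 \le t-s \le 15$ for exactly this reason. Your check stops at $14$. Finally, a minor factual slip: the $v_1$-periodic part of $\Ext$ in the range $t-s\le 14$ is \emph{not} generated by $h_0, h_1, h_2, h_3$ and their products; it also contains $c_0$, $Ph_1$, $Ph_2$, $d_0$, and so on. This does not wreck the conclusion (those classes all satisfy $s \ge n(y)$ too), but the mis-statement should be corrected if you intend the class-by-class verification to be the backbone of the argument.
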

\begin{proof}
From Figure~\ref{fig:v1Extchart}, all classes in $\Ext_{A_*}^{s,t}(\FF_2)$ for $0\leq t-s\leq 15$ are detected by good and there is a bijection between $H^{n,s,t}(\mc{C}_{alg})$ and $\Ext_{A(2)_*}^{s,t}(\FF_2)$ in this range.
\end{proof}

\ \\
\noindent
{\bf Stems 15-21}

\begin{prop}
There are differentials
\begin{align*}
d_{1+\epsilon}({\color{red}(16,2:2)}) &= {\color{orange}(15,3:3)^{ev} } , &
d_{2+\epsilon}({\color{red}(18,3:2)})&={\color{limegreen}(17,4:4)^{ev}}.\end{align*}
\end{prop}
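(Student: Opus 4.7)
The plan is to force each differential by combining the Dichotomy Principle (Theorem~\ref{thm:dichotomy}) with our knowledge of $\Ext_{A_*}(\FF_2)$ in the relevant bidegrees, as summarized by Figure~\ref{fig:v1Extchart} and Proposition~\ref{prop:v1Extchart}. The general pattern is: identify the Ext classes at the relevant Adams bidegrees, use the Dichotomy Principle to sort good from evil, conclude that specific $E_{1+\epsilon}$-classes cannot survive, and use the constraints of Lemma~\ref{lem:akdr} together with Theorem~\ref{thm:HCalg} to pin down the unique admissible differential in each case.

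For the first differential, I would begin by noting that $h_1 h_4 \in \Ext^{2,18}_{A_*}(\FF_2)$ sits at Adams bidegree $(16,2)$ and $h_0^2 h_4 \in \Ext^{3,18}_{A_*}(\FF_2)$ sits at $(15,3)$. By Proposition~\ref{prop:v1Extchart}, $h_1 h_4$ is $v_1$-torsion (hence evil by the Dichotomy Principle), whereas $h_0^2 h_4$ lies in the $v_0$-tower on $h_4$ and is therefore $v_1$-periodic (hence good). Consequently the good class ${\color{red}(16,2:2)}$ and the evil class ${\color{orange}(15,3:3)^{ev}}$ cannot survive and must each be killed. Since the source has $s=0$, the only admissible differentials leaving ${\color{red}(16,2:2)}$ are $d_1$ to a good class or $d_{1+\epsilon}$ to an evil class; Theorem~\ref{thm:HCalg} shows that $H^{3,0,18}(\mc{C}_{alg})$ contains no good class at Adams bidegree $(15,3)$, so the $d_1$ option is unavailable. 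This leaves $d_{1+\epsilon}({\color{red}(16,2:2)}) = {\color{orange}(15,3:3)^{ev}}$ as the unique possibility, accounting for the death of both classes simultaneously.

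For the second differential the strategy is analogous. I would identify the Ext classes at Adams bidegrees $(18,3)$ and $(17,4)$, classify them via the Dichotomy Principle, and verify that the red class ${\color{red}(18,3:2)}$ (which has $s(x)=1$) and the green evil class ${\color{green}(17,4:4)^{ev}}$ both have no $\Ext$-class to detect, so both must die. The possible differentials emanating from a good source with $s=1$ are $d_1$, $d_2$, and $d_{2+\epsilon}$; eliminating the first two by examining $H^{*,*,*}(\mc{C}_{alg})$ at the nearby tridegrees will leave $d_{2+\epsilon}$ as the only candidate.

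The main obstacle is uniqueness: in each case one must verify that no alternative differential (a shorter differential with a different source hitting the target, or a different differential from the given source) could account for the required cancellations. This amounts to a complete bookkeeping of $E_{1+\epsilon}$-classes in a neighborhood of each bidegree. The cleanest way to carry this out is to enumerate good classes via Theorem~\ref{thm:HCalg} and evil classes via the $\AA0$-comparison of Section~\ref{sec:AA0}, in particular using Theorem~\ref{thm:AA0} and Proposition~\ref{prop:E2boMSSAA0} to transport information between the two AKSSs when length constraints make the analysis there more transparent.
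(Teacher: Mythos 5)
Your overall strategy matches the paper's: use Figure~\ref{fig:v1Extchart} and Proposition~\ref{prop:v1Extchart} to see that the only $\Ext$ classes at Adams bidegrees $(16,2)$ and $(18,3)$ (namely $h_1h_4$ and $h_0h_2h_4$) are $v_1$-torsion, hence evil by part (1) of the Dichotomy Principle, so the good classes ${\color{red}(16,2:2)}$ and ${\color{red}(18,3:2)}$ cannot survive; Lemma~\ref{lem:akdr} and a glance at the chart then force the displayed differentials as the only way for them to die. This is precisely the paper's argument, which in fact relies only on the source classes having nothing to detect.

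However, there is a genuine flaw in a (fortunately inessential) step. You write that $h_0^2 h_4$ at $(15,3)$ is ``$v_1$-periodic (hence good).'' The implication ``$v_1$-periodic $\Rightarrow$ good'' is exactly the naive hope the paper explicitly disavows immediately before Lemma~\ref{lem:onethird}: a $v_1$-periodic class below the $1/3$-line can be evil (these are the triangle-marked classes of Figure~\ref{fig:v1Extchart}), and $(15,3)$ is below that line since $15 > 3\cdot 3$. To classify $h_0^2h_4$ you would need part (3) of Theorem~\ref{thm:dichotomy}, comparing $s=3$ against the $\bo$-filtration of the class detecting $v_1^{M}h_0^2h_4$ above the $1/3$-line. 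You don't need to do this, though: the proof requires only that ${\color{red}(16,2:2)}$ die, which you already established correctly via $h_1h_4$ being $v_1$-torsion; once it must die, Lemma~\ref{lem:akdr} with $s({\color{red}(16,2:2)})=0$ leaves $d_{1+\epsilon}$ as the unique option on the $E_{1+\epsilon}$-page and later (the ``$d_1$'' option you list is already exhausted in passing from $E_1$ to $E_{1+\epsilon}$; the genuine reason it is unavailable is that you are past page $1$, not a vanishing in $H^{3,0,18}(\mc{C}_{alg})$). The second differential is only sketched rather than carried out --- you should still name the $v_1$-torsion class at $(18,3)$ and eliminate the good-to-good $d_2$ option by inspecting $H^{4,0,21}(\mc{C}_{alg})$ --- but the intended argument is correct and the same as the paper's.
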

\begin{proof}
The only class in $\Ext_{A_*}(\FF_2)$ in degree $(16,2)$ and $(18,3)$ are detected by evil, therefore, ${\color{red}(16,2:2)}$ and ${\color{red}(18,3:2)}$ cannot survive, forcing these differentials.
\end{proof}

\begin{prop}
There are differentials
\begin{align*}
d_{1+\epsilon}({\color{orange}(21,3:3)})  &= {\color{limegreen}(20,4:4)^{ev}} , & d_{2+\epsilon}({\color{orange}(21,4:3)})  &= {\color{cyan}(20,5:5)^{ev}}.\end{align*}
\end{prop}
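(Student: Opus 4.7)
The plan is to use the same argument as the preceding proposition: show that the good AKSS classes ${\color{orange}(21,3:3)}$ and ${\color{orange}(21,4:3)}$ have no candidate detectees in $\Ext_{A_*}(\FF_2)$, so they must die, and then identify the unique differential that can kill each.

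First I would note that ${\color{orange}(21,3:3)}$ sits in tri-degree $(n,s,t) = (3,0,24)$ and would detect an element of $\Ext^{3,24}_{A_*}(\FF_2)$ in stem $21$, while ${\color{orange}(21,4:3)}$ sits in $(3,1,25)$ and would detect an element of $\Ext^{4,25}_{A_*}(\FF_2)$ in stem $21$. Consulting Figure~\ref{fig:v1Extchart}, every non-trivial class in these two bidegrees is circled, i.e.\ is $v_1$-torsion. By the Dichotomy Principle (Theorem~\ref{thm:dichotomy}(1)) each such class is therefore evil, so neither of the two good sources can survive to detect anything in the abutment, and both must die in the AKSS.

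Next I would appeal to Lemma~\ref{lem:akdr} to pin down the length and target of each killing differential. For ${\color{orange}(21,3:3)}$, with $s(x)=0$, any good-to-good $d_r$ on or after the $E_{1+\epsilon}$-page would land in tri-degree $(3+r,1-r,24)$ with $s<0$, which is impossible; the only remaining option is the good-to-evil differential of length $s(x)+1+\epsilon = 1+\epsilon$, whose unique evil target of tri-degree $(4+\epsilon,0,24)$ is ${\color{green}(20,4:4)^{ev}}$. For ${\color{orange}(21,4:3)}$, with $s(x)=1$, the only good-to-good candidate is a $d_2$ landing at tri-degree $(5,0,25)$, but Figure~\ref{fig:AAKSSchart} shows no good class in that tri-degree; the remaining possibility is therefore the good-to-evil $d_{2+\epsilon}$, whose unique evil target of tri-degree $(5+\epsilon,0,25)$ is ${\color{cyan}(20,5:5)^{ev}}$.

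The only nontrivial check, and hence the place where one has to be slightly careful, is verifying that no incoming good-to-good $d_r$ from an earlier column kills either source before the stated differentials can fire. This is a routine scan of Figure~\ref{fig:AAKSSchart} in the relevant columns, and since no compatible good sources are available, both differentials are forced.
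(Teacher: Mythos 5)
Your argument is correct and is essentially the paper's proof: you identify the two relevant $\Ext$ bidegrees, observe from Figure~\ref{fig:v1Extchart} that every nonzero class there is $v_1$-torsion (hence evil by the Dichotomy Principle) or that the bidegree is zero, conclude the two good AKSS sources cannot survive, and then identify the unique admissible target for each. The paper phrases the same facts by saying the $\Ext$ classes in $(21,3)$ are "accounted for" by an evil class and that $\Ext$ vanishes in $(21,4)$, but the logical content and the route to the conclusion are the same.
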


\begin{proof}
It follows from Figure~\ref{fig:v1Extchart} that $h_2^2h_4$ is detected by $\color{orange}(21,3:3)^{ev}$. All classes in $\Ex$ in degree $(21,3)$ are accounted for and $\Ex$ is zero in $(21,4)$. Both ${\color{orange}(21,3:3)}$ and ${\color{orange}(21,4:3)}$ must die, and the only possibility is for them to kill evil.
\end{proof}

\ \\
\noindent
{\bf Stem 22-25}

\begin{prop}\label{prop:stem22_1}
There are differentials
\begin{align*}
d_{1+\epsilon}({\color{orange}(22,3:3)}) &= {\color{limegreen}(21,4:4)^{ev}} , & d_{2+\epsilon}({\color{orange}(23,4;3)}) &= {\color{cyan}(22,5;5)^{ev}}.\end{align*}
\end{prop}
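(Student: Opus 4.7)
The plan follows the template of Proposition~\ref{prop:stem22_1}. The two orange classes converge to $\Ex^{3,25}(\FF_2)$ and $\Ex^{4,27}(\FF_2)$ respectively. First I would consult Figure~\ref{fig:v1Extchart}, together with the classifications established in the preceding propositions, to confirm that every class in these two Ext groups is already detected by an AKSS survivor of strictly lower bo-filtration. Hence both ${\color{orange}(22,3:3)}$ and ${\color{orange}(23,4:3)}$ must die.

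Next I would determine how each dies. By Lemma~\ref{lem:akdr}, a good class can perish only by participating in a $d_r$ (good-to-good) or $d_{r+\epsilon}$ (good-to-evil) differential; a brief inspection of Figure~\ref{fig:AAKSSchart} and of the good differentials already established rules out any incoming differential onto either orange class, so each must support an outgoing one. The orange class ${\color{orange}(22,3:3)}$ sits at $(n,s,t) = (3,0,25)$: since $s=0$, the first potentially non-trivial differential is $d_{1+\epsilon}$, whose target must lie in the evil part at $(4+\epsilon, 0, 25)$, i.e.\ Adams bidegree $(21,4)$; the unique non-zero class there on the $E_{1+\epsilon}$-page is ${\color{green}(21,4:4)^{ev}}$. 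The orange class ${\color{orange}(23,4:3)}$ sits at $(3,1,27)$: a $d_{1+\epsilon}$ is identically zero (evil lives in $s=0$), and the potential $d_2$ target at $(5,0,27)$ in $H^{5,0,27}(\mc{C}_{alg})$ is zero by Theorem~\ref{thm:HCalg}; thus the first non-trivial differential is $d_{2+\epsilon}$, landing at $(5+\epsilon, 0, 27)$, i.e.\ Adams bidegree $(22,5)$, where the unique non-zero evil class is ${\color{cyan}(22,5:5)^{ev}}$.

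The main obstacle is the bookkeeping: verifying that every class in $\Ex^{3,25}(\FF_2)$ and $\Ex^{4,27}(\FF_2)$ is already accounted for, that no earlier good-to-good differential competes, and that the claimed evil targets are non-zero on the $E_{1+\epsilon}$-page. All three checks reduce to careful inspection of Figure~\ref{fig:v1Extchart}, Proposition~\ref{prop:v1Extchart}, Theorem~\ref{thm:HCalg}, and the stem-by-stem analysis in the earlier propositions of this section. Once this census is complete, the two differentials are uniquely forced by Lemma~\ref{lem:akdr} and the Dichotomy Principle (Theorem~\ref{thm:dichotomy}).
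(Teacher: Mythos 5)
Your proposal is correct and mirrors the paper's proof: both observe that $\Ex^{3,25}=0$ (so $\gdr{22}{3}{3}{orange}$ cannot survive) and that the sole class $h_4c_0 \in \Ex^{4,27}$ is $v_1$-torsion, hence evil, and so is detected by $\evr{23}{4}{green}$ (so $\gdr{23}{4}{3}{orange}$ cannot survive); the specific differential lengths then follow from Lemma~\ref{lem:akdr} and the tridegree bookkeeping you carry out. One small slip: for $\gdr{23}{4}{3}{orange}$ the detecting evil class $\evr{23}{4}{green}$ has \emph{higher} $\bo$-filtration ($4+\epsilon$), not ``strictly lower'' as you wrote; this does not affect the conclusion---what matters is simply that nothing in $\Ex^{4,27}$ is detected at filtration exactly $3$, which forces the orange class to die.
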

\begin{proof}
The first differential follows from the fact that ${\Ex}=0$ in bidegree $(22,3)$. The class $h_4c_0$ is detected by ${\color{limegreen}(23,4:4)^{ev}}$.  It follows that all classes of $\Ex$ in $(23,4)$ have been accounted for, and this forces the differential on ${\color{orange}(23,4;3)}$.
\end{proof}

\begin{prop}
There are differentials
\begin{align*}
d_{1+\epsilon}( {\color{red}(24,2;2)})&={\color{orange}(23,3:3)^{ev}}, & d_{1+\epsilon}( {\color{limegreen}(24,4;4)})&={\color{cyan}(23,5:5)^{ev}}, \\
d_{2+\epsilon}({\color{limegreen}(24,5;4)} )&={\color{lavenderrose}(23,6:6)^{ev}}, &
d_{1+\epsilon}( {\color{orange}(25,3;3)})&={\color{limegreen}(24,4:4)^{ev}}.
\end{align*}
\end{prop}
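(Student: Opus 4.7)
My plan is to follow the template used throughout this stems 22--25 subsection (compare Proposition~\ref{prop:stem22_1} and the proposition immediately preceding the one to be proved): for each of the four claimed differentials, exhibit an $\Ex$ bidegree which the good source would be forced to detect, verify that every class of $\Ex$ in that bidegree is already accounted for by some other surviving AKSS class, and conclude that the source must be killed. The only target of the correct length $d_{r+\epsilon}$ will then turn out to be the claimed evil class.

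I would begin by tabulating, with the aid of Bruner's tables and Figure~\ref{fig:v1Extchart}, the classes of $\Ex$ in the Adams bidegrees $(t-s,s) = (26,2), (28,4), (29,5), (28,3)$ (and in sufficient neighboring degrees to track possible detectors at higher $y=s+n$). Applying the Dichotomy Principle (Theorem~\ref{thm:dichotomy}) to partition these into good and evil, and cross-referencing with the $v_1$--periodic families of Propositions~\ref{prop:blueoranage}--\ref{prop:orangeyellowred} together with the evil detections already obtained earlier in this subsection, I expect to find that each of the four sources \mbox{${\color{red}(24,2:2)}$}, \mbox{${\color{green}(24,4:4)}$}, \mbox{${\color{green}(24,5:4)}$}, \mbox{${\color{orange}(25,3:3)}$} is left with no $\Ex$ class to detect; indeed, $\Ex$ is zero in $(26,2)$, and in the $28$-- and $29$--stems all classes in the relevant filtrations are already pinned down by surviving good classes of lower $\bo$--filtration or by evil classes established earlier. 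Inspection of the $E_{1+\epsilon}$-- and $E_{2+\epsilon}$--charts then shows that the only available targets of the appropriate length are the four claimed evil classes.

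The only nontrivial wrinkle is that \mbox{${\color{green}(24,5:4)}$} supports a longer $d_{2+\epsilon}$ rather than a $d_{1+\epsilon}$. The plan here is to observe that its would-be $d_{1+\epsilon}$--target persists (because it detects an actual $\Ex$ class), so the source must instead die by a $d_{2+\epsilon}$, whose only candidate target in tridegree $(n,s,t) = (6,0,29)$ is \mbox{${\color{pink}(23,6:6)^{ev}}$}. The main obstacle is purely combinatorial: bookkeeping which $\Ex$ classes are accounted for in the target bidegrees, since an error here would either invalidate a differential or leave an extra class of $\Ex$ unaccounted for. I expect the most delicate check to be precisely the one for \mbox{${\color{green}(24,5:4)}$}, where both the shorter differential must be ruled out and the correct $d_{2+\epsilon}$--target isolated among the candidates in Adams bidegree $(23,6)$.
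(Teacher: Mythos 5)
Your overall strategy -- eliminate good survivors by pinning down the $\Ex$ classes in the relevant bidegrees, then identify each forced differential -- is indeed the one the paper uses, and it is the right one. However, there is a concrete indexing error that would derail the whole calculation. The chart coordinate $(x,y : n)$ has $x = t - (s+n)$ and $y = s+n$, so the class $\gdr{x}{y}{n}{red}$ (if it survives) detects an element of $\Ext_{A_*}$ in stem $x$ and Adams filtration $y$ -- i.e.\ the relevant Adams bidegree is simply $(x,y)$. For the four sources $\gdr{24}{2}{2}{red}$, $\gdr{24}{4}{4}{green}$, $\gdr{24}{5}{4}{green}$, $\gdr{25}{3}{3}{orange}$ you should therefore be examining $\Ex$ in $(24,2)$, $(24,4)$, $(24,5)$, $(25,3)$; these all lie in stems $24$ and $25$, exactly as the paper indicates. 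You instead propose $(26,2)$, $(28,4)$, $(29,5)$, $(28,3)$, which is the result of taking $t$ (or $t-s$ with $s$ the $\Ext_{A(1)}$ filtration, rather than $t-(s+n)$) as the first coordinate. That error is not cosmetic: e.g.\ $\Ext^{2,28}_{A_*}$ contains $h_2h_4$, so your claim that ``$\Ex$ is zero in $(26,2)$'' is false, and the accounting argument you envision would not close.

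A secondary point: your reasoning for the $d_{2+\epsilon}$ on $\gdr{24}{5}{4}{green}$ invokes ruling out a $d_{1+\epsilon}$. But that source has $s = y - n = 1$, and by the structure of the AKSS (Lemma~\ref{lem:akdr}, case (3)) a good class with $s(x)=1$ can only hit an evil class via a $d_{s(x)+1+\epsilon} = d_{2+\epsilon}$; there is no competing $d_{1+\epsilon}$ to exclude. Once the source is shown to be dead (all of $\Ext$ in $(24,5)$ being already accounted for), the $d_{2+\epsilon}$ to $\ev{pink}{23}{6}$ is automatic from degree considerations, which is how the paper's terse proof operates.
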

\begin{proof}
After taking into account the good differentials and the restrictions imposed by Figure~\ref{fig:v1Extchart} all classes in $\Ex$ in stem $24$ and $25$ have been accounted for. These are the only possibilities left.
\end{proof}

\ \\
\noindent
{\bf Stem 26-27}

\begin{prop}
There are no non-trivial differentials $d_r$, $r \geq 1+\epsilon$ in the AKSS with source or target satisfying $t-(s+n) =26, 27$. In these stems, $\E{akss}{alg}^{n+\epsilon,s,t}_{1+\epsilon} =0$ and
$H^{n,s,t}(\mc{C}_{alg}) \cong   \E{akss}{alg}^{n,s,t}_{\infty} $.
\end{prop}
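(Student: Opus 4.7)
The plan is to verify this stemwise by reading off data from Figure~\ref{fig:v1Extchart} and Figure~\ref{fig:AAKSSchart}, and applying the Dichotomy Principle (Theorem~\ref{thm:dichotomy}).

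First I would confirm the vanishing $\E{akss}{alg}^{n+\epsilon,s,t}_{1+\epsilon} = 0$ for $t-(s+n) \in \{26,27\}$. By definition this term is $H^{n,t}(V)$ when $s=0$ and zero otherwise, so it suffices to consult Figure~\ref{fig:v1Extchart} and note that every class in $\Ext_{A_*}^{s,t}(\FF_2)$ with $t-s \in \{26,27\}$ is uncircled and untriangled. By Proposition~\ref{prop:v1Extchart} these classes are all $v_1$-periodic, and by inspection none of them requires a triangle decoration, so the Dichotomy Principle rules out evil detection. Consequently $V^{n,t} = 0$ for $n \ge 1$ in these stems (any evil class in $V^{n,t}$ either detects a $v_1$-torsion element by case (1) of Theorem~\ref{thm:dichotomy}, or detects a $v_1$-periodic class for which $s < n(y)$ by case (2) --- neither occurs here).

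Next I would verify the differential statement. By Lemma~\ref{lem:akdr}, the only possible differentials $d_r$ with $r \ge 1 + \epsilon$ in these stems are either (a) good-to-good $d_r$'s with $r \ge 2$ (since the $d_1^{good}$ part was already collapsed into $H^{*,*,*}(\mc{C}_{alg})$ by passing to $E_{1+\epsilon}$), or (b) $d_{s+1+\epsilon}$'s from a good class to an evil class. Option (b) is immediately excluded by the previously established vanishing of evil classes in stems $26,27$. For option (a), I would check against the known calculation of $\Ext_{A_*}^{s,t}(\FF_2)$ in these stems (Tangora \cite{Tangora}, Bruner \cite{Brunertable}): a straightforward dimension count of $H^{n,s,t}(\mc{C}_{alg})$ from Theorem~\ref{thm:HCalg}, summed over the relevant $(n,s)$, matches the known dimension of $\Ext_{A_*}^{*,t}(\FF_2)$ for each $t-s \in \{26,27\}$. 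Since the AKSS converges to $\Ext_{A_*}(\FF_2)$ and the $E_{1+\epsilon}$-page already has the right size, every class must be a permanent cycle and no class can be a boundary.

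The main obstacle, if any, is the bookkeeping: one must ensure no good class could be hit by a differential originating in an adjacent stem (which would in turn necessitate a compensating non-trivial class elsewhere). However, the lower stems $\le 25$ have been completely analyzed in the preceding propositions, and Lemma~\ref{lem:akdr} guarantees that differentials only increase $n + \alpha \epsilon$ by finite amounts; so the only candidate sources are classes in stems $26$ or $27$ themselves, which are ruled out by the dimension count. Hence every class in $H^{n,s,t}(\mc{C}_{alg})$ for $t-(s+n) \in \{26,27\}$ survives to $\E{akss}{alg}_\infty$, giving the claimed isomorphism.
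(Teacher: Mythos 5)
The core idea---reading off $\Ext$ from Figure~\ref{fig:v1Extchart}, applying the Dichotomy Principle, and checking against $H^{*,*,*}(\mc{C}_{alg})$ by a dimension count---is the correct approach, and is what the paper relies on (the proposition appears without proof, the justification being encoded in the chart of Figure~\ref{fig:AAKSSchart}). However, as written your argument has two gaps.

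First, the deduction that $H^{n,t}(V)=0$ (you write $V^{n,t}=0$, but what is actually needed is the vanishing of the cohomology $H^{n,t}(V)$, which is what $\E{akss}{alg}^{n+\epsilon,0,t}_{1+\epsilon}$ denotes) does not follow from the Dichotomy Principle alone. The Dichotomy Principle is a statement about nontrivial classes of $\Ext_{A_*}(\FF_2)$, telling you which ones are detected by evil classes. It does \emph{not} say that every nonzero element of $H^{n,t}(V)$ detects a class in $\Ext$. An element of $H^{n,t}(V)$ could instead be the target of a $d_{s+1+\epsilon}$-differential emanating from a good class, in which case it contributes nothing to $\E{akss}{alg}_\infty$ and is invisible in Figure~\ref{fig:v1Extchart}. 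Your sentence ``any evil class in $V^{n,t}$ either detects a $v_1$-torsion element \ldots or detects a $v_1$-periodic class \ldots'' omits exactly this third possibility, and the conclusion is therefore unjustified.

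Second, your closing bookkeeping is off. The AKSS differentials decrease $t-(s+n)$ by exactly one, so a class in stem $27$ can only be hit from stem $28$---not from stems $26$ or $27$ themselves, as you assert. Consequently, a dimension count internal to stems $26$--$27$ shows that no good class there dies, but it cannot on its own rule out an evil class in $H^{n,t}(V)$ with $t-n=27$ being killed by a good class in stem $28$ via a $d_{s+1+\epsilon}$. To complete the argument one must also establish (from the $v_1$-periodic families of Propositions~\ref{prop:blueoranage}--\ref{prop:orangeyellowred} together with a dimension count in stem $28$, or simply by reading off Figure~\ref{fig:AAKSSchart}) that no good class in stem $28$ supports any differential. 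This is true, but it is an additional verification that your proposal does not supply.

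Also note that the dimension count you appeal to is described but not performed; since it is precisely what closes the argument, it should actually be carried out against Theorem~\ref{thm:HCalg} and the tables of \cite{Brunertable}.
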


\ \\
\noindent
{\bf Stem 28-32}

\begin{prop}
There are differentials
\begin{align*}
d_{3+\epsilon}({\color{red}(30,4:2)}) &= {\color{cyan}(29,5:5)^{ev}} , &
d_{4+\epsilon}({\color{red}(30,5:2)}) &= {\color{lavenderrose}(29,6:6)^{ev}}.\end{align*}
\end{prop}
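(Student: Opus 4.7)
Following the pattern of the preceding propositions in this subsection, the plan is to use Figure~\ref{fig:v1Extchart} and the Dichotomy Principle (Theorem~\ref{thm:dichotomy}) to tabulate the classes of $\Ex$ in the four relevant bidegrees, match them against the surviving classes of the AKSS in stems $29$ and $30$, and show that the only consistent way to reconcile the counts is via the two asserted differentials.

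First I would read off from Figure~\ref{fig:v1Extchart} the classes of $\Ex$ in bidegrees $(t-s,s) = (29,5), (29,6), (30,4), (30,5)$ together with their good/evil natures; in this range every class is $v_1^4$-periodic and lies below the $1/3$-line, so the nature of each is determined by its $v_1^4$-translate above the line. Second, I would list the classes on the $E_{1+\epsilon}$-page of the AKSS in the matching Adams coordinates using Theorem~\ref{thm:HCalg} (for $H^{*,*,*}(\mc{C}_{alg})$) and the Dichotomy Principle (for $H^{*,*}(V)$). Third, I would match surviving good $\Ex$-classes to good AKSS classes and surviving evil $\Ex$-classes to evil AKSS classes, using $v_0$-, $h_1$-, and $v_1^4$-linearity; after this accounting, the unmatched classes should be exactly ${\color{red}(30,4:2)}$ and ${\color{red}(30,5:2)}$ on the good side, together with ${\color{cyan}(29,5:5)^{ev}}$ and ${\color{pink}(29,6:6)^{ev}}$ on the evil side. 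The unmatched good sources must therefore die, and by Lemma~\ref{lem:akdr} a good-to-evil differential on a class of $s$-degree $s(x)$ has length $s(x)+1+\epsilon$; this forces $d_{3+\epsilon}$ out of ${\color{red}(30,4:2)}$ and $d_{4+\epsilon}$ out of ${\color{red}(30,5:2)}$, with the only admissible evil targets being the two unmatched evil classes.

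The main obstacle I expect is the third step's exclusion of shorter alternatives. One has to rule out any good-to-good $d_r^{akss}$ with $r \leq s(x)$ out of the two red sources, and symmetrically any competing source hitting the two evil targets at an earlier page. This requires a careful case analysis using Figure~\ref{fig:AAKSSchart} together with the differentials established in the propositions earlier in this section (in particular Propositions~\ref{prop:oragnecyan} and~\ref{prop:greenred}, which cover nearby tridegrees). In each case the candidate target should either have been matched with a surviving $\Ex$-class already, or else have been hit by a previously established differential, leaving only the two stated differentials as the sole consistent outcome.
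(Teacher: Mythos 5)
Your proposal is correct and takes essentially the same route as the paper: read off from Figure~\ref{fig:v1Extchart} that the only $\Ext$ classes in bidegrees $(30,4)$ and $(30,5)$ (namely $v_0^2h_4^2$ and $v_0^3h_4^2$) are detected by evil, conclude that the good classes $\gdr{30}{4}{2}{red}$ and $\gdr{30}{5}{2}{red}$ cannot survive, and then identify the only admissible targets, with Lemma~\ref{lem:akdr} fixing the lengths $d_{s(x)+1+\epsilon}$. One small slip in your aside: at these bidegrees one has $N(x)=4$, so $v_1^4$ is not defined and one would translate by $v_1^{16}$ to invoke the Dichotomy Principle; this does not affect the argument since you only use the good/evil decorations already recorded in Figure~\ref{fig:v1Extchart}.
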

\begin{proof}
By Figure~\ref{fig:v1Extchart}, both $v_0^2h_4^2$ and $v_0^3h_4^2$ are detected by evil classes ${\color{limegreen}(30,4:4)^{ev}}$ and ${\color{cyan}(30,5:5)^{ev}}$ respectively. This implies that ${\color{red}(30,4:2)}$ and ${\color{red}(30,5:2)}$ do not survive. Taking into account the good differentials already established, this is the only possibility.
\end{proof}

\begin{prop}
$v_0^sh_5$ for $0\leq s \leq 15$ are detected by ${\color{blue}(31, s+1 : 1)}$.
\end{prop}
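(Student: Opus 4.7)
The plan is to combine the description of $H^{1,*,*}(\mc{C}_{alg})$ from Theorem~\ref{thm:HCalg} with the Dichotomy Principle (Theorem~\ref{thm:dichotomy}). Applying Theorem~\ref{thm:HCalg} with $n=1$ and $K=(0,0,0,1)$ gives $I[K]=(8)$, $\norm{I[K]}=8$, and $\abs{I[K]}=1$, so the contribution to $H^{1,*,*}(\mc{C}_{alg})$ is a single $v_0$-tower in the pattern $\bo_4[31]^{\bra{15}}$ at stem $31$ and $\bo$-filtration $n=1$. The classes labelled ${\color{blue}(31, s+1:1)}$ for $0\le s\le 15$ enumerate the relevant portion of this tower (together with the associated evil contributions from $V^{1,*}$ coming from the $A(1)_*$-primitives of $\ull{B_8}$) in the range of the chart.

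Next, I would establish that each $v_0^s h_5$ is $v_1$-periodic. Arguing exactly as in the proof of Proposition~\ref{prop:v1Extchart} for $h_4$, the tops of the $v_0$-towers $\{v_0^j h_5\}$ are $v_1^4$-periodic via iterates of the Adams $P$-operator, and $v_0$-linearity then forces the entire tower to survive in $v_1^{-1}\Ext_{A_*}(\FF_2)$. Applying Theorem~\ref{thm:dichotomy}(2) with $N=5$ (so that $v_1^{32}$ is well-defined on $h_5$ by Proposition~\ref{prop:v1ext}) and $k$ sufficiently large that $v_1^{32k}v_0^s h_5$ lies above the $1/3$-line, one checks that the detector $y$ of $v_1^{32k}v_0^s h_5$ lies in $\bo$-filtration $n(y)=1$ on the same tower identified above. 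The dichotomy condition $s+1\ge n(y)=1$ then holds trivially, and ``unlocalizing'' the $v_1^{32k}$-multiplication via the isomorphism of Lemma~\ref{lem:v1E2boalg} identifies the detector of $v_0^s h_5$ as the class ${\color{blue}(31, s+1:1)}$ in the tower.

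The main obstacle is verifying that none of these classes is killed by an AKSS differential $d_r^{akss}$ with $r\ge 2$ in the range $0\le s\le 15$. By Lemma~\ref{lem:v1torsion}, any $d_r^{akss}$ out of (or into) a $v_1$-periodic good class propagates $v_1^{32}$-periodically; such a differential would contradict the non-vanishing of the corresponding multiple $v_1^{32k}v_0^s h_5$ in $\Ext_{A_*}(\FF_2)$ above the $1/3$-line (where by Lemma~\ref{lem:v1E2boalg} the $v_0$-tower of $H^{1,*,*}(\mc{C}_{alg})$ injects into $v_1^{-1}\Ext_{A_*}(\FF_2)$). Good-to-evil differentials are ruled out similarly: the source is $v_1$-periodic while any evil target would necessarily be $v_1$-torsion. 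Hence ${\color{blue}(31, s+1:1)}$ is a permanent cycle of the AKSS detecting $v_0^s h_5$ for every $0\le s\le 15$.
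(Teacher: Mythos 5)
The high-level strategy you use matches the paper's: invoke $v_1$-periodicity of the $h_5$ tower, the Dichotomy Principle to conclude goodness, $v_0$-linearity to populate the tower, and an argument that the relevant classes survive. However, your final step --- ruling out differentials touching $(31,s+1:1)$ --- does not work as stated, and the correct argument is in fact much more elementary.

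You invoke Lemma~\ref{lem:v1torsion} as though it lets you propagate any differential involving $(31,s+1:1)$ upward by $v_1^{32k}$-multiplication. But Lemma~\ref{lem:v1torsion} only controls $v_1$-\emph{division}: it says that if $d_r(x)=y$ and $v_1^{-2^N k}y$ exists, then so does $v_1^{-2^N k}x$ and the differential divides. For $0\le s\le 15$, the class $(31,s+1:1)$ has internal filtration $s\le 15<32$, so by Lemma~\ref{lem:v1div} it admits no $v_1^{-32}$-division; the lemma is vacuous on this range. The propagation you actually want (multiplication by $v_1^{2^N}$) uses the module structure over the AKSS for $\Ext_{A(N)_*}$ from the remark following Lemma~\ref{lem:v1div}, but even with that, the stated ``contradiction'' does not materialize: a differential out of $(31,s+1:1)$ merely shows that this particular class fails to detect $v_0^s h_5$ (which could then be detected in higher $\bo$-filtration); it does not imply that $v_1^{32k}v_0^s h_5$ vanishes. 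The parenthetical appeal to Lemma~\ref{lem:v1E2boalg} to assert that the $v_0$-tower of $H^{1,*,*}(\mc{C}_{alg})$ injects into $v_1^{-1}\Ext_{A_*}(\FF_2)$ is also unsupported: that lemma identifies $v_1^{-1}\E{bo}{alg}_2$ with $H^*(v_1^{-1}\mc{C}_{alg})$, but the localized $\bo$-MSS carries many higher differentials, so no injection into $v_1^{-1}\Ext$ follows. Relatedly, your intermediate claim ``one checks that the detector $y$ of $v_1^{32k}v_0^s h_5$ lies in $\bo$-filtration $n(y)=1$'' is precisely the nontrivial content of the Dichotomy verification and is left unjustified.

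The paper's argument avoids all of this. By Proposition~\ref{prop:v1Extchart} (encoded in Figure~\ref{fig:v1Extchart}), $h_5$ is good, and since $H^{0,*,*}(\mc{C}_{alg})$ is concentrated at stems $0,1,2$ there is no $\bo$-filtration-$0$ candidate at stem $31$, so $h_5$ is detected by $(31,1:1)$. Since AKSS differentials are $v_0$-linear, each $v_0^s(31,1:1)=(31,s+1:1)$ is a permanent cycle. These classes cannot be targets: a differential into $\bo$-filtration $1$ must originate from a good class in $\bo$-filtration $0$ at stem $32$ (evil cannot hit good by Lemma~\ref{lem:akdr}, and $V^{0,*}=0$), but $H^{0,*,*}(\mc{C}_{alg})$ has no such class. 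Hence each $(31,s+1:1)$ survives, and by multiplicativity it detects $v_0^s h_5$. No appeal to $v_1$-localization is needed past the initial determination that $h_5$ is good.
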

\begin{proof}
By Figure~\ref{fig:v1Extchart}, $h_5$ is good so must be detected by ${\color{blue}(31, 1 : 1)}$. By $v_0$--linearity, the whole tower consists of permanent cycles. For  degree reasons they cannot be targets of differentials and the claim follows.
\end{proof}

\begin{prop}
There are differentials
\begin{align*}
d_{4+\epsilon}({\color{red}(31,5:2)}) &= {\color{lavenderrose}(30,6:6)^{ev}}, & d_{1+\epsilon}({\color{red}(32,2 : 2)})&={\color{orange}(31,3 : 3)^{ev}_1}.\end{align*}
\end{prop}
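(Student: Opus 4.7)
The plan for both differentials follows the exhaustion strategy used throughout this section: (i) identify the classes of $\Ex$ in the relevant Adams bidegree, (ii) use the Dichotomy Principle (Theorem~\ref{thm:dichotomy}) together with detections already established in earlier propositions to pair each $\Ex$ class with its detecting AKSS class, (iii) conclude that the red source class detects nothing in $\Ex$ and therefore must die, and then (iv) use the indexing constraints for AKSS differentials (Lemma~\ref{lem:akdr}) together with Theorem~\ref{thm:HCalg} to rule out all but the stated target.

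For $d_{4+\epsilon}({\color{red}(31,5:2)}) = {\color{pink}(30,6:6)^{ev}}$, I would first observe that, by the preceding proposition, the $v_1$-periodic class $v_0^4 h_5$ of $\Ex$ in bidegree $(31,5)$ is detected by ${\color{blue}(31,5:1)}$, while the remaining classes of $\Ex$ in this bidegree are $v_1$-torsion (circled in Figure~\ref{fig:v1Extchart}) and hence evil by the Dichotomy Principle. Consequently ${\color{red}(31,5:2)}$ detects nothing and must die. Its potential targets lie at chart position $(30,6)$, namely good classes of $\bo$-filtration $n\in\{4,5,6\}$ via $d_r$ for $r=2,3,4$, or the evil class ${\color{pink}(30,6:6)^{ev}}$ via $d_{4+\epsilon}$. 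Using Theorem~\ref{thm:HCalg} and tracking the good differentials already established in Propositions~\ref{prop:blueoranage} through~\ref{prop:orangeyellowred}, none of the good candidates at $(30,6)$ remains available as a target, so the only possibility is the claimed $d_{4+\epsilon}$.

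For $d_{1+\epsilon}({\color{red}(32,2:2)}) = {\color{orange}(31,3:3)^{ev}_1}$, the argument is more direct. The only nontrivial class of $\Ex$ in bidegree $(32,2)$ is $h_1 h_5$, which is $v_1$-torsion (circled in Figure~\ref{fig:v1Extchart}) and therefore evil by the Dichotomy Principle. Thus ${\color{red}(32,2:2)}$ cannot detect $h_1 h_5$ and must die. Since the source sits at $s=0$, the indexing constraint $s-r+1\ge 0$ forces any surviving differential at this page to be $d_{1+\epsilon}$, landing on an evil target at chart position $(31,3)$; the only such candidate is ${\color{orange}(31,3:3)^{ev}_1}$. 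The main obstacle of the whole proof is the careful check, for the first differential, that no good target persists at chart position $(30,6)$ in $\bo$-filtrations $4$ and $5$; this is precisely where the cumulative accounting supplied by the earlier propositions of this section becomes essential.
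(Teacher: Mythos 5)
Your proposal matches the paper's proof in substance. For both differentials you invoke the Dichotomy Principle together with Figure~\ref{fig:v1Extchart} to conclude that the remaining class in $\Ex$ in the source bidegree ($n$ in $(31,5)$, $h_1h_5$ in $(32,2)$) is $v_1$-torsion and hence evil, so the red class cannot survive; you then exhaust the possible targets, which is exactly how the paper concludes ``this is the only possibility.'' Your added observation that $s({\color{red}(32,2:2)}) = 0$ forces $d_{1+\epsilon}$ via Lemma~\ref{lem:akdr} is a cleaner justification than the paper's terse phrasing, but it is the same argument.
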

\begin{proof}
Since $n$ is detected by an evil class, all elements of $\Ex$ in degrees $(31,5)$ have been accounted for, forcing the first differential. Since $h_1h_5$ is detected by evil, ${\color{red}(32,2 : 2)}$ cannot survive. The $d_{1+\epsilon}$ is the only possibility.
\end{proof}

\subsection*{The proliferation of evil: Stems 33-42}
The developing phenomena in the remaining stems is that all good classes of low Adams filtration die killing evil classes, and the non-zero elements of $\Ex$ are detected by evil.

\ \\
\noindent
{\bf Stems 33-34}

\begin{prop}
There are differentials
\begin{align*}d_{4+\epsilon}({\color{orange}(33,6 : 3)})&={\color{darkmagenta}(32, 7 : 7)^{ev}}  \\
d_{3+\epsilon}({\color{orange}(33,5 : 3)})&={\color{lavenderrose}(32, 6 : 6)^{ev}} \\
 d_{4+\epsilon}({\color{blue}(33,4 : 1)})&={\color{cyan}(32, 5 :5)^{ev}}.
\end{align*}
\end{prop}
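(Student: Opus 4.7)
The plan is to mimic the bookkeeping argument used for the previous propositions in stems 28--32. The three differentials all assert that a good class dies killing an evil class, and such statements are forced once we have (i) a complete list of the classes of $\Ext^{*,*}_{A_*}(\FF_2)$ in stems $32$ and $33$ from Bruner's tables, (ii) the knowledge of which of these classes are good and which are evil coming from the Dichotomy Principle (Theorem~\ref{thm:dichotomy}) summarized in Figure~\ref{fig:v1Extchart}, and (iii) the good differentials in this range already established in Propositions~\ref{prop:blueoranage}--\ref{prop:orangeyellowred}.

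The first step is to tabulate the classes in $H^{n,s,t}(\mc{C}_{alg})$ and $H^{n,t}(V)$ in the relevant tridegrees (which are read off from Theorem~\ref{thm:HCalg} and Figure~\ref{fig:AAKSSchart}) and, separately, to read off from Figure~\ref{fig:v1Extchart} the classes in $\Ex$ in bidegrees $(32,5)$, $(32,6)$, $(32,7)$, $(33,4)$, $(33,5)$, and $(33,6)$, together with their good/evil nature. All the classes of $\Ex$ in stem $32$ in these filtrations are $v_1$-torsion (they lie below the $1/3$-line and are circled in Figure~\ref{fig:v1Extchart}), so by the Dichotomy Principle they must be detected by evil classes, namely ${\color{cyan}(32,5:5)^{ev}}$, ${\color{pink}(32,6:6)^{ev}}$, and ${\color{magenta}(32,7:7)^{ev}}$. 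A parallel tabulation in stem $33$ shows that every class of $\Ex$ in the relevant bidegrees has already been accounted for by good classes that are permanent cycles (e.g.\ the $v_0$-tower on ${\color{blue}(33,1:1)} = [h_5 h_0^k]$ and classes detected by good from the previous stems via $h_1$- or $v_0$-linearity), together with elements already shown to be evil targets.

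The second step is then to observe that each of the good sources ${\color{orange}(33,5:3)}$, ${\color{orange}(33,6:3)}$, and ${\color{blue}(33,4:1)}$ must therefore die (there is no element of $\Ex$ left for them to detect, and they cannot be hit by anything shorter given the differentials already propagated). By Lemma~\ref{lem:akdr} a good class can only be killed by a good $d_r$-differential or it can itself kill an evil class via a $d_{s+1+\epsilon}$-differential; since in each case there is no good class in the appropriate bidegree to kill them, they must hit evil. The target evil classes ${\color{cyan}(32,5:5)^{ev}}$, ${\color{pink}(32,6:6)^{ev}}$, ${\color{magenta}(32,7:7)^{ev}}$ must be killed for the same reason (after the classes of $\Ex$ in stem $32$ are exhausted by other detectors), and matching each source to the unique admissible evil target in the correct topological degree pins down both the target and the length $r+\epsilon$ of the differential: for the orange source in $s$-degree $5$ the only evil class of topological degree $32$ living in a compatible tridegree is ${\color{pink}(32,6:6)^{ev}}$, giving length $3+\epsilon$; for the orange source in $s$-degree $6$ it is ${\color{magenta}(32,7:7)^{ev}}$, giving length $4+\epsilon$; and for the blue source ${\color{blue}(33,4:1)}$ it is ${\color{cyan}(32,5:5)^{ev}}$, giving length $4+\epsilon$.

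The main obstacle is the careful verification of the second step, namely that no other classes (good or evil) could receive the source differentials or supply the targets. This reduces to a finite enumeration comparing the counts on each side of the short exact sequence $0 \to \E{akss}{alg,\infty}^{n+\epsilon,0,t} \to \E{bo}{alg,\infty}^{n,0,t} \to \E{akss}{alg,\infty}^{n,s,t} \to 0$ with the known ranks of $\Ex$, but it must be done with care because several differentials emanating from stems $28$--$33$ interact with these bidegrees. No new techniques beyond those of the preceding propositions are required.
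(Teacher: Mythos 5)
Your argument takes a genuinely different route from the paper for the two orange differentials, and it contains a concrete error that needs fixing.

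For $d_{3+\epsilon}({\color{orange}(33,5:3)})$ and $d_{4+\epsilon}({\color{orange}(33,6:3)})$, the paper does \emph{not} argue by tabulating $\Ex$ in Adams bidegrees $(33,5)$ and $(33,6)$; it uses a $v_0$-divisibility argument. The class ${\color{orange}(33,7:3)}$ detects $h_1q$, and $h_1q$ is not $v_0$-divisible in $\Ex$. Since ${\color{orange}(33,5:3)}$, ${\color{orange}(33,6:3)}$, ${\color{orange}(33,7:3)}$ form (the top of) a $v_0$-tower in $H^{3,*,*}(\mc{C}_{alg})$, if ${\color{orange}(33,6:3)}$ survived and detected some $z$, then $v_0z$ would be detected by ${\color{orange}(33,7:3)}$, forcing $v_0z = h_1q$ and contradicting non-divisibility; similarly for ${\color{orange}(33,5:3)}$. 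This argument is more robust than a pure count: it rules out the orange classes surviving without requiring you to know in advance that $\Ex$ in $(33,5)$ and $(33,6)$ is already exhausted. Your enumeration approach can be made to work, but it needs the tabulation actually written out, and as stated it contains a mistake: you cite ``the $v_0$-tower on ${\color{blue}(33,1:1)} = [h_5h_0^k]$'' to account for classes in stem $33$, but $h_5$ lives in stem $31$; the corresponding $v_0$-tower is $\{{\color{blue}(31,k+1:1)}\}$ and there is no class ${\color{blue}(33,1:1)}$. For the third differential on ${\color{blue}(33,4:1)}$ your reasoning matches the paper's: no element of $\Ex$ in $(33,4)$ is detected by a good class, so the source must die, and Lemma~\ref{lem:akdr} together with bidegree arithmetic pins down the $d_{4+\epsilon}$. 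Replacing the erroneous example and either writing out the tabulation or switching to the paper's $v_0$-divisibility argument for the orange classes would make the proof complete.
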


\begin{proof}The class
${\color{orange}(33,7 : 3)}$ detects $h_1q$ and, in $\Ex$, it is not divisible by $v_0$. Hence, ${\color{orange}(33,6 : 3)}$ and ${\color{orange}(33,5 : 3)}$ cannot survive and these differentials are the only possibilities.
No element of $\Ex$ in $(33,4)$ is detected by a good class. This forces the $d_{4+\epsilon}$--differential.
\end{proof}

\begin{prop}
There are differentials
\begin{align*}
d_{2+\epsilon}({\color{red}(34,3 : 2)})&={\color{limegreen}(33, 4 : 4)_1^{ev} },\\
d_{3+\epsilon}({\color{red}(34,4 : 2)})&={\color{cyan}(33, 5 : 5)_1^{ev} }, \\
d_{4+\epsilon}({\color{red}(34,5 : 2)})&={\color{lavenderrose}(33, 6 : 6)^{ev} }.
\end{align*}
\end{prop}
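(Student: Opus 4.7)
The plan is to follow the stem-wise pattern established in the preceding propositions of this subsection. Each good red class ${\color{red}(34,k:2)}$ with $k=3,4,5$ has $n=2$ and Ext-filtration $s=k-2$, and, if it survived, would detect an element of $\Ext^{k,34+k}_{A_*}(\FF_2)$. My first step is to enumerate, via Figure~\ref{fig:v1Extchart}, all classes of $\Ex$ in the three Adams bidegrees $(34,3)$, $(34,4)$, $(34,5)$, and to sort them into $v_1$-torsion and $v_1$-periodic parts.

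Next I would invoke the Dichotomy Principle (Theorem~\ref{thm:dichotomy}): $v_1$-torsion classes in these bidegrees must be detected by evil AKSS generators, and the $v_1$-periodic ones lifting above the $1/3$-line have their $\bo$-filtration dictated by part~(3) of that theorem. Combining this with the evil classes produced by Proposition~\ref{prop:greenred} and with the good differentials already established in stem 33 (the preceding propositions of the present group), I would verify that every class of $\Ex$ in each of the three bidegrees is already accounted for by an AKSS generator other than the corresponding ${\color{red}(34,k:2)}$. Hence each of the three red classes must die on some page.

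I would then use Lemma~\ref{lem:akdr} to classify the admissible differentials supported by each red class. Since each has $s=k-2\ge 1$, the options are either a $d_r$-differential ($\beta=0$) targeting a good class at $(n+r,\,s-r+1,\,t)$ with $r\le s+1$, or a $d_{r+\epsilon}$-differential targeting an evil class at $(n+r+\epsilon,\,0,\,t)$ with $r=s+1$. The good candidate target positions in stem 33 at total filtration $k+1$ contain no surviving class to absorb a good-to-good differential, by the preceding analysis of stems 33-34 and the structure of $H^{*,*,*}(\mc{C}_{alg})$ from Theorem~\ref{thm:HCalg}. This leaves only the $d_{k-1+\epsilon}$-differential hitting the unique evil class in tridegree $(33,\,k+1,\,k+1+\epsilon)$, namely the one claimed.

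The main obstacle is the bookkeeping. One must check carefully that the three evil targets ${\color{green}(33,4:4)_1^{ev}}$, ${\color{cyan}(33,5:5)_1^{ev}}$, and ${\color{pink}(33,6:6)^{ev}}$ are still alive on the pages where the proposed differentials occur, that is, that they are not hit by any shorter differential emanating from a source of lower $\bo$-filtration, and, symmetrically, that no surviving good class in stem 33 at the relevant total filtration has been overlooked as a possible target. Both verifications reduce to a careful tally of the differentials established earlier in the section against the $E_{1+\epsilon}$-page description from Theorem~\ref{thm:HCalg}, and should be routine once the preceding propositions of Stems 33-34 are in hand.
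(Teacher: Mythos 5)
Your proposal is correct and follows the same approach as the paper: observe that $\Ext_{A_*}$ in Adams bidegrees $(34,3)$, $(34,4)$, $(34,5)$ is either zero or consists entirely of classes already detected by evil AKSS generators, so the three red classes must die, and the constraints of Lemma~\ref{lem:akdr} (a good class in filtration $n$ with internal degree $s$ can only hit an evil class via a $d_{s+1+\epsilon}$) pin down the stated differentials as the only possibilities. The paper states this in two terse sentences; your version simply unpacks the bookkeeping.
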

\begin{proof}
$\Ex$ in degrees $(34,3)$, $(34,4)$ and $(34,5)$ is either zero, or its elements are detected by evil classes. This forces these differentials.
\end{proof}

\ \\
\noindent
{\bf Stems 35-43}

\begin{prop}\label{prop:detectx}
The class $x$ is detected by ${\color{orange}(37,5 : 3)}$.
\end{prop}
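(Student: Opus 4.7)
The plan is to apply the Dichotomy Principle (Theorem~\ref{thm:dichotomy}) in conjunction with the multiplicative relation $v_0^5 x = h_2^2 d_0 e_0$ established in the proof of Proposition~\ref{prop:v1Extchart}.

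First I would verify that $x$ is good. By Proposition~\ref{prop:v1Extchart} and Figure~\ref{fig:v1Extchart}, $x$ is $v_1$-periodic with $N(x) = 4$. Choosing $k$ large enough, the class $v_1^{16k} x$ lies strictly above the $1/3$-line, so Lemma~\ref{lem:onethird} gives that its AKSS detector $y$ is good. From the explicit relation $v_1^{16} v_0^5 x = v_0^8 P^2 x'$ together with Lemma~\ref{lem:v1div}, one reads off $n(y) \le 5 = s(x)$, so Theorem~\ref{thm:dichotomy}(3) yields that $x$ itself is detected by a good class.

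To pin down the bo-filtration $n = 3$, I would use the $v_0$-linearity of the AKSS: since $v_0$ has AKSS tri-degree $(0, 1 : 0)$, $v_0$-multiplication preserves bo-filtration. Hence the AKSS detector of $x$ has the same bo-filtration as the detector of $v_0^5 x = h_2^2 d_0 e_0$. The latter class lies above the $1/3$-line, and its bo-filtration can be read off from the explicit basis for $H^{*,*,*}(\mc{C}_{alg})$ furnished by Theorem~\ref{thm:HCalg} (equivalently, by tracing through Proposition~\ref{prop:E1wssalg} for the algebraic weight spectral sequence), yielding $n = 3$ once $h_2^2 d_0 e_0$ is matched with its representing monomial in the $h_r$-generators. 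Combined with Step~1, the detector of $x$ is forced to be the orange class at AKSS coordinate $(37, 5 : 3)$.

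The main obstacle will be the last identification: carefully matching $h_2^2 d_0 e_0$ with its representative in the weight spectral sequence basis and ruling out competing bo-filtrations at the Adams coordinate $(37, 10)$. In practice this is settled by tracing the detector through the (already computed) AKSS structure in lower stems and using the known positions of $d_0$ and $e_0$ in the weight spectral sequence, together with the observation that the orange class is the unique candidate at $(37, 5)$ whose $v_0^5$-multiple survives to detect the non-trivial class $h_2^2 d_0 e_0$ in $\Ext$.
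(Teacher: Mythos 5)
Your Step~1 (goodness of $x$ via the Dichotomy Principle and the $v_1^{16}$-periodicity relation from Proposition~\ref{prop:v1Extchart}) is fine and consistent with the paper's setup. The gap is in Step~2, where you assert that ``$v_0$-multiplication preserves $\bo$-filtration'' and hence that the detector of $x$ and the detector of $v_0^5 x$ share the same $\bo$-filtration $n$. That is false as stated: the $v_0$-multiplication on the AKSS pages does indeed preserve $\bo$-filtration of \emph{classes in the spectral sequence}, but when $v_0 \cdot (\text{detector of } x)$ is zero on $E_\infty$ (a hidden $v_0$-extension), the element $v_0 x \in \Ext$ is detected in \emph{strictly higher} $\bo$-filtration. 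So the only thing you get for free is the one-sided inequality $n(\text{detector of } x) \le n(\text{detector of } v_0^5 x)$. Your argument uses equality, which is exactly what is at issue.

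This matters here precisely because the alternative for $x$ at Adams coordinate $(37,5)$ is the green class $(37,5:4)$ in $\bo$-filtration $4$, and the whole content of the proposition is ruling that out. The paper's proof does so with a single application of the one-sided inequality: it observes that $v_0 x$ is good and can only be detected by $\gdr{37}{6}{3}{orange}$ (the unique good candidate in $(37,6)$), and that a hidden $v_0$-extension cannot \emph{lower} $\bo$-filtration from $4$ to $3$; hence $x$ cannot be detected by green $(37,5:4)$ and must be detected by $\gdr{37}{5}{3}{orange}$. Your route via $v_0^5 x = h_2^2 d_0 e_0$ is in principle salvageable along the same lines (use only the inequality, then show the detector of $h_2^2 d_0 e_0$ lies in $\bo$-filtration $3$), but you explicitly defer that last identification as the ``main obstacle,'' so the proposal as written is not a complete proof. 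It is also strictly more work: you need to control five potential hidden $v_0$-extensions, or equivalently pin down the $\bo$-filtration of a class four Adams filtrations higher, whereas the paper needs to inspect only a single adjacent stem. I would rewrite Step~2 to use the one-sided filtration inequality, restrict to $v_0 x$ rather than $v_0^5 x$, and supply the claim that $\gdr{37}{6}{3}{orange}$ is the only good candidate in $(37,6)$, which is read directly off Figure~\ref{fig:AAKSSchart} together with the $d_{1+\epsilon}$-differentials of Proposition~\ref{prop:lotsofevil}.
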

\begin{proof}
Both $x$ and $v_0x$ are detected by good classes, and for $v_0x$, the only possibility is  ${\color{orange}(37,6 : 3)}$. Since there cannot be an exotic $v_0$-extension from $ {\color{limegreen}(37,5 : 4)} $ to ${\color{orange}(37,6 : 3)}$, we must have that $x$ is detected by ${\color{orange}(37,5 : 3)}$.
\end{proof}

\begin{prop}
There are differentials
\begin{align*}
d_{1+\epsilon}({\color{limegreen}(36,4 : 4)}) &={\color{cyan}(35,5 : 5)_2^{ev}}, \\
d_{2+\epsilon}({\color{limegreen}(36,5 : 4)}) &={\color{lavenderrose}(35,6 : 6)^{ev}}, \\
d_{3+\epsilon}({\color{limegreen}(36,6 : 4)}) &={\color{darkmagenta}(35,7 : 7)_1^{ev}}.
\end{align*}
\end{prop}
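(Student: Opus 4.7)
The plan is to show that each of the three green classes ${\color{green}(36,y:4)}$ for $y = 4,5,6$ must die in the AKSS, and that the stated evil classes in stem $35$ are the unique targets available at the respective page lengths.

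First, I would enumerate the non-zero classes of $\Ext_{A_*}^{s,t}(\FF_2)$ in stem $35$ at Adams filtration $s \geq 5$, using Figure~\ref{fig:v1Extchart} and the known Steenrod algebra cohomology in this range (cf.~\cite{Bruner}, \cite{Tangora}). I would check that each such class is either marked as $v_1$-torsion (circled), hence evil by the Dichotomy Principle, or is $v_1$-periodic but lies above the $1/3$-line in a bidegree forcing it to be detected by a class of higher $\bo$-filtration. In either case, nothing in $\Ex$ in stem $35$ at Adams filtration $5, 6, 7$ is detected by a green ($n=4$) class.

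Second, this enumeration forces each of ${\color{green}(36,4:4)}$, ${\color{green}(36,5:4)}$, ${\color{green}(36,6:4)}$ to die in the AKSS, as one checks that they also do not lie in the image of any differential already established (in particular they are not in the image of the $d_2$ differentials of Proposition~\ref{prop:greenred} or its analogs, and they are not $h_1$- or $v_0$-multiples of classes already known to die). The only evil classes in stem $35$ at $\bo$-filtrations $5$, $6$, $7$ with the correct internal degree and $s$-coordinate matching a differential of length $r + \epsilon$ emanating from the respective green class are precisely ${\color{cyan}(35,5:5)_2^{ev}}$, ${\color{pink}(35,6:6)^{ev}}$, and ${\color{magenta}(35,7:7)_1^{ev}}$. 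By Lemma~\ref{lem:akdr}, a good class can only hit an evil class via a $d_{s+1+\epsilon}$-differential, which matches $1+\epsilon$, $2+\epsilon$, $3+\epsilon$ for $s = 0, 1, 2$; these are exactly the $s$-coordinates dictated by the relation $s(x) = y - n = y - 4$.

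The main obstacle is the careful bookkeeping of $\Ex$ in stem $35$ and verifying that the surviving good classes of higher $\bo$-filtration (magenta, pink, cyan) that detect $n$, $d_0e_0$, and companions account exactly for the $\Ex$ groups, leaving the green classes orphaned. Once this enumeration is in hand, the lengths and targets of the three differentials are forced, exactly as in the proofs of the previous propositions in this subsection.
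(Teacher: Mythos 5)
Your overall framework (force the sources to die, then identify targets by degree reasons and Lemma~\ref{lem:akdr}) is the right one, but the key enumeration is happening in the wrong stem, and as written the implication from your first step to your second is a non sequitur.

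The sources $\gdr{36}{4}{4}{green}$, $\gdr{36}{5}{4}{green}$, $\gdr{36}{6}{4}{green}$ lie in chart bidegrees $(36,4)$, $(36,5)$, $(36,6)$, and, if they survived, would have to detect nonzero elements of $\Ex$ in \emph{those same} bidegrees. The relevant observation is therefore about stem $36$: one checks that $\Ex$ vanishes in $(36,4)$ and $(36,5)$, and that the sole nonzero class in $(36,6)$ is already detected by an evil class; hence all three green sources must support differentials, and the stated lengths and evil targets are forced by degree reasons. Your enumeration instead runs over stem $35$ at filtration $s\ge 5$ and concludes that nothing there is detected by a green $n=4$ class --- but that statement concerns the green classes $\gdr{35}{5}{4}{green}$, $\gdr{35}{6}{4}{green}$, $\gdr{35}{7}{4}{green}$ (with $s = 1,2,3$), not the sources $\gdr{36}{y}{4}{green}$ of the proposition. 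Showing the former fail to detect anything gives no information about whether the latter survive, so the jump ``this enumeration forces each of $\gdr{36}{4}{4}{green}$, $\gdr{36}{5}{4}{green}$, $\gdr{36}{6}{4}{green}$ to die'' does not follow.

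A target-side argument in stem $35$ could in principle be salvaged: one would need to show that the evil classes $\evrm{cyan}{35}{5}{2}$, $\evr{pink}{35}{6}$, $\evrm{magenta}{35}{7}{1}$ do \emph{not} detect anything in $\Ex$ (so they must be killed), and then rule out all killers other than the green classes in stem $36$. That is considerably more delicate than the source-side check --- for instance, the companion classes $\evrm{cyan}{35}{5}{1}$, $\evrm{magenta}{35}{7}{2}$ \emph{do} survive and detect elements of $\Ex$, so the bookkeeping of which indexed copy of each evil nature must die is nontrivial. The source-side check in stem $36$ avoids all of this.
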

\begin{proof}
$\Ex$ in degrees $(36,4 )$ and $(36,5)$ is zero and the only non-zero element of $\Ex$ in $(36,6)$ is detected by an evil class. For degree reasons, we must have the first three differentials.
\end{proof}

\begin{prop}
For $i = 1,2$, there are differentials
\begin{align*}
d_{1+\epsilon}({\color{orange}(37,3 : 3)_i}) &= {\color{limegreen}(36,4 : 4)_i^{ev}}, \\
 d_{2+\epsilon}({\color{orange}(37,4 : 3)_i}) &= {\color{cyan}(36,5 : 5)_i^{ev}}.
 \end{align*}
\end{prop}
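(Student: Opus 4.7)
The approach will mirror the template used throughout the previous propositions of this stem block: rule out survival of the orange sources and survival of the green/cyan evil targets by examining $\Ext^{s,t}_{A_*}(\FF_2)$ in the relevant bidegrees, and then observe that the only AKSS differentials of the allowed natures (by Lemma~\ref{lem:akdr}) that connect the relevant tri-degrees are the ones stated.

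The first step is to read off $\Ext^{s,t}_{A_*}(\FF_2)$ in bidegrees $(3,40), (4,40), (4,41), (5,41)$ from Figure~\ref{fig:v1Extchart} (equivalently, from Bruner's tables \cite{Brunertable}). For each nonzero class, apply Proposition~\ref{prop:v1Extchart} and the Dichotomy Principle (Theorem~\ref{thm:dichotomy}) to label it as good or evil. By the time one reaches stem $40$ in the stemwise calculation, the $v_1$-periodic good differentials of Propositions~\ref{prop:blueoranage}--\ref{prop:orangeyellowred}, together with the evil differentials already established in stems $33$--$36$, have assigned AKSS detectors to every nonzero element of $\Ex$ in these four bidegrees except for precisely the classes involved in the proposition. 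In particular, every non-evil class in $\Ex^{3,40}$ and $\Ex^{4,41}$ is already detected in $bo$-filtration strictly larger than $3$, forcing the two orange classes ${\color{orange}(37,3:3)_i}$ and the two orange classes ${\color{orange}(37,4:3)_i}$ to die.

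For the first line, the orange source has $s=0$, so by Lemma~\ref{lem:akdr} the only differential it can support has $\beta=1$, namely a $d_{1+\epsilon}$ into an evil class at $s+n=4$, $t=40$. The only evil classes of this tri-degree still alive on $E_{1+\epsilon}$ are the two ${\color{green}(36,4:4)_i^{ev}}$ (any alternative evil target in $t=40$ at $s+n=4$ would contradict the known $v_1$-torsion content of $\Ex^{4,40}$ as tabulated in Figure~\ref{fig:v1Extchart}); moreover these two green classes themselves have to die, since the relevant $v_1$-torsion elements of $\Ex^{4,40}$ are detected by evil classes of higher $bo$-filtration. Matching the two orange sources with the two green targets is then forced, and since the assertion only records the existence of a bijection under the labels $i=1,2$ the pairing is independent of the basis chosen. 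The identical argument in $t=41$ treats the second line: here the orange source has $s=1$, so the only allowed differential is $d_{s+1+\epsilon}=d_{2+\epsilon}$, landing in $(n+2+\epsilon,0,41)$, which is precisely where ${\color{cyan}(36,5:5)_i^{ev}}$ lives.

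The main obstacle is the bookkeeping: one must carefully confirm that every remaining nonzero element of $\Ext_{A_*}(\FF_2)$ in the four bidegrees above has already been assigned to a detector on earlier pages, so that no further evil classes compete to be the target of the differentials, and no further good classes compete to be the source. Once this tabulation is complete (for instance by comparing against the list of $v_1$-torsion classes in Proposition~\ref{prop:v1Extchart} and the evil detections established in Propositions~\ref{prop:stem22_1}--\ref{prop:detectx}), the differentials follow by the process of elimination used throughout this section, together with the constraint on $r$ and $\beta$ imposed by Lemma~\ref{lem:akdr}.
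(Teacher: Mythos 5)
Your proposal reconstructs the paper's (very terse) proof by the same method: the paper simply notes that $\Ext_{A_*}(\FF_2)$ is zero or detected by evil in the source bidegrees $(3,40)$ and $(4,41)$, and that for degree reasons the stated differentials are then forced; your argument is a fleshed-out version of exactly that chart-reading and process of elimination. One small overclaim worth fixing: Lemma~\ref{lem:akdr} on its own does not say that the orange source ``can only support a differential with $\beta=1$'' --- case (2) of the lemma still allows a good-to-good $d_r$ --- so ruling out a good target requires a separate chart-reading (which you do acknowledge in your final paragraph, but the sentence invoking Lemma~\ref{lem:akdr} should not present $d_{1+\epsilon}$ as the \emph{only} allowed differential).
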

\begin{proof}
In these bi-degrees, $\Ex$ is zero or detected by evil. For degree reasons, we must have these differentials.
\end{proof}

\begin{prop}\label{prop:lotsofevil}
There are differentials
\begin{align*}
d_{1+\epsilon}({\color{red}(38,2 : 2)}) &= {\color{orange}(37,3 : 3)^{ev}_2},  &d_{2+\epsilon}({\color{red}(38,3 : 2)}) &= {\color{limegreen}(37,4 : 4)_1^{ev}}, \\
d_{3+\epsilon}({\color{red}(38,4: 2)}) &= {\color{cyan}(37,5 : 5)^{ev}} , & d_{4+\epsilon}({\color{red}(38,5: 2)}) &= {\color{lavenderrose}(37,6: 6)^{ev}}
\end{align*}
and, for $i=1,2$, differentials
\begin{align*}
d_{1+\epsilon}(\gdm{orange}{38}{3}{3}{i}) &=  \evm{limegreen}{38}{4}{i+1}, & d_{2+\epsilon}(\gdm{orange}{39}{4}{3}{i}) &=  \evm{cyan}{38}{5}{i+1}.
\end{align*}
\end{prop}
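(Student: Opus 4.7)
The plan is to argue uniformly as in the preceding propositions of this subsection: in each of the relevant Adams bidegrees, we use Figure~\ref{fig:v1Extchart} (together with the Dichotomy Principle, Theorem~\ref{thm:dichotomy}, and the list of good/evil classifications it yields) to show that every nonzero class in $\Ext_{A_*}^{s,t}(\FF_2)$ is either absent or detected by an evil class in the AKSS. Consequently, the ``good'' classes named in the statement (the red ones in the $38$-stem of Adams filtration $2$, and the orange ones in Adams bidegrees $(38,3)$ and $(39,4)$) cannot be permanent cycles and must support nontrivial differentials.

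First I would make the stem-by-stem inventory. For stem $36$ (i.e.\ $t-(s+n) = 37$), one checks from Figure~\ref{fig:v1Extchart} that $\Ex$ is zero in $(38,2)$, while all surviving classes in $(38,3),(38,4),(38,5),(38,6)$ are $v_1$-torsion and hence evil by Theorem~\ref{thm:dichotomy}(1). This already forces $\gdr{red}{38}{2}{2}{red}$, $\gdr{red}{38}{3}{2}{red}$, $\gdr{red}{38}{4}{2}{red}$, $\gdr{red}{38}{5}{2}{red}$ to be non-permanent. For the stem $t-(s+n) = 38$, the analogous check shows $\Ex$ is zero in $(38,3)$ and any nonzero class in $(38,4),(38,5)$ is $v_1$-torsion, forcing the orange classes $\gdrm{orange}{38}{3}{3}{orange}{i}$ and $\gdrm{orange}{39}{4}{3}{orange}{i}$ for $i = 1,2$ to be non-permanent.

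Next, I would show that the differentials listed are the only possibilities. By Lemma~\ref{lem:akdr}, a good class of Adams filtration $s$ either supports an ordinary good-to-good $d_r$-differential ($r\ge 1$, $\beta = 0$) or a good-to-evil $d_{s+1+\epsilon}$-differential. The former is ruled out for each of our sources by consulting the list of previously established good differentials (Propositions~\ref{prop:blueoranage}--\ref{prop:orangeyellowred}, together with the propositions in the ``Stems 33-34'' and ``Stems 35-43'' subsections above): none of the red or orange classes under consideration appears there as a source or target, and dimensional considerations (comparing $s$-degree and $\bo$-filtration) exclude the existence of any unlisted good $d_r$-differential with these sources. The only remaining option is a good-to-evil differential of the predicted length $s+1+\epsilon$, and inspection of the evil subcomplex $V^{*,*}$ in the appropriate tridegree shows that in each case a unique evil target is available, namely the one listed.

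The principal obstacle is the bookkeeping in the previous paragraph: one must verify that each of the putative evil targets is still alive at the relevant page (i.e.\ has not already been killed by an earlier good-to-evil differential), and that no other evil class of the correct internal degree could absorb the source. Both checks are immediate from the computations of the preceding propositions, which systematically eliminate evil classes of smaller $\bo$-filtration in this range; the only remaining evil classes in the target tridegrees are those named in the statement, and they are paired with the sources by $v_0$-linearity of $d^{akss}$ (and, where applicable, by $h_1$-linearity to pair the $i=1$ and $i=2$ cases). This completes the identification.
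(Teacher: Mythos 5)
Your overall strategy matches the paper's: use Figure~\ref{fig:v1Extchart} and the Dichotomy Principle to conclude that the listed good classes cannot survive, then identify the unique available differentials. However, there are two concrete problems.

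First, the inventory step contains a factual error. You assert that $\Ex$ is zero in bidegree $(38,2)$, but $\Ext_{A_*}^{2,40}(\FF_2)$ contains $h_3h_5$. The correct assertion, and the one the paper actually uses, is that every nonzero class of $\Ex$ in stems $38$ and $39$ with Adams filtration $\le 5$ is detected by an \emph{evil} class (e.g., $h_3h_5$ is $v_1$-torsion because $v_1^{32}h_5\cdot h_3 = 0$, as in Proposition~\ref{prop:v1Extchart}); it is this, not vanishing, that forces the red classes $\gdr{38}{y}{2}{red}$ and the orange classes $\gdm{orange}{38}{3}{3}{i}$, $\gdm{orange}{39}{4}{3}{i}$ to be non-permanent. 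Your parallel claim that $\Ex$ is zero in $(38,3)$ has the same defect, and the stem labels in your first paragraph (``stem $36$ (i.e.\ $t-(s+n)=37$)'') are internally inconsistent and do not match the sources under discussion, all of which lie in $t-(s+n)\in\{38,39\}$.

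Second, and more seriously, the $d_{3+\epsilon}$ on ${\color{red}(38,4:2)}$ requires an argument that your appeal to ``dimensional considerations'' does not supply. The class ${\color{red}(38,4:2)}$ (with $n=2$, $s=2$) has a legitimate competing good-to-good $d_2$ landing on the live good class ${\color{green}(37,5:4)}$, and nothing about bidegrees alone rules this out. The paper disposes of it by $v_0$-linearity: since ${\color{red}(38,4:2)} = v_0\cdot{\color{red}(38,3:2)}$ and ${\color{red}(38,3:2)}$ is a $d_2$-cycle (it survives to support the $d_{2+\epsilon}$ already established), ${\color{red}(38,4:2)}$ is also a $d_2$-cycle, so the claimed $d_{3+\epsilon}$ is the only remaining option. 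This step is the real content of the proposition for that source and cannot be waved away.
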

\begin{proof}
All classes of $\Ex$ in the stems $38$ and $39$ for $y \leq 5$ are detected by evil. No good class can survive. For both families, the $d_{1+\epsilon}$'s and the $d_{2+\epsilon}$'s are the only possibilities. By $v_0$--linearity, ${\color{red}(38,4: 2)}$ is a $d_{2+\epsilon}$--cycle, hence it cannot kill ${\color{limegreen}(37,5 : 4)}$. Therefore, we must have the claimed $d_{3+\epsilon}$. The $d_{4+\epsilon}$ is also the only possibility.
\end{proof}

\begin{prop}
There are differentials
\begin{align*}
d_{1+\epsilon}({\color{lavenderrose}(36,6 : 6)}) &={\color{darkmagenta}(35,7 : 7)_2^{ev}}, & d_{2+\epsilon}({\color{limegreen}(37,5 : 4)}) &={\color{lavenderrose}(36,6 :6)_2^{ev}}.
\end{align*}
\end{prop}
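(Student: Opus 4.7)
The plan is to follow the bookkeeping template used throughout this section (compare Propositions~\ref{prop:detectx} and~\ref{prop:lotsofevil}). I would begin by listing, with the aid of Figure~\ref{fig:v1Extchart}, all nontrivial classes in $\Ext_{A_*}^{s,t}(\FF_2)$ with $t = 42$ and $s \le 7$, labelling each as $v_1$-periodic or $v_1$-torsion. By the Dichotomy Principle (Theorem~\ref{thm:dichotomy}), every $v_1$-torsion class must be detected by an evil class in the AKSS, so such classes cannot be killed by a good differential and cannot detect a good permanent cycle.

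Next, I would cross off every class of $\Ex$ in these bidegrees that has already been matched to a permanent cycle or to the source/target of a differential by the preceding propositions --- most importantly Propositions~\ref{prop:greenred}, \ref{prop:cyanmagenta}, \ref{prop:detectx} and~\ref{prop:lotsofevil}. The expectation is that after this accounting every nontrivial class of $\Ex$ in the relevant bidegrees is spoken for, so the good classes ${\color{pink}(36,6:6)}$ and ${\color{green}(37,5:4)}$ cannot detect anything new and must die. By Lemma~\ref{lem:akdr}, a good class killing an evil class does so by a differential of length exactly $s(\text{source})+1+\epsilon$, which gives lengths $1+\epsilon$ and $2+\epsilon$ in our two cases since $s=0$ and $s=1$ respectively. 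Reading off Figure~\ref{fig:AAKSSchart}, the only evil classes in the matching tridegrees are the ones named in the statement, and no good target of the correct tridegree remains available, so the differentials are forced.

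The main obstacle I anticipate is the meticulous accounting: these are the first bidegrees in the range where two distinct evil classes share a tridegree (hence the subscripts $_2$), so one must check carefully that none of the previously established differentials has already consumed the putative targets, and that no $v_0$- or $h_1$-extension elsewhere in the $t=42$ column invalidates the count of $v_1$-torsion classes in $\Ex$. In particular I would want to verify that the evil classes ${\color{magenta}(35,7:7)^{ev}_2}$ and ${\color{pink}(36,6:6)^{ev}_2}$ do not themselves detect any nontrivial element of $\Ex$ (which would force them to be permanent cycles and thus preclude their being hit). Once those checks are in place the conclusion follows by elimination, exactly as in Proposition~\ref{prop:lotsofevil}.
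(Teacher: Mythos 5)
Your proposal follows the correct bookkeeping template, but the elimination is incomplete in a way that the paper has to address explicitly: the accounting you describe (all classes of $\Ex$ in stem $36$ and nearby are detected by evil or already spoken for) does not by itself distinguish the claimed differentials from the alternative scenario in which $\gdr{37}{5}{4}{green}$ supports the good-to-good differential
\[
d_{2}(\gdr{37}{5}{4}{green}) = \gdr{36}{6}{6}{pink},
\]
i.e.\ the good pink class is \emph{killed} rather than the \emph{source} of a $d_{1+\epsilon}$. In this alternative, both good classes still die, the count of $\Ex$ classes in the relevant bidegrees is unchanged, and the remaining evil classes would be disposed of by some other rearrangement — so your claim that ``no good target of the correct tridegree remains available'' is precisely the nontrivial point that needs proof, not something that falls out of the census. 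Lemma~\ref{lem:akdr} constrains which lengths are possible for good-to-evil differentials, but it does not preclude good-to-good ones; in fact nothing in the formal structure of the AKSS forbids $d_2(\gdr{37}{5}{4}{green}) = \gdr{36}{6}{6}{pink}$.

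The paper closes this gap with a $v_1$-periodicity argument: by Lemma~\ref{lem:v14}, if $d_{2}(\gdr{37}{5}{4}{green}) = \gdr{36}{6}{6}{pink}$ held, then the $v_1^4$-shifted differential $d_2(\gdr{45}{9}{4}{green}) = \gdr{44}{10}{6}{pink}$ would also hold, but this contradicts Proposition~\ref{prop:pinkyellow}, which shows $\gdr{44}{10}{6}{pink}$ survives to $E_3$ and supports a $d_3$. Once that alternative is eliminated, your template does finish the job (the pink good class must support a $d_{1+\epsilon}$ onto the only available evil target, and the $d_{2+\epsilon}$ on the green class follows from Proposition~\ref{prop:detectx} and the fact that Proposition~\ref{prop:lotsofevil} rules out the remaining possibility). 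So the missing step is not a minor accounting detail: it is a structural obstruction that requires the $v_1^4$-periodicity lemma to resolve.
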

\begin{proof}
The class cannot ${\color{lavenderrose}(36,6 : 6)}$ survive since the only non-zero element of $\Ex$ in $(36,6)$ is detected by evil. The only other possibility is that $d_{2}({\color{limegreen}(37,5 : 4)}) = {\color{lavenderrose}(36,6 : 6)}$. However, this would imply that $d_{2}({\color{limegreen}(45,9 : 4)}) = {\color{lavenderrose}(44,10 : 6)}$, contradicting Proposition~\ref{prop:pinkyellow}.

For the $d_{2+\epsilon}$--differential, by Proposition~\ref{prop:detectx}, $x$ has already been accounted for, hence, 
${\color{limegreen}(37,5 : 4)}$ cannot survive. Proposition~\ref{prop:lotsofevil} eliminates the only other possibility.
\end{proof}

\begin{prop}\label{prop:cyan39}
There are differentials
\begin{align*}
d_{1+\epsilon}(\gdr{40}{2}{2}{red}) &= \evm{orange}{39}{3}{2}, & d_{1+\epsilon}(\gdr{41}{3}{3}{orange} )&= \evm{limegreen}{40}{4}{3} \end{align*}
and, for $i=1,2$,
\begin{align*}
d_{1+\epsilon}(\gdr{39}{5}{5}{cyan}) &= \evm{lavenderrose}{38}{6}{2}, & d_{1+\epsilon}( \gdm{limegreen}{40}{4}{4}{i}) &= \evm{cyan}{39}{5}{i+1}\\
 d_{2+\epsilon}(\gdr{39}{6}{5}{cyan}) &= \ev{darkmagenta}{38}{7}, & d_{2+\epsilon}( \gdm{limegreen}{40}{5}{4}{i}) &= \evm{lavenderrose}{39}{6}{i}.
\end{align*}
\end{prop}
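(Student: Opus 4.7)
The plan is to imitate the pattern used in the preceding propositions (e.g.\ Proposition~\ref{prop:lotsofevil} and the other stem-by-stem arguments): examine the relevant columns of $\Ext^{s,t}_{A_*}(\FF_2)$ via Figure~\ref{fig:v1Extchart}, observe that all non-zero classes in the target bidegrees are already accounted for by evil detectors (or else the bidegree is zero), and conclude that the good classes listed in the proposition cannot survive. The forced targets are then determined by degree reasons, $v_0$- and $h_1$-linearity, and compatibility with the differentials established earlier in this section.

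First I would handle the two $d_{1+\epsilon}$ statements in stems $40$ and $41$. The class $\gdr{40}{2}{2}{red}$ must die because every non-zero element of $\Ex$ in $(40,2)$ is detected by evil (from Figure~\ref{fig:v1Extchart}); the only available target is $\evm{orange}{39}{3}{2}$, since the shorter $d_{r+\epsilon}$ targets in $(39,3)$ have either been hit already or are zero. A parallel check in bidegree $(41,3)$ forces $d_{1+\epsilon}(\gdr{41}{3}{3}{orange}) = \evm{green}{40}{4}{3}$.

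Next I would do the two families indexed by $i=1,2$. For the cyan classes in stem $39$, the groups $\Ex^{5,44}$ and $\Ex^{6,45}$ are exhausted by evil detectors, so $\gdr{39}{5}{5}{cyan}$ and $\gdr{39}{6}{5}{cyan}$ cannot survive; checking the charts, the only targets not already killed are $\evm{pink}{38}{6}{2}$ and $\ev{magenta}{38}{7}$ respectively, giving a $d_{1+\epsilon}$ and a $d_{2+\epsilon}$. For the green classes $\gdm{green}{40}{4}{4}{i}$ and $\gdm{green}{40}{5}{4}{i}$ in stem $40$, the same reasoning applies: the relevant $\Ex$ groups are either zero or fully detected by evil, and the good $d_{1+\epsilon}$- and $d_{2+\epsilon}$-targets available are precisely $\evm{cyan}{39}{5}{i+1}$ and $\evm{pink}{39}{6}{i}$. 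One must verify that the cyan and pink evil classes in $(39,5)$ and $(39,6)$ indexed by $i=1,2$ have not already been hit by the differentials of Proposition~\ref{prop:stem22_1}–Proposition~\ref{prop:lotsofevil}, which is a straightforward bookkeeping check.

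The main obstacle I foresee is the bookkeeping: making sure the enumeration subscripts $(\, \cdot\,)_i$ on the evil classes in $(38,6)$, $(38,7)$, $(39,5)$, and $(39,6)$ are matched correctly so that no target is double-booked, and confirming that $v_0$-linearity constraints are consistent across the two families. Once this matching is done correctly, every differential in the statement is the unique option compatible with the known structure of $\Ex$ and the previously established differentials, and the proof reduces to the same type of elimination argument used throughout the section.
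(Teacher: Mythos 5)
Your proposal follows essentially the same elimination strategy as the paper: check that the Ext groups in the relevant Adams bidegrees are either zero or already exhausted by evil detectors, conclude the good sources cannot survive, and then pin down the targets (and differential lengths) by $\bo$-filtration and degree considerations. One small factual wrinkle: for $\gdr{40}{2}{2}{red}$ you argue that every non-zero element of $\Ext$ in $(40,2)$ is detected by evil, whereas the paper notes that $\Ext$ in $(40,2)$ is in fact zero; this doesn't change the conclusion (either way the source must die), but it is worth stating the sharper vanishing fact, which avoids any need to invoke the Dichotomy Principle in that bidegree.
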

\begin{proof}
$\Ex$ is zero in $(40,2)$ and this justifies the first differential.
$\Ex$ in $(39,5)$ and $(39,6)$ is either detected by evil or zero. Hence, the sources of these differentials cannot survive. Taking $\bo$--filtration into account, this is the only possibility. The other differentials are justified in a similar way.
\end{proof}

\begin{prop}
There are differentials
\begin{align*}
d_{1+\epsilon}(\gdr{41}{7}{7}{darkmagenta}) &=\ev{goldenpoppy}{40}{8}, & d_{1+\epsilon}(\gdr{42}{6}{6}{lavenderrose}) &= \evm{darkmagenta}{41}{7}{2},\\
d_{1+\epsilon}(\gdr{42}{7}{7}{darkmagenta}) &=\ev{goldenpoppy}{41}{8}, &
d_{2+\epsilon}(\gdr{43}{8}{7}{darkmagenta}) &=\ev{seagreen}{42}{9} .
 \end{align*}
\end{prop}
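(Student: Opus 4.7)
The plan is to follow the same template used throughout this section (see in particular Propositions~\ref{prop:lotsofevil} and \ref{prop:cyan39}): for each source, show it is a good class that cannot survive to $E_\infty$ (because the $\Ext^{*,*}_{A_*}(\FF_2)$ classes in its bidegree are already accounted for — typically detected by evil classes or by other good classes established earlier), and then argue that the named evil target is the only possibility consistent with previously established differentials and $\bo$-filtration constraints.

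First I would read off from Figure~\ref{fig:v1Extchart} the list of nonzero elements of $\Ex$ in bidegrees $(41,7)$, $(40,8)$, $(42,6)$, $(42,7)$, $(41,8)$, $(43,8)$ and $(42,9)$, together with their good/evil classification supplied by the Dichotomy Principle. In each of these bidegrees, every nonzero class has already been matched — either by evil classes arising from the magenta/yellow/seagreen propagation established in Propositions~\ref{prop:cyanmagenta}, \ref{prop:blueyellow}, \ref{prop:orangeyellowred} and \ref{prop:pinkyellow}, or by good classes whose detection was fixed in the earlier stem-by-stem analysis (stems 40--41, Proposition~\ref{prop:cyan39}). Hence the good sources $\gdr{41}{7}{7}{magenta}$, $\gdr{42}{6}{6}{pink}$, $\gdr{42}{7}{7}{magenta}$, $\gdr{43}{8}{7}{magenta}$ all have no permanent cycle role to play and must die.

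For each such source I would next enumerate the admissible targets: a differential $d_{r+\beta\epsilon}$ emanating from a good class of $\bo$-filtration $n$ lands in $\bo$-filtration $n+r$ (good) or $n+r+\epsilon$ (evil) with appropriate shifts in $(s,t)$. Restricting to the correct internal degree $t$ and using Lemma~\ref{lem:akdr}, one gets at most a handful of candidates. For $\gdr{41}{7}{7}{magenta}$ and $\gdr{42}{7}{7}{magenta}$, the only evil class in the correct $t$-degree one filtration higher is the yellow class, giving a $d_{1+\epsilon}$ to $\ev{yellow}{40}{8}$ and $\ev{yellow}{41}{8}$ respectively; all longer differentials are ruled out since their potential targets are already engaged (by the $d_7$ of Proposition~\ref{prop:blueyellow} and the $d_5,d_6$ of Proposition~\ref{prop:orangeyellowred}). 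Similarly, for $\gdr{42}{6}{6}{pink}$ the only available evil class one filtration higher is $\evm{magenta}{41}{7}{2}$ (the first magenta being the target of Proposition~\ref{prop:cyanmagenta} and $d_{1+\epsilon}$ sources from earlier in the section). For $\gdr{43}{8}{7}{magenta}$, the $d_{1+\epsilon}$ candidate in yellow at $(42,8)$ is already a permanent cycle (or otherwise engaged), forcing the longer $d_{2+\epsilon}$ to land on the seagreen evil class $\ev{seagreen}{42}{9}$.

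The main obstacle is purely bookkeeping: one must carefully track which evil classes in $(40,8)$, $(41,7)$, $(41,8)$, $(42,9)$ are already consumed as targets of shorter good $\to$ evil differentials, and which are still available. In particular, one must verify that $\ev{yellow}{40}{8}$ is not killed by any shorter $d_{r+\epsilon}$ for $r\le 6$ (this uses Lemma~\ref{lem:akdr} plus $\bo$-filtration arithmetic to limit the sources), and similarly for the other three targets. Once these bookkeeping checks are made, the four differentials follow as the unique surviving possibilities, exactly as in the proofs of Propositions~\ref{prop:lotsofevil} and \ref{prop:cyan39}.
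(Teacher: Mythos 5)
Your treatment of the three $d_{1+\epsilon}$ differentials is essentially the paper's: in bidegrees $(41,7)$, $(42,6)$, $(42,7)$ every nonzero class of $\Ext_{A_*}(\FF_2)$ is detected by evil, so the good sources must die, and $\bo$-filtration considerations single out the named targets. That part is fine.

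The gap is in the fourth differential. You argue that $\gdr{43}{8}{7}{magenta}$ cannot survive and then immediately conclude that it must \emph{support} a differential, enumerating possible evil targets. But there is a second way for a class to die: it can be \emph{hit}. In fact the serious competing scenario here is $d_2(\gdr{44}{7}{5}{cyan}) = \gdr{43}{8}{7}{magenta}$, in which case $\gdr{43}{8}{7}{magenta}$ is killed and the $d_{2+\epsilon}$ to $\ev{seagreen}{42}{9}$ never occurs. Your ``bookkeeping of available evil targets'' framing never confronts this alternative, and no amount of chasing evil classes in filtration $\ge 8$ will rule it out, because the competing differential is a good-to-good $d_2$. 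The paper disposes of it with a genuinely different argument: such a $d_2$ would propagate $v_1^4$-periodically, forcing $\gdr{83}{28}{7}{magenta}$ (the $v_1^{20}$-translate of the target) to die as well, contradicting the fact that $\Ext_{A_*}(\FF_2)$ is nonzero in degree $(83,28)$. This periodicity argument is the essential content of the proof and your proposal would not reproduce it.

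A secondary issue: you mention a ``$d_{1+\epsilon}$ candidate in yellow at $(42,8)$'' as the thing that needs ruling out. By Lemma~\ref{lem:akdr}, a good class with $s=1$ can only map to an evil class via $d_{s+1+\epsilon} = d_{2+\epsilon}$; there is no $d_{1+\epsilon}$ from a good class with positive $s$ to an evil class. So the enumeration of ``shorter good-to-evil'' differentials you sketch is vacuous here, and it obscures the actual obstruction (a potential incoming good $d_2$).
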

\begin{proof}
The only classes in $\Ex$ in degree $(41,7)$, $(42,7)$ and $(42,6)$ are detected by evil. Because of the $\bo$--filtrations, this forces the three $d_{1+\epsilon}$-differentials.

$\Ex$ is zero in degree $(43,8)$. Hence the source of the $d_{2+\epsilon}$ cannot survive. The only other possibility is that $d_{2}(\gdr{44}{7}{5}{cyan}) =\gdr{42}{7}{7}{darkmagenta}$. However, this differential would be $v_1^4$--periodic, and this would contradict the fact that there is a non-zero element in $\Ex$ in degree $(83,28)$.
\end{proof}

\begin{prop}\label{prop:hard}
There is a differential
\[d_{2+\epsilon}(\gdr{38}{7}{6}{lavenderrose}) =\ev{goldenpoppy}{37}{8}. \]
\end{prop}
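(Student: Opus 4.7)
The plan follows the template used throughout this section: first show that the target $\ev{yellow}{37}{8}$ cannot survive, then identify $\gdr{38}{7}{6}{pink}$ as the only available source, and finally read off the length of the differential from the $s$-coordinate of the source via Lemma~\ref{lem:akdr}. I would begin by consulting Figure~\ref{fig:v1Extchart} together with Proposition~\ref{prop:v1Extchart} and tabulating the non-zero classes of $\Ext^{8,45}_{A_*}(\FF_2)$ in bidegree $(t-s,s) = (37,8)$. Each such class should already be assigned a detecting class in an earlier proposition of this section, so that no Ext class remains for $\ev{yellow}{37}{8}$ to detect; consequently it must be the target of a differential.

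Next, I would enumerate the candidate sources. By Lemma~\ref{lem:akdr}, any differential into an evil class in $\bo$-filtration $8$ at chart coordinate $(37,8)$ must come from a good class at $(38,7)$ in some $\bo$-filtration $n = 8 - r$ with $s$-coordinate $s = r - 1$, via a differential of length $r + \epsilon$. Consulting Theorem~\ref{thm:HCalg} at bidegree $(38,7)$ and checking against each preceding proposition in this stem-range, the candidate sources at $\bo$-filtrations $n \neq 6$ are either absent from $H^{n,*,*}(\mc{C}_{alg})$ or have already been consumed as sources of differentials established earlier in this section. The only remaining candidate is $\gdr{38}{7}{6}{pink}$, corresponding to $r = 2$, and hence the differential must be $d_{2+\epsilon}$.

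The chief obstacle is bookkeeping: one must cross-reference every preceding proposition in this stem-range to confirm that no other class at chart coordinate $(38,7)$ survives to compete as a source, and in particular that longer $d_{r+\epsilon}$ candidates from lower $\bo$-filtrations have genuinely been used up rather than merely appearing unassigned. Once the $E_{1+\epsilon}$-page of the AKSS at the bidegrees $(37,8)$ and $(38,7)$ is fully tabulated, this reduces to a finite, routine verification.
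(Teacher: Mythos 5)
Your plan has a genuine gap at the step where you claim that ``the only remaining candidate is $\gdr{38}{7}{6}{pink}$''. That step is precisely what makes this proposition hard, and routine elimination does not settle it.

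By Lemma~\ref{lem:akdr}, a good class at chart coordinate $(38,7)$ in $\bo$-filtration $n$ has $s = 7-n$ and would hit the evil class at $(37,8)$ in $\bo$-filtration $8$ via a $d_{(8-n)+\epsilon}$-differential. Two good classes sit at $(38,7)$ on the $E_{1+\epsilon}$-page and are still alive at the point where this proposition is proved: $\gdr{38}{7}{6}{pink}$ (which would give $d_{2+\epsilon}$) and $\gdr{38}{7}{3}{orange}$ (which would give $d_{5+\epsilon}$). Moreover these candidates are not ``used up'': there is a nonzero element $v_0 y$ in $\Ext^{7,45}_{A_*}(\FF_2)$ that is good and must be detected by exactly one of them. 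So one of the two survives to detect $v_0 y$ and the other one kills $\ev{yellow}{37}{8}$; the bookkeeping you describe cannot decide which. The paper resolves the ambiguity with a much more delicate argument: it assumes $\gdr{38}{7}{6}{pink}$ survives, propagates forward with $v_1^4$-multiplication to $(46,11)$, invokes the Dichotomy Principle to show $d_0 l$ would have to be detected by $\gdr{46}{11}{6}{pink}$, and then derives a contradiction from $P y = 0$ (that Ext bidegree vanishes) while $P y \cdot v_0$ would have to equal $d_0 l \ne 0$. This $v_1$-periodicity argument is the essential content of the proof and is not replaceable by the finite verification you outline.
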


\begin{proof}
In bidegree $(38,7)$, the element $v_0y$ in $\Ex$, is either detected by $\gdr{38}{7}{6}{lavenderrose}$ or by $\gdr{38}{7}{3}{orange}$. Therefore, one of them supports a nontrivial differential that kills an evil class, and the other one survives.

Suppose that $\gdr{38}{7}{6}{lavenderrose}$ survives in the AKSS. In bidegree $(46,11)$, $d_0l$ is the only element in $\Ex$. Further, the only non-zero element in $\Ex$ in degree $(78,27)$ is $lP^4d_0$. We are in a region where $\Ex \cong \Ext_{A(4)_*}$ and hence $lP^4d_0$ maps to $v_1^{16} d_0l$ in the latter. In particular, $d_0l$ is $v_1$-periodic. This class is above the 1/3-line, and the only element in the AKSS left to detect $lP^4d_0$ is $\gdr{78}{27}{6}{lavenderrose}$. By the Dichotomy Principle, $d_0l$ is good and so must be detected by $\gdr{46}{11}{6}{lavenderrose}$. Finally, in $\E{bo}{alg}^{*,*,*}_{1}$, $v_1^4 \gdr{38}{7}{6}{lavenderrose} = \gdr{46}{11}{6}{lavenderrose}$, which implies that $Pyv_0 = d_0l$ (with zero indeterminacy). However, $Py = 0$ since this bidegree is zero in $\Ex$, a contradiction.
\end{proof}

\begin{prop}\label{prop:exception}
There are differentials
\begin{align*}
d_6( {\color{red}(42,8:2)}) &= {\color{goldenpoppy}(41,9:8)}, & d_5( {\color{red}(42,7:2}) &= {\color{darkmagenta}(41,8:7)} .
\end{align*}
\end{prop}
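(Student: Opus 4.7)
The plan is to proceed by the same kind of direct enumeration that powered the prior propositions in this section, but without invoking the $v_1^{32}$-periodic propagation trick used in Propositions~\ref{prop:blueyellow} and \ref{prop:orangeyellowred}. As the authors note, pushing that trick through here would require detailed control of $\Ex$ in stems exceeding $100$, which we do not assume; instead, we argue inside stem $42$ itself, using the bookkeeping of $\Ex$ through stem $42$ together with the differentials already recorded in Propositions~\ref{prop:cyan39}--\ref{prop:hard}.

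First I would tabulate all nontrivial elements of $\Ext^{s,t}_{A_*}(\FF_2)$ with $t-s = 42$ in the range $s \le 9$, using Bruner's tables together with Figure~\ref{fig:v1Extchart}. For each such element I would apply the Dichotomy Principle (Theorem~\ref{thm:dichotomy}): the circled classes of Figure~\ref{fig:v1Extchart} are $v_1$-torsion and must be evil, while each uncircled class has an identified $v_1$-periodic representative whose $\bo$-filtration is forced by its image above the $1/3$-line. Cross-referencing against Figure~\ref{fig:AAKSSchart} and the accounting already completed in this section, I would check that every such element of $\Ex$ is already detected by an AKSS class other than $\gdr{42}{7}{2}{red}$ or $\gdr{42}{8}{2}{red}$. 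This shows these two red classes have no Ext class available to detect, so each must be either hit or support a differential.

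Next I would argue they support rather than receive the claimed differentials. No earlier class in a lower $\bo$-filtration column of stem $43$ can hit them, since by the structure of Figure~\ref{fig:AAKSSchart} and the list of good-good differentials already established, the only candidates have been used elsewhere. Once they are forced to support differentials, Lemma~\ref{lem:akdr} restricts the targets: for $\gdr{42}{7}{2}{red}$ any good-to-good $d_r^{akss}$ lands in $\bo$-filtration $2+r$, Adams filtration $6-r$, and internal degree $t=49$; for $\gdr{42}{8}{2}{red}$ the constraints are $\bo$-filtration $2+r$, Adams filtration $7-r$, $t=50$. Running through Figure~\ref{fig:AAKSSchart} in these columns and eliminating the classes that are already sources, targets, or otherwise accounted for leaves $\ev{magenta}{41}{8}{7}$ and $\ev{yellow}{41}{9}{8}$ as the unique possibilities, respectively, yielding the two differentials.

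The main obstacle in carrying this out is ruling out the alternative that $\gdr{42}{7}{2}{red}$ or $\gdr{42}{8}{2}{red}$ supports a shorter good-to-evil $d^{akss}_{s+1+\epsilon}$ landing in the evil complex $V^{*,*}$; such a differential would still kill the red class but would not produce the good-to-good differential claimed. Excluding this requires checking that the relevant evil targets in $V^{n, t}$ for $t \in \{49, 50\}$ are either already killed, already used to detect a $v_1$-torsion class of $\Ext$, or nonexistent, which is the most delicate bookkeeping step; once done, the claimed differentials are the only remaining possibilities.
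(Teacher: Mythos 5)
Your proposal correctly identifies the central difficulty — ruling out the alternative that $\gdr{42}{7}{2}{red}$ or $\gdr{42}{8}{2}{red}$ supports a good-to-evil differential $d_{s+1+\epsilon}$ rather than the claimed good-to-good ones — but then leaves this as an acknowledged, unresolved bookkeeping problem. That is precisely the part of the argument that cannot be finessed by enumeration alone, and the paper does not resolve it by bookkeeping either. Instead it uses the multiplicative structure of the AKSS in two targeted ways that your proposal omits.

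For the $d_6$ on $\gdr{42}{8}{2}{red}$, the paper argues from the \emph{target} side, not the source side: since $\gdr{40}{8}{8}{yellow}$ detects the unique $\Ex$ class in bidegree $(40,8)$ it is a permanent cycle, and by $h_1$-linearity so is $\gdr{41}{9}{8}{yellow}$. Because $\Ex$ vanishes in $(41,9)$, a surviving permanent cycle there is impossible, so $\gdr{41}{9}{8}{yellow}$ must be hit, and the only source in range is $\gdr{42}{8}{2}{red}$ via $d_6$. This sidesteps any need to examine what $\gdr{42}{8}{2}{red}$ could do; its fate is forced from below.

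For the $d_5$ on $\gdr{42}{7}{2}{red}$, the paper rules out the good-to-evil $d_{6+\epsilon}$ by $v_0$-linearity: since $v_0 \cdot \gdr{42}{7}{2}{red} = \gdr{42}{8}{2}{red}$ in $\mc{C}_{alg}$, a $d_{6+\epsilon}$ on $\gdr{42}{7}{2}{red}$ would mean $\gdr{42}{7}{2}{red}$ supports no $d_r$ with $r \le 6$, hence $d_6(\gdr{42}{8}{2}{red}) = v_0 \, d_6(\gdr{42}{7}{2}{red}) = 0$, contradicting the $d_6$ just established. This two-line algebraic argument replaces what in your outline would be a laborious and error-prone audit of $V^{n,t}$ for $t \in \{49,50\}$. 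Without some such structural input, the ``delicate bookkeeping step'' you flag is a genuine gap: nothing in your proposal explains why the evil alternative is unavailable. (Separately, note that your candidate targets should be the \emph{good} classes $\gdr{41}{8}{7}{magenta}$ and $\gdr{41}{9}{8}{yellow}$, not evil ones; good-to-good differentials are what is being claimed.)
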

\begin{proof}
The only class in $\Ex$ in degree $(40,8)$ is detected by ${\color{goldenpoppy}(40,8:8)}$ so by $h_1$--linearity, ${\color{goldenpoppy}(41,9:8)}$ is a permanent cycle. However, $\Ex$ in degree $(41,9)$ is zero. Therefore, the class ${\color{goldenpoppy}(41,9:8)}$ must be hit by a differential. The only possibility is the $d_6$--differential. 

The only class in $\Ex$ in degree $(42,7)$ is detected by evil. Therefore, $\gdr{42}{7}{2}{red}$ must support a differential. If it kills evil, that differential would be a $d_{6+\epsilon}$--differential. We have $v_0\gdr{42}{7}{2}{red} = \gdr{42}{8}{2}{red} $, so such a $d_{6+\epsilon}$ would imply that $d_{6}(\gdr{42}{8}{2}{red} )=0$, contradicting what we have just shown.
The only possibility is this $d_5$--differential.
\end{proof}


\section{Computation of the topological $\bo$-resolution}\label{sec:topcomp}

In this section, we deduce the differentials in the topological $\bo$-based Adams spectral sequence from known computations of the stable homotopy groups of spheres (see \cite{Isaksen} for example). The computation is depicted in Figure~\ref{fig:boASSchart}. We find that certain products in $\pi_\ast$, which are nontrivial extensions in the classical Adams spectral sequence, are products in $E_2$-page of the $\bo$-based Adams spectral sequence, and so they are not exotic extensions. 

\begin{figure}
\includegraphics[angle=90,height=0.9\textheight]{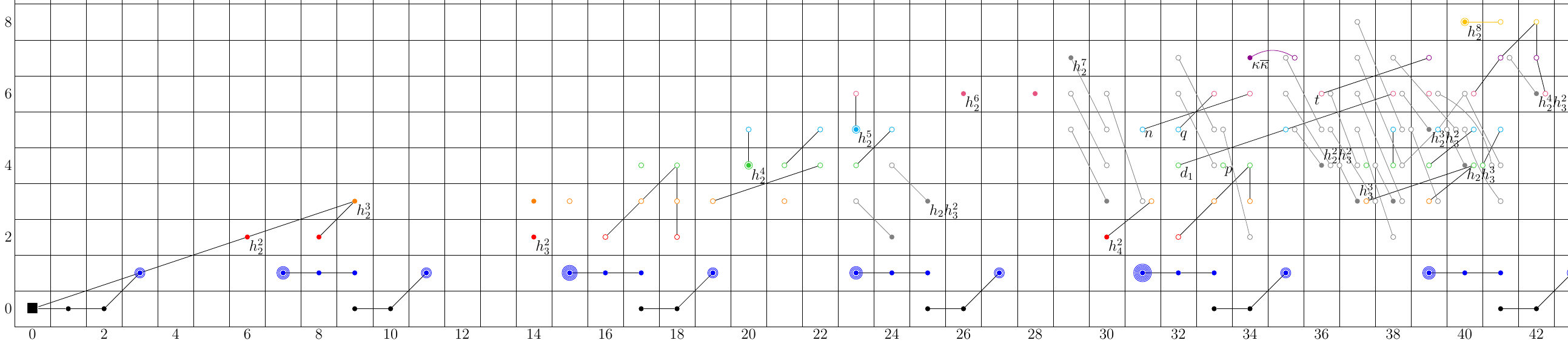}
\caption{The $\bo$ Adams Spectral Sequence. The horizontal axis denotes the topological degree and the vertical axis (and the color) denotes the $\bo$-filtration. 
The $\blacksquare$  denotes $\mathbb{Z}_{(2)}$.  A $\bullet$ denotes a copy of $\mathbb{Z}/2$ detected by a good class and $\circ$ a copy of $\mathbb{Z}/2$ detected by an evil class. 
A \circled{$\bullet$} denotes a copy $\mathbb{Z}/4$, a \circled{\circled{$\bullet$}} a copy of $\mathbb{Z}/8$ etc.. A line between classes which increases topological degree by one is multiplication by $\eta$ and one which increases topological degree by $3$ is multiplication by $\nu$. Gray lines between gray classes show the differentials.
}\label{fig:boASSchart}
\end{figure}

\begin{rem}\label{rem:filt}
In the following computation, we will use the fact that the map from $\bo$ to $H\mathbb{F}_2$ induces a map of spectral sequences
\[\E{bo}{}^{*,*}_2 \to \Ex^{*,*}.\]
In particular, if an element $x \in \pi_*$ is detected in $\Ex$ by a class of Adams filtration $s$, then it must be detected in $\E{bo}{}^{*,*}_2 $ by a class of $\bo$--filtration $n \leq s$. 
\end{rem}

It is straightforward to see that there are no differentials up to stem 28.

\ \\
\noindent
{\bf Stems 29-31}

\noindent

\begin{prop}
The element $\gdr{30}{2}{2}{red}$ survives and detects $\theta_4$.
\end{prop}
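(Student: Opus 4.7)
The plan is to use the known existence of $\theta_4 \in \pi_{30}^s$ (constructed by Barratt--Jones--Mahowald, and detected in the classical Adams spectral sequence by $h_4^2$ in Adams filtration $2$), combined with the filtration comparison of Remark~\ref{rem:filt}, to force $\theta_4$ to be detected in the topological AKSS at $\bo$-filtration at most $2$. Via the map $\E{bo}{}_2 \to \Ex$, a class detecting $\theta_4$ in the $\bo$-ASS maps to a class of Adams filtration $\ge n$; since $h_4^2$ has Adams filtration $2$ and no class of lower Adams filtration is available in this stem, the $\bo$-filtration must satisfy $n \le 2$.

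Next, I would read off the surviving classes of the topological AKSS in stem $30$ at total filtration $\le 2$. Using Corollary~\ref{cor:HC}, the groups $H^{0,30}(\mc{C})$ and $H^{1,31}(\mc{C})$ vanish (stem $30$ is not realised by the generators listed there), so the only candidate coming from the good complex in the relevant range is $\gdr{30}{2}{2}{red}$, the image of $h_4^2$ under the edge homomorphism $\E{bo}{}_2^{2,32} \to \E{ass}{}_2^{2,32}$. Possible evil contributions in $H^{n,30}(V)$ for $n \le 2$ are eliminated by transporting the algebraic AKSS calculations of Section~\ref{sec:algcomp} via the isomorphism $H^{*,*}(V) \cong H^{*,0,*}(V_{alg})$: the stem-$30$ analysis there shows that any evil class at filtration $\le 2$ in this stem has already been cancelled.

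The main step, and where the argument is most delicate, is verifying that $\gdr{30}{2}{2}{red}$ is a permanent cycle in the topological AKSS. Potential topological differentials emanating from it would land in stem $29$ at filtration $\ge 3$; by the stem-$29$ analysis of the previous section, all plausible targets are either already hit by differentials of lower $\bo$-filtration or are known permanent cycles detecting non-zero elements of $\pi_{29}^s$. A short case check, using the Dichotomy Principle (Theorem~\ref{thm:dichotomy}) together with Lemma~\ref{lem:akdr} to restrict the possible natures of source and target, rules out every remaining candidate. Combined with the uniqueness of a filtration-$\le 2$ class in stem $30$ surviving the $\bo$-ASS, this forces $\gdr{30}{2}{2}{red}$ to be a permanent cycle detecting $\theta_4$.
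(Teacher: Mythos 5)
Your proposal uses the same key ingredients as the paper — $\theta_4$ is detected in $\Ex$ by $h_4^2$ in Adams filtration~$2$, so by Remark~\ref{rem:filt} its $\bo$-filtration is at most~$2$, and $\gdr{30}{2}{2}{red}$ is the only class there — but you then frame a ``main step'' of separately verifying that $\gdr{30}{2}{2}{red}$ is a permanent cycle by a case check of differentials. That step is redundant: once you know $\theta_4$ must be detected by a surviving class of $\bo$-filtration $\le 2$ and $\gdr{30}{2}{2}{red}$ is the unique candidate, it is forced to be a permanent cycle with no need to enumerate potential $d_r$'s. Worse, your case-check as stated is shaky: you invoke ``permanent cycles detecting non-zero elements of $\pi_{29}^s$,'' but $\pi_{29}^s = 0$ (the paper uses this precisely to kill all three stem-$29$ classes in the very next proposition), and the claim that the plausible targets are ``already hit by differentials of lower $\bo$-filtration'' requires an ordering of differentials that is not established at this point in the argument. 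The paper's proof is exactly your first two paragraphs; drop the third.
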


\begin{proof}
The element $\gdr{30}{2}{2}{red}$ maps to $h_4^2$ in $\Ex$, which detects $\theta_4$. Since there are no elements with $\bo$-filtration lower than $\gdr{30}{2}{2}{red}$, by Remark \ref{rem:filt}, $\gdr{30}{2}{2}{red}$ survives and detects $\theta_4$.
\end{proof}

\begin{prop} 
There are differentials
$d_{2+\epsilon}(\gdr{30}{3}{3}{orange}) =  \evr{29}{5}{cyan}$, $d_2(\evr{30}{4}{limegreen}) =  \evr{29}{6}{lavenderrose}$ and $d_{2-\epsilon}(\evr{30}{5}{cyan}) =  \gdr{29}{7}{7}{darkmagenta}$.
\end{prop}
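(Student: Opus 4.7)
The plan is to deduce these differentials from known computations of $\pi_{29}^s$ and $\pi_{30}^s$ at the prime $2$ (see \cite{Isaksen}), combined with the previous proposition identifying $\theta_4 \in \pi_{30}^s$ as $\gdr{30}{2}{2}{red}$.

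Since $\pi_{30}^s \cong \ZZ/2$ is already detected by $\gdr{30}{2}{2}{red}$, none of $\gdr{30}{3}{3}{orange}$, $\evr{30}{4}{green}$, $\evr{30}{5}{cyan}$ can detect a nonzero element of $\pi_{30}^s$. Inspection of Figure~\ref{fig:boASSchart} rules out their being hit by differentials from stem $31$, so each must support a nontrivial differential. Dually, the candidate targets $\evr{29}{5}{cyan}$, $\evr{29}{6}{pink}$, $\gdr{29}{7}{7}{magenta}$ in stem $29$ cannot detect classes in $\pi_{29}^s$ at their $bo$-filtrations, and so must each be hit. By Lemma~\ref{lem:akdr}, any such differential has length $r$ or $r\pm \epsilon$ dictated by the good/evil pattern of its endpoints, and the $bo$-filtration gaps $3 \to 5$, $4 \to 6$, $5 \to 7$ are each equal to $2$. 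This gives precisely the three differentials in the proposition, provided the matching between sources and targets is as stated.

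The main obstacle is ruling out crossing or longer pairings (for instance, a $d_{3+\epsilon}$ from $\gdr{30}{3}{3}{orange}$ to $\evr{29}{6}{pink}$, compensated by a longer differential on $\evr{30}{4}{green}$). I expect to resolve this by two complementary tools. First, the multiplicative structure of the topological AKSS together with known relations in $\pi_*^s$, in particular $h_1$- and $h_2$-multiples of surviving classes in stems $27$-$28$, should identify some of these classes as $\eta$- or $\nu$-multiples of elements whose fate in the AKSS is already settled, thereby directly pinning down one endpoint of a differential. Second, a counting argument on the sizes of $\pi_{29}^s$ and $\pi_{30}^s$ shows that the shortest-differential matching already accounts for all necessary extinctions in these stems, leaving no room for longer crossing alternatives.
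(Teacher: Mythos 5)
The paper's own proof is a one-liner: $\pi_{29}^s = 0$ at $p=2$, so the three stem-$29$ classes $\evr{29}{5}{cyan}$, $\evr{29}{6}{pink}$, $\gdr{29}{7}{7}{magenta}$ must all die, and (since they have nothing to hit) must each be the target of a differential. The matching with sources in stem $30$ is then forced without appeal to multiplicativity or counting: after the $E_{1+\epsilon}$ page there is no $d_1$ or $d_{1-\epsilon}$ differential, so $\evr{30}{5}{cyan}$ cannot reach $\evr{29}{6}{pink}$ and $\evr{30}{4}{green}$ cannot reach $\evr{29}{5}{cyan}$. Hence the only class that can possibly hit $\gdr{29}{7}{7}{magenta}$ is $\evr{30}{5}{cyan}$; having assigned that, the only class that can still hit $\evr{29}{6}{pink}$ is $\evr{30}{4}{green}$; and only $\gdr{30}{3}{3}{orange}$ is left to hit $\evr{29}{5}{cyan}$. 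There is no room for ``crossing'' pairings, so the extra machinery you invoke to rule them out is unnecessary.

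Two concrete flaws in your write-up. First, you cite Lemma~\ref{lem:akdr} for the constraint on differential lengths, but that lemma concerns the \emph{algebraic} AKSS; the paper explicitly remarks, when introducing the topological AKSS, that the analogous dimensional restriction fails there (``in the topological case there are no dimensional restrictions''). What you actually need is only the bookkeeping rule that the length of a $d_{r\pm\epsilon}$ equals the $bo$-filtration gap of source and target, together with the observation above that gap-$1$ differentials are already past. Second, you hedge with ``cannot detect classes in $\pi_{29}^s$ at their $bo$-filtrations'' rather than simply stating the fact that $\pi_{29}^s=0$; and you do not actually carry out the matching argument but instead defer it to hoped-for multiplicative and counting arguments, leaving the uniqueness step --- the only nontrivial part of the proposition --- unproven. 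Your appeal to $\pi_{30}^s\cong\ZZ/2$ is a legitimate (and in fact complementary) starting point, since it shows the stem-$30$ sources must die, but it does not by itself produce the matching.
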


\begin{proof}
Since $\pi_{29}=0$, none of the three elements in the 29-stem can survive. This forces these $d_2$--differentials.
\end{proof}

\begin{prop}
One of $\evrm{31}{3}{orange}{1}$ and $\evrm{31}{3}{orange}{2}$ supports a $d_3$ differential that kills $\evr{30}{6}{lavenderrose}$, the other one survives and detects $\eta\theta_4$. Without loss of generality, we adopt the convention that $d_3(\evrm{31}{3}{orange}{1}) = \evr{30}{6}{lavenderrose}$, and $\evrm{31}{3}{orange}{2}$ survives and detects $\eta\theta_4$.
\end{prop}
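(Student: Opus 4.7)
The plan is to combine known values of $\pi_{30}^s$ and $\pi_{31}^s$ with the structure of Figure~\ref{fig:boASSchart} near stems 30 and 31. Since $\pi_{30}^s$ is $\ZZ/2$ at the prime $2$, generated by $\theta_4$, which the preceding proposition shows is detected in bo-filtration $2$ by $\gdr{30}{2}{2}{red}$, no class at stem $30$ of bo-filtration greater than $2$ can survive to $E_\infty$. In particular, $\evr{30}{6}{pink}$ must be killed by a differential.

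I would then enumerate the possible sources of a differential hitting $\evr{30}{6}{pink}$. For a length-$r$ topological AKSS differential, stem decreases by $1$ and both $y = s+n$ and the bo-filtration $n$ increase by $r$, so any candidate source lies at position $(31, 6-r : 6-r)$ in the chart. After taking into account the differentials in stems $29$--$30$ established earlier in Section~\ref{sec:topcomp} and the list of classes appearing on the $E_2$-page, the only surviving candidates are $\evrm{31}{3}{orange}{1}$ and $\evrm{31}{3}{orange}{2}$, and the hitting differential must be a $d_3$.

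Next, I would invoke the nontriviality of $\eta\theta_4 \in \pi_{31}^s$, which in the classical Adams spectral sequence is detected by $h_1 h_4^2 \in \Ex^{3,34}$. By Remark~\ref{rem:filt}, $\eta\theta_4$ must be detected in the bo-ASS by a class of classical Adams filtration at most $3$. Inspection of Figure~\ref{fig:boASSchart} at bidegree $(31,3)$ shows that, after accounting for previously computed differentials, the only surviving candidates of filtration $3$ at this stem are among $\evrm{31}{3}{orange}{1}$ and $\evrm{31}{3}{orange}{2}$. In particular, no class of bo-filtration $\leq 2$ survives at $(31,3)$ to detect the naive product $\eta \cdot \gdr{30}{2}{2}{red}$, which signals a hidden $\eta$-extension from $[\theta_4]$.

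Combining these observations, the $d_3$ must annihilate exactly one direction in the two-dimensional $\FF_2$-span of $\evrm{31}{3}{orange}{1}$ and $\evrm{31}{3}{orange}{2}$: its kernel detects $\eta\theta_4$, and its image kills $\evr{30}{6}{pink}$. Choosing a basis compatible with this kernel/image decomposition yields the stated convention. The main obstacle is the verification that no class of bo-filtration $\leq 2$ at $(31,3)$ survives to detect $\eta\theta_4$; that is, ruling out a red-class detector for $\eta\theta_4$ and confirming the hidden $\eta$-extension. This reduces to careful chart analysis, cross-referenced with the algebraic AKSS computed in Section~\ref{sec:algcomp} and the already established differentials in stems $29$--$30$, which collectively leave the two evil orange classes as the unique candidates.
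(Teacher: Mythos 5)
Your proposal is correct and follows essentially the same reasoning as the paper's proof: $\pi_{30} = \mathbb{Z}/2$ is already accounted for by $\gdr{30}{2}{2}{red}$, so $\evr{30}{6}{pink}$ must die; the filtration arithmetic forces a $d_3$ from one of the two evil orange classes at stem $31$; and the surviving orange class must detect $\eta\theta_4$ because $\eta\theta_4$ has classical Adams filtration $3$, so by Remark~\ref{rem:filt} its $\bo$-filtration is at most $3$. Your extra discussion of the hidden $\eta$-extension from $\theta_4$ is a valid refinement not spelled out in the paper, but be careful with the phrasing ``no class of bo-filtration $\leq 2$ survives at $(31,3)$'': one must separately observe (as you implicitly do) that the $v_0$-good blue tower at stem $31$ is accounted for by the image of $J$ and hence cannot detect a $v_1$-torsion element like $\eta\theta_4$.
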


\begin{proof}
Since $\pi_{30}=\mathbb{Z}/2$, $\evr{30}{6}{lavenderrose}$ must be hit by a differential. The only possibility is that 
it be hit by
one of $\evrm{31}{3}{orange}{1}$ and $\evrm{31}{3}{orange}{2}$ by a $d_3$--differential. The other one must survive and detect $\eta\theta_4$, since the Adams filtration of $\eta\theta_4$ is $s=3$.
\end{proof}

\begin{cor}
The element $\evr{31}{5}{cyan}$ survives and detects $\{n\}$.
\end{cor}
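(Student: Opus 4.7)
The plan is a straightforward Adams-filtration argument on top of the preceding proposition. The element $\{n\}\in\pi_{31}^s$ is detected in the classical Adams spectral sequence by $n\in\Ext^{5,36}_{A_*}(\FF_2)$, which has Adams filtration exactly $5$. By Remark~\ref{rem:filt}, any class in the topological $\bo$-AKSS detecting $\{n\}$ must therefore have $\bo$-filtration at most $5$.

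I would then exhaust the possible detectors of $\bo$-filtration strictly less than $5$ in stem $31$, using the preceding proposition together with Figure~\ref{fig:boASSchart}. The blue tower $\gdr{31}{*}{1}{blue}$ detects $h_5$-related elements (the bottom class supporting the $\bo$-AKSS counterpart of the classical differential $d_2(h_5)=h_0h_4^2$, inherited via the map of spectral sequences to the classical ASS), the class $\evrm{31}{3}{orange}{1}$ is killed by a $d_3$-differential, and $\evrm{31}{3}{orange}{2}$ detects $\eta\theta_4$. Since $\{n\}\in\pi_{31}^s$ is distinct from $\eta\theta_4$ and from any $h_5$-related element (a standard fact, see e.g.\ \cite{Isaksen}), the only remaining candidate of $\bo$-filtration $\le 5$ at this stem is $\evr{31}{5}{cyan}$. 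This forces $\evr{31}{5}{cyan}$ to be a permanent cycle and to detect $\{n\}$ simultaneously.

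The main obstacle is the completeness of this enumeration: one must confirm that there are no further surviving classes of $\bo$-filtration $\le 5$ in stem $31$ lurking on the $E_{1+\epsilon}$-page of Figure~\ref{fig:boASSchart} beyond the blue tower, the two orange classes, and $\evr{31}{5}{cyan}$. This is already essentially encoded in the algebraic calculation of Section~\ref{sec:algcomp} via Theorem~\ref{thm:HCalg} and the Dichotomy Principle (note that $n$ is $v_1$-torsion by Proposition~\ref{prop:v1Extchart}, so Theorem~\ref{thm:dichotomy} independently predicts its detector is evil). The remaining check, that no higher-length topological differential targets $\evr{31}{5}{cyan}$ from stem $32$, is ensured by the filtration count above: any such hit would leave $\{n\}$ without a detector of the requisite filtration.
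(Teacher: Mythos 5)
Your proof is correct and is essentially the argument the paper leaves implicit: after the preceding proposition disposes of the two orange classes, a filtration count using Remark~\ref{rem:filt} (since $n$ has Adams filtration $5$) together with inspection of Figure~\ref{fig:boASSchart} shows that $\evr{31}{5}{cyan}$ is the only remaining class of $\bo$-filtration $\le 5$ in stem $31$, forcing it simultaneously to survive and to detect $\{n\}$. The parenthetical about $d_2(h_5)=h_0h_4^2$ is tangential and could be dropped without loss; the relevant point is just that the blue tower accounts for the image-of-$J$ summand, which is disjoint from $\{n\}$.
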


\ \\
\noindent
{\bf Stems 32-34}

\noindent

\begin{prop}
The elements $\evr{32}{2}{red}$, $\evr{32}{4}{limegreen}$, $\evr{32}{5}{cyan}$ and $\evr{33}{3}{orange}$ survive and detect $\eta_5$, $\{d_1\}$, $\{q\}$ and $\eta\eta_5$ respectively. One of the elements $\evrm{33}{4}{limegreen}{1}$ and $\evrm{33}{4}{limegreen}{2}$, say $\evrm{33}{4}{limegreen}{1}$, survives and detects $\nu\theta_4$. (Note that in the classical Adams spectral sequence, $p$ detects $\nu\theta_4$.)
\end{prop}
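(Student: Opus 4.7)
The plan is to invoke Remark~\ref{rem:filt} to bound the $bo$-filtration of each named homotopy element from above by its classical Adams filtration, and then to inspect Figure~\ref{fig:boASSchart} and the analysis of Section~\ref{sec:algcomp} to show that the only surviving class of that filtration in the appropriate stem is the one listed. In the classical Adams $E_\infty$-page, $\eta_5$ is detected by $h_1 h_5$ (filtration $2$), $\{d_1\}$ by $d_1$ (filtration $4$), $\{q\}$ by $q$ (filtration $5$), $\eta\eta_5$ by $h_1^2 h_5$ (filtration $3$), and $\nu\theta_4$ by $p$ (filtration $4$). Hence in the $\bo$-ASS each of these elements is detected at $\bo$-filtration at most the listed value.

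The second step is to read off from Figure~\ref{fig:boASSchart}, and from the differentials established earlier in this section and in Section~\ref{sec:algcomp}, the list of $E_\infty$-survivors in stems $32$ and $33$ of the requisite $\bo$-filtration. In stem $32$, candidates at $\bo$-filtration $\leq 5$ which have not already been used up as sources or targets of topological AKSS differentials are precisely $\evr{32}{2}{red}$, $\evr{32}{4}{green}$, and $\evr{32}{5}{cyan}$. In stem $33$, the candidates at $\bo$-filtration $\leq 4$ are $\evr{33}{3}{orange}$, $\evrm{33}{4}{green}{1}$, and $\evrm{33}{4}{green}{2}$. Verifying the survival of each requires checking via Lemma~\ref{lem:akdr} that no topological $d_r^{akss}$-differential (for $r\geq 1+\epsilon$) can emanate from or land on these classes: candidate targets either vanish in Figure~\ref{fig:boASSchart} or have already been hit in earlier propositions, while candidate sources either do not exist or have been used to kill different classes. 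Because each of $\eta_5$, $\{d_1\}$, $\{q\}$, and $\eta\eta_5$ is nonzero in $\pi^s_\ast$, Remark~\ref{rem:filt} then forces the unique detection matching the filtration bound.

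For the $\nu\theta_4$ claim, both $\evrm{33}{4}{green}{1}$ and $\evrm{33}{4}{green}{2}$ survive to $E_\infty$ at $\bo$-filtration $4$, so $\nu\theta_4$ is detected by one of them, and the labelling in the statement is pure convention; the other green class will detect a different homotopy element, to be pinned down when stem $33$ is revisited. The main obstacle I expect is the $\eta_5$ assertion: since $\{h_5\}$ is detected by $\gdr{31}{1}{1}{blue}$ at $\bo$-filtration $1$ and $\eta$ lies in $\bo$-filtration $0$, a naive multiplicativity argument would place $\eta_5 = \eta\cdot\{h_5\}$ at a (good) blue class in $\bo$-filtration $1$. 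The resolution I plan to implement is to verify via the $h_1$-multiplicative structure inherited from the algebraic AKSS that any such blue candidate at $(32,2)$ is killed in $E_\infty^{\bo}$, so that $h_1\cdot \gdr{31}{1}{1}{blue}$ vanishes in the $\bo$-ASS and $\eta_5$ is pushed up into a hidden extension landing on the evil class $\evr{32}{2}{red}$, exactly of the form permitted by Lemma~\ref{lem:akhe}.
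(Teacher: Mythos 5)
Your core strategy matches the paper's: bound each element's $\bo$-filtration above by its classical Adams filtration via Remark~\ref{rem:filt}, then read off the unique surviving candidate from Figure~\ref{fig:boASSchart}. The paper's entire proof is the one-line ``This follows from Remark~\ref{rem:filt}.''

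The third paragraph of your plan, however, is a wrong turn. You correctly observe that multiplicativity does not by itself pin $\eta_5$ to $\bo$-filtration $1$ (the detecting product $[\eta]\cdot[\{h_5\}]$ may vanish in $E_\infty$), so the lower bound of $1$ from multiplicativity and the upper bound of $2$ from Remark~\ref{rem:filt} leave an ambiguity. But Lemma~\ref{lem:akhe} is the wrong instrument to close it: that lemma constrains hidden extensions in the \emph{algebraic} AKSS---it is where the paper establishes that $h_1 h_5 \in \Ext_{A_*}(\FF_2)$ is detected by an evil class---and it has no direct bearing on the \emph{topological} AKSS, where the paper explicitly cautions at the end of Section~\ref{sec:rules} that the algebraic dimensional restrictions on differentials and extensions do not carry over. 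Nor is a hidden-extension argument needed: evil classes at $\bo$-filtration $0$ or $1$ necessarily sit at $y$-coordinate $0$ or $1$ (since they have $s = 0$), not at $y=2$, and $H^{0,*}(\mc{C})$ and $H^{1,*}(\mc{C})$ from Corollary~\ref{cor:HC} have no generator at coordinate $(32,2)$ either. So the $E_\infty$-page has nothing at $(32,2)$ in $\bo$-filtration $\le 1$, and $\eta_5$ lands on $\evr{32}{2}{red}$ by elimination. The vanishing of $h_1\cdot\gdr{31}{1}{1}{blue}$ in the topological $E_\infty$ is a \emph{consequence} of this determination, not something to verify in advance. The other four detections in your plan are correct and follow by the same filtration-count read from the chart.
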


\begin{proof}
This follows from Remark~\ref{rem:filt}.
\end{proof}

\begin{prop}There is a differential
$d_2(\evrm{33}{4}{limegreen}{2})=\evr{32}{6}{lavenderrose}$.
One of the elements $\evrm{33}{5}{cyan}{1}$ and $\evrm{33}{5}{cyan}{2}$, say $\evrm{33}{5}{cyan}{1}$, supports a $d_2$ differential that kills $\evr{32}{7}{darkmagenta}$. That is, $d_2(\evrm{33}{5}{cyan}{1})=\evr{32}{7}{darkmagenta}$.
\end{prop}

\begin{proof}
Since all classes in $\pi_{32}$ are accounted for, this is the only possibility to kill $\evr{32}{6}{lavenderrose}$ and $\evr{32}{7}{darkmagenta}$.
\end{proof}

\begin{prop}
There is a differential $d_3(\evr{34}{2}{red})=\evrm{33}{5}{cyan}{2}$ and $\evr{33}{6}{lavenderrose}$ survives to detect $\eta\{q\}$.
\end{prop}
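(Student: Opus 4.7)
The plan is to combine multiplicativity in the topological AKSS with the known values of $\pi_{33}^s$ and $\pi_{34}^s$ (from e.g.\ \cite{Isaksen}), using the previous proposition as the starting point. From the classical computations, $\eta\{q\}$ is a nontrivial element of $\pi_{33}^s$ detected by $h_1 q$ in Adams filtration $6$. By Remark~\ref{rem:filt}, any detecting class in the AKSS for $\eta\{q\}$ must sit in $bo$-filtration $n \le 6$.

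Next, I would invoke multiplicativity. By the previous proposition, $\{q\}$ is detected by $\evr{32}{5}{cyan}$ (i.e.\ $n=5$, $s=0$, $t=37$), while $\eta$ is detected by $h_1 \in \pi_1\bo$, sitting in $bo$-filtration $0$ at $(1,1:0)$. Their product in the AKSS lands at chart coordinates $(33,6)$ with $n=5$, i.e.\ a cyan class in $(33,6)$. Since the proposition demands detection by $\evr{33}{6}{pink}$ of $bo$-filtration $6$, a hidden $bo$-filtration jump must occur, and so the naive cyan representative of $\eta\cdot \{q\}$ must die. Coupled with a comparison between the order of $\pi_{33}^s$ and the count of surviving AKSS classes in stem $33$ after all previously established differentials, this forces exactly one additional differential in the $33$--$34$ range.

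To pin down the differential, I would check which source-target pairs are compatible with the AKSS chart motion. Using the convention that the topological $d_r$ moves a class with fixed $s$ by $(-1, +r)$ in the $(t-s-n, s+n)$ coordinates (so $\Delta n = r$ and $\Delta t = r-1$), the pair $\evr{34}{2}{red} \to \evrm{33}{5}{cyan}{2}$ has $\Delta y = 3$ and is uniquely realized by a $d_3$. All other candidate sources in stem $34$ of low $bo$-filtration (e.g.\ the orange classes) are already accounted for or incompatible by $bo$-filtration. Consistency is reinforced by the fact that $\evr{34}{2}{red}$ maps to $h_2h_5 \in \Ex$, which is known not to detect a permanent cycle in $\pi_{34}^s$, so $\evr{34}{2}{red}$ must die. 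Once $\evrm{33}{5}{cyan}{2}$ is killed, $\evr{33}{6}{pink}$ becomes the unique surviving class in stem $33$ of $bo$-filtration $\le 6$ available to detect $\eta\{q\}$, giving the second half of the claim.

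The main obstacle is the bookkeeping in the count argument: one needs to verify that $|\pi_{33}^s|$ together with previously accounted-for classes leaves exactly one excess survivor in stem $33$, so that $d_3(\evr{34}{2}{red}) = \evrm{33}{5}{cyan}{2}$ is forced rather than merely possible. A secondary subtlety is that the hidden $bo$-filtration jump for $\eta\cdot \{q\}$ must land precisely at $\evr{33}{6}{pink}$ rather than in strictly higher $bo$-filtration, which one can confirm using the classical Adams filtration bound from Remark~\ref{rem:filt} and the fact that no class of stem $33$ in $bo$-filtration $7$ or higher can receive the image of $\eta\cdot\{q\}$ compatibly with the map to $\Ex$.
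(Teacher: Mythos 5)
Your proposal and the paper's proof take genuinely different routes, and yours has an unfilled gap that the paper's approach avoids.

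The paper's argument is short and sharp: in the algebraic AKSS the $\Ext$ classes $h_2h_5$ and $h_0p$ are detected by $\evr{34}{2}{red}$ and $\evrm{33}{5}{cyan}{2}$ respectively, so the known classical Adams differential $d_3(h_2h_5) = h_0p$ transfers directly under the map $\E{bo}{}_2 \to \Ex$ of Remark~\ref{rem:filt} to the topological $d_3$ with both source, target, and length determined at once (the $\bo$-filtration jump $2 \to 5$ is exactly $3$ because the detecting classes are evil, hence $s = 0$). The second half is then just a ``what's left'' statement: once $\evrm{33}{5}{cyan}{2}$ is killed, $\evr{33}{6}{pink}$ is the only class remaining to detect $\eta\{q\}$.

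Your route instead tries to force the differential by a dimension count in stem $33$, supplemented by a multiplicativity argument showing that $\eta\{q\}$ cannot be detected in $\bo$-filtration $\le 5$ and hence must jump to filtration $6$. There are two concrete gaps. First, you explicitly defer the bookkeeping (``one needs to verify that $|\pi_{33}^s|$ together with previously accounted-for classes leaves exactly one excess survivor''), but this count is precisely the hard part, and without it the differential is not forced. Second, your multiplicativity step is circular as stated: you deduce that the naive cyan product representative of $\eta\cdot\{q\}$ must die \emph{because} ``the proposition demands detection by $\evr{33}{6}{pink}$,'' i.e.\ you are invoking the conclusion you are trying to establish. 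To make your argument non-circular, you would need to show independently that the candidate product class at $(33,6)$ in $\bo$-filtration $5$ is either absent or supports/receives a differential. The observation that $\evr{34}{2}{red}$ maps to $h_2h_5$, which does not survive, is a correct and useful ingredient (it shows $\evr{34}{2}{red}$ is not a permanent cycle), but by itself it determines neither the length of the differential nor its target; that is exactly what the paper's identification of the detected $\Ext$ classes and the explicit classical $d_3$ supply.
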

\begin{proof}
Since $h_2h_5$ is detected by $\evr{34}{2}{red}$ and $h_0 p$ by $\evrm{33}{5}{cyan}{2}$ in the algebraic $\bo$ spectral sequence, the $d_3$ differential $d_3(h_2h_5) = h_0p$ in the classical Adams spectral sequence implies the first claim and $\evr{33}{6}{lavenderrose}$ is the only class left to detect $\eta\{q\}$. 
\end{proof}

\begin{cor}
The elements $\evr{34}{3}{orange}$, $\evr{34}{4}{limegreen}$, $\evr{34}{6}{lavenderrose}$, $\gdr{34}{7}{7}{darkmagenta}$ survive and detect $\{h_0h_2h_5\}$, $\{h_0^2h_2h_5\}$, $\nu\{n\}$, $\kappa\overline{\kappa}$ respectively. (Note that it is $d_0g$ that detects $\kappa\overline{\kappa}$ in the classical Adams spectral sequence.)
\end{cor}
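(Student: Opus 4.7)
The plan is to enumerate the surviving classes in stem $34$ of the topological AKSS after the differentials of the previous propositions, and then match them to the four known generators of $\pi_{34}^s$ via a combination of Remark~\ref{rem:filt} and multiplicative constraints.

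First I would verify that the four named classes are indeed permanent cycles. After the differential $d_3(\evr{34}{2}{red}) = \evrm{33}{5}{cyan}{2}$ of the previous proposition, the only remaining classes in stem $34$ at $E_{1+\epsilon}$ are $\evr{34}{3}{orange}$, $\evr{34}{4}{green}$, $\evr{34}{6}{pink}$, and $\gdr{34}{7}{7}{magenta}$. None of these can support a further differential: every possible target in stem $33$ is either already known to be a permanent cycle detecting a class of $\pi_{33}^s$ or has been killed by an earlier differential. Similarly, no class in stem $35$ can hit any of them for dimensional/filtration reasons.

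Next, I would match using Adams filtration. The group $\pi_{34}^s$ is known to be generated by $\{h_0h_2h_5\}$, $\{h_0^2h_2h_5\}$, $\nu\{n\}$, and $\kappa\bar\kappa$, with classical Adams filtrations $3$, $4$, $6$, and $8$. By Remark~\ref{rem:filt}, each element must be detected in the $\bo$-ASS by a class of $bo$-filtration no greater than its classical Adams filtration. Since the four surviving classes sit at total degrees $s+n = 3,4,6,7$, the matches $\{h_0h_2h_5\} \leftrightarrow \evr{34}{3}{orange}$ and $\{h_0^2h_2h_5\} \leftrightarrow \evr{34}{4}{green}$ are forced, and are confirmed by observing that these classes map to $h_0h_2h_5$ and $h_0^2h_2h_5$ respectively under the edge map $\E{bo}{}_2 \to \Ex$.

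For the remaining two identifications I would use multiplicativity. Since $\nu$ is detected in $bo$-filtration $1$ and $\{n\}$ is detected by $\evr{31}{5}{cyan}$ (total filtration $5$), the product $\nu\{n\}$ must be detected in total filtration $\geq 6$; the unique candidate is $\evr{34}{6}{pink}$, and nonvanishing follows because $h_2 \cdot n$ is nonzero in the appropriate bidegree of $\Ex$. For $\kappa\bar\kappa$, since $\kappa$ and $\bar\kappa$ are both detected by (good) classes in the $\bo$-ASS, one computes the product on the $E_2$-page and, after ruling out the already-assigned lower-filtration classes, arrives at $\gdr{34}{7}{7}{magenta}$.

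The most subtle point, and the main obstacle, is this last identification: in the classical ASS, $\kappa\bar\kappa$ is detected by $d_0g$ in Adams filtration $8$, which is strictly higher than the $bo$-filtration $7$ of $\gdr{34}{7}{7}{magenta}$. This is allowed by Remark~\ref{rem:filt} (we only have the inequality $n \leq s$), and reflects the feature that in the $\bo$-ASS certain products are visible at lower filtration than in the classical ASS --- indeed, this is advertised as the content of the parenthetical remark in the statement. Verifying that the product actually lifts to filtration $7$ (and is nonzero there) is the key technical step, and would be carried out by an explicit multiplicative computation in the $\bo$-ASS $E_2$-term, invoking the good-class detection of $\kappa$ and $\bar\kappa$ noted in Section~\ref{sec:rules}.
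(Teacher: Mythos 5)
Your proposal is essentially correct and follows the same logic the paper implicitly uses (the paper states this as a corollary with no proof, since it follows immediately from the preceding propositions together with Remark~\ref{rem:filt}). You correctly enumerate the surviving classes, observe that the differentials emanating from stem $34$ are exhausted because everything in stem $33$ is already accounted for, and then match by $\bo$-filtration constraints.

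The one place where you overcomplicate things is the identification of $\gdr{34}{7}{7}{magenta}$ with $\kappa\bar\kappa$. You describe an explicit multiplicative verification in the $\bo$-ASS $E_2$-term as ``the key technical step,'' but no such computation is needed: once $\{h_0h_2h_5\}$, $\{h_0^2h_2h_5\}$, and $\nu\{n\}$ have been assigned to the three lower-filtration survivors via Remark~\ref{rem:filt} and the multiplicative lower bounds you describe, $\gdr{34}{7}{7}{magenta}$ is the \emph{only} surviving class left in stem $34$, and convergence of the $\bo$-ASS forces it to detect the remaining generator $\kappa\bar\kappa$. The drop in filtration (from classical Adams filtration $8$ down to $\bo$-filtration $7$) is then a \emph{consequence} of this forced assignment, not something one must independently verify by lifting a product; this is precisely what the parenthetical remark in the statement is flagging. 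So the argument is correct, but simpler than you suggest.
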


\ \\
\noindent
{\bf Stems 35-36}

\noindent

\begin{prop}
The element $\evr{35}{5}{cyan}$ survives and detects $\nu\{d_1\}$. One of the elements $\evrm{35}{7}{darkmagenta}{1}$ and $\evrm{35}{7}{darkmagenta}{2}$, say $\evrm{35}{7}{darkmagenta}{1}$, survives and detects $\eta\kappa\overline{\kappa}$.
\end{prop}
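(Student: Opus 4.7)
The plan is to identify the detectors of $\nu\{d_1\}$ and $\eta\kappa\overline{\kappa}$ in the topological AKSS by combining (i) their classical Adams filtrations, derived from known computations of $\pi^s_{35}$ (see, e.g., \cite{Isaksen}), (ii) lower bounds on their $\bo$-filtrations derived via multiplicativity from previously identified detectors, and (iii) inspection of Figure~\ref{fig:boASSchart} for the unique surviving candidate.

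First I would recall that both $\nu\{d_1\}$ and $\eta\kappa\overline{\kappa}$ are non-trivial in $\pi^s_{35}$, detected classically by $h_2 d_1$ (Adams filtration $5$) and $h_1 d_0 g$ (Adams filtration $9$) respectively. Both $\nu \in \pi_3$ and $\eta \in \pi_1$ have $\bo$-filtration $0$ because they are detected by the unit $S \to \bo$.

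For $\nu\{d_1\}$: since $\{d_1\}$ is detected by $\evr{32}{4}{green}$ ($\bo$-filtration $4$), multiplicativity of the $\bo$-ASS (and hence of the AKSS) gives $\mr{bo\text{-}filt}(\nu\{d_1\}) \ge 4$. Remark~\ref{rem:filt}, combined with the classical Adams filtration $5$, forces the detecting class to lie in the column $x = 35$ with $4 \le y \le 5$. Inspection of Figure~\ref{fig:boASSchart}, together with the differentials and permanent cycles already identified in previous propositions, leaves $\evr{35}{5}{cyan}$ as the only available class; therefore it is a permanent cycle and detects $\nu\{d_1\}$.

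The argument for $\eta\kappa\overline{\kappa}$ is parallel: since $\kappa\overline{\kappa}$ is detected by $\gdr{34}{7}{7}{magenta}$ ($\bo$-filtration $7$) and $\eta$ has $\bo$-filtration $0$, multiplicativity gives $\mr{bo\text{-}filt}(\eta\kappa\overline{\kappa}) \ge 7$, while the classical Adams filtration $9$ bounds the detector from above. The only candidates in the column $x = 35$ of Figure~\ref{fig:boASSchart} that survive the previously established differentials and have not already been identified as detectors of other elements of $\pi^s_{35}$ are the two magenta classes $\evrm{35}{7}{magenta}{1}$ and $\evrm{35}{7}{magenta}{2}$. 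At least one of them must detect $\eta\kappa\overline{\kappa}$, and I would label it $\evrm{35}{7}{magenta}{1}$. The main obstacle in this step is that the AKSS provides no intrinsic way to single out one of these two classes over the other: there is no unambiguous hidden-extension argument identifying $\eta \cdot \gdr{34}{7}{7}{magenta}$ with a preferred one of the two magenta classes. This ambiguity is precisely what the ``without loss of generality'' clause in the statement sidesteps, and the role of the remaining magenta class will be pinned down by the continued analysis of $\pi^s_{35}$ in the subsequent propositions.
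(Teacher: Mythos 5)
Your argument follows essentially the same route as the paper's proof: Remark~\ref{rem:filt} for the upper bound on $\bo$-filtration, a multiplicativity lower bound coming from previously identified detectors, and inspection of Figure~\ref{fig:boASSchart} to single out the surviving class, with the ``without loss of generality'' labeling handling the indistinguishability of the two magenta classes. Your treatment of $\eta\kappa\overline{\kappa}$ is identical to the paper's.

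One small factual error: you assert that $\nu$ has $\bo$-filtration $0$ ``because it is detected by the unit $S \to \bo$.'' This is incorrect; $\pi_3(\bo) = 0$, so $\nu$ is not in the image of the unit, and the paper explicitly uses the fact that $\nu$ has $\bo$-filtration $1$ (e.g.\ in the later arguments for $\nu^2\{d_1\}$ and $\nu\{t\}$). This does not break your argument, since you only invoke the weaker lower bound $\ge 4$ coming from the $\bo$-filtration of $\{d_1\}$, which is valid regardless; using the correct $\bo$-filtration of $\nu$ would in fact give the tighter bound $\ge 5$ directly, matching the upper bound from Remark~\ref{rem:filt} without any chart inspection. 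Your statement about $\eta$ having $\bo$-filtration $0$ is correct.
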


\begin{proof}
The first claim follows from Remark~\ref{rem:filt}. Since $\kappa\overline{\kappa}$ is detected by $\gdr{34}{7}{7}{darkmagenta}$ with $\bo$-filtration 7, $\eta\kappa\overline{\kappa}$ has $\bo$-filtration at least 7. (Note that $\eta$ has $\bo$-filtration 0). Therefore, the only possibility is that one of the elements $\evrm{35}{7}{darkmagenta}{1}$ or $\evrm{35}{7}{darkmagenta}{2}$ detects $\eta\kappa\overline{\kappa}$.
\end{proof}

\begin{prop}
$d_2(\evr{36}{4}{limegreen})=\evr{35}{6}{lavenderrose}$.
One of the elements $\evrm{36}{5}{cyan}{1}$ and $\evrm{36}{5}{cyan}{2}$, say $\evrm{36}{5}{cyan}{1}$, supports a $d_2$ differential that kills $\evrm{35}{7}{darkmagenta}{2}$. That is, $d_2(\evrm{36}{5}{cyan}{1})=\evrm{35}{7}{darkmagenta}{2}$.
\end{prop}

\begin{proof}
Since all classes in $\pi_{35}$ are accounted for, this is the only possibility to kill $\evr{35}{6}{lavenderrose}$ and $\evrm{35}{7}{darkmagenta}{2}$.
\end{proof}

\begin{prop}
The element $\evrm{37}{3}{orange}{1}$ survives and detects $\{h_2^2h_5\}$.
\end{prop}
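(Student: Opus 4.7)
The plan is to identify $\{h_2^2h_5\}$ by a counting/filtration argument, along the same lines as the preceding propositions in stems $32$--$36$. The class $h_2^2h_5 \in \Ext^{3,40}_{A_*}(\FF_2)$ detects a nonzero element of $\pi_{37}^s$ of Adams filtration exactly $3$, so by Remark~\ref{rem:filt} the detecting class in the $\bo$-AKSS must lie in $\bo$-filtration $n \le 3$. I would first enumerate every class in the $37$-stem of the topological AKSS with $n \le 3$, using the fact that the good part of the $E_{1+\epsilon}$-term coincides with $H^{*,*}(\mc{C})$ (described in Corollary~\ref{cor:HC}) and the evil part with $H^{*,*}(V)$ (which we computed algebraically in Section~\ref{sec:algcomp}), and consult Figure~\ref{fig:boASSchart}.

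Next I would rule out every other candidate in this range. The good orange classes $\gdm{orange}{37}{3}{3}{1}$ and $\gdm{orange}{37}{3}{3}{2}$ support $d_{1+\epsilon}$-differentials killing the evil classes $\evm{green}{36}{4}{i}$, exactly as in their algebraic counterparts (since $d_1^{good}$ agrees with $d_1^{good,alg}$, and the required evil targets in the topological AKSS still need to die because $\pi_{36}^s$ leaves no room for them). Similarly, any class of $\bo$-filtration $1$ or $2$ in the $37$-stem either supports or receives a shorter differential dictated by known elements of $\pi_{36}^s$ and $\pi_{37}^s$. The remaining evil class at $(37,3:3)$ apart from $\evrm{37}{3}{orange}{1}$, namely $\evrm{37}{3}{orange}{2}$, was hit by a differential from $\gdr{38}{2}{2}{red}$ in the algebraic AKSS (Proposition~\ref{prop:lotsofevil}); I would check that the corresponding differential persists topologically, forced by the fact that $\{h_2h_5\} \in \pi_{38}^s$ is detected in strictly lower $\bo$-filtration (by the evil $\gdr{38}{2}{2}{red}$'s image, or absent).

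Once the enumeration shows $\evrm{37}{3}{orange}{1}$ is the only surviving class of $\bo$-filtration $\le 3$ in stem $37$, it must detect $\{h_2^2h_5\}$ and, in particular, must be a permanent cycle. The main obstacle is not any single calculation but the careful bookkeeping: making sure that the differentials from the algebraic AKSS in stems $36$--$38$ genuinely transfer to the topological AKSS (or that where they do not, the topological picture is still forced by $\pi_{36}^s$ and $\pi_{37}^s$), and that no additional good class in $\bo$-filtration $\le 3$ has been overlooked. Both tasks are straightforward because the topological $d_1$ agrees with the algebraic $d_1$ on good classes, and the higher topological differentials out of low-filtration good classes are constrained by Remark~\ref{rem:filt} applied to known elements of $\pi_*^s$ in neighboring stems.
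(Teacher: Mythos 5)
Your plan opens exactly the way the paper's proof does: $h_2^2h_5$ lies in Adams filtration $3$, so by Remark~\ref{rem:filt} the element $\{h_2^2h_5\}\in\pi_{37}^s$ must be detected in $\bo$-filtration $\le 3$, and $\evrm{37}{3}{orange}{1}$ is the class in that range which maps to $h_2^2h_5$ under $\E{bo}{}_2 \to \Ex$. The paper stops there ("This follows from Remark~\ref{rem:filt}"), since inspection of the chart already identifies the unique candidate. So the core of your approach is the same.

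The extra bookkeeping you sketch, however, has some imprecisions worth flagging. First, the element of $\pi_{38}^s$ you invoke should be $\{h_3h_5\}$, not $\{h_2h_5\}$: $h_3h_5$ is the class in $\Ext^{2,40}_{A_*}$, i.e., stem $38$; $h_2h_5$ lives in stem $34$. Second, and more substantively, you should not take for granted that the algebraic $d_{1+\epsilon}$-differential $d_{1+\epsilon}(\gdr{38}{2}{2}{red}) = \evrm{37}{3}{orange}{2}$ of Proposition~\ref{prop:lotsofevil} automatically "persists topologically." By Corollary~\ref{cor:HC}, for $n \ge 2$ the topological good cohomology $H^{n,*}(\mc{C})$ is only the subspace of $H^{n,0,*}(\mc{C}_{alg})$ spanned by monomials $x\,h_2^{k_2}\cdots h_l^{k_l}$ with $k_l \ge 2$; an algebraic good class feeding a $d_{1+\epsilon}$ may simply not be present in the topological $E_{1+\epsilon}$, in which case that particular differential does not transfer. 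Whether $\evrm{37}{3}{orange}{2}$ is actually killed at $d_{1+\epsilon}$ in the topological AKSS, or needs a later differential, is exactly the sort of thing the chart (Figure~\ref{fig:boASSchart}) has to settle. The good news is that the proposition you are proving does not actually require you to resolve the fate of $\evrm{37}{3}{orange}{2}$: Remark~\ref{rem:filt} pins the detection down to $\bo$-filtration $\le 3$, and since $\evrm{37}{3}{orange}{1}$ is the class mapping to the non-bounding permanent cycle $h_2^2h_5$ in $\Ex$, it cannot be a boundary and therefore must detect $\{h_2^2h_5\}$. The elaborate enumeration of other candidates is unnecessary here and is where your proposal becomes shaky.
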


\begin{proof}
This follows from Remark~\ref{rem:filt}.
\end{proof}

\begin{prop}
The element $\evrm{36}{6}{lavenderrose}{1}$ survives and detects $\{t\}$ and there are differentials 
\begin{align*}
d_2(\evrm{37}{3}{orange}{2}) &= \evrm{36}{5}{cyan}{2}, & d_2(\evrm{37}{4}{limegreen}{1}) &= \evrm{36}{6}{lavenderrose}{2}.\end{align*}
\end{prop}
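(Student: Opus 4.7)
The plan is to mimic the strategy used in the preceding propositions of this section: identify which classes in $\pi_{36}$ remain to be detected, use Remark~\ref{rem:filt} to place them in the $\bo$-ASS, and then force the remaining classes in stem 36 to be killed by $d_2$-differentials from stem 37. Throughout, bidegree bookkeeping will use that a topological $\bo$-ASS differential $d_r$ maps a class in stem $t-n$ to a class in stem $t-n-1$, so $d_2$-differentials naturally connect stem 37 to stem 36.

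First I would handle $\{t\}$. The class $t \in \Ext^{6,42}_{A_*}(\FF_2)$ in the classical ASS detects an element $\{t\} \in \pi_{36}^s$, and by Remark~\ref{rem:filt} the $\bo$-filtration of $\{t\}$ is at most $6$. Inspecting the column for stem 36 in Figure~\ref{fig:boASSchart}, and using that $\evr{36}{4}{green}$ and $\evrm{36}{5}{cyan}{1}$ were shown to support $d_2$-differentials in the previous proposition, the only surviving candidates of $\bo$-filtration $\le 6$ are the two pink classes $\evrm{36}{6}{pink}{1}$ and $\evrm{36}{6}{pink}{2}$. Following the convention used earlier (for example in the naming of $\evrm{33}{4}{green}{1}$ versus $\evrm{33}{4}{green}{2}$), I would declare $\evrm{36}{6}{pink}{1}$ to survive and detect $\{t\}$.

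Next I would account for all remaining classes in the 36-stem. Using knowledge of $\pi_{36}^s$ from \cite{Isaksen}, once $\{t\}$ is detected the remaining classes $\evrm{36}{5}{cyan}{2}$ and $\evrm{36}{6}{pink}{2}$ do not detect any element of $\pi_{36}^s$ and so must die. Since no non-trivial class in stem 36 of lower $\bo$-filtration remains to support a differential hitting them, and since the only unaccounted classes in stem 37 of sufficiently low $\bo$-filtration are $\evrm{37}{3}{orange}{2}$ and $\evrm{37}{4}{green}{1}$ (with $\evrm{37}{3}{orange}{1}$ already allocated to $\{h_2^2 h_5\}$), the only possibility is the pair of $d_2$-differentials
\[
d_2(\evrm{37}{3}{orange}{2}) = \evrm{36}{5}{cyan}{2}, \qquad d_2(\evrm{37}{4}{green}{1}) = \evrm{36}{6}{pink}{2}.
\]
A quick check of bidegrees shows that these are indeed $d_2$'s: the source and target tri-gradings differ by the correct shift.

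The step I expect to require the most care is the elimination of alternatives. In particular, I need to confirm that no longer differential (such as a $d_3$ or $d_{2+\epsilon}$) from another source in stems $\ge 37$ could alternatively kill $\evrm{36}{5}{cyan}{2}$ or $\evrm{36}{6}{pink}{2}$, and that the surviving pink class really is $\evrm{36}{6}{pink}{1}$ rather than $\evrm{36}{6}{pink}{2}$ up to the naming convention. Once this case analysis, which is essentially a bidegree accounting against Figure~\ref{fig:boASSchart} and the already established differentials, is verified, the result follows.
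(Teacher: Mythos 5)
Your argument successfully bounds the $\bo$-filtration of $\{t\}$ above by $6$ via Remark~\ref{rem:filt} and correctly observes that once the surviving class is identified, the two $d_2$-differentials are forced by elimination. However, there is a genuine gap in the first step: you never rule out the possibility that $\{t\}$ has $\bo$-filtration $5$ and is detected by $\evrm{36}{5}{cyan}{2}$. Remark~\ref{rem:filt} only gives an upper bound on $\bo$-filtration, and after the preceding proposition the class $\evrm{36}{5}{cyan}{2}$ is still available. In that scenario both pink classes $\evrm{36}{6}{pink}{1}$ and $\evrm{36}{6}{pink}{2}$ would need to die; e.g.\ via $d_3(\evrm{37}{3}{orange}{2})$ and $d_2(\evrm{37}{4}{green}{1})$, which your stem-counting alone cannot exclude. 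Appealing to a ``naming convention'' does not fix this, because the convention only applies after one knows which filtration contains the survivor.

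The paper closes this gap with a different and more decisive observation: $\evrm{36}{6}{pink}{1}$ maps to $t \in \Ex$ under the map of spectral sequences of Remark~\ref{rem:filt}, and $t$ is not a boundary in the classical Adams spectral sequence; since a map of spectral sequences carries boundaries to boundaries, $\evrm{36}{6}{pink}{1}$ cannot be a boundary either, so it must survive (and hence detect the generator $\{t\}$ of $\pi_{36}^s$). This is a concrete fact about the image of that specific class in $\Ex$, not a convention, and it is what pins down the $\bo$-filtration at $6$. Once that is known, your elimination argument for the two $d_2$'s is sound and matches the paper's last sentence.
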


\begin{proof}
The element $\evrm{36}{6}{lavenderrose}{1}$ maps to $t$ in $\Ex$. Since $t$ is not a boundary in the classical Adams spectral sequence, $\evrm{36}{6}{lavenderrose}{1}$ is also not a boundary. Therefore, $\evrm{36}{6}{lavenderrose}{1}$ survives and detects $\{t\}$. The other two differentials are the only possibilities left.
\end{proof}

\ \\
\noindent
{\bf Stems 37-38}

\noindent

\begin{prop}\label{prop:length38}
The elements $\evrm{38}{4}{limegreen}{1}$, $\evrm{38}{5}{cyan}{1}$ survive and detect $\{h_0^2h_3h_5\}$, $\{h_0^3h_3h_5\}$ respectively.
\end{prop}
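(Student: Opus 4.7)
The plan is to adapt the same Remark~\ref{rem:filt} argument used repeatedly in the preceding propositions (for instance in Proposition~\ref{prop:length38}'s predecessors, and the earlier argument detecting $\{h_2^2h_5\}$ by $\evrm{37}{3}{orange}{1}$). The classes $h_0^2h_3h_5$ and $h_0^3h_3h_5$ are known to be nonzero permanent cycles in the classical Adams spectral sequence for $\pi_{38}^s$, sitting in Adams filtration $4$ and $5$ respectively, and these are $v_0$-multiples of $\{h_3h_5\}$ (which lives in stem 38, Adams filtration 2, but does not appear among our surviving classes in that bidegree for the $\bo$-ASS).

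The approach is as follows. First, I would note that by the map of spectral sequences $\E{bo}{}_2 \to \Ex$ of Remark~\ref{rem:filt}, any class in $\pi_{38}^s$ detected in the classical Adams spectral sequence by a class of Adams filtration $s_0$ must be detected in the $\bo$-ASS by a class of $\bo$-filtration $n \le s_0$. Second, I would consult Figure~\ref{fig:boASSchart} in Adams coordinate $(38,4)$ and $(38,5)$ and enumerate the candidates that survive to $\E{bo}{}_\infty$. From the preceding propositions in Stems 37--38, all good classes and all other evil classes in these bidegrees have either been shown to support a nontrivial differential, or to be the target of one, or to detect a specific other homotopy element accounted for previously. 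The only remaining survivors are $\evrm{38}{4}{green}{1}$ and $\evrm{38}{5}{cyan}{1}$, which must therefore detect $\{h_0^2h_3h_5\}$ and $\{h_0^3h_3h_5\}$ respectively.

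The only step with any subtlety is the bookkeeping: one has to check carefully that after all prior Stem~37--38 propositions have been applied, no other permanent cycle in bidegree $(38,4)$ or $(38,5)$ remains. The $v_0$-linearity of the differentials and the identification of the surrounding good classes (which lie in a part of the AKSS that is $v_1$-periodic and fully controlled by the Lellmann--Mahowald picture of Section~\ref{sec:wss}) make this a direct verification rather than a genuine obstacle. No new technical input beyond Remark~\ref{rem:filt} and the tabulation already done is required.
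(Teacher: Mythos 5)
There is a genuine gap. Proposition~\ref{prop:length38} is the \emph{first} proposition in the Stems 37--38 subsection, so your appeal to ``the preceding propositions in Stems 37--38'' is vacuous: at this point in the paper nothing in those stems has yet been shown to support or receive a differential, and nothing in those stems has been matched to a homotopy element. The enumeration-of-survivors step you lean on therefore has no input, and the argument is circular as written.

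The missing idea is a \emph{forward} use of the map of spectral sequences $\E{bo}{}_2 \to \Ex$, which is stronger than the filtration bound of Remark~\ref{rem:filt}. The lower-filtration candidates $\evr{38}{2}{red}$, $\evr{38}{3}{orange}$, $\evrm{38}{4}{green}{2}$ map to $h_3h_5$, $h_0h_3h_5$, $e_1$ in $\Ex$. Since a map of spectral sequences takes permanent cycles to permanent cycles, and each of $h_3h_5$, $h_0h_3h_5$, $e_1$ supports a nontrivial differential in the classical Adams spectral sequence, none of $\evr{38}{2}{red}$, $\evr{38}{3}{orange}$, $\evrm{38}{4}{green}{2}$ can survive the $\bo$-ASS. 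Only after eliminating these does the filtration bound of Remark~\ref{rem:filt} (Adams filtration $4$ and $5$ force $\bo$-filtration $\le 4$, resp.~$\le 5$) leave $\evrm{38}{4}{green}{1}$ and $\evrm{38}{5}{cyan}{1}$ as the unique remaining detectors of $\{h_0^2h_3h_5\}$ and $\{h_0^3h_3h_5\}$. You have the right overall shape (Remark~\ref{rem:filt} plus elimination of competitors), but you need this comparison argument to supply the eliminations rather than nonexistent earlier propositions.
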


\begin{proof}
The elements $\evr{38}{2}{red}$, $\evr{38}{3}{orange}$ and $\evrm{38}{4}{limegreen}{2}$ map to $h_3h_5$, $h_0h_3h_5$ and $e_1$ in $\Ex$. Since $h_3h_5$, $h_0h_3h_5$ and $e_1$ 
support non-trivial
differentials 
in the classical Adams spectral sequence, so do $\evr{38}{2}{red}$, $\evr{38}{3}{orange}$ and $\evrm{38}{4}{limegreen}{2}$
in the $\bo$-Adams spectral sequence. By Remark~\ref{rem:filt} and filtration reasons, $\evrm{38}{4}{limegreen}{1}$ and $\evrm{38}{5}{cyan}{1}$ survive and detect $\{h_0^2h_3h_5\}$ and $\{h_0^3h_3h_5\}$ respectively.
\end{proof}

\begin{prop}
The element $\evr{38}{6}{lavenderrose}$ survives and detects $\nu^2\{d_1\}$.
\end{prop}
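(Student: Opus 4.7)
The plan is to combine multiplicativity of the $\bo$-Adams spectral sequence with comparison to the classical Adams spectral sequence. The element $\nu^2 \in \pi_6$ is detected in the topological AKSS by $\gdr{6}{2}{2}{red}$, the good class corresponding to $h_2^2 \in \Ex$ (obtained as the square of the class $\gdr{3}{1}{1}{blue}$ detecting $\nu$). The element $\{d_1\} \in \pi_{32}$ is detected by the evil class $\evr{32}{4}{green}$. By the multiplicative structure of the AKSS with its monoidal grading on $\mb{N}^\epsilon$ (where good times evil is evil, with $\bo$-filtrations adding), the product $\gdr{6}{2}{2}{red} \cdot \evr{32}{4}{green}$ lies in the unique evil tridegree with stem $38$, $s = 0$, and $n = 6$; that is, it is represented by $\evr{38}{6}{pink}$.

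Under the comparison map $\E{bo}{}_\ast \to \E{ass}{}_\ast$ to the classical ASS, this product maps to $h_2^2 \cdot d_1$, which is known to be a permanent cycle in the classical ASS detecting $\nu^2\{d_1\} \neq 0 \in \pi_{38}$. Consequently, $\evr{38}{6}{pink}$ cannot be hit by a differential: if it were, then by multiplicativity and the bound from Remark~\ref{rem:filt}, $\nu^2\{d_1\}$ would be forced to have classical Adams filtration strictly greater than $6$, contradicting that $h_2^2 d_1$ detects it. It also cannot support a nontrivial differential, for this would force $\nu^2\{d_1\}$ to be zero in $\pi_{38}$, again contradicting the classical computation. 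Hence $\evr{38}{6}{pink}$ is a permanent cycle.

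It remains to see that $\evr{38}{6}{pink}$ detects $\nu^2\{d_1\}$ specifically. All other classes in stem $38$ have been accounted for in the preceding propositions, either as permanent cycles detecting other named elements (such as $\{h_0^2 h_3 h_5\}$ and $\{h_0^3 h_3 h_5\}$ of Proposition~\ref{prop:length38}) or as sources/targets of differentials, and among them only $\evr{38}{6}{pink}$ maps to $h_2^2 d_1$ under the comparison map. This forces the identification. The main subtlety is verifying that the multiplicative formula $\gdr{6}{2}{2}{red} \cdot \evr{32}{4}{green} = \evr{38}{6}{pink}$ persists to $E_\infty$ without hidden extensions lowering the $\bo$-filtration of $\nu^2\{d_1\}$; this is pinned down by combining the multiplicative lower bound $n \ge 6$ with the upper bound on filtration imposed by the known classical detection in Adams filtration $6$.
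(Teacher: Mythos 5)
Your proof is correct and uses essentially the same argument as the paper: a two-sided bound on the $\bo$-Adams filtration of $\nu^2\{d_1\}$. The upper bound comes from the classical Adams filtration (which is $6$, since $h_2^2 d_1$ detects it) via Remark~\ref{rem:filt}; the lower bound comes from multiplicativity of the $\bo$-filtration. The only substantive difference is the factorization used for the lower bound: you write $\nu^2\{d_1\} = \nu^2 \cdot \{d_1\}$ with $\bo$-filtrations $2 + 4 = 6$, while the paper uses $\nu \cdot (\nu\{d_1\})$ with $\bo$-filtrations $1 + 5 = 6$; both are available from the preceding propositions and yield the same conclusion. A few presentational quibbles: the topological AKSS is indexed by $(n+\alpha\epsilon, t)$ without a separate $s$-grading, so the phrase ``$s=0$'' is borrowed from the algebraic setting; and the opening claim that the AKSS product is ``represented by $\evr{38}{6}{pink}$'' silently presupposes the product is nonzero on $E_{1+\epsilon}$, which you haven't shown at that point --- but this doesn't matter because, as you observe at the end, the two filtration bounds together with the fact that $\evr{38}{6}{pink}$ is the only remaining class in $\bo$-filtration $6$ already force the conclusion without needing to verify the product survives on the nose.
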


\begin{proof}
On one hand, $\nu^2\{d_1\}$ has Adams filtration 6, and therefore $\bo$-Adams filtration at most 6. On the other hand, $\nu$ and $\nu\{d_1\}$ have $\bo$-Adams filtration 1 and 5, therefore $\nu^2\{d_1\}$ has $\bo$-Adams filtration at least 6. Therefore, $\nu^2\{d_1\}$ has $\bo$-Adams filtration 6, and the claim then follows from the fact that $\evr{38}{6}{lavenderrose}$ is the only element left in $\bo$-Adams filtration 6 in this stem.
\end{proof}

\begin{prop}\label{prop:seagreend3}
$d_3(\evrm{38}{5}{cyan}{2}) = \evr{37}{8}{seagreen}$. 
\end{prop}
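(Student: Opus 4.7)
The plan is to establish this differential by a two-step elimination argument: first show that $\evr{37}{8}{seagreen}$ cannot survive to $E_\infty$, and then identify $\evrm{38}{5}{cyan}{2}$ as the unique remaining candidate source in stem $38$.

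First, I would show that $\evr{37}{8}{seagreen}$ must be a boundary. By Remark~\ref{rem:filt}, any element of $\pi_{37}^s$ detected by $\evr{37}{8}{seagreen}$ would have classical Adams filtration at least as large as its $\bo$-filtration. Consulting the known computation of $\pi_{37}^s$ (see, e.g., \cite{Isaksen}), one checks that every nonzero element of $\pi_{37}^s$ is detected in sufficiently low classical Adams filtration that no element can be carried by $\evr{37}{8}{seagreen}$. Moreover, the preceding propositions in this section have already produced detecting classes for every generator of $\pi_{37}^s$: $\evrm{37}{3}{orange}{1}$ detects $\{h_2^2 h_5\}$, and the remaining stem-$37$ classes of small $\bo$-filtration ($\evrm{37}{3}{orange}{2}$ and $\evrm{37}{4}{green}{1}$) were shown to support $d_2$ differentials. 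Consequently $\evr{37}{8}{seagreen}$ cannot detect a homotopy class and must be hit by a nonzero differential.

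Second, I would enumerate the possible sources in stem $38$. Any differential killing $\evr{37}{8}{seagreen}$ must originate from a class in stem $38$ with smaller $\bo$-filtration. The preceding propositions treating stem $38$ have already accounted for $\evr{38}{2}{red}$, $\evr{38}{3}{orange}$, $\evrm{38}{4}{green}{1}$, $\evrm{38}{4}{green}{2}$, $\evrm{38}{5}{cyan}{1}$, and $\evr{38}{6}{pink}$ --- each either survives to detect a named homotopy class (such as $\eta_5$, $\{h_0^2 h_3 h_5\}$, $\{h_0^3 h_3 h_5\}$, $\nu^2\{d_1\}$) or supports a differential whose target is explicitly identified. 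This leaves $\evrm{38}{5}{cyan}{2}$ as the unique remaining candidate, and the filtration jump required to land on $\evr{37}{8}{seagreen}$ pins down the length to $r = 3$.

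The main obstacle will be the first step: one must confirm that no element of $\pi_{37}^s$ could slip through with $\bo$-filtration high enough to be detected by $\evr{37}{8}{seagreen}$, and that no hidden $\bo$-filtration jumps raise a lower-filtration Adams-detected class into that position. This is a finite bookkeeping verification against the tabulated $\pi_{37}^s$ and the explicit detecting classes already exhibited; once it is in hand, the second step is automatic from the stem-by-stem analysis of stem $38$.
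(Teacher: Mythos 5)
Your two-step elimination strategy is the same one the paper uses: first kill $\evr{37}{8}{seagreen}$ by observing that every element of $\pi_{37}$ has classical Adams filtration at most $5$, hence by Remark~\ref{rem:filt} $\bo$-filtration at most $5 < 8$, and then identify the source by eliminating the other stem-$38$ candidates. Step~1 is correct as you state it (the extra detail about which stem-$37$ classes detect which homotopy elements is unnecessary for this part).

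However, Step~2 as written contains a genuine gap. You assert that the preceding propositions have accounted for $\evr{38}{2}{2}{red}$, $\evr{38}{3}{3}{orange}$, and $\evrm{38}{4}{4}{green}{2}$ because each ``supports a differential whose target is explicitly identified.'' At the point of Proposition~\ref{prop:seagreend3}, that is not true: the targets of the differentials from those three classes are established only in \emph{later} propositions ($d_3(\evr{38}{2}{2}{red}) = \evr{37}{5}{5}{cyan}$, $d_3(\evr{38}{3}{3}{orange}) = \evr{37}{6}{6}{pink}$, $d_3(\evrm{38}{4}{4}{green}{2}) = \evr{37}{7}{7}{magenta}$). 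What is available at this stage --- and what the paper actually uses via Proposition~\ref{prop:length38} --- is only that these three classes map to $h_3h_5$, $h_0h_3h_5$, $e_1$ in $\Ex$, whose classical ASS differential lengths bound the $\bo$-ASS differential lengths of the three classes, forcing their targets to land in $\bo$-filtration strictly below $8$. That length constraint is the missing ingredient in your Step~2; without it, you have no way to exclude a long differential from a low-filtration class. Two smaller inaccuracies: $\eta_5$ lives in stem $32$, not stem $38$, so it is not an example here; and the enumeration of stem-$38$ sources omits $\evrm{38}{5}{5}{cyan}{3}$ and $\evr{38}{7}{7}{magenta}$, although these are easily dealt with (the latter would be a $d_1$, already accounted for by the $E_2$-page).
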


\begin{proof}
In $\pi_{37}$, all classes have Adams filtration at most 5, and therefore $\bo$-Adams filtration at most 5. The target element $\evr{37}{8}{seagreen}$ has $\bo$-Adams filtration 8, therefore must be killed. Since $\evr{38}{6}{lavenderrose}$ survives, by Proposition~\ref{prop:length38}, the only possibility left to kill $\evr{37}{8}{seagreen}$ is an element in $\bo$-Adams filtration 5, say $\evrm{38}{5}{cyan}{2}$.
\end{proof}

\begin{prop}
$d_2(\evrm{39}{3}{orange}{1}) = \evrm{38}{5}{cyan}{3}$.
\end{prop}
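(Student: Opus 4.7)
The plan is to apply the elimination strategy used throughout this section. First, I would verify that all classes in $\pi_{38}^s$ of $\bo$-filtration at most $5$ are already accounted for by detectors established in the preceding propositions: $\evrm{38}{4}{green}{1}$ detects $\{h_0^2 h_3 h_5\}$, $\evrm{38}{5}{cyan}{1}$ detects $\{h_0^3 h_3 h_5\}$, and $\evr{38}{6}{pink}$ detects $\nu^2\{d_1\}$; in addition $\evrm{38}{5}{cyan}{2}$ is the source of the $d_3$ from Proposition~\ref{prop:seagreend3}. Cross-referencing with the tabulation of $\pi_{38}^s$ in \cite{Isaksen}, these exhaust all classes in $\pi_{38}^s$ of the relevant filtration range. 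Hence $\evrm{38}{5}{cyan}{3}$ cannot survive to $E_\infty$ and must be hit by a non-trivial differential.

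Next, I would enumerate the possible sources. Any differential killing $\evrm{38}{5}{cyan}{3}$ must originate in stem $39$ at $\bo$-filtration strictly less than $5$, and the length $r$ is then determined by the source. Inspection of Figure~\ref{fig:boASSchart}, together with the fate of the classes in stem $39$ already identified (either as detectors of known elements of $\pi_{39}^s$ via Remark~\ref{rem:filt}, or as sources or targets of previously established differentials), leaves exactly one orange class at filtration $3$ as a candidate. Labelling this class $\evrm{39}{3}{orange}{1}$ (following the conventional choice pattern used earlier, for example for $\evrm{31}{3}{orange}{1}$), the forced differential is a $d_2$ and we obtain $d_2(\evrm{39}{3}{orange}{1}) = \evrm{38}{5}{cyan}{3}$.

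The principal obstacle is the bookkeeping in the first two steps: one must match every $\bo$-filtration $\leq 5$ generator of $\pi_{38}^s$ and $\pi_{39}^s$ from Isaksen's tables against the enumerated survivors in Figure~\ref{fig:boASSchart}, being careful not to double-count classes or miss any hidden contributions coming from products involving $\eta$, $\nu$, or $\{d_1\}$. Once this accounting is complete, the differential itself is obtained without further calculation.
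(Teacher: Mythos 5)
Your proposal has a genuine gap at the step where you conclude ``Hence $\evrm{38}{5}{cyan}{3}$ cannot survive to $E_\infty$ and must be hit by a non-trivial differential.'' The observation that all of $\pi_{38}$ is already accounted for only tells you that $\evrm{38}{5}{cyan}{3}$ cannot survive; it does not tell you that it is a \emph{target} rather than a \emph{source}. A priori, $\evrm{38}{5}{cyan}{3}$ could itself support a differential into the $37$-stem (landing in $\bo$-filtration $\ge 7$), in which case no differential $d_2(\evrm{39}{3}{orange}{1}) = \evrm{38}{5}{cyan}{3}$ would be needed.

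This is precisely the case the paper's proof spends its effort ruling out: it supposes $\evrm{38}{5}{cyan}{3}$ is not killed, so it must support a $d_2$ hitting $\evr{37}{7}{magenta}$, then traces through the forced fates of $\evr{38}{2}{red}$, $\evr{38}{3}{orange}$, $\evrm{38}{4}{green}{2}$ to see that too few classes would remain in stem $37$ compared with $\pi_{37} = \ZZ/2 \oplus \ZZ/2$, a contradiction. To repair your proof you would need to add this (or an equivalent) argument eliminating the possibility that $\evrm{38}{5}{cyan}{3}$ is the source of a differential.
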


\begin{proof}
Every class in $\pi_{38}$ has already been accounted for, therefore $\evrm{38}{5}{cyan}{3}$ must either support a differential or get killed. Suppose it is not killed. Then it can only support a $d_2$ differential that kills $\evr{37}{7}{darkmagenta}$. It follows that the elements $\evr{38}{2}{red}$, $\evr{38}{3}{orange}$, and $\evrm{38}{4}{limegreen}{2}$ kill elements in $\bo$-Adams filtration 6, 5, 4 respectively. This leaves only one element in stem 37. However, $\pi_{37}=\mathbb{Z}/2\oplus\mathbb{Z}/2$, a contradiction. Therefore, we must have $d_2(\evrm{39}{3}{orange}{1}) = \evrm{38}{5}{cyan}{3}$.
\end{proof}

\begin{prop}
The elements $\evrm{39}{3}{orange}{2}$, $\evr{39}{4}{limegreen}$, $\evrm{39}{5}{cyan}{1}$, $\evrm{40}{4}{limegreen}{1}$, $\evrm{40}{4}{limegreen}{2}$ and $\evrm{40}{5}{cyan}{1}$ survive and detect $\sigma\eta_5$, $\{h_5c_0\}$, $\sigma\{d_1\}$, $\eta\sigma\eta_5$, $\{f_1\}$ and $\epsilon\eta_5$ respectively. (Note that $\sigma\eta_5$, $\sigma\{d_1\}$ and $\epsilon\eta_5$ are detected by $h_1h_3h_5$, $h_1e_1$ and $h_1h_5c_0$ in the classical Adams spectral sequence respectively).
\end{prop}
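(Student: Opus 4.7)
The plan is to argue class-by-class in the style of Propositions~\ref{prop:length38} and \ref{prop:seagreend3}, using Remark~\ref{rem:filt} as the main tool: the map $\bo \to H\mathbb{F}_2$ yields a filtration-preserving comparison $\E{bo}{}^{*,*}_2 \to \Ext_{A_*}^{*,*}$, so the $bo$-filtration of a detecting class is at most the Adams filtration of the classical detector.  First I would record the Adams filtrations of the six target homotopy elements: $\sigma\eta_5 \leftrightarrow h_1 h_3 h_5$ in filtration $3$; $\eta\sigma\eta_5,\ \{h_5 c_0\},\ \{f_1\}$ each in filtration $4$; and $\sigma\{d_1\} \leftrightarrow h_1 e_1$, $\epsilon\eta_5 \leftrightarrow h_1 h_5 c_0$ in filtration $5$.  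Each listed candidate $\evrm{x}{y}{color}{k}$ has $bo$-filtration equal to $y$ matching this Adams filtration, so Remark~\ref{rem:filt} forces it to be the unique detector (once we know it survives and is not already accounted for).

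Next I would verify that each of the six candidates is a permanent cycle.  For the non-boundary claim, I would note that every class in stems $39$ and $40$ of strictly lower $bo$-filtration than the candidates has already been assigned a fate in the previous propositions: $\evrm{39}{3}{orange}{1}$ was killed by $d_2(\evrm{39}{3}{orange}{1})=\evrm{38}{5}{cyan}{3}$, $\evr{38}{6}{pink}$ was shown to survive, the $(38, 5)$ green class was consumed by Proposition~\ref{prop:seagreend3}, etc.  A quick audit of the AKSS chart shows that no remaining class in these stems lies in a bidegree and $bo$-filtration compatible with supporting a differential into any of the six named targets.  Conversely, none of the six can support a nontrivial differential: each maps under the comparison of Remark~\ref{rem:filt} to a known permanent cycle of the classical ASS in the same Adams filtration, and any $d_r^{bo}$ would descend to a shorter-or-equal-length differential on that permanent cycle, a contradiction.

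The main subtlety, and the step I expect to require the most care, is distinguishing the two detectors $\evrm{40}{4}{green}{1}$ and $\evrm{40}{4}{green}{2}$ in the same bidegree $(40,4)$ and same $bo$-filtration $4$.  Here Remark~\ref{rem:filt} alone does not discriminate, so I would follow the convention used for $\nu\theta_4$ in Proposition on stem $33$: by $h_1$-linearity the class $h_1 \cdot \evrm{39}{3}{orange}{2}$ detects $\eta \sigma\eta_5$, and lands in one of the two green classes; call that one $\evrm{40}{4}{green}{1}$, and the other then detects $\{f_1\}$ by elimination (since $\pi_{40}^s$ at Adams filtration $4$ has precisely these two contributions and both candidates survive by the previous paragraph's argument).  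The remaining detection statements are routine matches of filtration and nonzero image under the comparison map to the classical ASS.
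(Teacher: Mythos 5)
Your overall approach — reading off classical Adams filtrations and applying Remark~\ref{rem:filt} to bound the $\bo$-filtration, then auditing the chart — is exactly what the paper means by "This follows from Remark~\ref{rem:filt}." The first paragraph's filtration bookkeeping and the third paragraph's treatment of the two classes at $(40,4)$ as a labeling convention (consistent with how the paper handles, e.g., $\nu\theta_4$ in stem $33$) are fine.

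However, there is a genuine gap in your second paragraph. You argue that none of the six candidates can support a differential because "each maps under the comparison of Remark~\ref{rem:filt} to a known permanent cycle of the classical ASS in the same Adams filtration, and any $d_r^{bo}$ would descend to a shorter-or-equal-length differential on that permanent cycle." This does not follow. The map $\E{bo}{}^{n,t}_2 \to \Ex^{n,t}$ commutes with the respective differentials, so if $\phi(x)$ is a permanent cycle and $d_r^{bo}(x) = y$, you only learn that $\phi(y) = 0$, i.e. that $y$ lies in the kernel of the comparison map — not that $y = 0$. In fact, several of the targets one must worry about in these stems are evil classes which can very well die under comparison to $\Ex$, so the "contradiction" you cite does not materialize. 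The correct logic is in the opposite direction: the elements $\sigma\eta_5, \{h_5 c_0\}, \sigma\{d_1\}, \eta\sigma\eta_5, \{f_1\}, \epsilon\eta_5$ exist in $\pi_*$, each must (by Remark~\ref{rem:filt}) be detected by a class of $\bo$-filtration at most its classical Adams filtration, and the chart audit shows the six listed classes are the only remaining candidates in the required bidegrees. It is from this forced detection that the classes must be permanent cycles — you cannot establish survival first and then conclude detection. Fixing this direction of argument makes your paragraph 2 match the paper's reasoning.
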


\begin{proof}
This follows from Remark~\ref{rem:filt}.
\end{proof}

\begin{prop}There is a differential
$d_2(\evrm{39}{5}{cyan}{2}) = \evr{38}{7}{darkmagenta}$.
\end{prop}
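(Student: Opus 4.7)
The plan is to argue by elimination, first on the target side and then on the source side. Since all of $\pi_{38}$ is accounted for by classes already shown to survive, $\evr{38}{7}{magenta}$ has no room to detect a nonzero element of $\pi_{38}^s$ and therefore must be either the source or target of a nontrivial differential in the topological AKSS. It cannot be a source: a differential from $\evr{38}{7}{magenta}$ would have to land on a surviving evil class in stem~$37$ of $\bo$-filtration $\ge 8$, but the only such candidate $\evr{37}{8}{seagreen}$ was already killed in Proposition~\ref{prop:seagreend3}. Hence $\evr{38}{7}{magenta}$ must be a target.

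Next I would identify the source. A class hitting $\evr{38}{7}{magenta}$ must live in stem~$39$ at $\bo$-filtration strictly less than $7$. Running through the classes already handled in stem~$39$, we have that $\evrm{39}{3}{orange}{1}$ supports $d_2$ hitting $\evrm{38}{5}{cyan}{3}$, while $\evrm{39}{3}{orange}{2}$, $\evr{39}{4}{green}$, and $\evrm{39}{5}{cyan}{1}$ all survive and detect $\sigma\eta_5$, $\{h_5 c_0\}$, and $\sigma\{d_1\}$ respectively. This leaves $\evrm{39}{5}{cyan}{2}$ as the unique remaining candidate, and the $\bo$-filtration jump from $5$ (evil) to $7$ (evil) forces the differential to have length $r=2$ in the AKSS indexing.

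The main obstacle is Step~1: confirming that no element of $\pi_{38}^s$ remains unaccounted for. This requires a careful appeal to the known $2$-primary stable stem $\pi_{38}^s$ (e.g.\ via \cite{Isaksen}), together with the filtration estimate of Remark~\ref{rem:filt} to see that the three surviving classes $\evrm{38}{4}{green}{1}$, $\evrm{38}{5}{cyan}{1}$, and $\evr{38}{6}{pink}$ detect distinct generators; this uses the same style of argument as in Proposition~\ref{prop:length38} for $\{h_0^2 h_3 h_5\}$ and $\{h_0^3 h_3 h_5\}$, combined with the $\bo$-filtration computation of $\nu^2\{d_1\}$ from the preceding proposition. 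Once these identifications are in place, Steps~2 and~3 are purely bookkeeping and the differential is forced.
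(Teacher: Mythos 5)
Your proof is correct and uses the same process-of-elimination strategy as the paper's terse proof, with the essential observation being that all of $\pi_{38}$ is already accounted for, so $\evr{38}{7}{magenta}$ must die and $\evrm{39}{5}{cyan}{2}$ is the only remaining eligible source. One small technical slip worth flagging: from the AKSS grading, a source differential from $\evr{38}{7}{magenta}$ (evil, filtration $7+\epsilon$) would be $d_{r-\epsilon}$ (to a good class at filtration $7+r$) or $d_r$ (to an evil class at filtration $(7+r)+\epsilon$) with $r\ge 2$, so the target sits at $\bo$-filtration at least $9$, not $\ge 8$ as you state; consequently $\evr{37}{8}{seagreen}$ was never a valid target in the first place, and you don't actually need Proposition~\ref{prop:seagreend3} here. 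The conclusion is unaffected since nothing exists in stem~$37$ at filtration $\ge 9$, but it is cleaner to cite the filtration jump directly rather than the prior killing of $\evr{37}{8}{seagreen}$.
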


\begin{proof}
Since all classes in $\pi_{38}$ are accounted for, this is the only possibility left.
\end{proof}

\begin{prop}There is a differential
$d_3(\evrm{38}{4}{limegreen}{2}) = \evr{37}{7}{darkmagenta}$.
\end{prop}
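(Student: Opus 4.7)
The plan is to combine the comparison map from the $\bo$-Adams to the classical Adams spectral sequence with a process of elimination. First, I would recall from the proof of Proposition~\ref{prop:length38} that $\evrm{38}{4}{green}{2}$ maps to $e_1$ in $\Ex$; since $e_1$ supports a nontrivial classical Adams differential, $\evrm{38}{4}{green}{2}$ cannot survive the topological $\bo$-AKSS, and must therefore be the source of some $d_r$ with $r \ge 2$.

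Next, I would pin down the target by counting. Any such differential lands in stem 37 at $\bo$-filtration $4+r$. The preceding propositions have already settled the fate of every class in stem 37 of $\bo$-filtration strictly between 4 and 7: each is either a known survivor detecting a specified element of $\pi_{37}$, or has already been killed by a previously established differential. Moreover, $\evr{37}{8}{seagreen}$ was killed by $d_3(\evrm{38}{5}{cyan}{2})$ in Proposition~\ref{prop:seagreend3}, which rules out any $d_4$ from $\evrm{38}{4}{green}{2}$. The only remaining possibility is a $d_3$ whose target is $\evr{37}{7}{magenta}$, which is the claim.

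As a sanity check in the opposite direction, I would note that $\pi_{37}$ consists entirely of classes of Adams filtration at most $5$, so by Remark~\ref{rem:filt} the class $\evr{37}{7}{magenta}$ cannot survive and must be hit. Enumerating the possible sources in stem 38 of $\bo$-filtration strictly less than $7$, the previous propositions show that $\evr{38}{6}{pink}$, $\evrm{38}{5}{cyan}{1}$, and $\evrm{38}{4}{green}{1}$ are survivors, $\evrm{38}{5}{cyan}{2}$ and $\evrm{38}{5}{cyan}{3}$ are already used, and $\evr{38}{2}{red}$, $\evr{38}{3}{orange}$ have their differentials determined by the corresponding classical $d_3$'s on $h_3 h_5$ and $h_0 h_3 h_5$, which land beyond $(37,7)$. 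This leaves $\evrm{38}{4}{green}{2}$ as the unique source, and the length is forced to be $3$ by the filtration count.

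The hard part will be the bookkeeping: one must confirm, by tracking every topological differential established earlier in Section~\ref{sec:topcomp}, that no class in stem 37 of $\bo$-filtration between $5$ and $7$ other than $\evr{37}{7}{magenta}$ is left unaccounted for, and that $\evrm{38}{4}{green}{2}$ is itself a $d_r$-cycle for $r<3$. No new conceptual input beyond the previous propositions is required.
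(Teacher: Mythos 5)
Your proof has a genuine gap in both directions of the elimination argument.

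In the forward direction, you assert that ``the preceding propositions have already settled the fate of every class in stem 37 of $\bo$-filtration strictly between 4 and 7.'' This is not the case at the point in the paper where this proposition sits: the fates of $\evr{37}{5}{cyan}$ and $\evr{37}{6}{pink}$ (and of the survivor $\evrm{37}{4}{green}{2}$) are established only in the \emph{two subsequent} propositions, namely $d_3(\evr{38}{3}{orange}) = \evr{37}{6}{pink}$ and $d_3(\evr{38}{2}{red}) = \evr{37}{5}{cyan}$. In particular, when one first looks at $\evrm{38}{4}{green}{2}$, a $d_2$ hitting $\evr{37}{6}{pink}$ has not been ruled out, and this is precisely the competing possibility the paper's argument needs to eliminate.

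In your sanity-check direction, the assertion that $\evr{38}{2}{red}$ and $\evr{38}{3}{orange}$ support differentials ``determined by the corresponding classical $d_3$'s on $h_3 h_5$ and $h_0 h_3 h_5$, which land beyond $(37,7)$'' misstates the classical Adams behavior. Classically one has $d_4(h_3 h_5) = h_0 x$ and $d_4(h_0 h_3 h_5) = h_0^2 x$, and a $d_4$ out of $\evr{38}{3}{orange}$ (which sits in $\bo$-filtration 3) would land \emph{exactly} at filtration 7, i.e.\ at $\evr{37}{7}{magenta}$ --- it does not overshoot. So $\evr{38}{3}{orange}$ remains a live competing source for $\evr{37}{7}{magenta}$, and your enumeration does not eliminate it. The actual proof rules out this scenario by comparing images in $\Ext_{A_*}$: if $d_4(\evr{38}{3}{orange}) = \evr{37}{7}{magenta}$, then $\evr{37}{7}{magenta}$ would have to map to $d_4(h_0 h_3 h_5) = h_0^2 x$, but it actually maps to $h_2^2 n \neq h_0^2 x$, a contradiction. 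That comparison of classical targets is the key step missing from your argument; without it the $d_2(\evrm{38}{4}{green}{2}) = \evr{37}{6}{pink}$ alternative is not excluded.
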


\begin{proof}
Since $\evrm{38}{4}{limegreen}{2}$ maps to $e_1$ in $\Ex$, it must support a $d_2$ or $d_3$ differential. Suppose that $d_2(\evrm{38}{4}{limegreen}{2}) =\evr{37}{6}{lavenderrose}$. This would force a differential $d_4(\evr{38}{3}{orange}) = \evr{37}{7}{darkmagenta}$ since $\evr{37}{7}{darkmagenta}$ must be hit by a differential and this is the only possibility. (The element $\evr{38}{2}{red}$ cannot support a $d_5$ differential, since its image in $\Ex$ supports a $d_4$ differential.) This would imply that $\evr{37}{7}{darkmagenta}$ maps to $ d_4(h_0h_3h_5) = h_0^2x \neq h_2^2n$. This is a contradiction since $\evr{37}{7}{darkmagenta}$ maps to $h_2^2n$ in $\Ex$. Therefore, we must have $d_3(\evrm{38}{4}{limegreen}{2}) = \evr{37}{7}{darkmagenta}$.
\end{proof}

\begin{prop}There is a differential
$d_3(\evr{38}{3}{orange}) = \evr{37}{6}{lavenderrose}$.
\end{prop}

\begin{proof}
The element $\evr{37}{6}{lavenderrose}$ in $\bo$-Adams filtration 6 must be killed, since all classes in $\pi_{37}$ have Adams filtration at most 5. The other possibility to kill it is by a $d_4$ differential: $d_4(\evr{38}{2}{red}) = \evr{37}{6}{lavenderrose}$. This would imply that $\evr{37}{6}{lavenderrose}$ maps to $d_4(h_3h_5) = h_0x$ in $\Ex$, which is not the case since it maps to the target of an algebraic $d_{1+\epsilon}$--differential (see Proposition~\ref{prop:lotsofevil}).
\end{proof}

\begin{prop}There is a differential
$d_3(\evr{38}{2}{red}) = \evr{37}{5}{cyan}$. The element $\evrm{37}{4}{limegreen}{2}$ survives and detects $\sigma\theta_4$. (Note that $\sigma\theta_4$ is detected by $x$ in the classical Adams spectral sequence.)
\end{prop}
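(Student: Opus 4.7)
The plan is to argue both claims by elimination, completing the stem 37 bookkeeping developed in this section.

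First I would observe that $\evr{37}{5}{cyan}$ cannot be a permanent cycle. By Remark~\ref{rem:filt} every class in $\pi_{37}^s \cong (\ZZ/2)^2$ has $\bo$-filtration at most $5$, and its two generators $\{h_2^2 h_5\}$ and $\sigma\theta_4$ already admit candidate detectors $\evrm{37}{3}{orange}{1}$ and $\evrm{37}{4}{green}{2}$ of strictly lower $\bo$-filtration; hence $\evr{37}{5}{cyan}$ must be a boundary. By Proposition~\ref{prop:length38}, $\evr{38}{2}{red}$ supports a nontrivial differential, and the preceding proposition rules out the length-$4$ option targeting $\evr{37}{6}{pink}$. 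Inspection of the AKSS chart shows $\evr{37}{5}{cyan}$ is the unique remaining candidate target of a differential out of $\evr{38}{2}{red}$, forcing $d_3(\evr{38}{2}{red}) = \evr{37}{5}{cyan}$.

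For the detection of $\sigma\theta_4$: with this differential in hand, every class in stem 37 of $\bo$-filtration $\geq 3$ is either an identified permanent cycle or a boundary. Specifically, $\evrm{37}{3}{orange}{2}$ and $\evrm{37}{4}{green}{1}$ supported $d_2$-differentials into stem 36, while $\evr{37}{5}{cyan}$, $\evr{37}{6}{pink}$, $\evr{37}{7}{magenta}$, and $\evr{37}{8}{seagreen}$ are boundaries of $d_3$-differentials from stem 38; the class $\evrm{37}{3}{orange}{1}$ detects $\{h_2^2 h_5\}$. The unique remaining permanent cycle is $\evrm{37}{4}{green}{2}$, which must therefore detect the remaining generator $\sigma\theta_4$ of $\pi_{37}^s$.

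The main subtlety is ruling out length-$\geq 5$ differentials out of $\evr{38}{2}{red}$, but these are precluded because the corresponding tridegrees in stem 37 at $\bo$-filtration $\geq 7$ are already consumed by the $d_3$-differentials recorded above.
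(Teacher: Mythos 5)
Your proposal has a genuine gap at the first step. You assert that since the two generators of $\pi_{37}^s \cong (\ZZ/2)^2$ ``already admit candidate detectors $\evrm{37}{3}{orange}{1}$ and $\evrm{37}{4}{green}{2}$ of strictly lower $\bo$-filtration,'' it follows that $\evr{37}{5}{cyan}$ must be a boundary. This does not follow. Remark~\ref{rem:filt} only gives an upper bound of $5$ on the $\bo$-filtration of $\sigma\theta_4$ (since its classical Adams filtration is $5$), so it is perfectly consistent at this point with $\sigma\theta_4$ being detected exactly at $\bo$-filtration $5$ by $\evr{37}{5}{cyan}$. In that alternative scenario, $\evrm{37}{4}{green}{2}$ is the class that dies; indeed, the differential $d_2(\evr{38}{2}{red}) = \evrm{37}{4}{green}{2}$ has not been excluded by anything established so far, and $\evr{38}{2}{red}$ would then still support a nontrivial differential as required by its image $h_3h_5$ in $\Ex$. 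When you later say that ``inspection of the AKSS chart shows $\evr{37}{5}{cyan}$ is the unique remaining candidate target of a differential out of $\evr{38}{2}{red}$,'' you have silently excluded this $d_2$ option, which is precisely the content of the proposition being proved, not an input to it.

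The paper closes this gap by a different mechanism: it assumes for contradiction that $\evr{37}{5}{cyan}$ detects $\sigma\theta_4$, observes that the map $\E{bo}{}_2^{*,*} \to \Ex^{*,*}$ would then have to send $\evr{37}{5}{cyan}$ to $x$ (the unique class in $\Ex^{5,42}$ detecting $\sigma\theta_4$), and shows this contradicts the algebraic analysis: by Propositions~\ref{prop:detectx} and~\ref{prop:lotsofevil}, $x$ is detected by the \emph{good} class $\gdr{37}{5}{3}{orange}$ in the algebraic AKSS, and the evil cyan class at $(37,5)$ is a $d_{3+\epsilon}$-target there, so it cannot hit $x$. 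Your counting argument and your dismissal of length-$\geq 5$ differentials are fine; what is missing is this algebraic input ruling out the $d_2$ differential to $\evrm{37}{4}{green}{2}$.
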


\begin{proof}
The other possibility is that $\sigma\theta_4$ is detected by $\evr{37}{5}{cyan}$. Since $\sigma\theta_4$ has Adams filtration 5, this implies that the element $\evr{37}{5}{cyan}$ maps to $x$ in $\Ex$, which would contradict Proposition~\ref{prop:detectx} and Proposition~\ref{prop:lotsofevil}.
\end{proof}

\ \\
\noindent
{\bf Stems 39-42}

\noindent

\begin{prop}
There are differentials
\begin{align*}
d_2(\evr{41}{3}{orange}) &= \evrm{40}{5}{cyan}{2}, & d_2(\evr{41}{4}{limegreen}) &= \evrm{40}{6}{lavenderrose}{1}.
\end{align*} 
The elements  $\evr{41}{5}{cyan}$, $\evrm{40}{6}{lavenderrose}{2}$, $\evr{41}{7}{darkmagenta}$, $\gdr{40}{8}{8}{goldenpoppy}$, $\evr{41}{8}{goldenpoppy}$, $\evr{42}{6}{lavenderrose}$, $\evr{42}{7}{darkmagenta}$ and $\evr{42}{8}{goldenpoppy}$ survive and detect $\{f_1\}$, $\{Ph_1h_5\}$, $\eta\{Ph_1h_5\}$, $\overline{\kappa}^2$, $\eta\overline{\kappa}^2$, $\{Ph_2h_5\}$, $2\{Ph_2h_5\}$ and $4\{Ph_2h_5\}$ respectively.
\end{prop}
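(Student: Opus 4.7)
The plan is to follow the template established in the preceding propositions of Section~\ref{sec:topcomp}: use Remark~\ref{rem:filt} together with the known structure of $\pi_{40}$, $\pi_{41}$, $\pi_{42}$ (e.g., from Isaksen's charts) to pin down the permanent cycles, and then force the two $d_2$-differentials by elimination. First I would collect the ``free'' permanent cycles. The classes $\gdr{40}{8}{8}{yellow}$ (mapping to $g^2$ in $\Ex$), $\evr{42}{6}{pink}$ (mapping to $Ph_2h_5$), $\evr{42}{7}{magenta}$ and $\evr{42}{8}{yellow}$ sit in bidegrees where the chart (Figure~\ref{fig:boASSchart}) shows no potential sources or targets for $d_r$ with $r\ge 2$, and hence must survive; Remark~\ref{rem:filt} then identifies the homotopy classes they detect. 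The identifications of $2\{Ph_2h_5\}$ and $4\{Ph_2h_5\}$ with $\evr{42}{7}{magenta}$ and $\evr{42}{8}{yellow}$ are automatic from $v_0$-linearity once $\{Ph_2h_5\}$ is pinned down.

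Next I would treat the extension-type identifications. The class $\evrm{40}{6}{pink}{2}$ maps to $Ph_1h_5$ in $\Ex$; since $\{Ph_1h_5\}$ has classical Adams filtration $6$, Remark~\ref{rem:filt} forces it to be detected in $bo$-filtration at most $6$, and inspection of the chart rules out lower-filtration candidates, so $\evrm{40}{6}{pink}{2}$ detects $\{Ph_1h_5\}$. The class $\evr{41}{7}{magenta}$ detects $\eta\{Ph_1h_5\}$: the product has classical Adams filtration $7$, and multiplication by $\eta$ (which has $bo$-filtration $0$) cannot lower the $bo$-filtration below $6$, so combined with $\eta\cdot\evrm{40}{6}{pink}{2}=0$ on $E_2$ (which must be checked in the $bo$-MSS from the structure of $V^{*,*}$), there is a hidden $\eta$-extension into $bo$-filtration at least $7$, and $\evr{41}{7}{magenta}$ is the only candidate. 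The identifications of $\eta\overline\kappa^2$ with $\evr{41}{8}{yellow}$, and of $\{f_1\}$-type product with $\evr{41}{5}{cyan}$, follow the same hidden-extension pattern using $\overline\kappa^2 \in \pi_{40}$ detected in $bo$-filtration $8$ and $\{f_1\}\in\pi_{40}$ detected in $bo$-filtration $4$ from the previous proposition.

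Finally, I would force the two $d_2$ differentials by counting. At this point every element of $\pi_{40}$ and $\pi_{41}$ at low Adams filtration has been assigned a detecting class in the $bo$-AKSS by the permanent cycles just established and those of the preceding propositions. In particular, neither $\evr{41}{3}{orange}$ nor $\evr{41}{4}{green}$ can detect a nonzero homotopy class, so each must support a differential. Their length-compatible targets are the orphan evil classes $\evrm{40}{5}{cyan}{2}$ and $\evrm{40}{6}{pink}{1}$, which must themselves die for the same counting reason (nothing in $\pi_{40}$ of the appropriate filtration remains unaccounted for); since the $bo$-filtration jumps are $3 \to 5$ and $4 \to 6$ respectively, both are $d_2$-differentials. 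The main obstacle I anticipate is the verification of the nontrivial hidden $\eta$-extension $\eta\cdot\{Ph_1h_5\} = \{\evr{41}{7}{magenta}\}$ (and similarly for $\eta\overline\kappa^2$): showing that the product is \emph{not} detected at $bo$-filtration $6$ in the $E_\infty$-page (so that the hidden extension into filtration $7$ is genuine) requires either a Moss-style bracket computation or an explicit appeal to the classical ASS, where $h_1\cdot Ph_1h_5 = 0$ on $E_\infty$ witnesses the same phenomenon and can be transported across the filtration-comparison map.
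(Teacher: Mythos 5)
Your proposal takes essentially the same route as the paper, which gives only the one-line proof ``This is the only possibility left''---i.e., a pure counting/elimination argument once the good differentials and the preceding stems have been settled. Your elaboration captures that correctly: the permanent cycles are pinned down via Remark~\ref{rem:filt} and the known structure of $\pi_{40},\pi_{41},\pi_{42}$, and the two $d_2$ differentials then fall out because their sources and targets have no other fate.

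Two points in your write-up are off, though neither is load-bearing. First, the identification of $2\{Ph_2h_5\}$ and $4\{Ph_2h_5\}$ with $\evr{42}{7}{magenta}$ and $\evr{42}{8}{yellow}$ is not ``automatic from $v_0$-linearity'': these detecting classes are \emph{evil}, hence concentrated in internal degree $s=0$ in the $\bo$-MSS, so the literal $v_0$-action on them is zero and the passage from $\{Ph_2h_5\}$ (at $bo$-filtration $6$) to $2\{Ph_2h_5\}$ (at $bo$-filtration $7$) is a \emph{hidden} $v_0$-extension. Like the $\eta$-extensions, it is established by elimination, not by on-the-nose multiplicativity. Second, the ``main obstacle'' you anticipate --- needing a Moss-style bracket or an explicit ASS comparison to certify $\eta\{Ph_1h_5\}$ landing in $bo$-filtration $7$ --- is not actually needed: the class $\evr{41}{7}{magenta}$ is a permanent cycle (nothing can hit it or be hit by it after the forced $d_2$'s), so it must detect \emph{some} nonzero element of $\pi_{41}$, and after the preceding propositions $\eta\{Ph_1h_5\}$ is the only candidate remaining. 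The counting suffices on its own; no independent verification of the hidden extension is required.
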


\begin{proof}
This is the only possibility left.
\end{proof}

\begin{prop}
The elements $\evrm{39}{6}{lavenderrose}{1}$ and $\evr{39}{7}{darkmagenta}$ survive and detect $\{u\}$ and $\nu\{t\}$. Further, there is a differential $d_2(\evrm{40}{4}{limegreen}{3}) = \evrm{39}{6}{lavenderrose}{2}$.
\end{prop}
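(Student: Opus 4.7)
The plan is to apply the same three-pronged strategy used throughout this section: filtration bounds coming from Remark~\ref{rem:filt}, multiplicativity of the $\bo$-Adams spectral sequence, and an elimination count against the known structure of $\pi_{39}^s$ and $\pi_{40}^s$ (as tabulated, e.g., in \cite{Isaksen}).

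First, for $\{u\}$: the class $u$ in the classical Adams spectral sequence lies in Adams filtration $6$, so Remark~\ref{rem:filt} forces any representative of $\{u\}$ in the $\bo$-Adams spectral sequence to have $\bo$-filtration at most $6$. The preceding propositions in stems $39$--$40$ have already accounted for every surviving class in stem $39$ of $\bo$-filtration $\leq 5$: these detect $\sigma\eta_5$, $\{h_5 c_0\}$, and $\sigma\{d_1\}$, with the remaining candidates in those filtrations supporting $d_2$-differentials. Hence $\{u\}$ must be detected in $\bo$-filtration exactly $6$, and the only candidate is $\evrm{39}{6}{pink}{1}$, which therefore must survive. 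For $\nu\{t\}$, I would use multiplicativity of the $\bo$-Adams spectral sequence: since $\nu$ is detected in $\bo$-filtration $1$ and $\{t\}$ is detected by $\evrm{36}{6}{pink}{1}$ in $\bo$-filtration $6$, the product $\nu\{t\}$ has $\bo$-filtration at least $7$. On the other hand, $\nu\{t\}$ is detected by $h_2 t$ of Adams filtration $7$ in the classical Adams spectral sequence, giving the matching upper bound via Remark~\ref{rem:filt}. The unique surviving class in the appropriate bidegree is $\evr{39}{7}{magenta}$.

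Finally, for the $d_2$-differential, I would argue by elimination. Once $\{u\}$ and $\nu\{t\}$ are absorbed into $\evrm{39}{6}{pink}{1}$ and $\evr{39}{7}{magenta}$ respectively, together with the detectors named in the previous propositions every element of $\pi_{39}^s$ has a designated detector; in particular $\evrm{39}{6}{pink}{2}$ does not detect any homotopy class. It must therefore be either a source or a target of a nonzero differential. Since all potential targets of a differential out of $\evrm{39}{6}{pink}{2}$ in stem $38$ are already accounted for by previous differentials, it cannot support one, and so it must be hit. The only remaining class in stem $40$ with compatible $\bo$-filtration (and compatible evil/good type, since $d_r$ for $r$ an integer preserves $\alpha$) which can target $\evrm{39}{6}{pink}{2}$ via a differential of the correct length is $\evrm{40}{4}{green}{3}$, and the requisite differential is a $d_2$. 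The main obstacle is the bookkeeping: verifying rigorously that every element of $\pi_{39}^s$ has already been claimed by the named detectors and that $\evrm{40}{4}{green}{3}$ is the unique candidate source, which requires careful comparison of Figure~\ref{fig:boASSchart} against the tabulation of $\pi_{39}^s$.
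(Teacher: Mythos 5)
Your argument for $\nu\{t\}$ (upper bound on $\bo$-filtration via Remark~\ref{rem:filt} applied to $h_2t$, lower bound via multiplicativity from $\nu$ and $\{t\}$) and your identification of $\{u\}$ with $\evrm{39}{6}{pink}{1}$ are essentially the paper's argument. The one place you diverge is the $d_2$: you eliminate from the target side, showing $\evrm{39}{6}{pink}{2}$ detects nothing in $\pi_{39}^s$ and cannot support a differential because its potential targets in stem $38$ are exhausted, whereas the paper eliminates more economically from the source side, noting that $\pi_{40}^s$ is fully accounted for so $\evrm{40}{4}{green}{3}$ must support a differential and the stated $d_2$ is the only option. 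Both are valid elimination arguments resting on the same Isaksen data, but the paper's version avoids the extra bookkeeping in stem $38$ that you flag as the obstacle in your own write-up.
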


\begin{proof}
On one hand, $\nu\{t\}$ has Adams filtration 7, and therefore $\bo$-Adams filtration at most 7. On the other hand, $\nu$ and $\{t\}$ have $\bo$-Adams filtration 1 and 6, therefore $\nu\{t\}$ has $\bo$-Adams filtration at least 7. 
That $\evr{39}{7}{darkmagenta}$ detects $\nu\{t\}$ follows from the fact that this is the only element in $\bo$-Adams filtration 7.
All classes in $\pi_{40}$ are accounted for. Therefore, the element $\evrm{40}{4}{limegreen}{3}$ must support a differential, and this is the only possibility.
\end{proof}

We finish with a few remarks on the $E_\infty$-page.

\begin{rmk}[Stems 8-9]
Recall that $\epsilon \in \pi_8$ is defined to be the unique class with Adams filtration 3. Also recall that we have a relation in $\pi_9$: 
$$\nu^3 = \eta \epsilon + \eta^2\sigma.$$
In $\pi_9$, $\nu^3$ is detected by $\gdr{9}{3}{3}{orange}$, since $\nu^2$ is detected by $\gdr{6}{2}{2}{red}$. It follows from the above relation that $\epsilon + \eta\sigma$ is detected by $\gdr{8}{2}{2}{red}$. Since $\eta\sigma$ is detected by $\gdr{8}{1}{1}{black}$, we have that $\epsilon$ is also detected by $\gdr{8}{1}{1}{black}$. This is interesting since $\epsilon$ and $\eta\sigma$ are detected by different elements in the classical Adams spectral sequence and $\epsilon$ is not divisible by $\eta$.
\end{rmk}

\begin{rmk}[Stem 14]
Recall that $\kappa \in \pi_{14}$ is defined to be the unique class with Adams filtration 4. 

The class $\gdr{14}{2}{2}{red}$ detects both $\sigma^2$ and $\sigma^2+\kappa$, since both $\sigma^2$ and $\sigma^2+\kappa$ have Adams filtration 2, and therefore $\bo$-Adams filtration at most 2. Therefore, $\gdr{14}{3}{3}{orange}$ detects $\kappa$.
\end{rmk}

\begin{rmk}[Stem 22]
The class $\evr{22}{3}{orange}$ detects both $\nu\overline{\sigma}$ and $\nu\overline{\sigma}+\eta^2\overline{\kappa}$, since both $\nu\overline{\sigma}$ and $\nu\overline{\sigma}+\eta^2\overline{\kappa}$ have Adams filtration 4, and therefore $\bo$-Adams filtration at most 4. Therefore, $\evr{22}{5}{cyan}$ detects $\eta^2\overline{\kappa}$.

Recall that in $\pi_{22}$, we have a relation:
$$\eta^2\overline{\kappa} = \epsilon \kappa = (\epsilon + \eta\sigma) \kappa,$$
which is both a nontrivial $\eta$-extension from $\eta\overline{\kappa}$ and a nontrivial $\kappa$-extension from $\epsilon + \eta\sigma$. However, in the $\bo$-Adams spectral sequence, $\eta^2\overline{\kappa}$ has $\bo$-Adams filtration 5, while $\epsilon + \eta\sigma$ and $\kappa$ have $\bo$-Adams filtration 2 and 3. Therefore, this relation is present in the $E_2$-page of the $\bo$-Adams spectral sequence.
\end{rmk}

\bibliographystyle{amsalpha}
\bibliography{boASS6}

\providecommand{\bysame}{\leavevmode\hbox to3em{\hrulefill}\thinspace}
\providecommand{\MR}{\relax\ifhmode\unskip\space\fi MR }
\providecommand{\MRhref}[2]{%
  \href{http://www.ams.org/mathscinet-getitem?mr=#1}{#2}
}
\providecommand{\href}[2]{#2}
\begin{thebibliography}{CDGM88}

\bibitem[Ada66]{AdamsP}
J.~F. Adams, \emph{A periodicity theorem in homological algebra}, Proc.
  Cambridge Philos. Soc. \textbf{62} (1966), 365--377. \MR{0194486}

\bibitem[And15]{Andrews}
Michael Andrews, \emph{The $v_1$-periodic part of the {A}dams spectral sequence
  at an odd prime}, 2015, Thesis (Ph.D.)--Massachusetts Institute of
  Technology.

\bibitem[Beh12]{goodEHP}
Mark Behrens, \emph{The {G}oodwillie tower and the {EHP} sequence}, Mem. Amer.
  Math. Soc. \textbf{218} (2012), no.~1026, xii+90. \MR{2976788}

\bibitem[BOSS]{BOSS}
Mark Behrens, Kyle Ormsby, Nathanial Stapleton, and Vesna Stojanoska, \emph{On
  the ring of tmf cooperations at the prime 2}, Preprint.

\bibitem[Bru]{Brunertable}
Robert~R. Bruner, \emph{The cohomology of the mod 2 steenrod algebra}.

\bibitem[Bru93]{Bruner}
\bysame, \emph{{${\rm Ext}$} in the nineties}, Algebraic topology ({O}axtepec,
  1991), Contemp. Math., vol. 146, Amer. Math. Soc., Providence, RI, 1993,
  pp.~71--90. \MR{1224908 (94a:55011)}

\bibitem[Car80]{Carlsson}
Gunnar Carlsson, \emph{On the stable splitting of {$b{\rm o}\wedge b{\rm o}$}
  and torsion operations in connective {$K$}-theory}, Pacific J. Math.
  \textbf{87} (1980), no.~2, 283--297. \MR{592736 (81j:55003)}

\bibitem[CDGM88]{CDGM}
Fred~R. Cohen, Donald~M. Davis, Paul~G. Goerss, and Mark~E. Mahowald,
  \emph{Integral {B}rown-{G}itler spectra}, Proc. Amer. Math. Soc. \textbf{103}
  (1988), no.~4, 1299--1304.

\bibitem[Dav87]{Davisbo}
Donald~M. Davis, \emph{The {$b{\rm o}$}-{A}dams spectral sequence: some
  calculations and a proof of its vanishing line}, Algebraic topology
  ({S}eattle, {W}ash., 1985), Lecture Notes in Math., vol. 1286, Springer,
  Berlin, 1987, pp.~267--285. \MR{922930 (89b:55021)}

\bibitem[DM88]{DavisMahowaldv1}
Donald~M. Davis and Mark Mahowald, \emph{{$v_1$}-periodic {${\rm Ext}$} over
  the {S}teenrod algebra}, Trans. Amer. Math. Soc. \textbf{309} (1988), no.~2,
  503--516. \MR{931531 (89g:55032)}

\bibitem[Gon00]{Gonzalez2}
Jes{\'u}s Gonz{\'a}lez, \emph{A vanishing line in the {${\rm BP}\langle
  1\rangle$}-{A}dams spectral sequence}, Topology \textbf{39} (2000), no.~6,
  1137--1153. \MR{1783851 (2001j:55022)}

\bibitem[HS14]{HeardStojanoska}
Drew Heard and Vesna Stojanoska, \emph{{$K$}-theory, reality, and duality}, J.
  K-Theory \textbf{14} (2014), no.~3, 526--555. \MR{3349325}

\bibitem[Hu99]{Hu}
Po~Hu, \emph{Transfinite spectral sequences}, Homotopy invariant algebraic
  structures ({B}altimore, {MD}, 1998), Contemp. Math., vol. 239, Amer. Math.
  Soc., Providence, RI, 1999, pp.~197--216. \MR{1718081 (2001a:18010)}

\bibitem[{Isa}14]{Isaksen}
D.~C. {Isaksen}, \emph{{Classical and motivic Adams charts}}, ArXiv e-prints
  (2014).

\bibitem[LM87]{LM}
Wolfgang Lellmann and Mark Mahowald, \emph{The {$b{\rm o}$}-{A}dams spectral
  sequence}, Trans. Amer. Math. Soc. \textbf{300} (1987), no.~2, 593--623.

\bibitem[Mah81]{Mahowaldbo}
Mark Mahowald, \emph{{$b{\rm o}$}-resolutions}, Pacific J. Math. \textbf{92}
  (1981), no.~2, 365--383. \MR{618072 (82m:55017)}

\bibitem[Mat]{Matschke}
Benjamin Matschke, \emph{Successive spectral sequences}, Preprint.

\bibitem[Mil75]{Milgram}
R.~James Milgram, \emph{The {S}teenrod algebra and its dual for connective
  {$K$}-theory}, Conference on homotopy theory ({E}vanston, {I}ll., 1974),
  Notas Mat. Simpos., vol.~1, Soc. Mat. Mexicana, M\'exico, 1975, pp.~127--158.
  \MR{761725}

\bibitem[Mil81]{Miller}
Haynes~R. Miller, \emph{On relations between {A}dams spectral sequences, with
  an application to the stable homotopy of a {M}oore space}, J. Pure Appl.
  Algebra \textbf{20} (1981), no.~3, 287--312.

\bibitem[MS87]{MahowaldShick}
Mark Mahowald and Paul Shick, \emph{Periodic phenomena in the classical {A}dams
  spectral sequence}, Trans. Amer. Math. Soc. \textbf{300} (1987), no.~1,
  191--206. \MR{871672}

\bibitem[Rav86]{Ravenel}
Douglas~C. Ravenel, \emph{Complex cobordism and stable homotopy groups of
  spheres}, Pure and Applied Mathematics, vol. 121, Academic Press, Inc.,
  Orlando, FL, 1986. \MR{860042}

\bibitem[Tan70]{Tangora}
Martin~C. Tangora, \emph{On the cohomology of the {S}teenrod algebra}, Math. Z.
  \textbf{116} (1970), 18--64. \MR{0266205}

\end{thebibliography}
\end{document}